\title{Cycles of links and fixed points for orientation preserving homeomorphisms of the open unit disk}
\author{Juliana Xavier}
\date{}
\newcommand{\Z}{\mathbb{Z}}
\newcommand{\R}{\mathbb{R}}
\newcommand{\C}{\mathbb{C}}
\newcommand{\D}{\mathbb{D}}
\newcommand{\om}{\omega}
\newcommand{\toe}{\stackrel{=}{\longrightarrow}}
\DeclareMathOperator{\fix}{Fix}
\DeclareMathOperator{\per}{Per}
\DeclareMathOperator{\homeo}{Homeo}
\DeclareMathOperator{\inte}{Int}
\DeclareMathOperator{\id}{Id}
\newtheorem{teo}{Theorem}[section]
\newtheorem{cor}[teo]{Corollary}
\newtheorem{lema}[teo]{Lemma}
\newtheorem{prop}[teo]{Proposition}
\theoremstyle{definition}
\newtheorem{obs}[teo]{Remark}
\theoremstyle{remark}
\begin{document}

\maketitle

\begin{abstract} Michael Handel proved in \cite{handel} the existence
of a fixed point for an orientation preserving homeomorphism of the
open unit disk that can be extended to the closed disk, provided that
it has points whose orbits form an {\it oriented cycle of links at
infinity}.  More recently, the author generalized Handel's theorem to a wider class of cycles of links \cite{joul}. 
In this paper we complete this topic describing exactly which are all the cycles of links forcing the existence of a
fixed point. 

\end{abstract}

\section{Introduction}\label{intro}

Handel's fixed point theorem \cite{handel} has been of great
importance for the study of surface homeomorphisms.  It guarantees
the existence of a fixed point for an orientation preserving
homeomorphism $f$ of the unit disk $\D = \{z\in \C : |z| < 1\}$
provided that it can be extended to the boundary $S^1 = \{z\in \C :
|z|=1\}$ and that it has points whose orbits form an oriented cycle
of links at infinity.  More precisely, there exist $n$ points $z_i
\in \D$ such that

$$\lim _{k \to -\infty} f^k(z_i) = \alpha _i \in S^1 ,  \ \lim _{k
\to +\infty} f^k(z_i) = \om _i \in S^1 ,$$

\noindent $i=1, \ldots, n$, where the $2n$ points $\{\alpha _i\}$, $\{\om _i\}$ are different points in $S^1$
and satisfy the following order property:

(*) $\alpha_{i+1}$ is the only one among these points that lies in the  open interval in the oriented circle $S^1$
from $\om_{i-1}$ to $\om_i$ .

\noindent (Although this is not Handel's original statement, it is an equivalent one as already pointed out in
\cite{patrice}).

Le Calvez gave an alternative proof of this theorem \cite{patrice},
relying only in Brouwer theory and plane topology, which allowed him
to obtain a sharper result. Namely, he weakened the extension
hypothesis by demanding  the homeomorphism to be extended just to $\D
\cup (\cup _{i \in \Z/n\Z} \{\alpha _i , \om _i\})$ and he strengthed
the conclusion by proving the existence of a simple closed curve of
index 1.

The author generalized both Handel's and Le Calvez's results as follows \cite{joul}. Let $P \subset \D$ be a compact convex $n$-gon.  Let $\{v_i : i\in \Z/n\Z\}$ be its set of vertices and for each
$i\in \Z/n\Z$, let  $e_i$ be the edge joining
$v_i$ and $v_{i+1}$. We suppose that each $e_i$ is endowed with an
orientation, so that we can tell whether $P$ is to the right or to the left of $e_i$ .  We say that the orientations of
$e_i$ and $e_j$
{\it coincide} if $P$ is to the right (or to the left) of both $e_i$ and $e_j$, $i, j \in \Z/n\Z$. \\
We define the {\it index} of $P$ by

$$i(P) = 1 - \frac{1}{2} \sum _{i \in \Z/n\Z} \delta _i,$$

\noindent where $\delta _i = 0$ if the orientations of $e_{i-1}$ and $e_i$
coincide, and $\delta _i = 1$ otherwise.

We will note $\alpha _i$ and $\om _i$ the first, and respectively the last,
point  where the straight line $\Delta _i$ containing $e_i$ and inheriting its
orientation intersects $\partial \D$.

\begin{figure}[h]
\begin{center}

   \subfigure[Handel's index 1 polygon]{\includegraphics[scale=0.5]{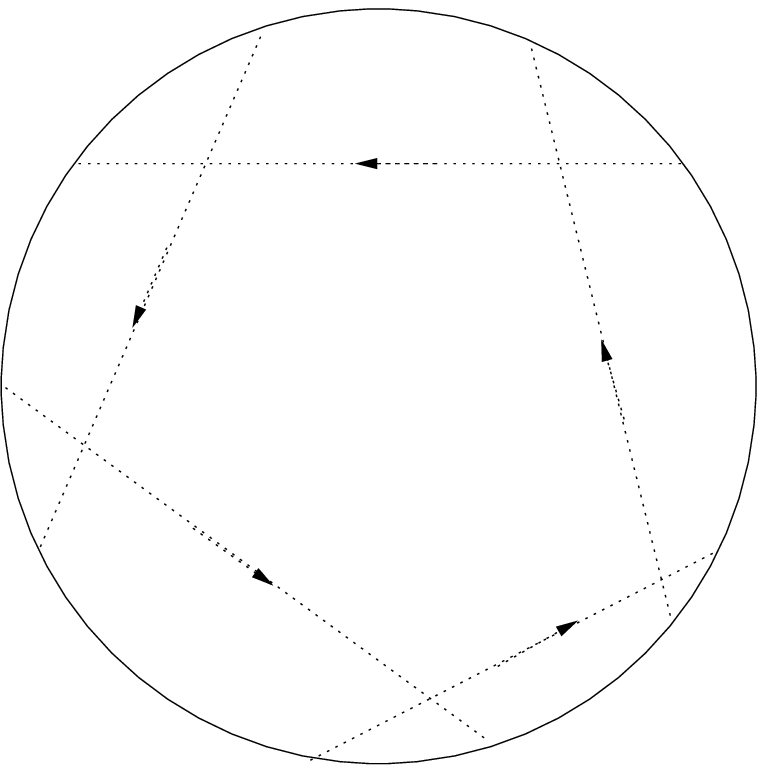}}\hspace{.25in}
    \subfigure[ Index -1 polygon] {\includegraphics[scale=0.5]{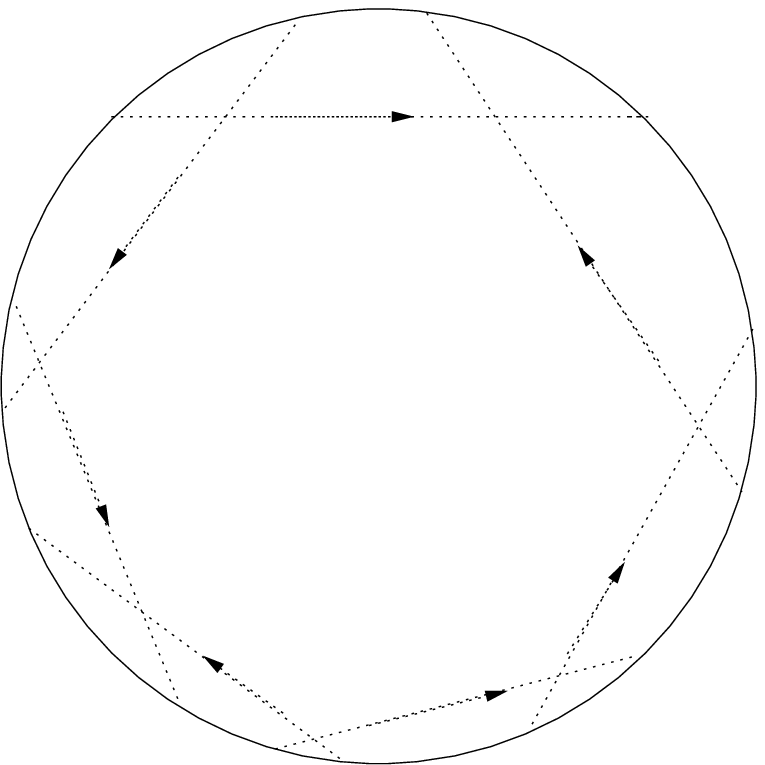}}\hspace{.25in}
    \subfigure[$\om _i = \alpha _{i+2} \ \forall i$ ] {\includegraphics[scale=0.5]{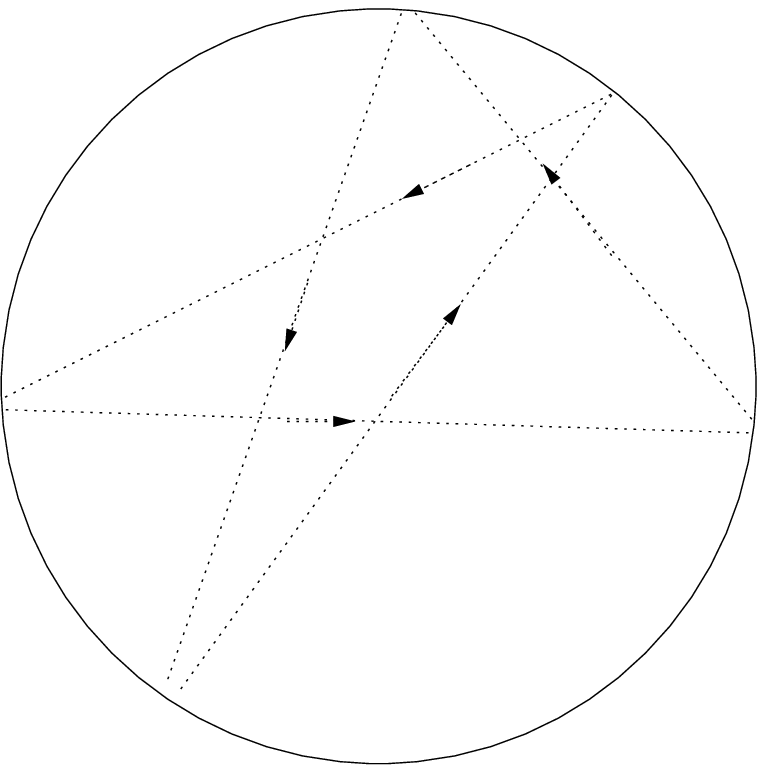}} \\
\caption {The hypothesis of Theorem \ref{poli}.}
\end{center}
 \end{figure}

\noindent We say that a homeomorphism $f:\D \to \D$ {\it realizes} $P$ if
there exists a family $(z_i)_{i\in \Z/n\Z}$ of points in $\D$ such
that for all $i\in \Z/n\Z$,
$$\lim _{k \to -\infty} f^k(z_i) = \alpha _i ,  \ \lim _{k \to
+\infty} f^k(z_i) = \om _i .$$\\

\begin{teo}\label{poli}{\bf \cite{joul}} Let $f : \D \to \D$ be an
orientation preserving homeomorphism  which realizes a compact convex
polygon $P\subset \D$ where the points $\alpha_i, \om_i, i\in \Z/n\Z$ are all different.  Suppose that $f$ can be extended to a homeomorphism of $\D
\cup (\cup _{i \in \Z/n\Z} \{\alpha _i , \om _i\}).$\\
If $i(P) \neq 0$, then $f$ has a fixed point.  Furthermore, if
$i(P)=1$, then there exists a simple closed curve $C\subset \D$ of
index  1.

\end{teo}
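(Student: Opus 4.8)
The plan is to argue by contradiction: assuming $f$ has no fixed point, I would manufacture from the realizing family $(z_i)$ a closed curve on which a Poincar\'e--Hopf count forces $i(P)=0$. The tool is Le Calvez's equivariant foliated version of Brouwer's plane translation theorem. Since $\D$ is homeomorphic to $\R^2$ and $f$ would then be an orientation preserving fixed point free homeomorphism, $f$ is a Brouwer homeomorphism, and one obtains an oriented topological foliation $\mathcal F$ of $\D$, \emph{without singularities}, transverse to the dynamics of $f$: there is an identity isotopy $(f_t)_{t\in[0,1]}$ from $\id$ to $f$ such that for every $z\in\D$ the arc $t\mapsto f_t(z)$ is homotopic with fixed endpoints to an arc positively transverse to $\mathcal F$, and every leaf of $\mathcal F$ is a Brouwer line for $f$. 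Concatenating over $k\in\Z$ the transverse arcs from $f^k(z)$ to $f^{k+1}(z)$ yields, for each $z$, a bi-infinite curve $\gamma_z$ positively transverse to $\mathcal F$ and crossing each leaf of $\mathcal F$ at most once.

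Next I would study the trajectories $\Gamma_i:=\gamma_{z_i}$. Because the extension of $f$ is continuous at $\alpha_i$ and $\om_i$ and $f^{\pm k}(z_i)$ converges to $\alpha_i,\om_i$, these $2n$ boundary points are fixed by the extension and, by hypothesis, pairwise distinct. The delicate point here — and the only place the partial extension hypothesis of the theorem enters — is a local analysis near each boundary fixed point showing that $\Gamma_i$ may be chosen so as to accumulate on $\alpha_i$ alone in negative time and on $\om_i$ alone in positive time. Granting this, $\Gamma_i$ compactifies to an arc of $\overline\D$ from $\alpha_i$ to $\om_i$, positively transverse to $\mathcal F$.

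Now the core of the argument. Using the convexity of $P$ and the prescribed orientations of its edges $e_i$, I would read off the cyclic order of the $2n$ points $\{\alpha_i,\om_i\}$ on $S^1$, and detect, at each junction between the $\om$-endpoint of one trajectory and the $\alpha$-endpoint of the following one, whether the local configuration is ``coherent'' or not — this dichotomy being exactly what $\delta_i$ records (it is the same datum as whether the orientations of $e_{i-1}$ and $e_i$ coincide). After a surgery on the family $(\Gamma_i)$ removing mutual intersections while keeping positive transversality to $\mathcal F$ and the relevant isotopy class, I would splice the $\Gamma_i$ together with short arcs running along $S^1$ into a single simple closed curve $C_0\subset\D$, positively transverse to $\mathcal F$ except at the $n$ junction points, where a tangency of a type governed by $\delta_i$ may occur. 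A Poincar\'e--Hopf count for the line field $\mathcal F$ along $C_0$ then evaluates the total index of the singularities of $\mathcal F$ enclosed by $C_0$ as $1-\tfrac12\sum_{i}\delta_i=i(P)$. Since $\mathcal F$ is nonsingular this total index is $0$, so $i(P)=0$, contradicting the hypothesis; hence $f$ has a fixed point.

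For the refinement, suppose $i(P)=1$, so every $\delta_i=0$ and the curve $C_0$ above carries no junction tangency: it is a simple closed curve positively transverse to $\mathcal F$. Dropping the no-fixed-point assumption and rerunning the construction with a maximal identity isotopy for $f$ — whose fixed point set, nonempty since $\fix(f)\neq\emptyset$, carries the singularities of the corresponding foliation — one produces a simple closed curve $C$ positively transverse to $\mathcal F$ enclosing singularities of total index $1$; translating positive transversality of the $f$-motion across $C$ into a winding number computation for $z\mapsto f(z)-z$ shows that the index of $C$ for $f$ equals $1$. I expect the main obstacle to lie entirely in this index bookkeeping: the boundary analysis of the $\Gamma_i$, the determination of the cyclic arrangement of the $2n$ points from the convex polygon, the surgery producing an embedded curve without destroying transversality, and the precise matching of the tangency contributions with $1-\tfrac12\sum_i\delta_i$. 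By contrast, the existence of $\mathcal F$ is used as a black box and the passage from the dynamics to the bi-infinite transverse trajectories is routine.
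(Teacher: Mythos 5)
Your proposal takes a genuinely different route from the paper's. Theorem \ref{poli} is proved in \cite{joul} (and its generalization, Theorem \ref{main*}, is proved here) purely with Brouwer theory and plane topology in the style of \cite{patrice}: translation arcs converging to the boundary points (Proposition \ref{gamak}), maximal free brick decompositions, attractors and repellers, Franks' lemma, and Repeller/Attractor configurations; the foliated equivariant theorem is never used. Your foliation-plus-Poincar\'e--Hopf scheme is appealing, but several steps you treat as routine or as ``bookkeeping'' are genuine gaps. (i) The convergence of the bi-infinite transverse trajectory $\Gamma_i$ to $\alpha_i$ and $\om_i$ is only asserted: knowing $f^k(z_i)\to\om_i$ gives no control on the transverse arcs joining consecutive iterates, and the foliation near $S^1$ is completely uncontrolled; this is precisely the point where the extension hypothesis must do real work (the paper's substitute is the Hausdorff convergence of translation arcs in Proposition \ref{gamak}, which takes an actual proof). (ii) The surgery making the $\Gamma_i$ pairwise disjoint is incompatible with keeping the pairing: resolving a crossing of two transverse arcs necessarily reconnects their ends, so it permutes which $\alpha$ is joined to which $\om$, and with it the value of $1-\tfrac12\sum_i\delta_i$; you cannot in general both remove intersections and preserve the combinatorics on which $i(P)$ is computed. (iii) At the junctions near $\om_i$ and $\alpha_{i+1}$ the foliation is arbitrary, so nothing forces the junction arc to carry a single tangency ``of a type governed by $\delta_i$''; identifying the local crossing pattern of leaves at these boundary points with the combinatorial datum $\delta_i$ is the heart of any such proof --- it is exactly what the cyclic-order arguments with attractors and repellers (Lemmas \ref{quid} and \ref{quidh} here, and their analogues in \cite{joul}) replace --- and it is not supplied.

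The refinement also has a gap. Granting a simple closed curve $C$ positively transverse to the foliation of a maximal isotopy and enclosing singularities of total foliation index $1$, it does not follow without argument that the index of $C$ for $f$ (the winding number of $z\mapsto f(z)-z$ along $C$) equals $1$; the known link between the two indices is Le Calvez's result at an isolated fixed point (\cite{patens}), which does not directly compute the index of $f$ along a large curve possibly enclosing many fixed points. The paper obtains the index-$1$ curve by an entirely different mechanism: the elliptic configuration forces recurrence (Proposition \ref{hc}), i.e.\ a closed chain of free pairwise disjoint disks, and Franks' lemma then produces the simple closed curve of index $1$ directly. So, as written, neither the fixed point conclusion nor the index-$1$ conclusion is closed by your argument.
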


The two polygons appearing in Figure 1 (a) and (b) satisfy the hypothesis of this theorem.  However,  the polygon
illustrated in (c) does not, as there are coincidences among the points
$\{\alpha_i\}, \{\om_i\}$, $i\in \Z/n\Z$.\\

The purpose of this paper is to complete this topic: we assume that there exists a family $(z_i)_{i\in \Z/n\Z}$ of points in $\D$ and two families $(\alpha_i)_{i\in
\Z/n\Z} , (\om_i)_{i\in\Z/n\Z}$ of points in $S^1$ such
that for all $i\in \Z/n\Z$,
$$\lim _{k \to -\infty} f^k(z_i) = \alpha _i ,  \ \lim _{k \to
+\infty} f^k(z_i) = \om _i ,$$\noindent that the homeomorphism $f$ extends to a homeomorphism of $\D
\cup (\cup _{i \in \Z/n\Z} \{\alpha _i , \om _i\}),\ $\noindent and describe exactly which combinatorics of the
points $\alpha_i, \om_i, i\in \Z/n\Z$ force the existence of a fixed point.

A {\it cycle of links of order $n\geq 3$} is a family of pairs of points on the circle $S^1$,
$${\cal L}= ((\alpha_i, \om_i))_{ i\in\Z/n\Z}$$ \noindent  such that for all
$i\in\Z/n\Z$:

\begin{enumerate}
 \item $\alpha_i \neq \om_i$,
\item  $\alpha_{i+1}$ and  $\om_{i+1}$ belong to different connected components of $S^1\backslash \{\alpha_i, \om_i\}$.
\end{enumerate}

If ${\cal L}$ is  a cycle of links, we define the set
$$\ell = \{\alpha_i, \om_i: i\in\Z/n\Z\}\subset S^1$$\noindent of points in the circle which belong to a pair in the cycle.

If $a,b\in \ell$, we note $a\to b$ if $b$ follows $a$ in the natural (positive) cyclic order on $S^1$, and $a\toe b$ if
either $a=b$ or $a\to b$.\\

\noindent We say that a cycle of links ${\cal L}$ is {\it elliptic}\index{cycle of links! elliptic} if for all $i\in\Z/n\Z$:

$$ \om_{i-1}\toe \alpha_{i+1}\to \om_{i}.$$

\noindent We say it is {\it hyperbolic} if $n=2k, k\geq 2$ and for all $i\in\Z/n\Z$, $i=0\mod 2$:

$$\alpha_{i}\to \alpha_{i-1}\toe \om_{i+1}\to \om_{i}\toe \alpha_{i+2}.$$\\

\begin{figure}[h]
\begin{center}
\psfrag{0}{$\alpha _{0}$}\psfrag{1}{$\om_{2} $}\psfrag{2}{$\alpha _1$}
\psfrag{3}{$ \om _{0}$}\psfrag{4}{$\alpha _{2}$}\psfrag{5}{$\om_{1}$}
\psfrag{00}{$\alpha _{0}$}\psfrag{11}{$\alpha_{3} $}\psfrag{22}{$\om _1$}
\psfrag{33}{$ \om _{0}=\alpha_2$}\psfrag{55}{$\alpha_{1}$}\psfrag{66}{$\om_{3}$}\psfrag{77}{$\om_{2}$}
   \subfigure[An elliptic cycle of links of order 3]{\includegraphics[scale=0.5]{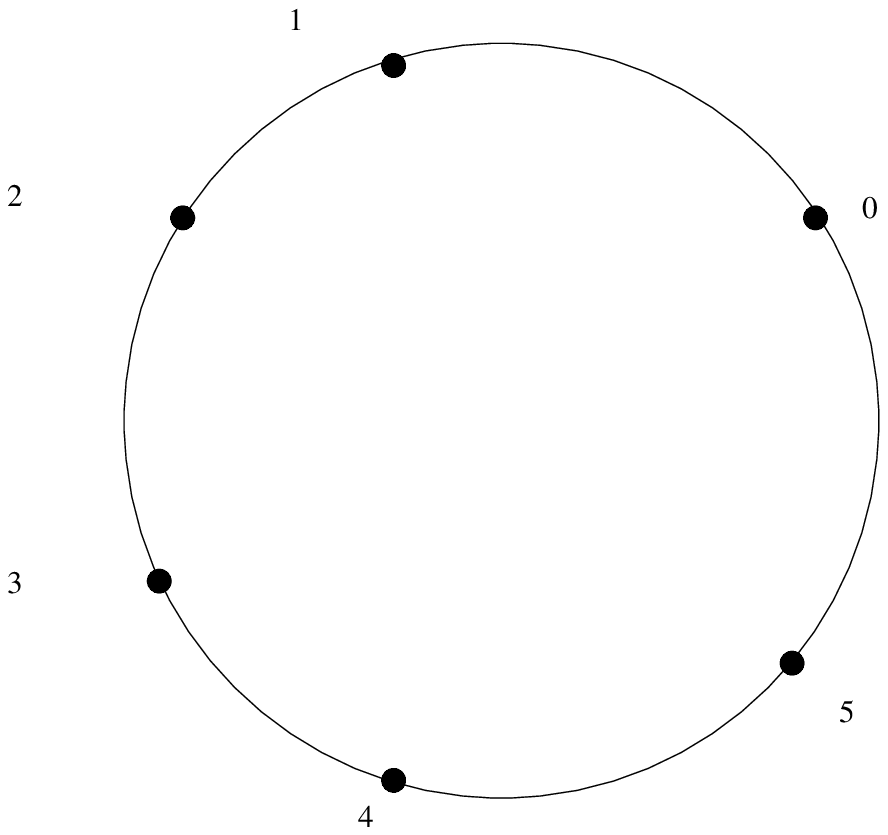}}\hspace{.25in}
    \subfigure[A hyperbolic cycle of links of order 4] {\includegraphics[scale=0.5]{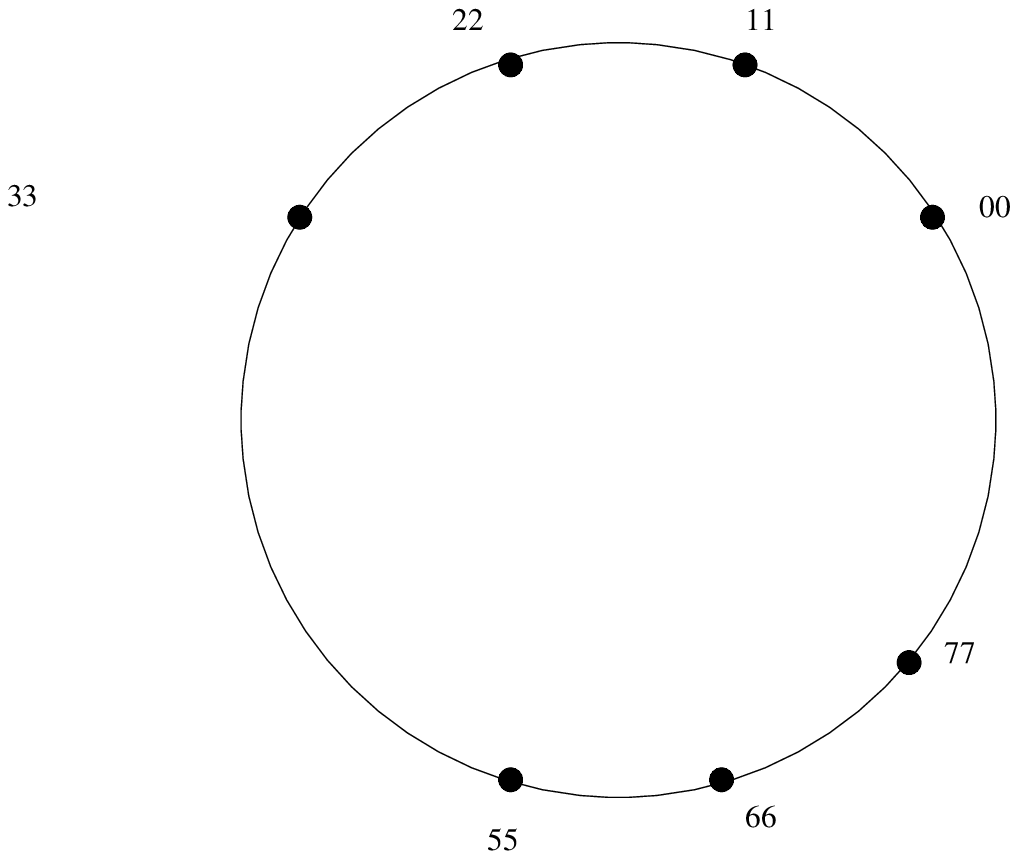}}\hspace{.25in}
     \\

\end{center}
 \end{figure}

\noindent We say that ${\cal L}$ is {\it non-degenerate} if:
$$(\alpha_i, \om_i)\in {\cal L} \Rightarrow ( \om_i, \alpha_i)\notin {\cal L} .$$ \noindent Of course, we say it is
{\it degenerate}, if this condition is not satisfied.  An example is illustrated in Figure 2. \\

\begin{figure}[h]\label{ptpt}
\begin{center}
\psfrag{0}{$\alpha _{0}=\om_2$}\psfrag{c}{$\om_0=\alpha_2 $}\psfrag{1}{$\alpha_1 = \om_3$}
\psfrag{u}{$ \om _{1}=\alpha_3$}
\includegraphics[scale=0.5]{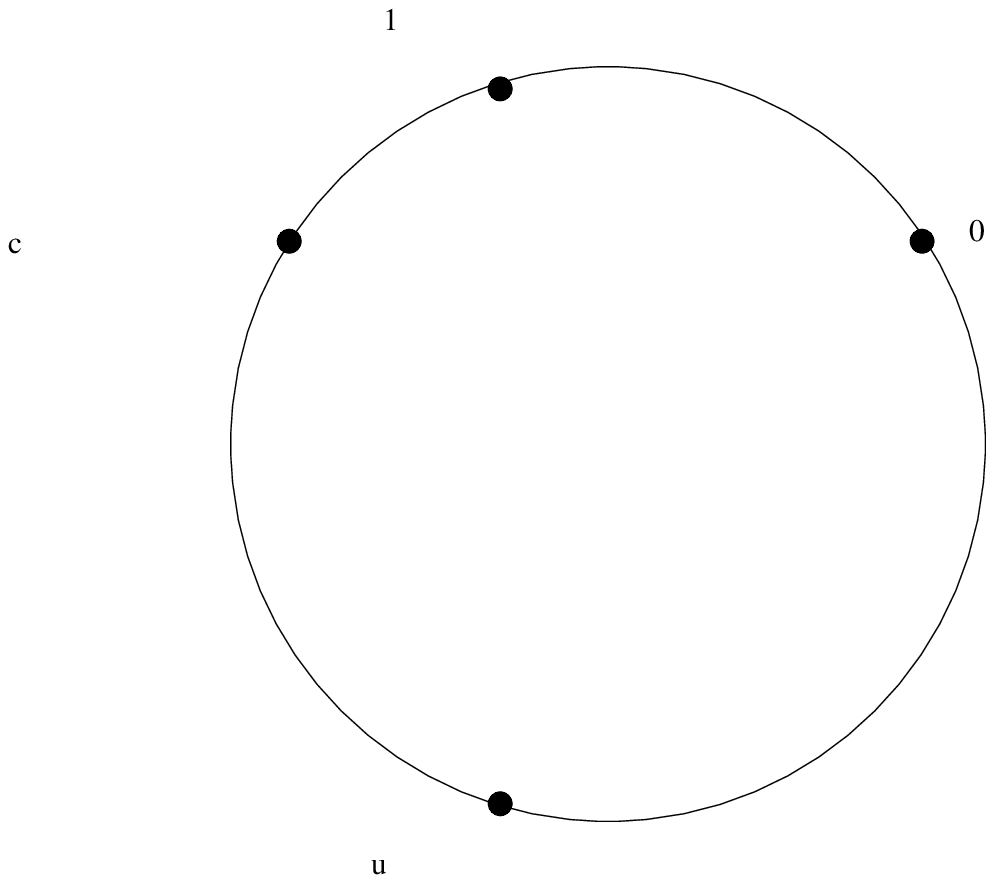}
\caption{A degenerate cycle of links }
\end{center}
\end{figure}

\noindent We say that a homeomorphism $f:\D \to \D$ {\it realizes} ${\cal L}$ if
there exists a family $(z_i)_{i\in \Z/n\Z}$ of points in $\D$ such
that for all $i\in \Z/n\Z$,
$$\lim _{k \to -\infty} f^k(z_i) = \alpha _i ,  \ \lim _{k \to
+\infty} f^k(z_i) = \om _i .$$\\

The following result is the main theorem of this article.

\begin{teo}\label{main*}  Suppose that $f : \D \to \D$ is an
orientation preserving homeomorphism  which realizes a cycle of links ${\cal L}$ and can be extended to a
homeomorphism of $\D
\cup \ell .$\\
If ${\cal L}$ is either elliptic or hyperbolic, then $f$ has a fixed point.  Furthermore, if ${\cal L}$ is non-degenerate and
elliptic, then there exists a simple closed curve $C\subset \D$ of
index  1 .
\end{teo}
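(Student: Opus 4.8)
The plan is to reduce Theorem~\ref{main*} to Theorem~\ref{poli} by realizing a cycle of links as (the boundary data of) a compact convex polygon, passing first to an iterate and perturbing when necessary to remove the degeneracies that put us outside the hypotheses of Theorem~\ref{poli}. The key observation is that Theorem~\ref{poli} fails to apply directly only because of coincidences among the $2n$ points $\alpha_i,\om_i$: the combinatorial conditions ``elliptic'' and ``hyperbolic'' are precisely the conditions that the cyclic configuration of links admits a straightening to a convex polygon of nonzero index (index $1$ in the elliptic case, index $-1$ in the hyperbolic case, as in Figure~1(a) and~1(b) respectively). So the first step is purely combinatorial: given an elliptic or hyperbolic $\mathcal L$, exhibit an abstract convex polygon $P$ with oriented edges whose associated boundary points are the $\alpha_i,\om_i$ in the correct cyclic order, check that the ellipticity inequalities $\om_{i-1}\toe\alpha_{i+1}\to\om_i$ force all the coincidence-counting $\delta_i$ to vanish so that $i(P)=1$, and likewise that the hyperbolic inequalities force exactly $k$ of the $\delta_i$ to equal $1$, giving $i(P)=1-k/1\cdot\tfrac12\cdot\dots$; more precisely one checks $i(P)=-1$ for the hyperbolic model. (One should phrase this intrinsically in terms of $\mathcal L$: define $\delta_i$ directly from the cyclic positions of $\alpha_{i-1},\om_{i-1},\alpha_i,\om_i$, and verify the index formula reproduces $\pm1$.)

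Second, I would handle the points $z_i$ and the map. When all $2n$ boundary points are distinct and ``in convex position'' as dictated by the polygon, one simply invokes Theorem~\ref{poli}: the hypothesis that $f$ extends to $\D\cup\ell$ is exactly the extension hypothesis there, ``$f$ realizes $\mathcal L$'' is ``$f$ realizes $P$'', and $i(P)\neq0$ yields a fixed point, with the sharper index-$1$ curve conclusion when $i(P)=1$, i.e.\ in the elliptic non-degenerate case. The work is therefore concentrated in the degenerate situations, where some $\om_i=\alpha_j$ (or $\alpha_i=\alpha_j$, $\om_i=\om_j$), so the ``$2n$ points'' are really fewer and Theorem~\ref{poli} does not literally apply. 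Here I would argue by a limiting/perturbation scheme: slightly push the coincident endpoints apart along $S^1$ (respecting the cyclic order and the link conditions) to get a non-degenerate elliptic or hyperbolic cycle $\mathcal L_\varepsilon$, and simultaneously replace $f$ by a conjugate $f_\varepsilon$ that realizes $\mathcal L_\varepsilon$ and still extends to $\D\cup\ell_\varepsilon$. Since the extension need only be to the finite set $\ell_\varepsilon$, such a conjugation can be built by an isotopy supported near the boundary that moves $\ell$ to $\ell_\varepsilon$; one must be careful that the orbits of the (moved) $z_i$ still converge to the prescribed perturbed endpoints, which is arranged by choosing the conjugating homeomorphism to fix a neighborhood of each $z_i$ and to be the boundary-straightening only near $S^1$. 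Applying Theorem~\ref{poli} to $f_\varepsilon$ gives a fixed point of $f_\varepsilon$, hence of $f$ (conjugate maps have conjugate fixed-point sets), and in the elliptic non-degenerate case a simple closed curve of index~$1$, which is conjugation-invariant as a notion.

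The main obstacle I anticipate is the hyperbolic case together with the degenerate cases: unlike the elliptic case, a hyperbolic cycle need not be straightenable to a genuinely \emph{convex} polygon in the plane with the given circular endpoint data, because the defining chain of inequalities $\alpha_i\to\alpha_{i-1}\toe\om_{i+1}\to\om_i\toe\alpha_{i+2}$ describes a ``non-convex'' linking pattern (the index~$-1$ picture). One route is to observe that Theorem~\ref{poli} as stated already allows index $-1$ polygons (Figure~1(b)), so it suffices to match the hyperbolic combinatorics to that model polygon; but verifying that the model's lines $\Delta_i$ really do hit $\partial\D$ at points in the cyclic order prescribed by a hyperbolic $\mathcal L$, for \emph{every} hyperbolic $\mathcal L$ up to the obvious symmetries, is the delicate bookkeeping step. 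A cleaner alternative, which I would actually pursue, is to avoid building an honest polygon at all and instead re-run Le~Calvez's Brouwer-theoretic argument from \cite{patrice,joul} directly with $\mathcal L$ as input: the only place convexity of $P$ entered was to produce, for each $i$, a properly embedded line $\Delta_i$ in $\D$ with endpoints $\alpha_i,\om_i$ on which $z_i$ lies and such that the family $(\Delta_i)$ is in ``general position'' realizing the index count; but such lines can be produced from the link data alone (take the $\Delta_i$ to be leaves of a foliation, or simply disjoint-interior arcs isotopic to chords), so the whole machinery applies verbatim once the coincidence points are resolved as above. I expect the degenerate case to then follow by the same perturbation, with the extra remark that degeneracy is exactly what obstructs the index-$1$ curve conclusion, consistent with the statement.
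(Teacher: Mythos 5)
Your reduction to Theorem \ref{poli} cannot work, because the difficulty it tries to remove by perturbation is intrinsic to the dynamics and is exactly the new content of the theorem. A coincidence such as $\om_{i-2}=\alpha_i$ means that the backward orbit of $z_i$ and the forward orbit of $z_{i-2}$ converge to the \emph{same} point of $S^1$; if you replace $f$ by a conjugate $hfh^{-1}$, the new limit points are the images under $h$ of the old ones, so coincident endpoints remain coincident --- no conjugation can ``push the coincident endpoints apart'', and hence no $f_\varepsilon$ realizing a separated cycle $\mathcal{L}_\varepsilon$ can be obtained this way. If instead you genuinely perturb $f$ (not by conjugation), fixed points of $f_\varepsilon$ give no information about fixed points of $f$; worse, since the extension hypothesis only concerns the finite set $\ell$, modifications concentrated near the boundary are precisely how the paper's introduction builds fixed-point-free maps realizing arbitrary prescribed cycles, so a perturbation argument of this kind is structurally doomed. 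Note also that the cases your scheme would reduce to are the already-known ones: an elliptic cycle with all points of $\ell$ distinct is Le Calvez's version of Handel's theorem, and non-degenerate cycles may still carry coincidences (Figure 1(c)), so even the index-$1$ statement cannot be obtained from Theorem \ref{poli}, whose hypotheses explicitly exclude coincidences. Your worry about straightening a hyperbolic cycle into a convex index $-1$ polygon is real as well, but it is secondary to the perturbation gap.

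The paper's actual proof goes the other way: it never builds a polygon, but argues by contradiction with Brouwer theory. Assuming $f$ non-recurrent, it constructs a maximal free brick decomposition adapted to the orbits (Lemmas \ref{ends} and \ref{cim}, the latter handling precisely the coincidence points $\om_{i-2}=\alpha_i$), proves ``domino'' lemmas (\ref{dosfut}--\ref{theka}, \ref{l49}--\ref{thel}) showing that the futures of the repelling bricks swallow all attracting bricks, and then uses the prime-end cyclic order together with the two technical Lemmas \ref{quid} and \ref{quidh} to produce an elliptic or hyperbolic Repeller/Attractor configuration, contradicting Propositions \ref{hc} and \ref{2c}. In the elliptic case this forces $n=4$ and $\mathcal{L}$ degenerate when $f$ is not recurrent, and that residual case is settled by a hyperbolic configuration of order $2$; recurrence, via Franks' lemma, is what yields the index-$1$ simple closed curve in the non-degenerate elliptic case. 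If you want a correct proof you should engage with this machinery (or an equivalent one) rather than attempt to eliminate the coincidences.
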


It turns out that these results completely describe the combinatorics giving rise to fixed points:

\begin{lema}\label{opt}  Given a
family $((\alpha_i, \om_i))_{i\in\Z/n\Z}$ of pairs of points in $S^1$, then one of the following is true:

\begin{enumerate}
\item there exists a subfamily of $((\alpha_i, \om_i))_{i\in\Z/n\Z}$ forming an elliptic or hyperbolic cycle of links,
\item  the straight oriented lines from $\alpha_i$ to $\om_i$ bound a non-zero index polygon $P\subset \D$,
\item there exists a {\it fixed-point free} orientation preserving homeomorphism $f:\D\to \D$, and a family of
points $(z_i)_{i\in\Z/n\Z}$ in $\D$ such that for all $i\in \Z/n\Z$,
$$\lim _{k \to -\infty} f^k(z_i) = \alpha _i ,  \ \lim _{k \to
+\infty} f^k(z_i) = \om _i. $$
\end{enumerate}
\end{lema}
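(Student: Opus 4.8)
\emph{Strategy.} The first observation is that conclusions (1) and (2) each already force a fixed point — (1) by Theorem~\ref{main*}, (2) by Theorem~\ref{poli} — so the real content of the Lemma is the negative implication: if no subfamily is an elliptic or hyperbolic cycle of links and the chords from $\alpha_i$ to $\om_i$ bound no polygon of non-zero index, then a fixed-point-free realization exists. The proof is thus a combinatorial dichotomy on the configuration $((\alpha_i,\om_i))_{i\in\Z/n\Z}$, followed, in the remaining case, by the explicit construction of a fixed-point-free homeomorphism. Along the way one needs that the three descriptions are not redundant in a contradictory way, but that is automatic: a configuration satisfying (1) or (2) cannot also satisfy (3), since the former forces a fixed point and the latter provides a map without one.

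\emph{Reductions and normal form.} A pair with $\alpha_i=\om_i$ is realizable by an orbit confined to an arbitrarily small free half-disk abutting that boundary point, and interferes with nothing, so such pairs may be discarded; coincident pairs may be identified. One is left with a family of genuine oriented chords $\Delta_i$ from $\alpha_i$ to $\om_i$, and one may assume $n\ge 3$, the cases $n\le 2$ being immediate. I would then fix a normal form for the chords: an arrangement of pairwise transverse arcs in $\D$ with minimal intersection — disjoint when the endpoint pairs are unlinked on $S^1$, crossing exactly once when linked, and with no triple points. The relevant bookkeeping object is the collection of maximal \emph{cyclic chains of linked chords}: cyclically ordered subfamilies $\Delta_{i_1},\dots,\Delta_{i_m}$ in which consecutive members are linked; each such chain, with that order, is by definition a cycle of links $\mathcal L'=((\alpha_{i_1},\om_{i_1}),\dots,(\alpha_{i_m},\om_{i_m}))$.

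\emph{The combinatorial dichotomy.} The heart of the argument is a classification of cycles of links, proved by induction on the order $m$: every cycle of links either is elliptic, or is hyperbolic, or contains a strictly shorter subcycle that is elliptic or hyperbolic, or else is \emph{reducible} — its linking pattern splits along an unlinked pair into subconfigurations that can be realized independently and simultaneously (the case where the $\Delta_{i_j}$ sit in convex position with zero index is of this last type). The induction rests on a case analysis of the cyclic positions on $S^1$ of the $2m$ points $\alpha_{i_j},\om_{i_j}$: the relations $\om_{i-1}\toe\alpha_{i+1}\to\om_i$ of an elliptic cycle and the alternating relations $\alpha_i\to\alpha_{i-1}\toe\om_{i+1}\to\om_i\toe\alpha_{i+2}$ of a hyperbolic one are precisely the ``taut'' configurations. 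Any other coherently linked cycle either displays the $\Delta_{i_j}$ in convex position bounding a polygon $P$ whose index $1-\tfrac12\sum\delta_i$ — one minus the number of sign changes of ``$P$ lies to the left/right of the edge'' around $\partial P$, equivalently one minus the number of boundary sources of the induced boundary flow — is non-zero, giving conclusion (2); or has some consecutive pair slack enough that a shorter cycle of links may be extracted and handed to the inductive hypothesis.

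\emph{The construction and the main obstacle.} Suppose now no subfamily is an elliptic or hyperbolic cycle of links and the chords bound no non-zero-index polygon. By the classification the linking pattern of the whole family is ``laminar'': there is no cyclically coherent linked chain left unresolved. I would then, reversing Le~Calvez's foliation picture, build a foliation $\mathcal F$ of $\D$ by properly embedded lines together with, for each $i$, an arc $\gamma_i$ from $\alpha_i$ to $\om_i$ positively transverse to $\mathcal F$, the $\gamma_i$ being arranged to cross each other exactly along the prescribed linking pattern; pushing every point a bounded amount across the leaves of $\mathcal F$, in a way compatible with prescribed shifts along the $\gamma_i$, yields an orientation preserving homeomorphism $f$ of $\D$ that is fixed-point-free (a fixed point would be a point met by no leaf of $\mathcal F$), extends to $\D\cup\ell$ fixing $\ell$ pointwise, and realizes $\mathcal L$ with $z_i\in\gamma_i$ — this is conclusion (3). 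The main obstacle is the combinatorial classification of cycles of links, i.e. recognizing elliptic and hyperbolic as the only irreducible coherent patterns and that every other pattern collapses convexly to a non-zero-index polygon or decomposes; the subsidiary technical points are keeping the link relations on $S^1$, the arrangement of arcs in $\D$ with its faces, and the dynamical realization in sync, and verifying that the foliation-and-transversal construction genuinely produces a homeomorphism continuous up to $\ell$. Once the combinatorics is in place, the construction itself is routine.
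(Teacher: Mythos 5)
Your overall shape (if no elliptic/hyperbolic subfamily and no non-zero-index polygon, construct a fixed-point-free realization) is the right one, but both pillars of your argument are left at the level of intention, and at the decisive points they diverge from what can actually be carried out. The ``combinatorial dichotomy'' you place at the heart of the proof --- every cycle of links is elliptic, hyperbolic, contains a shorter elliptic/hyperbolic subcycle, or is ``reducible'' --- is never proved: the key case (``any other coherently linked cycle either displays the chords in convex position bounding a non-zero-index polygon or has some consecutive pair slack enough that a shorter cycle may be extracted'') is a restatement of the assertion, not an argument, and it is not even clearly the right statement, since conclusion (2) of the Lemma concerns polygons bounded by the straight chords of the \emph{whole} family, not convex position of a subfamily. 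The paper needs no induction and no classification at all: its only combinatorial input is the elementary observation that a multiple point of the chord arrangement with non-zero index yields an elliptic subcycle of order $3$ (and, implicitly, that a non-trivial degeneracy yields a hyperbolic subcycle), after which the negations of (1) and (2) translate directly into the hypotheses ``all multiple points of zero index, all complementary polygons of zero index, all degeneracies trivial.''

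The construction step has the same problem. You posit a nonsingular oriented foliation of $\D$ together with arcs $\gamma_i$ from $\alpha_i$ to $\om_i$ positively transverse to it and crossing along the prescribed pattern, and then ``push across the leaves''; but the existence of such a foliation in every residual case is exactly as hard as the realization you are trying to produce (indeed it must fail for elliptic and hyperbolic cycles, so some argument tied to the hypotheses is unavoidable), and turning a foliation-plus-transversals picture into a homeomorphism with the prescribed $\alpha$- and $\om$-limits, extending to $\D\cup\ell$, also needs an actual construction. The paper proceeds differently and more concretely: the chords themselves are taken as orbits of a flow whose singularities are the multiple points, each of index zero and hence locally conjugate to the model index-zero flow; the zero-index-polygon hypothesis lets the flow be extended to the rest of $\D$ without singularities; a perturbation lemma supported in free disjoint disks then reroutes finitely many orbits so that some orbit runs from $\alpha_i$ to $\om_i$ for each $i$ (free supports guarantee no new fixed points), and finally the index-zero singularities are perturbed away. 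You also never address degeneracies --- a pair together with its reverse $(\om_i,\alpha_i)$ --- which cannot be put in your ``minimal position'' chord normal form at all; these are precisely the delicate case, which the paper handles by the trivial/non-trivial dichotomy and by ``opening up'' the degenerate chord along a curve $\gamma_j$. As it stands, then, the proposal has genuine gaps at the classification, at the realization, and in the degenerate case.
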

\newpage

We finish this introduction with some remarks on Theorem \ref{main*}.

\subsubsection*{The elliptic non-degenerate case contains Le Calvez's improvement of
Handel's theorem.}  Indeed, if the points in $\ell$ are all different, ${\cal L}$ is non-degenerate.  As the example in Figure 1 (c)
shows,
our theorem is more general even in this case.

\subsubsection* {The theorem contains the author's result on non-zero index polygons.} Indeed, in \cite{joul} it is shown that if $f$ realizes a non-zero index polygon where
 the points
 $\alpha_i, \om_i, i\in \Z/n\Z$ are all different, then $f$ realizes an elliptic or hyperbolic cycle of links. Again, as coincidences in $\ell$ are allowed, our
theorem is more general even in this case.

\subsubsection*{The extension hypothesis is needed. } Indeed,  if $f:\D\to \D$ is fixed-point free,  one
can easily construct a homeomorphism $h:\D \to \D$ such that $hfh^{-1}$ realizes any prescribed cycle of
links.

\subsubsection* {Non-degeneracy is needed for obtaining the index result.}

 Let $f_1$ be the time-one map of the flow whose orbits
are drawn in the figure below.

\begin{figure}[h]
\begin{center}
\psfrag{b}{$\alpha_1=\om _3$}\psfrag{c}{$\alpha_2 = \om_0$}
\psfrag{d}{$\alpha_3 = \om_1$}\psfrag{e}{$\alpha_0 = \om_2$}\psfrag{x}{$x$}
\includegraphics[scale=0.6]{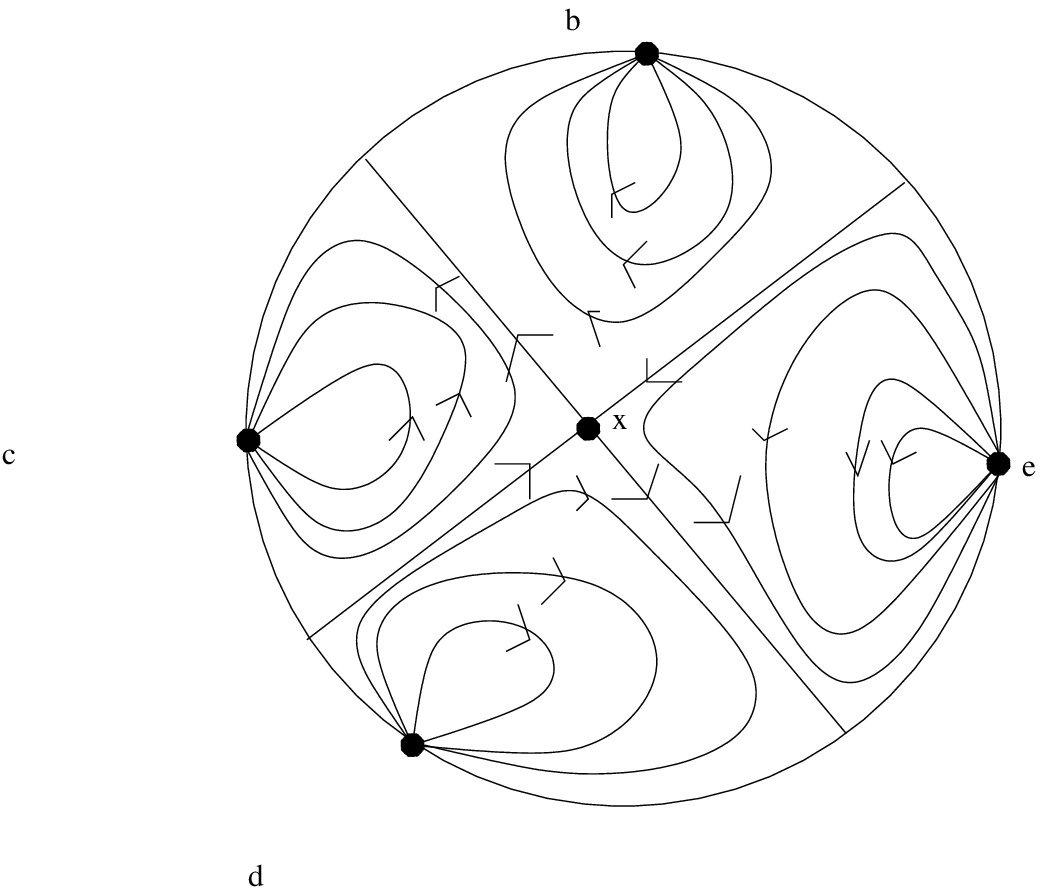}
\end{center}
\end{figure}

As we will explain below, one can perturb $f_1$ in a homeomorphism $f$ such that:

\begin{itemize}
\item $\fix(f) = \fix(f_1) = \{x\}$,
\item $f = f_1$ in a neighbourhood of $x$,
\item $f$ realizes ${\cal L} = ((\alpha_i, \om_i))_{i\in\Z/4\Z}$.
\end{itemize}

We say that the set $X$ is {\it free} if $f(X)\cap X = \emptyset$.

One can find (by means of a transverse foliation, for example),  free and pairwise disjoint simple paths $\beta _i$
and $\gamma _i$, $i\in\Z/4\Z$ such that :

\begin{itemize}
 \item  $\beta_i$ joins $z_i$ and $z_i^{'}$, where $\lim _{k\to \infty} f_1^{-k} (z_i) = \alpha_i$ and
$\lim _{k\to \infty} f_1^{k} (z_i^{'}) = \alpha_{i^*}$, where $i^* = i+1$ for even values of $i$, and
$i^* = i-1$ for odd values of $i$,
\item $\gamma_i$ joins $f_1^{p_i}(z_i^{'})$ and $z_i^{''}$, where $p_i>0$ and  $\lim _{k\to \infty} f_1^{k} (z_i^{''}) = \om_i$,
\item $\gamma _i$ and $\beta _i$ are disjoint from the $f_1$- orbits of every $z_j, z_j{'}, z_j^{''}$ with $i\neq j$.
\end{itemize}

By thickening the paths $\{\beta _i\}$ and $\{\gamma _i\}$, one can find free, pairwise dijsoint open disks
$\{D_i^{'}\}$ and $\{D_i^{''}\}$ such that the disks $D_i^{'}$ and $D_i^{''}$ are disjoint from the $f_1$-orbits
of the points $z_j, z_j^{'},$ and $z_j^{''}$, for $i\neq j$.

We construct a homeomorphism $h:\D\to \D$ such that:

\begin{itemize}
 \item $h = \id$ outside $\cup_{i\in \Z/4\Z} D_i^{'} \cup D_i^{''}$,
\item $h(z_i)= z_i^{'}$,
\item $h(f_1^p(z_i^{'})) = z_i^{''}$.
\end{itemize}

So, if we define $f = h\circ f_1 $, we obtain
$$\lim _{k\to \infty} f^{-k} (z_i) = \alpha_i, \ \lim _{k\to \infty} f^{k} (z_i) = \om _i,$$\noindent for all $i\in \Z/4\Z$.
Clearly we can make this construction in such a way that $f= f_1$ in a neighbourhood of $x$. Moreover, as the disks
$\{D_i^{'}\}$ and $\{D_i^{''}\}$ are free,
$$\fix(f) = \fix(f_1) = \{x\}.$$

So, $f$ realizes the elliptic cycle ${\cal L}$, but there is no simple closed curve of index $1$.

\subsubsection* {No negative-index fixed point is guaranteed by hyperbolicity.}

\begin{figure}[h]
\begin{center}
\psfrag{a}{$\alpha_0$}\psfrag{b}{$\alpha _3$}\psfrag{c}{$\om_1$}
\psfrag{d}{$\om_0$}\psfrag{e}{$\alpha_2$}\psfrag{f}{$\alpha_1$}\psfrag{g}{$\om_3$}
\psfrag{h}{$\om_2$}
\includegraphics[scale=0.6]{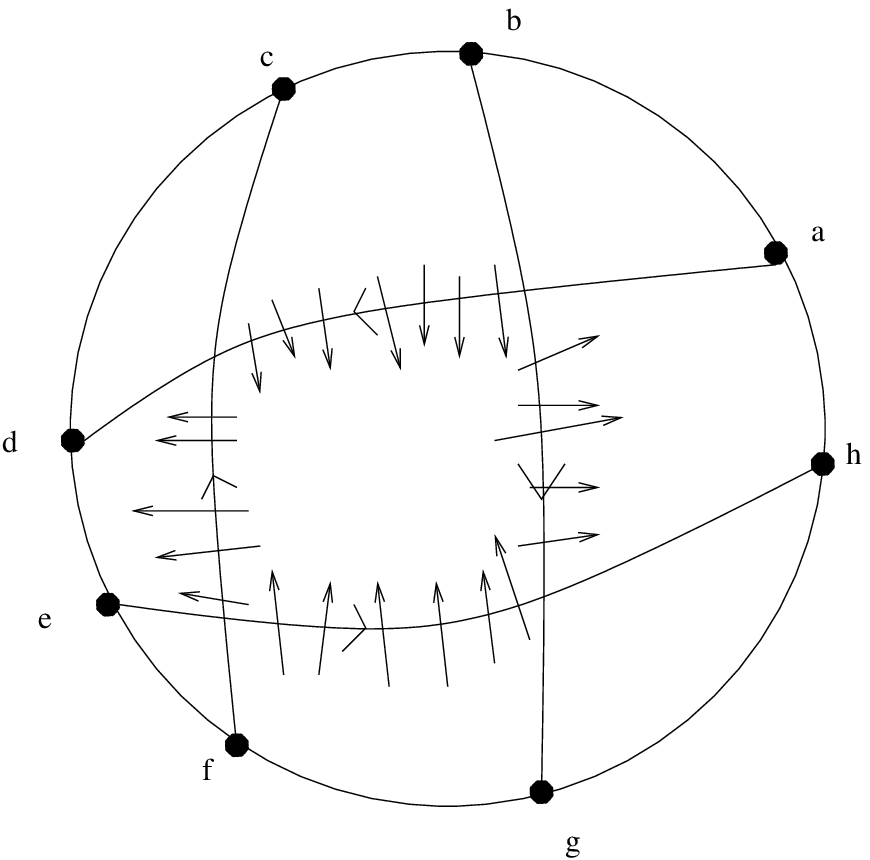}
\end{center}
\end{figure}

One could think that when ${\cal L}$ is hyperbolic, a negative-index fixed point should be obtained.  For example,
this would be the case if one had an oriented foliation ${\cal F}$ in $\D\backslash \fix (f)$ whose leaves are Brouwer lines
for $f$ and simple paths
$\gamma _i$, $i\in \Z/n\Z$ joining
$\alpha _i$ and $\om_i$ such that:
\begin{itemize}
 \item each $\gamma _i$ is positively transverse to ${\cal F}$,
\item the paths $\{\gamma _i\}$ bound  a compact disc in $\D$.
\end{itemize}
\noindent (See the figure above.)  Indeed, in this case, the Poincar\'e-Hopf formula would give a singularity $x$ of the foliation  for which
$i({\cal F}, x) < 0$.  So, $x\in \fix(f)$ and by a result of Le Calvez (\cite{patens})
one has $i(f,x) = i({\cal F}, x) < 0$.\\

However, this is not the case, as the following example shows.  Let $f_1$ be the time-one map of the flow whose orbits
are drawn in the figure below.

\begin{figure}[h]
\begin{center}
\psfrag{a}{$\alpha_0$}\psfrag{b}{$\alpha _3$}\psfrag{c}{$\om_1$}
\psfrag{d}{$\om_0$}\psfrag{e}{$\alpha_2$}\psfrag{f}{$\alpha_1$}\psfrag{g}{$\om_3$}
\psfrag{h}{$\om_2$}\psfrag{x}{$x$}
\includegraphics[scale=0.6]{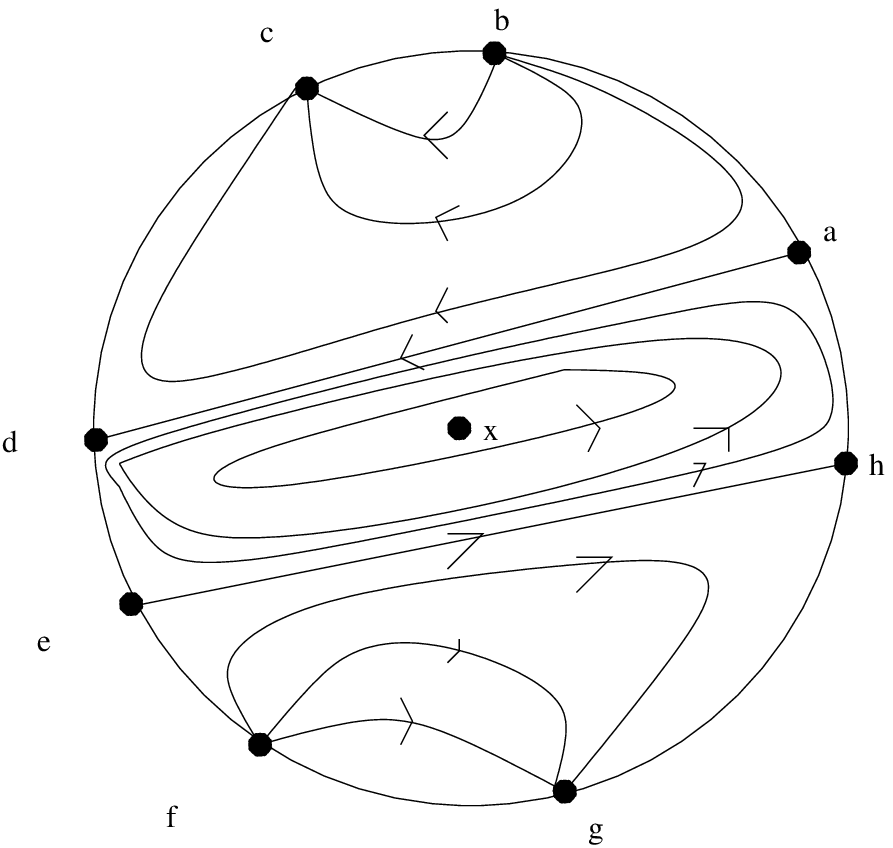}
\end{center}
\end{figure}

As we did in our preceding example, one can perturb $f_1$ in a homeomorphism $f$ such that:

\begin{itemize}
\item $\fix(f) = \fix(f_1) = \{x\}$,
\item $f = f_1$ in a neighbourhood of $x$,
\item $f$ realizes ${\cal L} = ((\alpha_i, \om_i))_{i\in\Z/4\Z}$.
\end{itemize}

So, $f$ realizes the hyperbolic cycle ${\cal L} $, but there is no fixed point of negative index.\\

The structure of this article is the following.  In Section \ref{bricks} we introduce the tools to be used (brick decompositions, Brouwer theory, Repeller/Attractor
configurations \cite{joul}) and we sum up the results from \cite{patrice} and \cite{joul} that will be used in the proofs.  In Section \ref{tech} we state two
lemmas that are the key for the contradiction argument in the proof of Theorem \ref{main*}, which is contained in Section \ref{proof}.  The last Section (\ref{last})
is devoted to the proof of Lemma \ref{opt}, which shows that out results are optimal.

\section{Preliminaries}\label{bricks}

\subsection{Brick decompositions}

A {\it brick decomposition}\index{brick! decomposition} $\cal D$ of an orientable surface $M$ is
a $1$- dimensional singular submanifold $\Sigma (\cal D)$ (the {\it
skeleton} of the decomposition), with the property that the set of singularities  $V$ is discrete and such that every 
$\sigma \in V$ has a neighborhood $U$
for which $U\cap (\Sigma ({\cal D})\backslash V)$ has exactly
three connected components. We have illustrated two brick decompositions
in Figure 4. The {\it bricks}\index{brick} are the closure of the
connected components of $M \backslash \Sigma (\cal D)$ and the {\it
edges} are the closure of the connected components of $\Sigma({\cal D})\backslash V$. We will write $E$ for the set of edges, $B$ for
the set of bricks and finally $\cal D$ = $(V,E,B)$ for a brick
decomposition.

\begin{figure}[h]\label{brickdec}
\begin{center}

   \subfigure[$M = \R ^2$]{\includegraphics[scale=0.4]{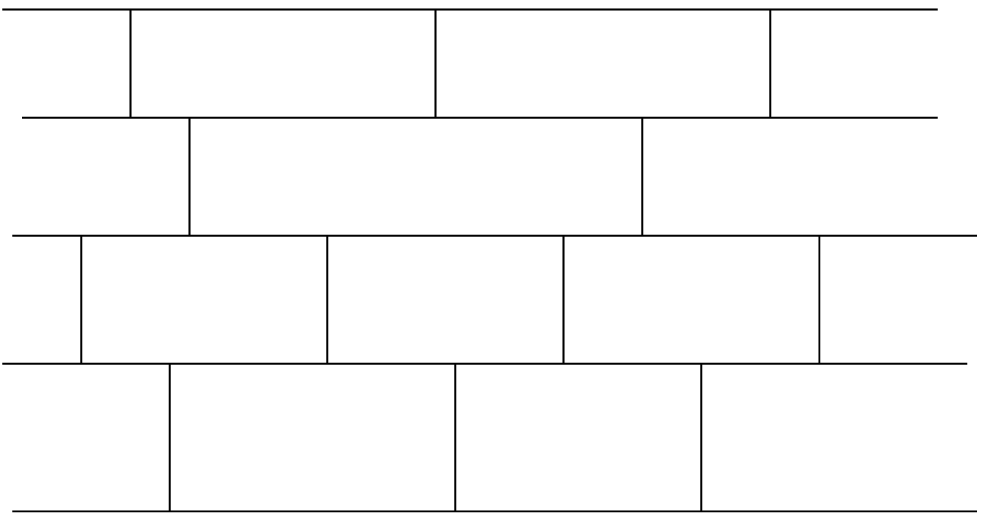}}\hspace{.25in}
    \subfigure[$M = \R ^2 \backslash \{0\}$]{\includegraphics[scale=0.5]{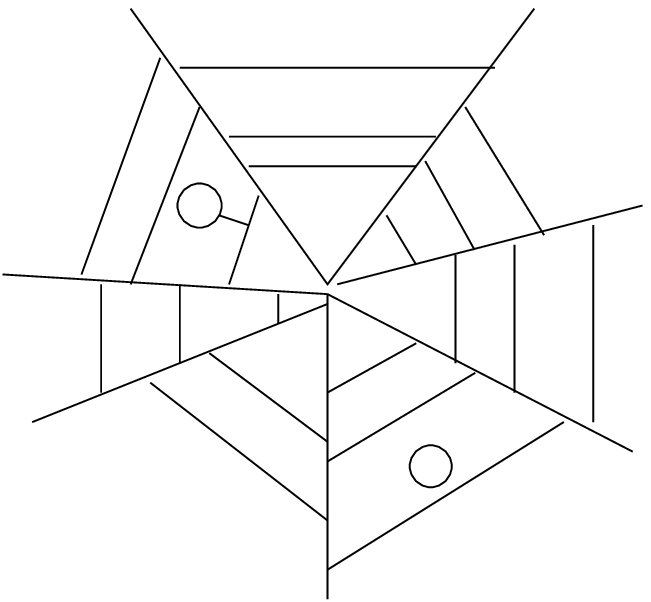}} \\
\caption{Brick decompositions}
\end{center}
 \end{figure}

Let $\cal D$ = $(V,E,B)$  be a brick decomposition of $M$.  We say
that $X\subset B$ is connected if given two bricks $b$, $b' \in X$,
there exists a sequence $(b_i)_{0\leq i \leq n}$, where $b_0 = b$,
$b_n = b'$ and such that $b_i$ and $b_{i+1}$ have non empty intersection, $i\in \{0, \ldots, n-1\}$.  Whenever
two bricks $b$ and $b'$ have no empty intersection, we say that they are {\it adjacent}\index{brick! adjacent}.  Moreover, we say that a brick
$b$ is {\it adjacent to a subset} $X\subset B$ if $b\notin X$, but $b$ is adjacent to one of the bricks in $X$.  We say that
$X\subset B$ is adjacent to $X'\subset B$ if $X$ and $X'$ have no common bricks but there exists $b\in X$ and $b'\in X'$
which are adjacent.

From now on we will identify a subset $X$ of $B$ with the closed subset of $M$ formed by
the union of the bricks in $X$.  By making so, there may be ambiguities (for instance, two adjacent subsets of $B$ have empty 
intersection in $B$ and nonempty intersection in $M$), but we will point it out when this happens. We remark that $\partial X$ is a
one-dimensional topological manifold and that the connectedness of
$X\subset B$ is equivalent to the connectedness of $X\subset M$ and
to the connectedness of $\inte (X) \subset M$ as well. We say that the decomposition $\cal D '$ is a 
{\it subdecomposition}\index{brick! subdecomposition}
of $\cal D$ if $\Sigma (\cal D ')$ $\subset \Sigma (\cal D)$.

If $f:M\to M$ is a homeomorphism, we define the application $\varphi
: \cal P (B) \to \cal P (B)$ as follows:
$$\varphi (X) = \{b\in B : f(X)\cap b
\neq \emptyset\}.$$\\

We remark that $\varphi (X)$ is connected whenever $X$ is.

We define analogously an application $\varphi _-: \cal P (B) \to \cal
P (B)$:

$$\varphi _- (X) =  \{b\in B :
f^{-1}(X)\cap b \neq \emptyset\}.$$\\

\begin{figure}[h]
\begin{center}
\psfrag{a}{$b$}\psfrag{b}{$f(b)$}\psfrag{c}{$\varphi (\{b\})$}

   {\includegraphics[scale=0.5]{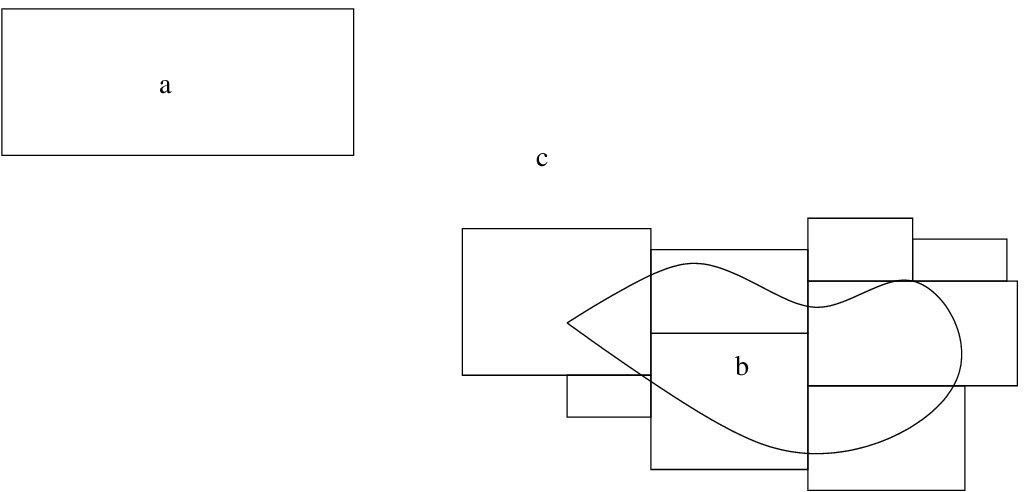}}
  \\

\end{center}
 \end{figure}

We define the {\it future}\index{brick! future} $[b]_\geq$ and the {\it past}\index{brick! past} $[b]_\leq$
of a brick $b$ as follows:

$$[b]_\geq = \bigcup _{k\geq 0} \varphi ^k (\{b\}),\  [b]_\leq =
\bigcup _{k\geq 0} \varphi _- ^k (\{b\}).$$

We also define the {\it strict future} $[b]_>$ and the {\it strict
past} $[b]_<$ of a brick $b$ :
$$[b]_> = \bigcup _{k>0} \varphi ^k (\{b\}),\  [b]_< = \bigcup
_{k> 0} \varphi _- ^k (\{b\}).$$

We say that a set $X\subset B$ is an {\it attractor}\index{attractor} if it verifies
$\varphi (X) \subset X$; this is equivalent in $M$ to the inclusion
$f(X)\subset \inte(X)$.  A {\it repeller}\index{repeller} is any set which verifies
$\varphi _- (X) \subset X$.  In this way, the future of any brick is an attractor,
and the past of any brick is a repeller. We observe that $X\subset B$
is a repeller if and only if $B\backslash X$ is an attractor.\\

\begin{obs}\label{br} The following properties can be deduced from the fact that $X\subset B$ is an attractor if and only if 
$f(X)\subset \inte (X)$:

\begin{enumerate}
 \item\label{br1} If $X\subset B$ is an attractor and $b\in X$, then $[b]_\geq \subset X$ ; 
if $X\subset B$ is a repeller and $b\in X$, then $[b]_\leq \subset X$,
\item\label{br2} if  $X\subset B$ is an attractor and $b\notin X$, then $[b]_\leq \cap X = \emptyset$ ;
if  $X\subset B$ is a repeller and $b\notin X$, then $[b]_\geq \cap X = \emptyset$,
\item\label{br3} if $b\in B$ is adjacent to the attractor $X\subset B$, then $[b]_> \cap X\neq \emptyset$;  
if $b\in B$ is adjacent to the repeller $X\subset B$, then $[b]_< \cap X\neq \emptyset$; 
\item\label{br4} two attractors are disjoint as subsets of $B$ if and only if they are disjoint as subsets of $M$;
in other words, two disjoint (in $B$) attractors  cannot be adjacent; respectively two disjoint (in $B$) repellers 
cannot be adjacent;
\end{enumerate}
 
\end{obs}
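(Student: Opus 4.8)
The plan is to obtain all four items from two facts already recorded: the monotonicity of $\varphi$ and of $\varphi_-$ (if $Y\subset Z$ as subsets of $B$ then $f(Y)\subset f(Z)$, hence $\varphi(Y)\subset\varphi(Z)$, and likewise for $\varphi_-$), and the two equivalences ``$X$ is an attractor $\iff f(X)\subset\inte(X)$'' and ``$X$ is a repeller $\iff B\backslash X$ is an attractor''. For Item \ref{br1} I will argue by induction on $k$: if $X$ is an attractor and $b\in X$, then $\varphi(\{b\})\subset\varphi(X)\subset X$, and if $\varphi^k(\{b\})\subset X$ then $\varphi^{k+1}(\{b\})=\varphi(\varphi^k(\{b\}))\subset\varphi(X)\subset X$; taking the union over $k\geq0$ gives $[b]_\geq\subset X$, and the repeller statement is the same argument applied to $\varphi_-$. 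Item \ref{br2} is then immediate by complementation: if $X$ is an attractor and $b\notin X$, then $b$ belongs to the repeller $B\backslash X$, so the repeller half of Item \ref{br1} gives $[b]_\leq\subset B\backslash X$, i.e. $[b]_\leq\cap X=\emptyset$; the repeller case is symmetric.

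Item \ref{br3} is the only point that uses the topology of the decomposition, and I expect it to be the main obstacle since everything else is formal bookkeeping with monotone maps. Suppose $X$ is an attractor and $b$ is adjacent to $X$, so that $b$ and $X$ share a point $p$ of $M$. Applying the homeomorphism $f^{-1}$ to $f(X)\subset\inte(X)$ gives $X\subset f^{-1}(\inte X)=\inte(f^{-1}(X))$, so $p$ has an open neighbourhood $U\subset f^{-1}(X)$. Since a brick is the closure of an open set, the point $p\in b$ satisfies $U\cap\inte(b)\neq\emptyset$; picking $q$ in this intersection, we have $q\in b$ and $f(q)\in X$, so $f(b)$ meets $X$ in $M$, which means that the brick of $X$ containing $f(q)$ lies in $\varphi(\{b\})\subset[b]_>$. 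Hence $[b]_>\cap X\neq\emptyset$. The repeller statement follows by running the same argument with $f^{-1}$ replacing $f$.

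Item \ref{br4} will follow by combining Items \ref{br1} and \ref{br3}. Suppose two attractors $X$ and $Y$ are disjoint in $B$ but adjacent; choose $b\in Y$ adjacent to $X$, so $b\notin X$. Item \ref{br3} gives $[b]_>\cap X\neq\emptyset$, while Item \ref{br1} gives $[b]_>\subset[b]_\geq\subset Y$, so $X\cap Y\neq\emptyset$ in $B$, a contradiction. Thus disjoint-in-$B$ attractors are never adjacent; equivalently they are disjoint in $M$ (a common point of $M$ would lie in two distinct bricks, one from each attractor, and these bricks would be adjacent), and the converse implication is trivial. The repeller case is verbatim the same, with $\varphi_-$ in place of $\varphi$.
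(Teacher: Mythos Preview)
Your proof is correct and is precisely the kind of verification the paper has in mind: the remark is stated without proof, only with the hint that everything follows from the equivalence ``$X$ attractor $\iff f(X)\subset\inte(X)$'', so there is no alternative argument to compare against. Your handling of Item~\ref{br3} (passing from $f(X)\subset\inte(X)$ to $X\subset\inte(f^{-1}X)$ and using that each brick is the closure of its interior) is exactly the topological input needed, and Items~\ref{br1}, \ref{br2}, \ref{br4} are the expected formal consequences.
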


The following conditions are equivalent:
$$b\in [b]_>,\ [b]_> = [b]_\geq,\ b\in [b]_<,\ [b]_< = [b]_\leq,\ [b]_<\cap
[b]_\geq \neq \emptyset,\ [b]_\leq\cap [b]_> \neq \emptyset.$$

The existence of a brick $b\in B$ for which any of these conditions is 
satisfied is equivalent to the existence of a {\it closed chain of
bricks }, i.e a family  $(b_i)_{i\in \Z/r\Z}$ of bricks such that for
all $i\in \Z/r\Z$, $\cup _{k \geq 1} f^k(b_{i}) \cap b_{i+1}\neq
\emptyset$.  

In general, a {\it chain}\index{chain} for $f\in \homeo (M)$ is a family
$(X_i)_{0\leq i \leq r}$ of subsets of $M$ such that for all $0\leq i
\leq r-1$ ,
$\cup _{k\geq 1} f^k(X_i) \cap X_{i+1}\neq \emptyset$. We say that
the chain is
closed if $X_{r}= X_{0}$.

We  say that a subset $X\subset M$  is {\it
free}\index{free! set} if $f(X)\cap X= \emptyset$.

We  say that a brick decomposition $\cal D$ = $(V,E,B)$  is {\it
free}\index{free!brick decomposition} if every $b\in B$ is a free subset of $M$. If $f$ is
fixed point free it is always possible, taking sufficiently small
bricks, to construct a free brick decomposition.\\

We recall the definition of {\it maximal free decomposition}\index{maximal! free decomposition}, which
was introduced by Sauzet in his doctoral thesis \cite{sauzet}.  Let
$f$ be a fixed point free homeomorphism of a surface $M$. We say that
$\cal D$ is a maximal free decomposition if $\cal D$ is free and any
strict subdecomposition is no longer free.  Applying Zorn's lemma, it
is always possible to construct a maximal free 
subdecomposition of a given 
brick decomposition $\cal D$.

\subsection{Brouwer Theory background.}

We say that $\Gamma : [0,1]\to \overline \D$ is an {\it arc}\index{arc}, if it is continuous and injective.  We say that an arc $\Gamma$
joins $x\in \overline \D$ to $y\in \overline \D$, if $\Gamma (0)= x $ and $\Gamma (1)= y $.  We say that an arc $\Gamma$
joins $X\subset \overline \D$ to $Y\subset \overline \D$, if $\Gamma$ joins $x\in X$ to $y\in Y$.\\

Fix $f\in \homeo ^+ (\D)$.  An arc $\gamma$ joining $z\notin \fix (f)$ to $f(z)$ such that 
$f(\gamma)\cap \gamma = \{z, f(z)\}$ if $f^2(z) = z$ and $f(\gamma)\cap \gamma = \{ f(z)\}$ otherwise, is called a 
{\it translation arc}\index{arc! translation}.

\begin{prop}{\bf (Brouwer's translation lemma \cite{brouwer}, \cite{brown}, \cite{fathi} or \cite{guillou})}  If any
of the two following hypothesis is satisfyed, then there exists a  simple closed curve 
of index 1:

\begin{enumerate}
 \item  there exists  a translation arc $\gamma$ joining  $z\in \fix (f^2)\backslash \fix (f)$ to $f(z)$;
\item  there exists  a translation arc $\gamma$ joining $z\notin \fix (f^2)$ to 
$f(z)$ and an integer $k\geq 2$ such that  $f^k(\gamma) \cap \gamma \neq \emptyset$.
\end{enumerate}

\end{prop}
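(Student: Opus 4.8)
This is Brouwer's translation-arc lemma, so rather than invent a new argument I would reproduce the classical one; here is the plan. First identify $\D$ with $\R^2$, so that $f$ becomes an orientation-preserving homeomorphism of the plane (the notion of ``simple closed curve of index $1$'' is preserved under this identification). In both cases the goal is the same: from the translation arc $\gamma$ and the recurrence hypothesis, build a Jordan curve $C\subset\R^2$ that misses $\fix(f)$ and along which the displacement $x\mapsto f(x)-x$ has winding number $1$; such a $C$ is by definition a simple closed curve of index $1$.

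Under hypothesis (1) the curve is immediate: since $f^2(z)=z\neq f(z)$ and $f(\gamma)\cap\gamma=\{z,f(z)\}$, the arc $f(\gamma)$ joins $f(z)$ to $f^2(z)=z$ and meets $\gamma$ only at those two endpoints, so $C:=\gamma\cup f(\gamma)$ is a Jordan curve. It avoids $\fix(f)$: a fixed point on $\gamma$ would lie in $\gamma\cap f(\gamma)$, a fixed point on $f(\gamma)$ in $f(\gamma)\cap f^2(\gamma)=f(\gamma\cap f(\gamma))$, and both of these sets equal $\{z,f(z)\}$, whose points are $2$-periodic. Under hypothesis (2) I would first take $k\geq 2$ minimal with $f^k(\gamma)\cap\gamma\neq\emptyset$; since $f^i(\gamma)\cap f^j(\gamma)\neq\emptyset$ if and only if $\gamma\cap f^{|i-j|}(\gamma)\neq\emptyset$, minimality forces $\gamma,f(\gamma),\dots,f^{k-1}(\gamma)$ to be pairwise disjoint except that consecutive ones share a single endpoint; hence $\Gamma:=\bigcup_{i=0}^{k-1}f^i(\gamma)$ is a simple arc from $z$ to $f^k(z)$, and $f^k(\gamma)$ touches $\Gamma$ only at $f^k(z)$ and along the first piece $\gamma$. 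Cutting $f^k(\gamma)$ at the first point $p$ at which it meets $\gamma$ and concatenating that sub-arc with the sub-arc of $\Gamma$ from $p$ to $f^k(z)$ yields a Jordan curve $C$ (some bookkeeping is needed for degenerate configurations, handled in the references), again missing $\fix(f)$ for the same piecewise reason. Alternatively one can bypass the explicit curve in case (2) by reducing $k$: one produces from $\gamma$ either a translation arc of the period-$2$ type, i.e.\ case (1), or one with a strictly shorter recurrence, and this descent cannot continue forever.

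What remains, and what I expect to be the only real obstacle, is the computation $\mathrm{ind}(C,f)=1$. Half of it is soft: along each constituent arc the displacement begins and ends at opposite vectors (for instance, along $\gamma$ in case (1) it runs from $f(z)-z$ to $z-f(z)$), so each arc contributes an odd multiple of $\pi$, and the orientation-preservation of $f$ fixes all the signs to be positive. The non-formal point is that each such contribution is exactly $\pm\pi$, that is, that the displacement segment does not wind any extra amount while its two ends sweep the two sides of $C$; this is precisely the content of Brouwer's lemma, and the standard way to see it is the plane-topology fact that the iterates of a translation arc cannot cross one another, which pins the relevant configuration of arcs down tightly enough. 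For this step I would simply invoke \cite{brouwer,brown,fathi,guillou} rather than reproduce it.
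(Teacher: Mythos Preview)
The paper does not give a proof of this proposition at all: it is stated as a classical result and simply attributed to the references \cite{brouwer}, \cite{brown}, \cite{fathi}, \cite{guillou}. There is therefore nothing to compare your argument against; any correct proof would count as ``more'' than what the paper does.

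Your outline is a faithful sketch of the classical argument. The construction of the Jordan curve $C$ in both cases is the standard one, and your observation that the essential content---that each translation-arc piece contributes exactly $\pm\pi$ to the winding number rather than an odd multiple thereof---is precisely where the work lies is accurate. Since you end by invoking the same references for that step, your write-up is really a gloss on the literature rather than an independent proof, which is entirely appropriate here given that the paper itself treats the result as background.
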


If $z\notin \fix (f)$, there exists a translation arc containing $z$; this is easy to prove once one has that the connected 
components of the complementary of $\fix (f)$ are invariant.  For a proof of this last fact, see \cite{brownkister} for a general proof in any dimension, or \cite{duke}
for an easy proof in dimension 2.

We deduce:

\begin{cor} If $\per (f) \backslash \fix (f) \neq \emptyset$, then  there exists a  simple closed curve 
of index 1. 
 
\end{cor}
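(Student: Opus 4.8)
The plan is to reduce the statement directly to Brouwer's translation lemma (the Proposition above). By hypothesis we may fix a point $z\in\per(f)\backslash\fix(f)$; let $p\geq 2$ be its minimal period, so that $f^p(z)=z$ while $f^j(z)\neq z$ for $0<j<p$. Using the fact recalled just above the statement — that any point not fixed by $f$ is an endpoint of a translation arc, a fact which rests on the invariance of the connected components of $\D\backslash\fix(f)$ — I would choose a translation arc $\gamma$ joining $z$ to $f(z)$. I then distinguish the two cases $p=2$ and $p\geq 3$.

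If $p=2$, then $z\in\fix(f^2)\backslash\fix(f)$, and $\gamma$ is precisely a translation arc joining such a point to its image, so hypothesis (1) of Brouwer's translation lemma applies verbatim and produces a simple closed curve of index $1$.

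If $p\geq 3$, then $f^2(z)\neq z$ (otherwise the period would divide $2$), so $z\notin\fix(f^2)$ and $\gamma$ is a translation arc of the type required in hypothesis (2). Applying $f^{p-1}$, the arc $f^{p-1}(\gamma)$ joins $f^{p-1}(z)$ to $f^{p}(z)=z$, so that $z\in f^{p-1}(\gamma)\cap\gamma\neq\emptyset$; since $k=p-1\geq 2$, hypothesis (2) applies and again yields a simple closed curve of index $1$. The two cases together exhaust all possibilities.

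There is no genuinely hard step here: the only substantial input is the existence of a translation arc through a non-fixed point, quoted from \cite{brownkister} or \cite{duke}, and everything else is bookkeeping on the period. The single point deserving care is to make sure the chosen arc actually joins $z$ to $f(z)$ (so that it is a translation arc in the precise sense used above), rather than merely passing through $z$ — but this is exactly what the recalled construction provides.
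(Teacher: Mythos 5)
Your argument is correct and is essentially the deduction the paper intends: the corollary is stated as an immediate consequence of Brouwer's translation lemma together with the existence of translation arcs through non-fixed points, which is exactly how you argue (splitting on $p=2$ versus $p\geq 3$). The only caveat is that the fact recalled in the paper guarantees a translation arc \emph{containing} $z$, not necessarily one joining $z$ to $f(z)$; even with only that weaker statement your case $p\geq 3$ goes through by taking $k=p$ instead of $k=p-1$, since $z=f^p(z)\in f^p(\gamma)\cap\gamma$, so nothing essential is lost.
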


\begin{prop} {\bf (Franks' lemma \cite{pbf})} If there
exists a closed chain of free, open and pairwise disjoint disks for $f$, then there exists a simple closed curve of index 1.
 
\end{prop}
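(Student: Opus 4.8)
The plan is to argue by contraposition: assuming $f$ admits no simple closed curve of index $1$, I will show it admits no closed chain of free, open, pairwise disjoint disks $(D_i)_{i\in\Z/r\Z}$ with $f^{n_i}(D_i)\cap D_{i+1}\neq\emptyset$, $n_i\geq 1$. Two reductions come first. By the corollary to the Brouwer translation lemma above the hypothesis forces $\per(f)=\fix(f)$, so no point of a free disk is periodic; and since a free disk meets no fixed point, each $D_i$ lies in a connected component of $\D\setminus\fix(f)$. These components are invariant (the fact recalled just after the translation lemma, \cite{brownkister},\cite{duke}), so $f^{n_i}(D_i)\cap D_{i+1}\neq\emptyset$ forces $D_i$ and $D_{i+1}$ into the same one; hence all the $D_i$ lie in a single invariant open connected $W\subset\D$ on which $f$ restricts to a fixed-point-free orientation preserving homeomorphism without periodic points. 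If $W$ is simply connected then $W\cong\R^2$ and we are in the core case below; if not, one passes to the universal covering, where either the lifted chain closes up --- reducing again to the core case --- or it winds nontrivially around a complementary component $K$ of $\fix(f)$, in which case a separate (more delicate, but standard) argument produces a simple closed curve of index $1$ around $K$. I regard this dichotomy as a secondary point, not the heart of the matter.

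The core assertion --- a fixed-point-free orientation preserving homeomorphism $f$ of $\R^2$ has no closed chain of free, open, pairwise disjoint disks --- is a repackaging of the Brouwer plane translation theorem, which I would prove by a returning-trajectory argument. Pick $b_i\in D_i$ with $f^{n_i}(b_i)\in D_{i+1}$, join $f^{n_i}(b_i)$ to $b_{i+1}$ by an arc $\delta_{i+1}\subset D_{i+1}$, choose a translation arc $\gamma_i$ from $b_i$ to $f(b_i)$ for each $i$, and form the closed loop $\Gamma=\prod_{i\in\Z/r\Z}\bigl(\gamma_i\cdot f(\gamma_i)\cdots f^{n_i-1}(\gamma_i)\cdot\delta_{i+1}\bigr)$. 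By the no-index-$1$ hypothesis no translation arc meets $f^k$ of itself for $k\geq 2$, so $T:=\bigcup_{k\in\Z}f^k(\gamma_0)$ is a properly embedded line disjoint from its image --- a Brouwer line --- carrying a height $h\colon\R^2\to\Z$ with $h\circ f=h+1$. The signed crossings of the lines $f^k(T)$ by the closed loop $\Gamma$ sum to $0$; but the orbit segments $\gamma_i\cdots f^{n_i-1}(\gamma_i)$ can be arranged (by a careful choice of the $\gamma_i$) to cross the $f^k(T)$ only positively, contributing the strictly positive total $\sum_i n_i\geq r\geq 1$, while the arcs $\delta_{i+1}$, confined to free disks, cannot compensate --- a contradiction. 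Equivalently, and closer to \cite{pbf}, the returning trajectory yields a translation arc $\gamma$ with $f^k(\gamma)\cap\gamma\neq\emptyset$ for some $k\geq 2$, whence case 2 of the Brouwer translation lemma above gives the index-$1$ curve outright.

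The step I expect to be the main obstacle is precisely this crossing-and-compensation estimate (equivalently, the extraction of the self-intersecting translation arc): the $\gamma_i$ must be chosen so that the orbit segments are genuinely $h$-monotone, and the $h$-oscillation of each free disk must be controlled finely enough that the $\delta_i$ cannot undo the drift forced by the orbit segments. It is here that the disks being \emph{free}, and not merely pairwise disjoint, is essential --- a nontrivial translation of the plane with $D_0$ a large disk meeting its own image shows the hypothesis cannot be dropped. A cleaner route, entirely within the language of Section \ref{bricks}, is to take a maximal free brick decomposition of $f$, invoke the Brouwer-theoretic fact (essentially Sauzet's, \cite{sauzet}) that it has no closed chain of bricks --- i.e.\ no brick $b$ with $b\in[b]_>$ --- and check that the free disk chain, run through the bricks it meets using their connectedness and the attractor/repeller relations of Remark \ref{br}, forces such a brick $b$, contradicting that fact. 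Either way the real content is the Brouwer plane translation theorem, and the two reductions of the first paragraph then deliver the statement.
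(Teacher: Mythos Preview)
The paper does not prove this proposition: it is quoted as Franks' lemma with a bare citation to \cite{pbf} and no argument. There is therefore nothing in the paper to compare your attempt against.

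On the substance of your sketch: the overall shape is right --- reduce to a fixed-point-free homeomorphism of a plane and extract from the closed free-disk chain a translation arc meeting a high iterate of itself, then invoke case~2 of the Brouwer translation lemma --- but the two places you yourself flag are genuine gaps, not cosmetic ones. The crossing estimate is the heart of the matter and your version does not go through as stated: a free disk can be arbitrarily long relative to the Brouwer line $T$, so the connecting arcs $\delta_{i+1}$ can a priori cross the lines $f^k(T)$ many times in the negative direction, and freeness alone gives no bound on that. Franks' actual argument in \cite{pbf} does not use a height function in this way; it produces the self-intersecting translation arc (equivalently, a periodic point) by a direct combinatorial construction on the chain. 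Your handling of the non-simply-connected component $W$ is likewise a real hole, not a ``secondary point'': when the lifted chain does not close in the universal cover you have asserted, but not shown, how to get an index-$1$ curve.

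Finally, the ``cleaner route'' through brick decompositions is circular within this paper's logic: Proposition~\ref{franksfino} (that $b\in[b]_>$ forces recurrence) is \emph{deduced} here from Proposition~\ref{guile}, which is itself a refinement of the very Franks' lemma you are trying to prove. You cannot invoke the brick statement without an independent proof of it, and the relevant part of Sauzet's machinery rests on the same Brouwer-theoretic input.
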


Following Le Calvez \cite{patrice}, we will say that $f$ is {\it recurrent}\index{recurrent homeomorphism} if there
exists a closed chain of free, open and pairwise disjoint disks for $f$.\\

The following proposition is a refinement of Franks' lemma due to Guillou and  Le Roux (see \cite{leroux}, page 39).

\begin{prop}\label{guile} Suppose there exists a closed chain
$(X_i)_{i\in\Z/r\Z}$ for $f$ of free subsets  whose
interiors are pairwise disjoint and which verify the following
property: given any two points $z,z'\in X_i$ there exists an
arc $\gamma$ joining $z$ and $z'$ such that $\gamma\backslash\{z,z'\}\subset
\inte (X_i)$. Then, $f$ is recurrent.
 
\end{prop}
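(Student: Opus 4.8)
The plan is to use the hypotheses to build explicitly a closed chain of free, open and pairwise disjoint topological disks; by definition $f$ is then recurrent. First, a reduction: if $f$ has a periodic point $z$ of minimal period $p\geq 2$, then $z,f(z),\dots,f^{p-1}(z)$ are $p$ distinct non‑fixed points, and for small $\varepsilon>0$ the Euclidean balls $D_j=B_\varepsilon(f^j(z))$, $j\in\Z/p\Z$, are pairwise disjoint, free, and satisfy $f(D_j)\cap D_{j+1}\neq\emptyset$; they form a closed chain of free open disks, so $f$ is recurrent. Hence we may assume throughout that $\fix(f^k)=\fix(f)$ for every $k\geq 1$.

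Next, for each $i\in\Z/r\Z$ the chain hypothesis provides an integer $n_i\geq 1$ and a point $a_i\in X_i$ with $b_{i+1}:=f^{n_i}(a_i)\in X_{i+1}$; note $a_i\notin\fix(f)$ because $X_i$ is free, and $b_i=f^{n_{i-1}}(a_{i-1})\in X_i$. Applying the arc hypothesis inside $X_i$ to the pair $\{b_i,a_i\}$ yields an arc $\gamma_i$ joining $b_i$ to $a_i$ with $\gamma_i\setminus\{b_i,a_i\}\subset\inte(X_i)$ (reduced to a point if $b_i=a_i$). Since $\gamma_i\subset X_i$ and $X_i$ is free, the compacta $\gamma_i$ and $f(\gamma_i)$ are disjoint, so a sufficiently thin tubular neighbourhood $N_i$ of $\gamma_i$ is an open topological disk that is still free. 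For any such choice of neighbourhoods, $(N_i)_{i\in\Z/r\Z}$ is a closed chain: $a_i\in\gamma_i\subset N_i$ and $f^{n_i}(a_i)=b_{i+1}\in\gamma_{i+1}\subset N_{i+1}$, so $b_{i+1}\in f^{n_i}(N_i)\cap N_{i+1}$ with $n_i\geq 1$.

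It remains to arrange that the $N_i$ be pairwise disjoint. Because $\gamma_i\setminus\{a_i,b_i\}\subset\inte(X_i)$ and the interiors $\inte(X_i)$ are pairwise disjoint, the arc $\gamma_i$ meets $X_j$ for $j\neq i$, as well as $\partial X_i$, only along $\{a_i,b_i\}$. Hence, whenever for each $i$ one can choose $a_i\in\inte(X_i)$ with $f^{n_i}(a_i)\in\inte(X_{i+1})$, both endpoints $b_{i+1}$ and $a_{i+1}$ of $\gamma_{i+1}$ lie in $\inte(X_{i+1})$, so $\gamma_{i+1}\subset\inte(X_{i+1})$ and $N_{i+1}$ may be taken inside $\inte(X_{i+1})$; the $N_i$ are then automatically pairwise disjoint and we are done. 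The delicate situation is when, for some $i$, $\bigcup_{k\geq1}f^k(X_i)$ meets $X_{i+1}$ only along common boundary points, forcing $a_i\in\partial X_i$ and $b_{i+1}\in\partial X_{i+1}$. One then uses that $f^{n_i}$ is a homeomorphism, so $a_i=f^{-n_i}(b_{i+1})\neq b_{i+1}$ (the reduction excludes $a_i$ being a non‑fixed periodic point), and that such coincidences among the $2r$ points $a_i,b_i$ are non‑generic: a small perturbation of the $a_i$ within the closed admissible sets $\{x\in X_i:\ \exists\,n\geq1,\ f^n(x)\in X_{i+1}\}$, together with a reparametrisation of the arcs so that they approach pairwise distinct endpoints from inside the disjoint interiors, makes the $\gamma_i$ pairwise disjoint compacta; sufficiently thin tubular neighbourhoods $N_i$ are then pairwise disjoint, free, and still form a closed chain, so $f$ is recurrent.

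I expect this last disjointification — removing the coincidences among the arc endpoints that sit on the common boundaries of the $X_i$, while keeping all the chain relations intact — to be the main obstacle; everything else is a routine thickening. It is exactly here that the hypothesis that any two points of $X_i$ are joined by an arc \emph{through $\inte(X_i)$}, rather than merely that $X_i$ is connected, is indispensable: it is what lets one shrink each $X_i$ to a thin free disk and still have a closed chain of pairwise disjoint disks.
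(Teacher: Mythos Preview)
The paper does not prove this proposition: it is quoted from Guillou and Le Roux with a reference to \cite{leroux}, page~39, and no argument is given. So there is no proof in the paper to compare yours against.

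On its own merits, your outline is the standard one and is correct up to the point you yourself flag. The reduction to the aperiodic case, the choice of transition points $a_i,b_{i+1}$, the arcs $\gamma_i$ through $\inte(X_i)$, and the thickening into free disks are all fine and form a closed chain automatically.

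The gap is exactly where you say it is, and your treatment of it is not yet a proof. The sentence ``such coincidences among the $2r$ points $a_i,b_i$ are non-generic: a small perturbation of the $a_i$ within the closed admissible sets\ldots'' does not stand: the admissible set $\{x\in X_i:\exists\,n\ge1,\ f^n(x)\in X_{i+1}\}$ can perfectly well be a single boundary point (take $X_i,X_{i+1}$ closed, $f^n(X_i)\cap X_{i+1}$ reduced to one point of $\partial X_{i+1}$), so there may be no room to perturb $a_i$ at all. Likewise ``reparametrising the arcs so that they approach pairwise distinct endpoints'' does not make sense if the endpoints are forced to coincide. What actually rescues the argument is not a perturbation of the $a_i$ but an analysis of the finite graph $\bigcup_i\gamma_i$: since $\gamma_i\setminus\{a_i,b_i\}\subset\inte(X_i)$ and the interiors are pairwise disjoint, distinct arcs can meet only at endpoints, and one checks (using the aperiodicity reduction to rule out cycles among the endpoint identifications) that a sufficiently thin neighbourhood of this graph still breaks up into free open disks forming a closed chain, or that the chain can be shortened when a genuine coincidence occurs. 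This is the content of the argument in \cite{leroux}; you have correctly located the difficulty but not resolved it.
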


We deduce:
\begin{prop}\label{franksfino} Let $\cal D$ = $(V,E,B)$  be a free
brick decomposition of $\D \backslash \fix (f)$.  If there exists
$b\in B$ such that $b\in [b]_>$, then $f$ is recurrent.
 
\end{prop}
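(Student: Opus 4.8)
The plan is to prove Proposition~\ref{franksfino} by combining Proposition~\ref{guile} with the structure of a brick decomposition. Suppose $b \in B$ satisfies $b \in [b]_>$. By definition of the strict future, this means there is an integer $r \geq 1$ and a sequence of bricks $b = b_0, b_1, \dots, b_r = b$ with $b_{i+1} \in \varphi^{k_i}(\{b_i\})$ for some $k_i \geq 1$; equivalently $\bigl(\bigcup_{k \geq 1} f^k(b_i)\bigr) \cap b_{i+1} \neq \emptyset$ for each $i$. In other words, the bricks $b_0, \dots, b_{r-1}$ (with the cyclic identification $b_r = b_0$) form a closed chain in the sense defined above. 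I would first record this translation from the brick-future condition to the existence of a closed chain of bricks, since that is exactly the combinatorial content of the equivalent conditions listed just before the definition of a closed chain of bricks.

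Next I would check that this closed chain of bricks satisfies the hypotheses of Proposition~\ref{guile}, taking $X_i = b_i$ for $i \in \Z/r\Z$. Each $b_i$ is a brick of a free brick decomposition, hence a free subset of $\D \setminus \fix(f)$ (so $f(b_i) \cap b_i = \emptyset$). Their interiors are pairwise disjoint: distinct bricks meet only along their boundaries, which lie in the skeleton $\Sigma(\mathcal D)$, so $\inte(b_i) \cap \inte(b_j) = \emptyset$ when $b_i \neq b_j$. (If some bricks in the chain coincide there is nothing to check for that pair.) Finally, the path-connectedness condition of Proposition~\ref{guile} holds: since $\inte(X) \subset M$ is connected when $X \subset B$ is connected — as recalled in the brick-decomposition subsection — and a single brick is connected, $\inte(b_i)$ is connected; moreover $b_i$ is the closure of its interior, so any two points $z, z' \in b_i$ can be joined by an arc whose interior lies in $\inte(b_i)$ (approximate $z, z'$ by interior points, join those inside the open connected set, and add short segments into the interior from $z$ and $z'$). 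Thus all the hypotheses of Proposition~\ref{guile} are met, and we conclude that $f$ is recurrent.

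The one genuinely delicate point — the main obstacle — is the last path-joining property: one must be sure that for a brick $b_i$, \emph{every} pair of points in $b_i$ (including boundary points) can be joined by an arc whose interior stays in $\inte(b_i)$. This is where the regularity of the brick decomposition is used: $\partial b_i$ is a one-dimensional topological manifold and $b_i$ is a surface with boundary (a "brick"), locally modeled near boundary points on a half-disk, so local arcs from boundary points into the interior exist, and global connectivity of $\inte(b_i)$ then finishes the argument. Once this is in place, Proposition~\ref{guile} applies verbatim with $X_i = b_i$, giving the recurrence of $f$, which completes the proof.
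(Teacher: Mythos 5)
Your proof is correct and takes essentially the same route as the paper, which states this proposition right after Proposition \ref{guile} with a bare ``we deduce'': the whole content is precisely your translation of $b\in[b]_>$ into a closed chain of bricks plus the verification that bricks are free, have pairwise disjoint interiors, and satisfy the arc-joining condition of Proposition \ref{guile}. The only point to tidy is your parenthetical about coinciding bricks: since the disjoint-interiors hypothesis is stated for all pairs of indices, you should instead pass to a shorter closed subchain of pairwise distinct bricks (always possible, and a chain of length one is allowed), after which the argument goes through verbatim.
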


\subsection{Little bricks at infinity.}

Fix  $f\in \homeo ^+ (\D)$, different from the identity map and  {\it non-recurrent}. We will make use of the following two propositions from
\cite{patrice} (both of them depend on the non-recurrent character of $f$).  The first one (Proposition 2.2 in \cite{patrice})
 is a refinement of a result already appearing in \cite{sauzet}; the second one is Proposition 3.1 in \cite{patrice}.

\begin{prop}[\cite{sauzet},\cite{patrice}]\label{futcon}  Let $\cal D$ = $(V,E,B)$  be a
free maximal brick decomposition of $\D\backslash \fix (f)$.  Then,
 the sets $[b]_\geq$, $[b]_>$, $[b]_\leq$ and $[b]_<$ are connected.  In particular every connected component of an attractor
is an attractor, and  every connected component of a repeller is a repeller.

\end{prop}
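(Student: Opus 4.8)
The plan is to exploit the maximality of the free brick decomposition together with the non-recurrence hypothesis. The key observation is that a free decomposition which is maximal cannot be made smaller without losing freeness, so for every edge $e\in E$ the union of the (one or two) bricks adjacent to $e$ fails to be free; equivalently, writing $b,b'$ for the bricks on either side of $e$, one has $f(b\cup b')\cap(b\cup b')\neq\emptyset$, whence (since $b$ and $b'$ are themselves free) $f(b)\cap b'\neq\emptyset$ or $f(b')\cap b\neq\emptyset$. This gives, for every edge, a "one-step" chain relation between the two adjacent bricks, which is exactly the combinatorial input needed. I would first record this as a lemma: in a maximal free decomposition, adjacent bricks are linked by $\varphi$ in at least one direction.

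Next I would prove connectedness of $[b]_\geq$. Suppose not; then $[b]_\geq$ splits as a disjoint union (in $B$) of a component $C$ containing $b$ and a nonempty remainder $C'$. Since $[b]_\geq$ is an attractor ($\varphi([b]_\geq)\subset[b]_\geq$) and $b\in C$, iterating $\varphi$ starting from $b$ stays in the connected set $\varphi^k(\{b\})$, which is connected because $\varphi$ preserves connectedness; so $\bigcup_k\varphi^k(\{b\})$ is connected — contradiction. Actually the cleaner route: $[b]_\geq=\bigcup_{k\ge0}\varphi^k(\{b\})$, each $\varphi^k(\{b\})$ is connected (image of a connected set under $\varphi$), and they form an increasing nested family all containing $b$, so their union is connected. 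The same argument handles $[b]_>=\bigcup_{k>0}\varphi^k(\{b\})$ once one knows $\varphi(\{b\})$ is connected and all the $\varphi^k(\{b\})$, $k\ge1$, share the brick(s) of $\varphi(\{b\})$; and $[b]_\leq$, $[b]_<$ follow by applying the same reasoning to $f^{-1}$ and $\varphi_-$. At this point the "in particular" statement is immediate: if $A$ is an attractor and $K$ a connected component of $A$, pick any $b\in K$; then $[b]_\geq\subset A$ is connected and contains $b$, so $[b]_\geq\subset K$, and since $\varphi(K)=\varphi(\bigcup_{b\in K}\{b\})\subset\bigcup_{b\in K}[b]_\geq\subset K$, $K$ is an attractor. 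Symmetrically for repellers.

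Wait — the nested/increasing claim needs care, since $\varphi^{k+1}(\{b\})$ need not contain $\varphi^k(\{b\})$ in general. This is where non-recurrence and maximality must really enter, and I expect it to be the main obstacle. The resolution: by Proposition \ref{franksfino}, non-recurrence of $f$ forces $b\notin[b]_>$ for every brick, and more generally the $\varphi$-dynamics on $B$ has no closed chains; so the future $[b]_\geq$ carries a genuine partial order. Then the right statement is not that $\varphi^k(\{b\})$ is increasing but that consecutive "shells" overlap: for $c\in\varphi^{k+1}(\{b\})$ there is $c'\in\varphi^k(\{b\})$ with $c\in\varphi(\{c'\})$, and $\varphi(\{c'\})$ is connected and meets $\varphi^k(\{b\})$ (it contains a brick met by $f$ applied to something met by $f^k(b)$...). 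I would assemble connectedness from: $\varphi^k(\{b\})$ connected for each $k$, plus $\varphi^k(\{b\})\cap\varphi^{k+1}(\{b\})\neq\emptyset$ — the latter because a brick in $\varphi^k(\{b\})$ is adjacent to (or equals) a brick met by $f^{k+1}(b)$, and adjacency across an edge in a maximal free decomposition yields a $\varphi$-link by the lemma above, placing that adjacent brick in $\varphi^{k+1}(\{b\})$. Chaining these overlaps gives connectedness of $\bigcup_k\varphi^k(\{b\})=[b]_\geq$. The delicate point throughout is bookkeeping the distinction between "adjacent in $B$" and "intersecting in $M$" flagged in the text, and invoking Remark \ref{br}(\ref{br3}) — $b$ adjacent to an attractor $X$ implies $[b]_>\cap X\neq\emptyset$ — to propagate the overlap; I would lean on that remark rather than re-deriving the edge lemma from scratch.
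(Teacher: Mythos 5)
Your opening lemma (in a maximal free decomposition the two bricks $b,b'$ adjacent along an edge satisfy $f(b)\cap b'\neq\emptyset$ or $f(b')\cap b\neq\emptyset$) is correct and is indeed the standard consequence of maximality, and your deduction of the ``in particular'' clause from the connectedness of futures is fine; you are also right to discard the nested--shells argument. But the repaired argument has a genuine gap at exactly the point where the real work lies. First, $\varphi^{k+1}(\{b\})$ is the set of bricks meeting $f$ of the whole union of the bricks of $\varphi^{k}(\{b\})$, not the set of bricks meeting $f^{k+1}(b)$; your justification silently uses the latter description. More seriously, the premise that some brick of $\varphi^{k}(\{b\})$ is adjacent to (or equals) a brick met by $f^{k+1}(b)$ is unsupported: nothing local prevents the bricks of shell $k$ from being disjoint from, and non-adjacent to, every such brick. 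And even granting that premise, applying the edge lemma to an adjacent pair $c\in\varphi^{k}(\{b\})$, $c'\in\varphi^{k+1}(\{b\})$ yields either $c'\in\varphi(\{c\})$ (which you already knew) or $c\in\varphi(\{c'\})\subset\varphi^{k+2}(\{b\})$; in neither case do you produce a brick common to shells $k$ and $k+1$, so the claimed overlap $\varphi^{k}(\{b\})\cap\varphi^{k+1}(\{b\})\neq\emptyset$ is not established. The same difficulty occurs at the base: for $[b]_\geq$ to be connected you need some brick of $[b]_>$ to be adjacent to $b$, and the edge lemma is perfectly compatible with every neighbour of $b$ being a ``backward'' neighbour ($f(b')\cap b\neq\emptyset$ for all $b'$ adjacent to $b$), in which case $b$ touches no brick of $\varphi(\{b\})$; ruling this configuration out is a global matter, not a consequence of the edge lemma plus Remark \ref{br}.

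This is also where non-recurrence, which you quote (via Proposition \ref{franksfino}) but never actually use, must enter: the paper states explicitly that Proposition \ref{futcon} depends on the non-recurrent character of $f$, and it gives no proof of its own --- the statement is imported from \cite{sauzet} and \cite{patrice}, whose arguments combine the adjacency lemma with a contradiction argument in which a disconnected future would force a dynamics on its connected components (each $\varphi$-image of a component is connected, hence lies in a single component) leading to a closed chain of free, pairwise disjoint connected unions of bricks and hence to recurrence via statements of the type of Proposition \ref{guile}. Your proposal contains no step of this kind; as written, it proves connectedness only in situations where consecutive shells already happen to meet, which is precisely what has to be shown. So the central assertion of the proposition remains unproved in your sketch.
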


\begin{prop}\cite{patrice}\label{gamak} If $f$  satisfies the
hypothesis of Theorem \ref{main*}, then for all $i\in \Z/n\Z$ we can find a sequence of arcs
$(\gamma _i^k)_{k\in\Z}$ such that:
\begin{enumerate}\item[$\bullet$] each $\gamma _i^k$ is a translation
arc from $f^k(z_i)$ to $f^{k+1}(z_i)$,
\item[$\bullet$] $f(\gamma _i^k)\cap \gamma _i^{k'} = \emptyset$ if
$k'<k$,
\item[$\bullet$] the sequence $(\gamma _i^k)_{k\leq 0}$ converges to
$\{\alpha _i\}$ in the Hausdorff topology,
\item[$\bullet$] the sequence $(\gamma _i^k)_{k\geq 0}$ converges to
$\{\om _i\}$ in the Hausdorff topology.
\end{enumerate}

\end{prop}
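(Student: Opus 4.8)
The plan is to build the arcs $\gamma_i^k$ inductively on the index $i$, one link at a time, using the fact that $f$ is non-recurrent together with the convergence of the orbit of each $z_i$ to $\alpha_i$ in backward time and to $\omega_i$ in forward time. Fix $i$ and work first with the positive half-orbit $(f^k(z_i))_{k\geq 0}$, which accumulates only on $\omega_i\in S^1$. Since $f$ is non-recurrent and $z_i\notin\fix(f)$, every point of the orbit lies in $\D\setminus\fix(f)$, whose components are $f$-invariant; hence for each $k$ there is a translation arc $\gamma_i^k$ from $f^k(z_i)$ to $f^{k+1}(z_i)$. The content of the proposition is that these can be chosen to satisfy the disjointness condition $f(\gamma_i^k)\cap\gamma_i^{k'}=\emptyset$ for $k'<k$ and to shrink to the boundary endpoints. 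For the convergence to $\{\omega_i\}$: because $f^k(z_i)\to\omega_i$, for large $k$ the orbit enters an arbitrarily small neighbourhood $U$ of $\omega_i$ in $\overline\D$; the standard Brouwer-theoretic construction of translation arcs lets one pick $\gamma_i^k$ inside a prescribed neighbourhood of the segment $[f^k(z_i),f^{k+1}(z_i)]$, so $\gamma_i^k\subset U$ eventually, giving Hausdorff convergence to $\{\omega_i\}$. The argument for $k\to-\infty$ and $\{\alpha_i\}$ is symmetric, applied to $f^{-1}$.

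The disjointness condition $f(\gamma_i^k)\cap\gamma_i^{k'}=\emptyset$ for $k'<k$ is the delicate point, and this is exactly where the non-recurrence of $f$ is used. If some $\gamma_i^{k'}$ met $f(\gamma_i^k)$ with $k'<k$, one would be able to splice the translation arcs $\gamma_i^{k'},\gamma_i^{k'+1},\dots,\gamma_i^k$ together (they already chain consecutively, each ending where the next begins) into a closed chain of free sets, or more precisely to produce a brick $b$ with $b\in[b]_>$ in a free maximal brick decomposition of $\D\setminus\fix(f)$, contradicting Proposition~\ref{franksfino} (which says $f$ would then be recurrent). The same mechanism — a returning translation arc forces recurrence, via Franks' lemma or Proposition~\ref{guile} — is what underlies the passage from ``non-recurrent'' to ``the future and past of a brick are well-behaved'' in Proposition~\ref{futcon}. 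So the construction is: fix a free maximal brick decomposition $\cal D$ of $\D\setminus\fix(f)$; use Proposition~\ref{futcon} to know the strict futures $[b]_>$ are connected and that no brick lies in its own strict future; then choose each $\gamma_i^k$ inside an appropriate union of bricks in the future of $f^k(z_i)$, thin enough that it avoids the already-constructed arcs with smaller index $k'$. Non-recurrence guarantees this avoidance is possible because the relevant futures do not loop back.

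The main obstacle I expect is bookkeeping the interaction between \emph{different} links — but note that the proposition as stated only asks for the disjointness $f(\gamma_i^k)\cap\gamma_i^{k'}=\emptyset$ \emph{within a single $i$}, so in fact the construction can be carried out independently for each $i$, and the only genuine work is the single-link statement. Within one link, the real obstacle is ensuring that shrinking the arcs toward $\alpha_i$ (resp.\ $\omega_i$) is compatible with the global non-return condition: one must choose the neighbourhoods $U_k$ of the successive segments decreasing fast enough near the ends that Hausdorff convergence holds, while simultaneously keeping each $f(\gamma_i^k)$ away from all earlier $\gamma_i^{k'}$. Since only finitely many constraints $f(\gamma_i^k)\cap\gamma_i^{k'}=\emptyset$ with $k'<k$ are active at stage $k$, and each is an open condition on the choice of $\gamma_i^k$ that is satisfiable by non-recurrence, a diagonal/inductive construction closes the argument. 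This is essentially Proposition~3.1 of~\cite{patrice}, so I would cite that proof and only indicate the role of the hypotheses of Theorem~\ref{main*} (namely, that $f$ is non-recurrent — which we may assume, arguing by contradiction, since a recurrent $f$ already has a fixed point by Franks' lemma — and that $f$ extends continuously to $\D\cup\ell$, which is what makes $f^k(z_i)\to\omega_i$, $f^k(z_i)\to\alpha_i$ genuine limits in $\overline\D$).
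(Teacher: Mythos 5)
The paper itself does not prove this proposition: it is imported from \cite{patrice} (it is Proposition 3.1 there), with only the remark that it follows from Brouwer's translation lemma, the hypothesis on the orbits of the $z_i$, and the extension hypothesis. Since your final move is to cite that same proposition, at the level of approach you coincide with the paper, and as a citation the proposal is acceptable.

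However, the sketch you wrap around the citation goes wrong at exactly the two delicate points, so it should not be mistaken for a proof. First, the extension of $f$ to $\D\cup\ell$ is not what makes $f^k(z_i)\to\alpha_i,\om_i$ ``genuine limits'' --- those limits are part of the definition of realizing ${\cal L}$ and hold with no extension at all. The actual role of the extension is that continuity at $\alpha_i,\om_i$ forces these boundary points to be fixed by the extended homeomorphism, and it is this that allows translation arcs through $f^k(z_i)$, for $|k|$ large, to be constructed inside prescribed small neighbourhoods of $\alpha_i$ or $\om_i$; that is where the Hausdorff convergence, and the disjointness condition for large indices, come from. Your claim that the ``standard construction'' produces a translation arc inside a prescribed neighbourhood of the segment $[f^k(z_i),f^{k+1}(z_i)]$ for an arbitrary orbit point is unjustified as stated, and it is precisely the step for which the extension hypothesis is needed. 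Second, your plan to obtain the arcs from a maximal free brick decomposition inverts the paper's architecture: in the paper the brick families of Lemma \ref{ends} are built \emph{from} the arcs of Proposition \ref{gamak}, not conversely, and an arc drawn inside $[b]_{\geq}$ joining $f^k(z_i)$ to $f^{k+1}(z_i)$ need not be a translation arc at all. Likewise, the condition $f(\gamma_i^k)\cap\gamma_i^{k'}=\emptyset$ for all $k'<k$ is the genuinely hard content of Le Calvez's inductive construction; translation arcs are not free, so they cannot simply be ``spliced into a closed chain of free sets,'' and declaring each constraint ``an open condition satisfiable by non-recurrence'' is an assertion, not an argument. Deferring to \cite{patrice} is the right call --- it is what the paper does --- but the heuristics would need these corrections before they could stand on their own.
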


This result is  a consequence of
Brouwer's translation lemma and the hypothesis on the orbits of the
points $(z_i)_{i\in\Z/n\Z}$. In particular,   the
extension hypothesis of Theorem \ref{main*} is used. It
allows us to construct a particular
brick decomposition suitable for our purposes: \\

\begin{lema}\label{ends} For every $i\in \Z/n\Z$, take $U_i^-$ a neighbourhood of $\alpha _i$ in $\overline \D$ and 
$U_i^+$ a neighbourhood of $\om _i$ in $\overline \D$ such that $U_i^-\cap U_i^+ = \emptyset$.
There exists two families $(b_i'^l)_{i\in\Z/n\Z,l\geq 1}$ and
$(b_i'^l)_{i\in\Z/n\Z,l\leq -1}$ of closed disks in $\D$, and a family of integers $(l_i)_{i\in \Z/n\Z}$ such that:
\begin{enumerate}\item each $b_i'^l$ is free and contained in $U_i^-$
($l\leq-1$) or in $U_i^+$ ($l\geq 1$),
\item $\inte (b_i'^l) \cap \inte (b_i'^{l'}) = \emptyset$, if $l \neq l'$
,
\item for every $k>1$ the sets $(b_i'^l)_{1\leq l \leq k}$ and
$(b_i'^l)_{-k \leq l \leq -1 }$ are connected, 

\item for all $i\in\Z/n\Z$, $\partial \cup_{l\in \Z\backslash \{0\}} b_i'^l$ is a one dimensional submanifold,
\item if $x\in \D$, then $x$ belongs to at most two different disks in the family $(b_i'^l)_{l\in \Z\backslash \{0\}}$,
$i\in\Z/n\Z$,
\item\label{pto1} for all  $i\in \Z/n\Z$  $f^{l_i + l} (z_i) \in \inte(b_i'^{l+1}) $ for all $l\geq 0$,
and  $f^{-l_i-l} (z_i) \in
\inte(b_i'^{-l-1}) $ for all $l\geq 0$,
\item $f^k(z_j) \in b_i'^l$ if and only if $j=i$ and $k = l_i+l-1$,
\item the sequence $(b_i'^l)_{l \geq 1}$ converges to  $\{\om _i\}$ in the Hausdorff topology
and the sequence $(b_i'^l)_{l \leq -1}$ converges to $\{\alpha _i\}$ in the Hausdorff topology.

\end{enumerate}

\end{lema}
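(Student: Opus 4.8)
The plan is to build the disks $b_i'^l$ as thin closed neighbourhoods of suitable sub-arcs of the translation arcs produced by Proposition \ref{gamak}, slicing the resulting ``tubes'' into bricks so that each brick contains exactly one point of the orbits. First I would apply Proposition \ref{gamak} to obtain, for each $i\in\Z/n\Z$, a sequence $(\gamma_i^k)_{k\in\Z}$ of translation arcs, $\gamma_i^k$ joining $q_i^k:=f^k(z_i)$ to $q_i^{k+1}$, with $f(\gamma_i^k)\cap\gamma_i^{k'}=\emptyset$ for $k'<k$, and with $(\gamma_i^k)_{k\ge0}$ converging to $\{\om_i\}$ and $(\gamma_i^k)_{k\le0}$ to $\{\alpha_i\}$ in the Hausdorff topology. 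The one elementary fact I would record at the outset is a freeness principle for sub-arcs: if $\delta$ is a sub-arc of a translation arc $\gamma$ joining $z$ to $f(z)$ and $\delta$ omits at least one of $z,f(z)$, then $\delta$ is free; indeed $f(\delta)\cap\delta\subset f(\gamma)\cap\gamma\subset\{z,f(z)\}$, and since $f^{-1}(f(z))=z$ — and $f^{-1}(z)=f(z)$ in the period-two case — a common point of $\delta$ and $f(\delta)$ would force both of $z,f(z)$ into $\delta$. Consequently every sufficiently thin closed-disk neighbourhood of such a $\delta$ is still free and contained in $\D$.

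Next I would carry out the slicing. By the Hausdorff convergence, choose $l_i\ge1$ so large that $\gamma_i^k\subset\inte(U_i^+)$ for all $k\ge l_i-1$ and $\gamma_i^k\subset\inte(U_i^-)$ for all $k\le-l_i$. On the $\om_i$-side, for each $k\ge l_i$ pick a cut point $p_i^k\in\gamma_i^k$ close to $q_i^{k+1}$ (for instance the first point of $\gamma_i^k$ beyond $q_i^k$ lying at a prescribed small distance from $q_i^{k+1}$), let $A_i^k$ be the sub-arc of $\gamma_i^k$ from $q_i^k$ to $p_i^k$, and define $b_i'^{k-l_i+1}$ to be a closed-disk neighbourhood of $A_i^k$ of radius $\epsilon_i^{k-l_i+1}>0$; the $\alpha_i$-side is treated symmetrically, cutting each $\gamma_i^k$ ($k\le-l_i$) near its initial endpoint so that $b_i'^{-1},b_i'^{-2},\dots$ run through $q_i^{-l_i},q_i^{-l_i-1},\dots$ out to $\alpha_i$. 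Then I would fix all radii $\epsilon_i^l\downarrow0$ small enough that simultaneously: each $b_i'^l$ lies in $\inte(U_i^+)\cap\D$ (resp.\ in $\inte(U_i^-)\cap\D$) and is free, by the freeness principle applied to $A_i^k$, which omits an endpoint of $\gamma_i^k$; $b_i'^l$ contains its designated orbit point in its interior and meets no other point of $\{f^k(z_j):j\in\Z/n\Z,\ k\in\Z\}$; consecutive bricks $b_i'^l,b_i'^{l+1}$ meet along a closed disk while non-consecutive bricks of a fixed family $i$ are disjoint; and no point of $\D$ lies in three of the bricks. The placement of $p_i^k$ near the adjacent orbit point is exactly what makes consecutive bricks overlap without any brick reaching that orbit point.

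With the radii fixed, properties (1), (2), (6) and (7) are immediate; (8) follows from $\gamma_i^k\to\{\om_i\},\{\alpha_i\}$ together with $\epsilon_i^l\to0$; (3) from the overlap of consecutive bricks; and (4) and (5) from the fact that, then, $\bigcup_{l\ne0}b_i'^l$ is a disjoint union (since $U_i^+\cap U_i^-=\emptyset$) of two chains of disks glued end to end, each a half-open strip, whose boundary is a one-dimensional manifold. I expect the real obstacle to be justifying that the radii can in fact be chosen so that non-consecutive bricks of a family are disjoint and no point lies in three bricks, since the arcs of Proposition \ref{gamak} are not asserted to be pairwise disjoint: $\gamma_i^k$ and $\gamma_i^{k'}$ with $|k-k'|\ge2$, or $\gamma_i^k$ and $\gamma_j^{k'}$ with $i\ne j$ — most delicately when $\om_i$ or $\alpha_i$ coincides with $\om_j$ or $\alpha_j$, so that the two tubes accumulate on a single point of $S^1$ — can a priori cross. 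I would resolve this either by appealing to the sharper disjointness of the specific arcs constructed in \cite{patrice}, or by first putting the $\gamma_i^k$ in general position (finitely many transverse crossings, no triple points) while preserving the conclusions of Proposition \ref{gamak}, and then shrinking the $\epsilon_i^l$ so much that a point in three tubes would force three of the sub-arcs $A_i^k$ through a common point, which general position forbids; one also needs the orbit points $\{f^k(z_j)\}$ pairwise distinct, which can be arranged at the start.
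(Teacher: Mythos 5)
Your starting point (thicken sub-arcs of the translation arcs supplied by Proposition \ref{gamak}) is the same as the paper's, and your freeness principle for sub-arcs of a translation arc is correct. The genuine gap is the one you half-identify at the end, and your two proposed fixes do not close it. Proposition \ref{gamak} only gives $f(\gamma_i^k)\cap\gamma_i^{k'}=\emptyset$ for $k'<k$; it in no way prevents $\gamma_i^k$ from crossing $\gamma_i^{k'}$ when $|k-k'|\geq 2$, nor arcs attached to different indices from crossing. If two sub-arcs $A_i^k$, $A_i^{k'}$ of the \emph{same} family genuinely cross, then no matter how small the radii, the corresponding thickened disks both contain a neighbourhood of the crossing point, so their interiors meet: property 2, the disjointness of non-consecutive bricks of a family, and with it property 4 (the boundary of the union being a $1$-manifold) are all lost. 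Putting the arcs in general position only makes crossings transverse and removes triple points -- that would help with property 5, but what you need is the absence of \emph{double} points inside each family, which general position cannot deliver; and the alternative of appealing to ``sharper disjointness'' of the arcs of \cite{patrice} is not available: nothing in Proposition \ref{gamak} or its source asserts the concatenated curves are simple, and that is precisely why an extra device is needed. (There is also an internal inconsistency in your sketch: consecutive bricks cannot ``meet along a closed disk'', since that already violates property 2; they must meet only along boundary arcs.)

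The missing device is exactly what the paper (following \cite{patrice}, with the written proof in \cite{joul}) uses: one does not thicken the arcs one by one, but first forms the concatenations $\prod_k \gamma_i^k\cap U_i^+$ and $\prod_k \gamma_i^k\cap U_i^-$, deletes their loops so as to obtain trees $T_i^+\subset U_i^+$, $T_i^-\subset U_i^-$ still carrying the tails of the orbit of $z_i$, and then thickens these trees and subdivides the thickened trees into the closed disks $b_i'^l$. Because a tree has no self-intersections, the subdivision can be arranged so that distinct disks of one family meet only along arcs of their boundaries (giving properties 2, 3, 4), each designated orbit point sits in the interior of its own brick (properties 6, 7), and the Hausdorff convergence (property 8) is inherited from that of the $\gamma_i^k$. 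Coincidences such as $\alpha_i=\om_{i-2}$ then produce overlaps only \emph{between different families}, which is exactly what item 5 tolerates and what the later $I_{\mbox{sing}}$ analysis exploits. Without the loop-deletion (or some equivalent way of replacing the possibly non-simple concatenated curve by a simple one before thickening), your construction does not establish the lemma.
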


The idea is to construct  trees $T_i^-\subset U_i^-, T_i^+ \subset U_i^+$, $i\in \Z/n\Z$ by deleting the loops of the curves 
$\prod _{k\geq -1} \gamma _i^k\cap U_i^-$ and 
 $\prod _{k\leq 1} \gamma _i^k\cap U_i^+$ respectively, and then
thickening these trees to obtain the families $(b_i'^l)_{i\in\Z/n\Z,l\geq 1}$
and $(b_i'^l)_{i\in\Z/n\Z,l\leq -1}$. We refer the reader to \cite{joul}
for a proof in english but we remark that these results are contained in \cite{patrice}. We have illustrated these families in Figure 4.

\begin{figure}[h]\label{fliabil}
\begin{center}
\psfrag{a}{$b_3'^{-l}$}\psfrag{b}{$b_1'^l$}\psfrag{c}{$b_0'^l$}\psfrag{d}{$b_2'^{-l}$}\psfrag{e}{$b_1'^{-l}$}\psfrag{f}{$b_3'^l$}
\psfrag{g}{$b_2'^{l}$}
\psfrag{h}{$b_0'^{-l}$}\psfrag{i}{$\alpha_3$}\psfrag{j}{$\om_1$}\psfrag{k}{$\om_0$}\psfrag{l}{$\alpha_2$}\psfrag{m}{$\alpha_1$}
\psfrag{n}{$\om_3$}\psfrag{o}{$\om_2$}\psfrag{p}{$\alpha_0$}
\includegraphics[scale=0.6]{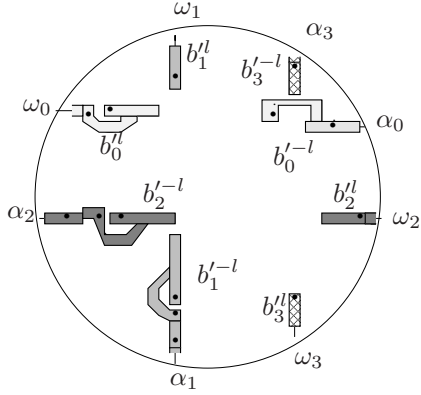}
\caption{The families $b_i'^l$}
\end{center}
\end{figure}

\begin{obs}\label{puntas}  The fact that the sequence $(b_i'^l)_{l\geq 1}$ converges in the 
Hausdorff topology to $\om _i$, implies that we can find an 
arc  $\Gamma_i ^+ : [0,1]\to \inte (\cup _{l\geq 0 } 
b_i'^{l}) \cup \{\om _i\}$ such that  $\Gamma_i ^+(1) = \om _i$, $i\in \Z/n\Z$.  Similarly, we can find an arc
$\Gamma_i ^- : [0,1]\to \inte (\cup _{l\geq 0 } b_i'^{-l}) \cup \{\alpha _i\}$ such that $\Gamma_i ^-(1) = \alpha _i$, 
$i\in\Z/n\Z$.
\end{obs}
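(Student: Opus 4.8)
The plan is to produce $\Gamma_i^+$ in two stages — first a continuous path landing at $\om_i$, then an honest (injective) arc extracted from it — the construction of $\Gamma_i^-$ being entirely symmetric (use $(b_i'^{-l})_{l\ge1}$ and $\alpha_i$ in place of $(b_i'^{l})_{l\ge1}$ and $\om_i$). Throughout, write $W=\bigcup_{l\ge1}b_i'^{l}$; this is a closed connected subset of $\D$, being the increasing union of the connected sets $\bigcup_{1\le l\le k}b_i'^{l}$, $k\ge1$ (property 3 of Lemma \ref{ends}), all of which contain $b_i'^{1}$.

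The first, and only delicate, step is to check that $\inte(b_i'^{l})$ and $\inte(b_i'^{l+1})$ lie in a common path-component of $\inte(W)$ for every $l\ge1$. Since the disks have pairwise disjoint interiors (property 2), this forces one to use not the abstract list of properties alone but the way the $b_i'^{l}$ are produced, namely by thickening the simple tree in $U_i^+$ obtained by deleting the loops of a concatenation of translation arcs (as recalled right after Lemma \ref{ends}): that thickening can, and should, be carried out so that each pair of consecutive disks $b_i'^{l},b_i'^{l+1}$ meets along a non-degenerate boundary arc $J_l\subset\partial b_i'^{l}\cap\partial b_i'^{l+1}$. Given such a $J_l$, property 2 forces $b_i'^{l}$ and $b_i'^{l+1}$ to lie on opposite sides of $J_l$ near each of its interior points (otherwise their interiors would overlap there), so $b_i'^{l}\cup b_i'^{l+1}$ contains a full neighbourhood of every point of $\mathrm{relint}(J_l)$; hence $\mathrm{relint}(J_l)\subset\inte(b_i'^{l}\cup b_i'^{l+1})\subset\inte(W)$. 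Now $\inte(b_i'^{l})$, $\inte(b_i'^{l+1})$ and a small connected neighbourhood of a point of $\mathrm{relint}(J_l)$ are connected subsets of the open — hence locally path-connected — set $\inte(b_i'^{l}\cup b_i'^{l+1})$ that pairwise overlap, so they all lie in one path-component of it, a fortiori of $\inte(W)$.

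Next I would assemble the path. Choose $p_l\in\inte(b_i'^{l})$ for each $l\ge1$; by property 8 of Lemma \ref{ends}, $(b_i'^{l})_{l\ge1}\to\{\om_i\}$ in the Hausdorff topology, so $p_l\to\om_i$ and every neighbourhood of $\om_i$ contains all but finitely many of the $b_i'^{l}$. By the previous step, pick for each $l$ a path $\sigma_l\colon[0,1]\to\inte(b_i'^{l}\cup b_i'^{l+1})\subset\inte(W)$ from $p_l$ to $p_{l+1}$. Concatenate the $\sigma_l$ and reparametrise so that $\sigma_l$ occupies $[1-2^{-(l-1)},1-2^{-l}]$, and set $\sigma(1)=\om_i$; this defines $\sigma\colon[0,1]\to\inte(W)\cup\{\om_i\}$. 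It is continuous on $[0,1)$ as a finite concatenation on each $[0,1-\varepsilon]$, and continuous at $1$ because $\sigma\big([1-2^{-(l-1)},1]\big)\subset\bigcup_{m\ge l}b_i'^{m}\cup\{\om_i\}$, whose diameter tends to $0$ by property 8. Moreover $\sigma(t)\in\inte(W)$ for every $t<1$, while $\sigma(1)=\om_i\notin\inte(W)$.

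Finally, since $\sigma$ joins the two distinct points $p_1$ and $\om_i$ inside the Hausdorff space $\inte(W)\cup\{\om_i\}$, the image $\sigma([0,1])$ is a Peano continuum and therefore contains an arc from $p_1$ to $\om_i$ (equivalently: delete the loops of $\sigma$). Reparametrising this arc on $[0,1]$ with its $\om_i$-endpoint at $1$ yields $\Gamma_i^+$; injectivity together with $\om_i\notin\inte(W)$ then gives $\Gamma_i^+([0,1))\subset\inte(W)=\inte\big(\bigcup_{l\ge1}b_i'^{l}\big)$ and $\Gamma_i^+(1)=\om_i$, as required. I expect the only real obstacle to be the existence of the arcs $J_l$: a careless thickening could leave two consecutive disks meeting in a single point, an external tangency that is not excluded by properties 4 and 5 alone — which is precisely why this statement is phrased as a remark on the construction of the $b_i'^{l}$ rather than deduced formally from their eight listed properties.
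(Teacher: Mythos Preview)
The paper does not prove this remark at all --- it is stated as an immediate consequence of Hausdorff convergence and of the way the $b_i'^{l}$ are constructed, with no further argument. Your construction (pick $p_l\in\inte(b_i'^{l})$, join consecutive $p_l$'s by short paths, concatenate with dyadic parametrisation, then extract an arc) is exactly the right way to make this precise, and the continuity check at $t=1$ via property~8 is the essential point.

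One small correction to your closing caveat: property~4 \emph{does} already exclude the single-point external tangency you worry about. If $b_i'^{l}$ and $b_i'^{l+1}$ met at a single boundary point $p$, then $p$ would lie on $\partial\big(\bigcup_{m}b_i'^{m}\big)$ and a neighbourhood of $p$ in that boundary would contain two tangent (or transverse) arcs through $p$, contradicting the $1$-manifold requirement. So properties 2--5 together do force each non-empty intersection $b_i'^{l}\cap b_i'^{l'}$ to contain a non-degenerate arc, and your path $\sigma_l\subset\inte(b_i'^{l}\cup b_i'^{l+1})$ exists without having to invoke the tree-thickening construction explicitly. (What the listed properties do \emph{not} by themselves force is that it is precisely the pair $b_i'^{l},b_i'^{l+1}$ that meets, rather than some other adjacency pattern; for that you are right to appeal to the construction --- or, equivalently, to the $f^{l_i+l}(z_i)\in\inte(b_i'^{l+1})$ clause in property~6, which pins down the chain order.)
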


\subsection{Repeller/ Attractor configurations}\label{raconf}

\subsubsection{Cyclic order at infinity.}\label{co}

Let $(a_i)_{i\in \Z/n\Z}$ be a family of  non-empty, pairwise disjoint, closed, connected subsets of 
$\D$, such that $\overline a_i \cap \partial \D \neq \emptyset$ and $U= \D\backslash (\cup_{i\in \Z/n\Z} a_i)$ is
a connected open set.  As $U$ is connected, and its complementary set in $\C$
$$\{z\in \C: |z|\geq 1\}\cup \cup_{i\in \Z/n\Z} a_i$$\noindent is connected, $U$ is simply connected.   

With these hypotheses, there is a natural cyclic order on the sets $\{a_i\}$.  Indeed, $U$ is conformally isomorphic
to the unit disc via the Riemann map $\varphi : U \to \D$, and one can consider the Carath\'eodory's extension of $\varphi$,

$$\hat\varphi: \hat U \to \overline{\D},$$\noindent which is a homeomorphism between the prime ends completion $\hat U$
of $U$ and the closed unit disk $\overline {\D}$.  The set $\hat J_i$ of prime ends whose impression is contained in $a_i$ is open and connected.  It follows that
the images $J_i = \hat\varphi (\hat J_i)$ are pairwise disjoint open intervals in $S^1$, and are therefore cyclically ordered
following the positive orientation in the circle. 

\subsubsection{Repeller/Attractor configurations. }

We recall de definition of Repeller/Attractor configuration that was introduced in \cite{joul}.

We fix $f\in \homeo ^+ (\D)$ together with a free maximal decomposition in bricks  $\cal
D$$=(V,E,B)$ of $\D\backslash \fix (f)$ .\\

Let  $(R_i)_{i\in \Z/n\Z}$ and  $(A_i)_{i\in \Z/n\Z}$ be two families of connected, pairwise disjoint subsets
 of $B$ such that  :

\begin{enumerate}
 
\item For all $i\in \Z/n\Z$:

\begin{enumerate} \item  $R_i$ is a repeller and $A_i$ is an attractor;

\item there exists non-empty, closed, connected subsets of $\D$, $r_i\subset \inte(R_i)$, $a_i\subset
\inte(A_i)$
such that $\overline{r_i} \cap \partial \D\neq \emptyset$ and $\overline{a_i} \cap \partial \D\neq \emptyset$ ,
\end{enumerate}

\item \label{orden} $\D\backslash (\cup_{i\in \Z/n\Z}(a_i\cup r_i))$ is a connected open set.
\end{enumerate}

We say that the pair $((R_i)_{i\in \Z/n\Z} , (A_i)_{i\in \Z/n\Z})$ is a {\it Repeller/Attractor configuration of order $n$}
\index{Repeller/Attractor configuration}.\\ 
We will note 
$${\cal{E}} = \{R_i, A_i: i\in \Z/n\Z \}.$$

Property \ref{orden} in the previous definition allows us to  give a cyclic order to the sets $r_i, a_i, i\in \Z/n\Z$ (see
the beginning of this section).

We say that a Repeller/Attractor configuration of order $n\geq 3$ is an {\it elliptic configuration} if :

\begin{enumerate}
\item the cyclic order of the sets $r_i, a_i$, $i\in \Z/n\Z$, satisfies the 
{\it elliptic order property}\index{elliptic order property}:
 $$a_0\to r_2\to a_1\to \ldots\to a_i\to r_{i+2}\to a_{i+1}\to \ldots\to a_{n-1}\to r_1\to a_0.$$
\item for all $i\in\Z/n\Z$ there exists a brick $b_i\in R_i$ such that $[b_i]_{\geq}\cap A_i\neq \emptyset $;

\end{enumerate}

 We say that a Repeller/Attractor configuration is a {\it hyperbolic configuration} if:
\begin{enumerate}
\item  the cyclic order of the sets $r_i, a_i$, $i\in \Z/n\Z$, satisfies the
 {\it hyperbolic order property}\index{hyperbolic order property}:
$$r_0\to a_0\to r_{1}\to a_1\to \ldots\to r_i\to a_{i}\to r_{i+1}\to a_{i+1}\to\ldots \to r_{n-1}\to 
 a_{n-1}\to r_0.$$ 
 \item for all $i\in\Z/n\Z$ there exists two bricks $b_i^i, b_{i}^{i-1}\in R_i$ such that $[b_i^i]_>\cap A_i\neq \emptyset$, and 
$[b_i^{i-1}]_>\cap A_{i-1}\neq \emptyset$;

\end{enumerate}

\begin{figure}[h]   \label{noorientado}
\begin{center}
\psfrag{a}{$R_0$}\psfrag{b}{$A_0$}\psfrag{c}{$R_1$}\psfrag{d}{$A_1$}\psfrag{e}{$R_2$}\psfrag{f}{$A_2$}\psfrag{m}{$R_3$}
\psfrag{n}{$A_3$}\psfrag{g}{$R_1$}\psfrag{h}{$A_0$}\psfrag{i}{$R_2$}\psfrag{j}{$A_1$}\psfrag{k}{$R_0$}\psfrag{l}{$A_2$}

   \subfigure[An elliptic configuration]{\includegraphics[scale=0.3]{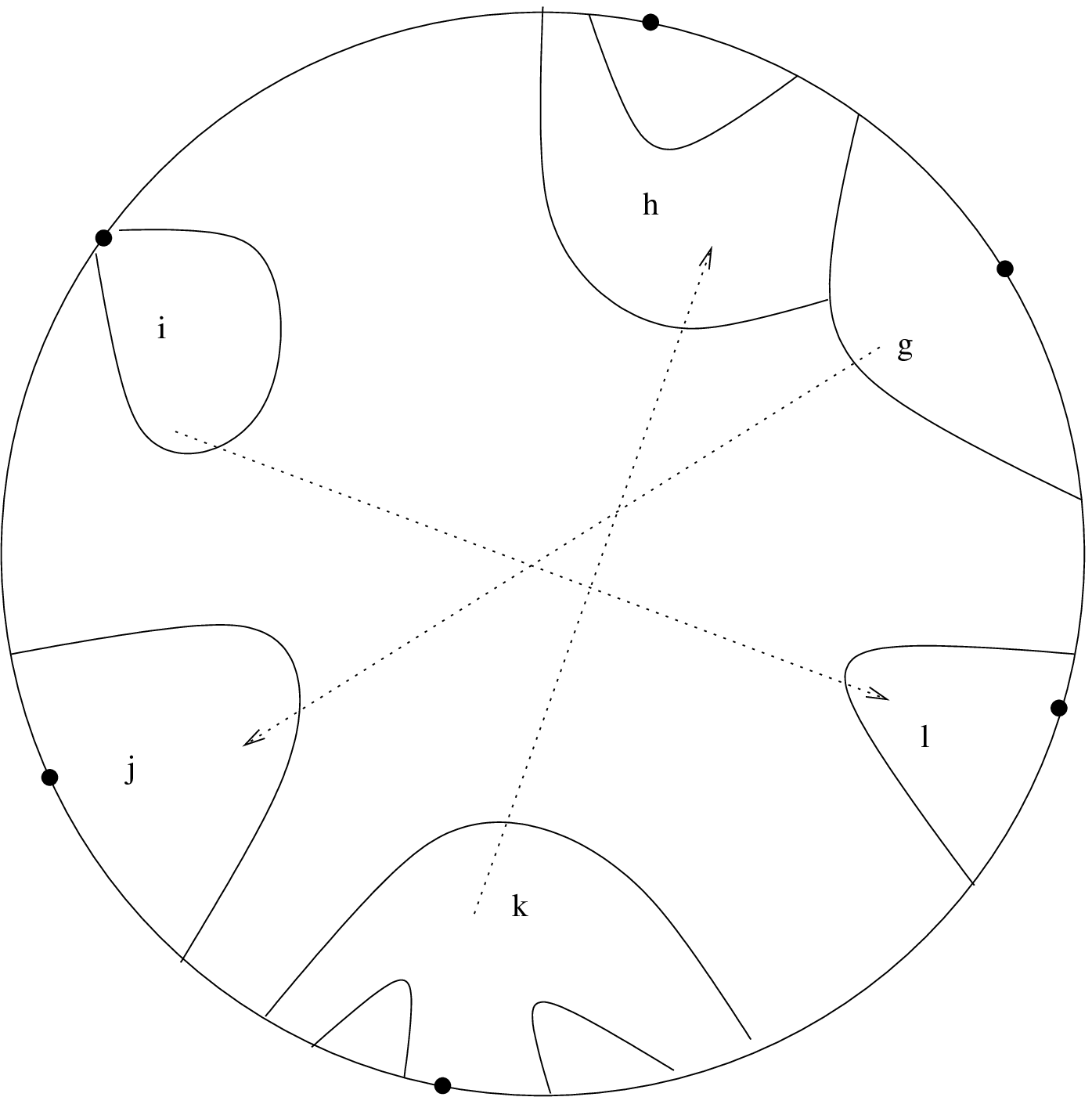}}\hspace{.25in}
    \subfigure[ A hyperbolic configuration] {\includegraphics[scale=0.3]{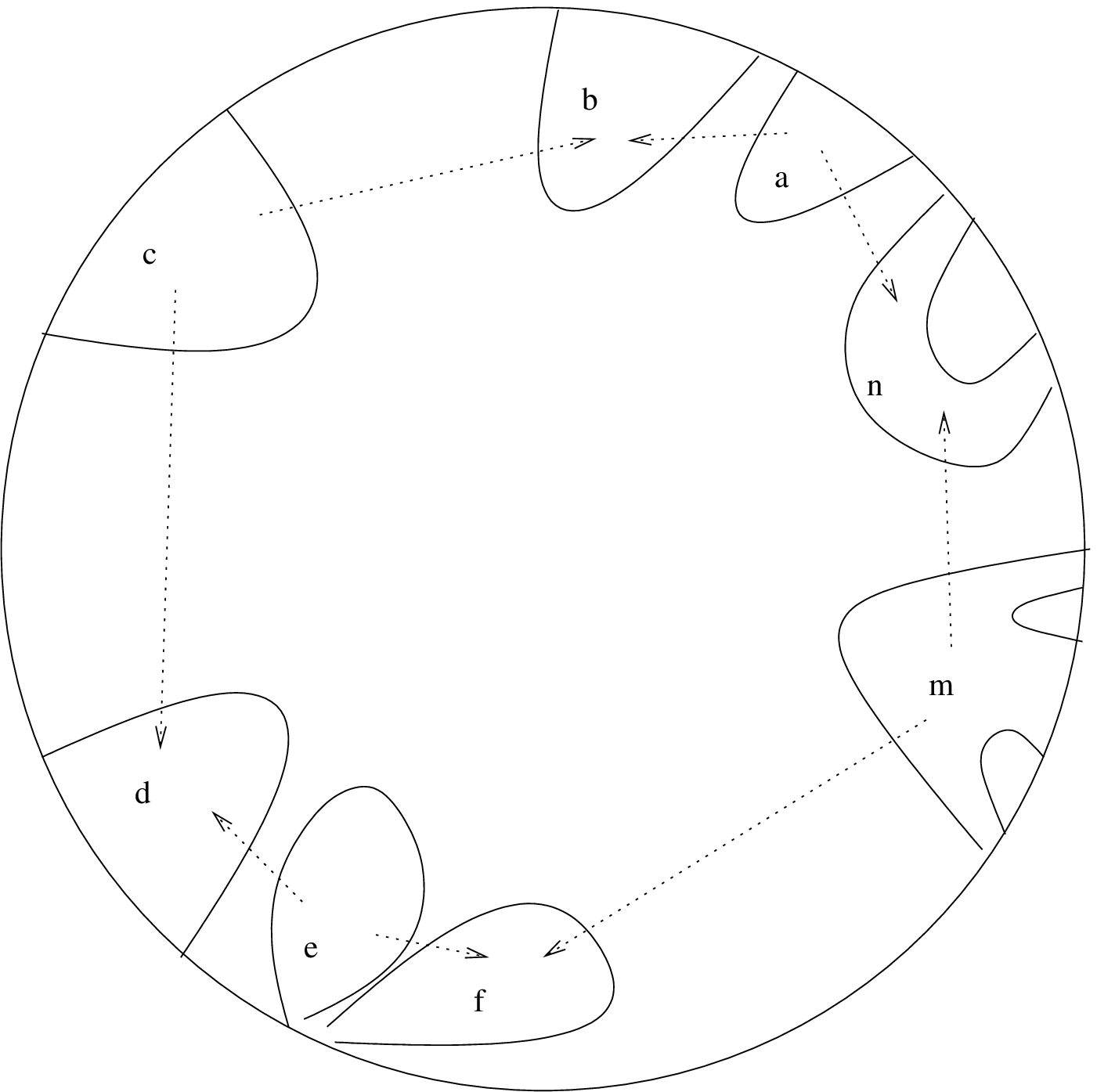}}\hspace{.25in} \\

\end{center}
 \end{figure}

We will make use of the following results from \cite{joul}:

\begin{prop}{\bf \cite{joul}}\label{hc} If there exists an elliptic configuration of order $n\geq 3$, then $f$ is recurrent.
 
\end{prop}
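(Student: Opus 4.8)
The plan is to argue by contradiction. If $f$ is recurrent there is nothing to prove, so assume it is not. Then we may work with the free maximal brick decomposition $\mathcal D=(V,E,B)$ of $\D\setminus\fix(f)$ underlying the configuration; by Proposition \ref{futcon} the future $[b]_\geq$ and the past $[b]_\leq$ of every brick $b\in B$ are connected, and by Proposition \ref{franksfino} no brick satisfies $b\in[b]_>$, i.e. there is no closed chain of bricks. The entire proof consists in producing such a closed chain out of the elliptic configuration $((R_i)_{i\in\Z/n\Z},(A_i)_{i\in\Z/n\Z})$; this contradiction finishes it.

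For each $i\in\Z/n\Z$ fix $b_i\in R_i$ with $[b_i]_\geq\cap A_i\neq\emptyset$ (the second clause of the definition), pick $c_i\in[b_i]_\geq\cap A_i$, and put $\mathcal A_i=R_i\cup[b_i]_\geq\cup A_i\subset B$. Then $\mathcal A_i$ is connected (its three connected pieces pairwise meet, in $b_i$ and in $c_i$), its closure in $\overline\D$ contains both $\overline{r_i}\cap\partial\D$ and $\overline{a_i}\cap\partial\D$, and -- the key point -- $\mathcal A_i$ is disjoint in $B$ from $R_j$ for every $j\neq i$: indeed $R_i$ and $A_i$ are disjoint from $R_j$ by hypothesis, while $[b_i]_\geq\cap R_j=\emptyset$ because $b_i\notin R_j$ and $R_j$ is a repeller (Remark \ref{br}). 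In particular $\mathcal A_i$ avoids $r_j$ for $j\neq i$, so in the simply connected region $U=\D\setminus\bigcup_k(a_k\cup r_k)$, with its cyclic order (Section \ref{co}), each $\mathcal A_i$ joins the prime end arc of $r_i$ to that of $a_i$ while meeting the prime end arc of no $r_j$, $j\neq i$.

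Now I would invoke planarity. The elliptic order property says exactly that on $S^1$ the $2n$ arcs occur in the cyclic order $\ldots,r_i,a_{i-1},r_{i+1},a_i,r_{i+2},\ldots$; in particular, for each $i$, the pair $\{r_i,a_i\}$ is linked with the pair $\{r_{i+1},a_{i+1}\}$. Since $U$ is simply connected and each $\mathcal A_i$ is connected and avoids the arcs of the $r_j$, $j\neq i$, the sets $\mathcal A_i$ and $\mathcal A_{i+1}$ must intersect (here is where simple connectedness of $U$ and the prime end structure are used); write $x_i$ for a brick in $\mathcal A_{i-1}\cap\mathcal A_i$. As $\mathcal A_{i-1}$ misses $R_i$ and $\mathcal A_i$ misses $R_{i-1}$, the brick $x_i$ lies in $([b_i]_\geq\cup A_i)\cap([b_{i-1}]_\geq\cup A_{i-1})$. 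The linking pattern forces the crossing $x_i$ with $\mathcal A_{i-1}$ to occur, along the forward flow of $\mathcal A_i$ (the chain from $b_i$ through $[b_i]_\geq$ into $A_i$), strictly before the crossing $x_{i+1}$ with $\mathcal A_{i+1}$: the $r_i$-side end of $\mathcal A_i$ is the cyclic neighbour of the $a_{i-1}$-side end of $\mathcal A_{i-1}$, while the $a_i$-side end of $\mathcal A_i$ is the cyclic neighbour of the $r_{i+1}$-side end of $\mathcal A_{i+1}$. Made precise, this yields $x_{i+1}\in[x_i]_>$ for all $i\in\Z/n\Z$, hence $x_0\in[x_0]_>$ by transitivity, contradicting Proposition \ref{franksfino}.

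The heart of the matter, and the step I expect to be hardest, is the implication ``linking pattern $\Rightarrow x_{i+1}\in[x_i]_>$''. Two things make it delicate. First, $[b_i]_\geq$ is only known to be connected, not linearly ordered, so ``the $\mathcal A_{i-1}$-crossing precedes the $\mathcal A_{i+1}$-crossing along the forward flow'' must be formulated intrinsically, through $\varphi$-chains and the relation $x\in[y]_>$; this is where one really needs the connectedness of the futures (Proposition \ref{futcon}) together with the Carath\'eodory prime end description of the cyclic order of Section \ref{co}, the disjointness $\mathcal A_i\cap R_j=\emptyset$, $j\neq i$, being exactly what keeps the planar picture under control. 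Second, one must dispose of the degenerate cases in which the bricks $x_i$ collide or some chain segment has length $0$, which could destroy the cycle; here the freeness of $\mathcal D$, the relation $R_i\cap A_i=\emptyset$ (which forces $x_i\notin R_i$, hence a genuine forward step), and, if needed, a mild shrinking of the bricks furnished by Lemma \ref{ends}, take care of it. Once the closed chain $x_0\to x_1\to\cdots\to x_{n-1}\to x_0$ is secured, the contradiction with non-recurrence -- and thus the recurrence of $f$ -- follows.
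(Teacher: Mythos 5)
You should note first that the paper does not prove Proposition \ref{hc} at all: it is quoted from \cite{joul}, so your argument has to stand on its own, and as written it does not. The setup is reasonable and is the natural frame: working in a maximal free brick decomposition, using Proposition \ref{futcon} for connectedness of futures, aiming at a closed chain of bricks to contradict Proposition \ref{franksfino}, forming the connected sets $\mathcal{A}_i=R_i\cup[b_i]_\geq\cup A_i$, and observing via Remark \ref{br} that $\mathcal{A}_i$ misses every $R_j$, $j\neq i$. But the proof has a genuine gap exactly where you flag it: the implication ``linking pattern $\Rightarrow x_{i+1}\in[x_i]_>$'' is asserted, not proved, and it is the entire content of the proposition. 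Nothing in the structure of $[b_i]_\geq$ provides a ``forward flow'' along which the two crossings can be ordered: $[b_i]_\geq$ is just a connected attractor, two bricks in $[b_i]_\geq\cup A_i$ need not be comparable under the chain relation, and knowing that $x_i$ and $x_{i+1}$ both lie in $([b_i]_\geq\cup A_i)$ gives no chain from $x_i$ to $x_{i+1}$ (both could lie in $A_i$, or $x_{i+1}$ could be reached from $b_i$ by a shorter chain than $x_i$). So the closed chain $x_0\to x_1\to\cdots\to x_0$ is never actually produced, and the contradiction is not obtained.

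Two subsidiary steps are also unjustified. First, the crossing claim itself: $\mathcal{A}_i$ is not contained in $U=\D\setminus\bigcup_k(a_k\cup r_k)$, because $[b_i]_\geq$ may enter the attractors $A_j$, hence meet $a_j$, for $j\neq i$; consequently $\mathcal{A}_i\cap U$ need not be connected and the prime-end cyclic order of Section \ref{raconf} cannot be applied directly to the sets $\mathcal{A}_i$ to force $\mathcal{A}_i\cap\mathcal{A}_{i+1}\neq\emptyset$. (This is precisely why, in this paper, the separation arguments of Lemmas \ref{quid} and \ref{quidh} are run with crosscuts contained in a single attractor and with the Riemann map, rather than with unions of repellers, futures and attractors.) Second, even a topological intersection of $\mathcal{A}_{i-1}$ and $\mathcal{A}_i$ in $\D$ would not yield a common brick $x_i$: two families of bricks that are disjoint in $B$ can still meet along the skeleton, and Remark \ref{br} rules this out only for two attractors or two repellers, not for these mixed unions. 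The proof of Proposition \ref{hc} in \cite{joul} is a substantially longer argument; your outline reproduces a plausible starting point but leaves its core step, and the planar bookkeeping needed to make it precise, as a hope rather than a proof.
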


\begin{prop}{\bf \cite{joul}}\label{2c} If there exists a hyperbolic configuration of order $n\geq 2$, then $\fix (f)\neq \emptyset$.
 
\end{prop}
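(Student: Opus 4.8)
The plan is to argue by contradiction: suppose $\fix(f)=\emptyset$. Then $\D\setminus\fix(f)=\D$, the given free maximal brick decomposition $\cal D=(V,E,B)$ is a decomposition of the whole disk, and $f$ is a Brouwer homeomorphism of $\D\cong\R^2$. Two facts are recorded at once. First, $z\mapsto f(z)-z$ is a continuous non-vanishing map on the contractible set $\D$, so the index $i(f,\Gamma)$ of every loop $\Gamma\subset\D$ vanishes. Second, by Proposition \ref{futcon} every strict future $[b]_>$ and every strict past $[b]_<$ --- hence every $[b]_\geq=\{b\}\cup[b]_>$ --- is connected; recall also that the future of a brick is an attractor. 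The goal is to contradict the first fact.

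\noindent\emph{Step 1: the necklace.} For each $i$, $[b_i^i]_\geq$ is a connected attractor containing $b_i^i\in R_i$ and meeting $A_i$, so we may choose inside it a chain of bricks $\Lambda_i^+$ joining $R_i$ to $A_i$; similarly $[b_i^{i-1}]_\geq$ yields a chain $\Lambda_i^-$ joining $R_i$ to $A_{i-1}$. One useful observation: an arm meets no foreign repeller, for if a brick $b\in R_j$ with $j\neq i$ lay in $[b_i^i]_\geq$ then $b_i^i\in[b]_\leq\subset R_j$ ($R_j$ being a repeller, cf.\ Remark \ref{br}), against pairwise disjointness of the configuration. Using this and the connectedness of futures one aims to choose the $\Lambda_i^\pm$ so that $N:=\bigcup_i(R_i\cup A_i\cup\Lambda_i^+\cup\Lambda_i^-)$ is a closed connected subset of $\D$ whose combinatorial skeleton is the $2n$-cycle $R_0-A_0-R_1-A_1-\cdots-R_{n-1}-A_{n-1}-R_0$.

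\noindent\emph{Step 2: a curve of non-zero index.} By condition \ref{orden} in the definition of a Repeller/Attractor configuration and the prime-end construction of Subsection \ref{co}, the traces $\overline{r_i}\cap S^1$, $\overline{a_i}\cap S^1$ occur in the cyclic order $r_0,a_0,r_1,a_1,\dots,r_{n-1},a_{n-1}$, which is exactly the order in which $N$ strings its beads. Using this and the simple connectedness of $U=\D\setminus\bigcup_i(a_i\cup r_i)$, one produces a simple closed curve $\gamma\subset\D$ --- necessarily disjoint from $\fix(f)=\emptyset$ --- that runs once around $N$, e.g.\ $\gamma=\partial\Delta$ for the ``central'' complementary component $\Delta$ of $N$ in $\D$. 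Along the part of $\gamma$ facing a prong $\overline{a_i}$ one has $f(A_i)\subset\inte(A_i)$, so $f(z)-z$ points away from $\Delta$; along the part facing $\overline{r_i}$ one has $f^{-1}(R_i)\subset\inte(R_i)$, so $f(z)-z$ points into $\Delta$; and along the part crossing a bridge $\Lambda_i^\pm$, which lies in a strict future, $f(z)-z$ points along the bridge in the $R\to A$ direction. Tracking the rotation of $f(z)-z$ over the $2n$ prongs and the $2n$ bridge crossings yields $i(f,\gamma)=1-n$, a non-zero integer since $n\geq 2$. This contradicts $i(f,\gamma)=0$, so $\fix(f)\neq\emptyset$.

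\noindent I expect Steps 1 and 2 to be the real work. The arms $\Lambda_i^\pm$ are dynamically defined and may a priori wander far from $\partial\D$ and even meet foreign attractors, so arranging that $N$ really has the clean $2n$-cycle shape, that $\gamma$ is genuinely simple and ``homotopic to $N$'', and that the central region $\Delta$ makes sense, all require routing $\gamma$ in a thin neighbourhood of $N$ and exploiting the prime-end order on $U$ to keep the cyclic pattern honest; and the index computation, though morally the index of a ``linked cyclic chain of sources and sinks'', must be carried out rigorously in the brick setting. This is precisely where the hyperbolic order property and the existence of two arms at every repeller are used; the reduction in the first paragraph and the soft facts about attractors and repellers (connectedness of futures, pairwise disjointness, vanishing of the index of a fixed-point-free homeomorphism) are the easy part.
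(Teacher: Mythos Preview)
The paper does not prove this proposition here: it is quoted from \cite{joul}, so there is no in-paper argument to compare against. I can therefore only assess your sketch on its own terms.

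Your overall strategy --- assume $\fix(f)=\emptyset$, use that every loop then has index $0$, and manufacture from the hyperbolic configuration a simple closed curve of index $1-n\neq 0$ --- is the right shape, and is almost certainly the strategy of \cite{joul}. Your preliminary reductions are also correct, though you should say explicitly why Proposition~\ref{futcon} applies: that proposition is stated under the standing hypothesis that $f$ is non-recurrent, and you need the observation that recurrence would already give an index-$1$ curve, contradicting $\fix(f)=\emptyset$.

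The genuine gap is in Step~2, and you have put your finger on it yourself. The assertion that along a bridge $\Lambda_i^\pm$ the vector $f(z)-z$ ``points along the bridge in the $R\to A$ direction'' is not justified and is not true in general: a brick is merely free, and membership in a strict future says nothing about the direction of displacement at a point of that brick. Likewise, ``$f(A_i)\subset\inte(A_i)$, so $f(z)-z$ points away from $\Delta$'' is only meaningful for $z\in\partial A_i$, not for a curve that merely runs near $a_i$. What actually carries the index computation is not a curve going \emph{around} a necklace, but a curve built \emph{from} boundary arcs: pieces of $\partial A_i$ (on which $f$ pushes into $A_i$) alternating with pieces of $\partial R_i$ (on which $f^{-1}$ pushes into $R_i$, i.e.\ $f$ pushes out). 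The hyperbolic order property and the two-arm condition at each $R_i$ are precisely what guarantee that these boundary components interlock so as to form a closed Brouwer-type curve with $2n$ alternations, yielding index $1-n$. Your necklace $N$ with ad hoc bridges does not give you this control, and the step where you read off the rotation of $f(z)-z$ over the bridge crossings is where the argument, as written, breaks.

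In short: right idea, wrong curve. Replace the ``$\gamma$ around $N$'' picture by a curve assembled from arcs of $\partial A_i$ and $\partial R_i$, and the index computation becomes honest.
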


\section{Two technical lemmas.}\label{tech}

In this section we give applications of Propositions \ref{hc} and \ref{2c} respectively, that will be used in the proof
of Theorem \ref{main*}.

We fix $f\in \homeo ^+ (\D)$ together with a free maximal decomposition in bricks  ${\cal D} =(V,E,B)$
of $\D\backslash \fix (f)$, and we suppose that $f$ is non-recurrent.

Let $a_i$, $i\in \Z/n\Z$, be non-empty, pairwise disjoint, closed, connected subsets of
$\D$, such that $\overline a_i \cap \partial \D \neq \emptyset$, for all $i\in \Z/n\Z$, and $U=
\D\backslash (\cup_{i\in \Z/n\Z} a_i)$ is a connected open set. We consider the  Riemann map
$\varphi: U \to \D$, and the open
intervals on the circle $J_i, i\in\Z/n\Z$  defined in 3.1. We recall that the interval $J_i$ correspond to the
prime ends in $U$ whose impression is contained in $a_i$.

Let $(I_i)_{i\in\Z/n\Z}$ be the connected components of $S^1\backslash (\cup_{i\in\Z/n\Z} J_i)$.  So, each $I_i$ is
a closed interval, that may be reduced to a point.

\begin{obs}\label{ii} One can cyclically order the sets  $(a_i)_{i\in \Z/n\Z}$,
 $(r_j)_{i\in \Z/m\Z}$, where  $(r_j)_{i\in \Z/m\Z}$ is any family of closed, connected and pairwise disjoint subsets of
$U$ satisfying:

\begin{enumerate}
 \item $\overline{r_j}\cap \partial U \neq \emptyset$, $j\in \Z/m\Z$,
\item for all $j\in \Z/m\Z$, there exists $i_j\in \Z/n\Z$ such that $\overline{\varphi (r_j)}\cap S^1 \subset I_{i_j}$,
\item the correspondence $j\to i_j$ is injective.
\end{enumerate}

\end{obs}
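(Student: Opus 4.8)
The plan is to read the cyclic order off the prime-end circle of $U$ itself, and \emph{not} off the domain $U\setminus\bigcup_j r_j$: this latter set need not be connected (a lasso-shaped $r_j$ inside the simply connected set $U$ already disconnects it), so the construction recalled in Section~\ref{co} cannot simply be rerun for the enlarged family $\{a_i\}\cup\{r_j\}$. Recall from Section~\ref{co} that the Riemann map $\varphi:U\to\D$ together with its Carath\'eodory extension gives a homeomorphism of the prime-end circle $\partial\hat U$ onto $S^1$ carrying the prime ends with impression contained in $a_i$ onto the pairwise disjoint open arcs $J_i$, and that the $I_i$ are the closures of the complementary arcs; by definition the cyclic order of $(a_i)_{i\in\Z/n\Z}$ is the cyclic order of the $J_i$ around $S^1$.

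First I would locate each $r_j$ on $S^1$. Set $S_j:=\overline{\varphi(r_j)}\cap S^1$. Hypothesis~(1) means that $r_j$ admits a sequence leaving every compact subset of $U$; since $\varphi$ is a homeomorphism onto $\D$, its image leaves every compact subset of $\D$ and hence accumulates on $S^1$, so $S_j\neq\emptyset$. Hypothesis~(2) says precisely that $S_j\subset I_{i_j}$, so $S_j$ is a nonempty compact subset of a single one of the complementary arcs. I would emphasize that one should \emph{not} try to show that $S_j$ is itself an arc --- the trace on $S^1$ of a connected plane set can be disconnected --- and that this is not needed: all that matters is that $S_j$ is nonempty and lies inside the prescribed arc $I_{i_j}$.

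Finally I would assemble the order. By hypothesis~(3) the correspondence $j\mapsto i_j$ is injective, hence the arcs $I_{i_j}$, $j\in\Z/m\Z$, are pairwise distinct, so pairwise disjoint and each disjoint from every $J_i$. Choose $t_j\in S_j\subset I_{i_j}$. Then $(J_i)_{i\in\Z/n\Z}$ together with $(t_j)_{j\in\Z/m\Z}$ are finitely many pairwise disjoint connected subsets of $S^1$, which therefore carry a well-defined cyclic order; pulling it back along $a_i\leftrightarrow J_i$ and $r_j\leftrightarrow t_j$ gives a cyclic order on $\{a_i\}\cup\{r_j\}$. It does not depend on the choices $t_j$: moving $t_j$ inside $S_j$ keeps it in the arc $I_{i_j}$, which meets no $J_i$ and, by injectivity, no $I_{i_{j'}}$ with $j'\neq j$, so no relative cyclic position changes. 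By construction this order restricts to the cyclic order of $(a_i)_{i\in\Z/n\Z}$ of Section~\ref{co} and inserts each $r_j$ in ``its'' gap $I_{i_j}$, between $a_{i_j}$ and the next element of the family $(a_i)$. The one genuinely delicate point is the one already flagged: one must avoid the disconnected-complement trap by working with the prime ends of $U$ rather than of the enlarged complement, the injectivity hypothesis~(3) being exactly what prevents two distinct $r_j$ from competing for the same slot.
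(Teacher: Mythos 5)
Your argument is correct, and it is essentially the justification the paper leaves implicit (Remark \ref{ii} is stated without proof): one reads the cyclic order off the prime-end circle of $U$ from Section \ref{co}, slotting each $r_j$ into its gap $I_{i_j}$ via a representative point of $\overline{\varphi(r_j)}\cap S^1$ (nonempty by hypothesis 1), with hypothesis 3 guaranteeing distinct gaps and hence a well-defined order extending that of the $a_i$. Your observation that one should not rerun the construction of Section \ref{co} for the enlarged family, since $U\setminus\bigcup_j r_j$ need not be connected, is a pertinent remark rather than a deviation from the paper.
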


\begin{lema}\label{quid}

We suppose that:

\begin{enumerate}

\item the cyclic order of the sets $a_i$, $i\in \Z/n\Z$, is the following:
$$a_0\to a_1\to \ldots \to a_{i}\to a _{i+1}\to \ldots \to a_{n-1}\to a_0.$$

\item  for all $i\in \Z/n\Z$ there exists $b_i^+ \in B $, such that  $a_i \subset [b_i^+]_>$,
\item there exists three bricks $(b_s^-)_{s\in \Z/3\Z}$ such that

\begin{enumerate} \item for all $s\in \Z/3\Z$ and for all $i\in\Z/n\Z$, one has $b_s^-\subset[b_i^+]_<$
(and so $[b_s^-]_<\subset U $),
\item  $\overline{[b_s^-]_<}\cap \partial U \neq \emptyset$ for all $s\in\Z/3\Z$,

\item for all $s\in\Z/3\Z$ there exists $i_s\in \Z/n\Z$ such that $\overline{\varphi ([b_s^-]_<)}\cap S^1\subset I_{i_s}$,

\end{enumerate}

\end{enumerate}

Then, the correspondence $s\to i_s$ is not injective.

\end{lema}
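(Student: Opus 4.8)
The plan is to argue by contradiction: assume the correspondence $s \to i_s$ \emph{is} injective, and then build an elliptic configuration of order $3$, which contradicts the non-recurrence of $f$ via Proposition \ref{hc}. The three bricks $b_0^-, b_1^-, b_2^-$ give rise to three repellers $[b_0^-]_<, [b_1^-]_<, [b_2^-]_<$; by Proposition \ref{futcon} (applied to the free maximal decomposition) each of these is connected, and by hypothesis (3a) each is contained in $U = \D \setminus (\cup_i a_i)$, so in particular they are disjoint from each of the $a_i$. A preliminary point to settle is that the three repellers $[b_s^-]_<$ are pairwise disjoint as subsets of $B$; if two of them met they would be adjacent, but they are both repellers, so by Remark \ref{br}(4) they would coincide, and then by (3c) they would have prime-end impressions in the same $I_{i_s}$, forcing $i_s$ to coincide for two values of $s$ — contradicting injectivity. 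Hence we get three pairwise disjoint connected repellers inside $U$, each reaching $\partial \D$ (via (3b)), with impressions in distinct intervals $I_{i_0}, I_{i_1}, I_{i_2}$.

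Next I would produce the attractors. The natural candidates are $A_s := [b_{i_s}^+]_\geq$ for $s \in \Z/3\Z$ — wait, more carefully: by hypothesis (2), for each $i$ there is $b_i^+$ with $a_i \subset [b_i^+]_>$, so $[b_i^+]_\geq \supset a_i$ is an attractor containing $a_i$, connected again by Proposition \ref{futcon}. I take the three attractors corresponding to the three indices $i_0, i_1, i_2 \in \Z/n\Z$ that actually occur. One must check these three attractors are pairwise disjoint (in $B$): here hypothesis (3a) is the key, since $b_s^- \subset [b_i^+]_<$ for \emph{every} $i$ means every $b_i^+$ lies in the common "past funnel", and the structure forces the $[b_i^+]_\geq$ for distinct $i$ to be separated — this is exactly where I expect to lean on the cyclic-order hypothesis (1) together with Remark \ref{br}(2),(4), and it is the step most likely to need care. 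One also needs $R_s = [b_s^-]_<$ disjoint from $A_t = [b_{i_t}^+]_\geq$ when $s \neq t$: a repeller and an attractor meeting would give a brick in both, hence (by Remark \ref{br}(1)) its whole future in $A_t$ and whole past in $R_s$, and chasing the chain one reaches a closed chain of bricks, contradicting non-recurrence (Proposition \ref{franksfino}).

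With the six sets in hand, the configuration axioms must be verified: each $R_s$ and $A_s$ is connected, pairwise disjoint; inside $R_s = [b_s^-]_<$ we have the connected set $r_s := \overline{[b_s^-]_<}$ reaching $\partial \D$ (hypothesis (3b)), and inside $A_s = [b_{i_s}^+]_\geq$ we have $a_{i_s}$ reaching $\partial \D$; and the complement $\D \setminus (\cup_s (a_{i_s} \cup r_s))$ is a connected open set — this follows since we started from $U$ connected and removed only the $r_s$, each of which is the impression-support of a single prime-end interval, so removing them keeps connectedness (cf. the discussion in Section \ref{co} and Remark \ref{ii}). The condition $[b_s^-]_\geq \cap A_s \neq \emptyset$ needed for ellipticity: since $b_s^- \subset [b_{i_s}^+]_<$, applying $\varphi$ forward shows $[b_{i_s}^+]_\geq$ is reached from $b_s^-$, i.e. $[b_s^-]_\geq \cap [b_{i_s}^+]_\geq \neq \emptyset$, giving the required clause with $R_s, A_s$ in the roles of the elliptic configuration. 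Finally, the cyclic order: the intervals $I_{i_0}, I_{i_1}, I_{i_2}$ inherit the cyclic order of the $a_i$'s from hypothesis (1), and after relabelling $s$ so that this order is respected, the three attractor-impressions and three repeller-impressions interleave in precisely the pattern $a \to r \to a \to r \to a \to r$ demanded by the elliptic order property for $n=3$ — here I use that each $r_s$ sits in the gap interval $I_{i_s}$ strictly between consecutive $a$'s. This yields an elliptic configuration of order $3$, so Proposition \ref{hc} makes $f$ recurrent, contradicting our standing assumption; therefore $s \to i_s$ cannot be injective. The main obstacle, as noted, is rigorously checking the disjointness and non-adjacency relations among the three attractors, and organising the relabelling so that the elliptic order property comes out exactly right.
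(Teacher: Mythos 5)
Your overall strategy (assume injectivity, build an elliptic configuration of order $3$, contradict non-recurrence via Proposition \ref{hc}) is the same as the paper's, but two of your key steps do not work as stated. First, the pairwise disjointness of the repellers $[b_s^-]_<$ cannot be obtained from Remark \ref{br}(4): that remark says that two repellers which are disjoint in $B$ cannot be adjacent in $M$; it does not say that two repellers sharing a brick must coincide, and in general they need not. The paper's actual argument is a separation argument in the prime-end compactification: if $[b_s^-]_<\cap[b_r^-]_<\neq\emptyset$ while $i_s\neq i_r$, one chooses $j_0,j_1$ so that any arc from $J_{j_0}$ to $J_{j_1}$ separates $I_{i_s}$ from $I_{i_r}$, and --- using hypothesis 3.(a), which guarantees that every $a_j$ is contained in $[b_s^-]_>$ --- a crosscut from $a_{j_0}$ to $a_{j_1}$ with $\gamma\cap U\subset[b_s^-]_>$; this forces $([b_s^-]_<\cup[b_r^-]_<)\cap[b_s^-]_>\neq\emptyset$, and non-recurrence then yields $[b_s^-]_<\subset[b_r^-]_<$, hence $\overline{\varphi([b_s^-]_<)}\cap S^1\subset I_{i_s}\cap I_{i_r}$, a contradiction. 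Some argument of this kind is unavoidable; your shortcut is a genuine gap.

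Second, your choice of attractors $A_s=[b_{i_s}^+]_\geq$ is the wrong one. In the statement of the lemma the intervals $(I_i)$ are merely an enumeration of the components of $S^1\setminus\cup_{j} J_j$; there is no positional relation between $J_{i_s}$ and $I_{i_s}$, so the six sets $a_{i_0},a_{i_1},a_{i_2},[b_0^-]_<,[b_1^-]_<,[b_2^-]_<$ need not alternate, and the elliptic order property can simply fail for your choice (your claim that each $r_s$ sits ``strictly between consecutive $a$'s'' is not justified). The paper instead cyclically orders the $n+3$ sets $a_i,[b_s^-]_<$ and chooses, for each arc between two cyclically consecutive repellers, an index $j_s$ with $a_{j_s}$ in that arc, setting $A_s=[b_{j_s}^+]_>$; hypothesis 3.(a) still provides the required condition $[b_s^-]_\geq\cap A_s\neq\emptyset$ for any such choice, and this interleaving is precisely what makes the subsequent disjointness argument for the attractors (a connectedness plus non-recurrence chase, again through 3.(a)) go through. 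With your indices $i_s$ that argument would also have to be redone. So the proposal follows the paper's route in outline, but the two central verifications --- disjointness of the repellers and the correctly interleaved choice of attractors --- are missing or incorrect.
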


\begin{figure}[h]\label{fquid}
\begin{center}
\psfrag{a}{$[b_0^-]_<$}\psfrag{b}{$[b_1^-]_<$}\psfrag{c}{ $[b_2^-]_<$}
\psfrag{f}{$a_{j_0}$}
\psfrag{e}{ $a_{j_2}$}
\psfrag{g}{ $A_2$}\psfrag{h}{ $A_0$}\psfrag{i}{ $A_1$}
\psfrag{l}{ $a_{j_1}$}\psfrag{x}{ $R_0$}\psfrag{y}{ $R_1$}\psfrag{z}{ $R_2$}
\includegraphics[scale=0.6]{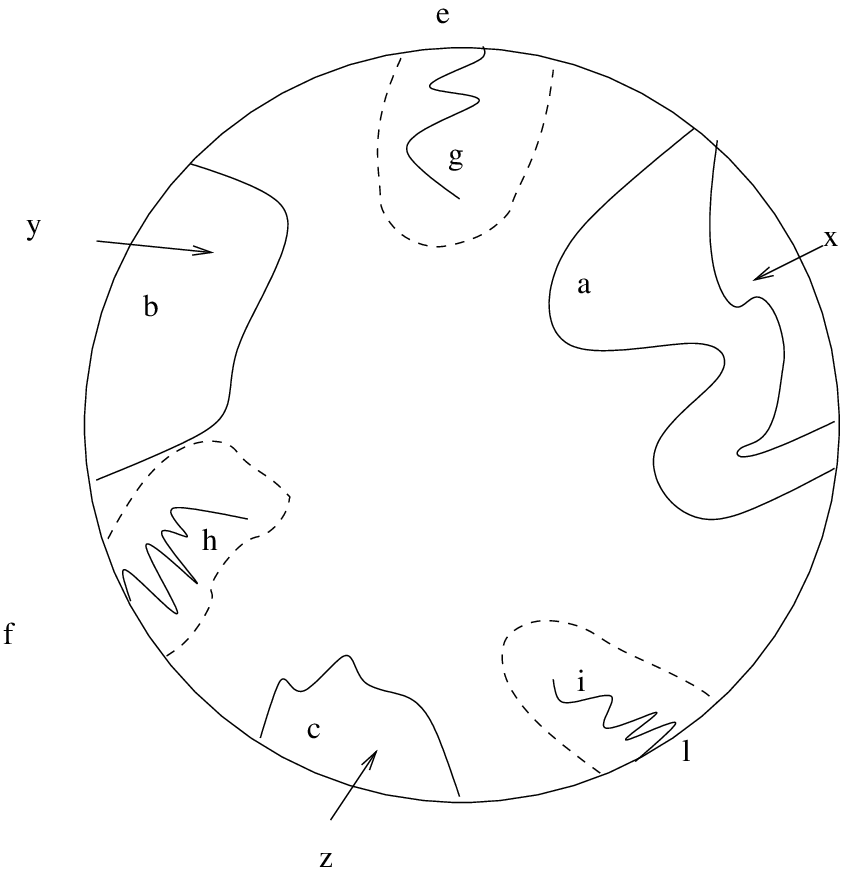}
\caption{Lemma \ref{quid}}
\end{center}
\end{figure}

\begin{proof} We will prove that if the correspondence $s\to i_s$ is injective, we can construct an elliptic configuration of
order $3$. As we are assuming $f$ is not recurrent, this is not possible by Proposition \ref{hc}.

We begin by proving that $[b_s^-]_<\cap [b_r^-]_<\neq \emptyset$ implies $i_s = i_r$.  Indeed,  if
$[b_s^-]_<\cap [b_r^-]_<\neq \emptyset$, then
 $[b_s^-]_<\cup [b_r^-]_<$ is a connected set and $\overline{\varphi([b_s^-]_<\cup [b_r^-]_<)}$ intersects both
$I_{i_s}$ and $I_{i_r}$.  If $i_s\neq i_r$, then there exists $j_0, j_1\in \Z/n\Z$ such that any arc joining $J_{j_0}$ and
$J_{j_1}$ separates $I_{i_r}$ from $I_{i_s}$ in $\overline \D$ .  Our hypothesis 3.(a) allows us to take a crosscut
$\gamma$ from $a_{j_0}$ to $a_{j_1}$ such that $\gamma \cap U \subset[b_s^-]_>$.  So, $\overline{\varphi(\gamma \cap U)}$
is an arc joining $J_{j_0}$ and $J_{j_1}$, and
$$\overline{\varphi(\gamma \cap U)} \cap \varphi([b_s^-]_<\cup [b_r^-]_<)
\neq \emptyset. $$\noindent  This gives us $$([b_s^-]_<\cup [b_r^-]_<)\cap[b_s^-]_>\neq \emptyset,$$ and as we are supposing
that $f$ is not recurrent, $$ [b_r^-]_<\cap[b_s^-]_>\neq \emptyset.$$\noindent  So, $$ [b_s^-]_<\subset [b_r^-]_<, $$
\noindent which implies
$$\overline{\varphi ([b_s^-]_<)}\cap S^1\subset I_{i_s}\cap I_{i_r}, $$ \noindent a contradiction.

So, if the correspondence $s\to i_s$ is injective, the sets $[b_s^-]_<$ are pairwise disjoint, and one can cyclically order the
$n+3$  sets $a_i, [b_s^-]_<$, $i\in \Z/n\Z$,
$s\in \Z/3\Z$ (see Remark \ref{ii}).  We may suppose without loss of generality that

$$[b_0^-]_<\to [b_1^-]_< \to [b_2^-]_< \to [b_0^-]_< .$$

\noindent For all $s\in \Z/3\Z$, we can take $j_s\in \Z/3\Z$ such that

 $$[b_0]_<^{-}\to a_{j_2} \to [b_1^-]_< \to a_{j_0}\to [b_2^-]_< \to a_{j_1}\to [b_0]_<^{-}$$ \noindent (see Figure 9).

For all $s\in \Z/3\Z$, we define: $$R_s = [b_s^-]_< , \ A_s = [b_{j_s}^+]_> .$$  We want to show that
$$((R_s)_{s\in \Z/3\Z}), (A_s)_{s\in \Z/3\Z}),$$ \noindent is an elliptic configuration.
 It is enough to show that the sets $A_s, R_s$, $s\in \Z/3\Z$, are
pairwise disjoint, because of the cyclic order of these sets , and our hypothesis 3.(a).
  We already know that the sets $R_s$, $s\in \Z/3\Z$, are pairwise disjoint. As we are
supposing that $f$ is not recurrent, and
$b_{j_s}^+\in [b_{s'}^-]_>$ for every pair of indices $s, s'$ in $\Z/3\Z$ (3.(a)), we know that
$$[b_{j_s}^+]_>\cap [b_{s'}^-]_<=\emptyset$$\noindent for all
$s, s'$ in $\Z/3\Z$.  So, the sets $\{A_s\}$, are disjoint from the sets $\{R_s\}$, and we just have to show
that the sets $\{A_s\}$ are pairwise disjoint to finish the proof of the lemma.

Because of the symmetry of the
problem it is enough to show that $$A_0\cap A_1=\emptyset . $$\noindent  If this is not so,
$$A_0\cup A_1 = [b_{j_0}^+]_> \cup [b_{j_1}^+]_> $$ \noindent would be a connected set containing  both $a_{j_1}$ and
$a_{j_0}$, and the cyclic order would imply that
$$( [b_{j_0}^+]_> \cup [b_{j_1}^+]_>)\cap [b_{j_0}^+]_<\neq \emptyset , $$\noindent  by our hypothesis 3.(a).  As we
are supposing that $f$ is not recurrent, we have
$$ [b_{j_1}^+]_>\cap [b_{j_0}^+]_<\neq \emptyset . $$\noindent But this implies that $[b_{j_1}^+]_> $ is a connected set
containing both $a_{j_1}$ and $a_{j_0}$.  Once again  our hypothesis 3.(a) and the cyclic order gives us
$$[b_{j_1}^+]_> \cap [b_{j_1}^+ ]_<\neq \emptyset , $$\noindent and we are done.

\end{proof}

For our next lemma, we keep the assumption on the cyclic order of the sets $a_i, i\in \Z/n\Z$:

$$a_0\to a_1\to \ldots \to a_{i}\to a _{i+1}\to \ldots \to a_{n-1}\to a_0.$$

\noindent We define $I_i$, as to be the connected component of $S^1\backslash \cup_{j\in \Z/n\Z} J_j$  that
follows $J_{i-1}$ in the natural cyclic order on $S^1$, so that we have:
$$J_{i-1}\to I_i\to J _{i},$$

\noindent for all $ i\in \Z/n\Z$.

\begin{lema}\label{quidh} If for all $ i\in \Z/n\Z$:

\begin{enumerate}

\item there exists $b_i^+ \in B $, such that  $a_i \subset [b_i^+]_>$,
\item there exists $b_i^- \in B $ such that $b_{i}^-\subset[b_j^+]_<$, $j\in \{i-1,i\}$,
\item $[b_i^-]_<\subset U$, and   $\overline{[b_i^-]_<}\cap \partial U\neq \emptyset$,
\item $\overline{\varphi([b_i^-]_<)}\cap S^1 \subset I_{i}$,

\end{enumerate}
then $\fix(f)\neq \emptyset$.

\end{lema}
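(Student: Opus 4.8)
The plan is to mimic the strategy of Lemma~\ref{quid}: assuming $\fix(f)=\emptyset$ together with the standing non-recurrence hypothesis, I would build a \emph{hyperbolic configuration} of some order $n\geq 2$ out of the given data, and then invoke Proposition~\ref{2c} to obtain the contradiction $\fix(f)\neq\emptyset$. So the whole proof is really a construction: set $R_i=[b_i^-]_<$ and $A_i=[b_i^+]_>$ for $i\in\Z/n\Z$, and check that the pair $((R_i),(A_i))$ satisfies all the axioms of a Repeller/Attractor configuration plus the hyperbolic order property and the two-brick condition.

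First I would record the easy structural facts. Each $R_i=[b_i^-]_<$ is a repeller and each $A_i=[b_i^+]_>$ is an attractor by definition of past and future; by Proposition~\ref{futcon} (applicable since $\cal D$ is a free maximal decomposition of $\D\setminus\fix(f)$) these sets are connected. Hypothesis (3) gives a closed connected $r_i:=[b_i^-]_<$ with $\overline{r_i}\cap\partial\D\neq\emptyset$ sitting inside $\inte(R_i)$; for the $a_i$ I use the subsets furnished in the ambient setup with $\overline{a_i}\cap\partial\D\neq\emptyset$ and $a_i\subset\inte([b_i^+]_>)$ (this is exactly the content of hypothesis (1) read via Remark~\ref{br}\eqref{br1}, after passing to the interior). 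The two-brick condition of a hyperbolic configuration is immediate: hypothesis (2) says $b_i^-\subset[b_j^+]_<$ for $j\in\{i-1,i\}$, which by Remark~\ref{br}\eqref{br3} / the definition of strict past forces $[b_i^-]_>\cap[b_j^+]_\geq\neq\emptyset$; refining to strict future/past as in Lemma~\ref{quid} (using non-recurrence to drop the endpoint brick) yields bricks with $[b_i^-]_>\cap A_i\neq\emptyset$ and $[b_i^-]_>\cap A_{i-1}\neq\emptyset$, i.e.\ $R_i$ contains the two required bricks $b_i^i,b_i^{i-1}$.

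The substantive work, and the step I expect to be the main obstacle, is two-fold: (a) showing the sets $\{R_i\}\cup\{A_i\}$ are pairwise disjoint, and (b) pinning down the cyclic order so that it is exactly the hyperbolic one $r_0\to a_0\to r_1\to a_1\to\cdots\to r_{n-1}\to a_{n-1}\to r_0$. For (a) the $A_i$--$R_j$ disjointness is the familiar non-recurrence argument: since $b_i^-\subset[b_j^+]_<$ for the relevant $j$, if $[b_j^+]_>$ met $[b_i^-]_<$ we would get $b\in[b]_>$ for some brick $b$, contradicting Proposition~\ref{franksfino}; one has to be slightly careful because hypothesis (2) only gives containment for $j\in\{i-1,i\}$, not all $j$, so I would argue using the cyclic order (any $A_j$ meeting $R_i$ would, by connectedness of $A_j$ and the order constraint $J_{i-1}\to I_i\to J_i$, be forced to also meet one of the ``adjacent'' repellers for which containment does hold — this is where the careful placement $\overline{\varphi([b_i^-]_<)}\cap S^1\subset I_i$ pays off). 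The $A_i$--$A_j$ and $R_i$--$R_j$ disjointness is the connectedness-and-cyclic-order contradiction run exactly as in the last two paragraphs of the proof of Lemma~\ref{quid}: a connected union $A_i\cup A_j$ would contain two of the $a$'s in non-adjacent order positions, hence meet some $[b_\bullet^+]_<$, hence (non-recurrence) give $[b_\bullet^+]_>\cap[b_\bullet^+]_<\neq\emptyset$. For (b), the order property $J_{i-1}\to I_i\to J_i$ together with $\overline{\varphi(r_i)}\cap S^1\subset I_i$ and the assumed order $a_0\to a_1\to\cdots\to a_{n-1}\to a_0$ of the $a_i$ forces the interleaving $\cdots\to a_{i-1}\to r_i\to a_i\to r_{i+1}\to\cdots$, which after relabelling $a_i\leftrightarrow$ the configuration's $a_i$ and $r_i\leftrightarrow$ the configuration's $r_i$ is precisely the hyperbolic order. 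Having verified all axioms, Proposition~\ref{2c} applied to this hyperbolic configuration gives $\fix(f)\neq\emptyset$, contradicting our assumption and completing the proof.
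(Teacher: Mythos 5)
Your overall architecture is the paper's: set $R_i=[b_i^-]_<$, $A_i=[b_i^+]_>$, check that this yields a hyperbolic configuration, and invoke Proposition \ref{2c} (the contradiction framing is superfluous, since only the standing non-recurrence is used, but harmless). The problem is that the two disjointness verifications you describe rely on mechanisms that do not work as stated. First, $R_i\cap R_j=\emptyset$ cannot be proved ``exactly as in the last two paragraphs of the proof of Lemma \ref{quid}'': that template starts from a connected union containing two of the sets $a_l$, and $R_i\cup R_j$ contains no $a_l$ at all. The argument actually needed (and the first half of the paper's proof, parallel to the first paragraph of Lemma \ref{quid}) is the crosscut/prime-end one: if $[b_i^-]_<\cap[b_j^-]_<\neq\emptyset$, the union is connected and its image under the Riemann map accumulates on both $I_i$ and $I_j$; hypotheses 1 and 2 give $a_{i-1}\cup a_i\subset[b_i^-]_>$, hence a crosscut from $a_{i-1}$ to $a_i$ whose trace in $U$ lies in $[b_i^-]_>$ and separates $I_i$ from $I_j$; so the union meets $[b_i^-]_>$, and non-recurrence forces $[b_i^-]_<\subset[b_j^-]_<$, contradicting hypothesis 4 (nonemptiness coming from hypothesis 3). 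Note that this disjointness of the $R_i$'s is also what legitimizes the appeal to Remark \ref{ii} and hence the interleaving $a_{i-1}\to[b_i^-]_<\to a_i$ that you use in your step (b); the cyclic order cannot be invoked before it is established.

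Second, for $A_j\cap R_i=\emptyset$ with $j\notin\{i-1,i\}$ you claim that a connected $A_j$ meeting $R_i$ would be ``forced to also meet one of the adjacent repellers''; nothing forces this, since those repellers do not separate $a_j$ from $R_i$ inside $\D$, so this sub-case is left unproved. The correct step is order-theoretic on bricks: a brick $b\in A_j\cap R_i$ satisfies $b_j^+\in[b]_<$, and since $R_i$ is a repeller containing $b$ it contains $[b_j^+]_\leq$; hypothesis 2 then puts $b_j^-$, hence $[b_j^-]_<$, inside $[b_i^-]_<$, so $\overline{\varphi([b_j^-]_<)}\cap S^1\subset I_i$, contradicting hypothesis 4. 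These two repairs are precisely where hypotheses 2--4 get consumed; without them the hyperbolic configuration is not constructed. (A smaller point: the two bricks required by the definition of a hyperbolic configuration must lie in $R_i=[b_i^-]_<$, and $b_i^-$ itself lies in its own strict past only if $f$ is recurrent; take instead any brick $b$ with $f(b)\cap b_i^-\neq\emptyset$, whose strict future contains $A_{i-1}\cup A_i$.)
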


\begin{figure}[h]\label{fquidh}
\begin{center}
\psfrag{a}{ $[b_0^-]_<$}\psfrag{d}{ $[b_1^-]_<$}
\psfrag{b}{  $[b_2^-]_<$}\psfrag{e}{ $[b_3^-]_<$}
\psfrag{c}{ $[b_4^-]_<$}\psfrag{f}{ $[b_5^-]_<$}
\psfrag{g}{ $a_{4}$}\psfrag{h}{ $a_{5}$}\psfrag{i}{ $a_{0}$}\psfrag{j}{ $a_{1}$}\psfrag{k}{ $a_{2}$}
\psfrag{l}{ $a_{3}$}
\includegraphics[scale=0.6]{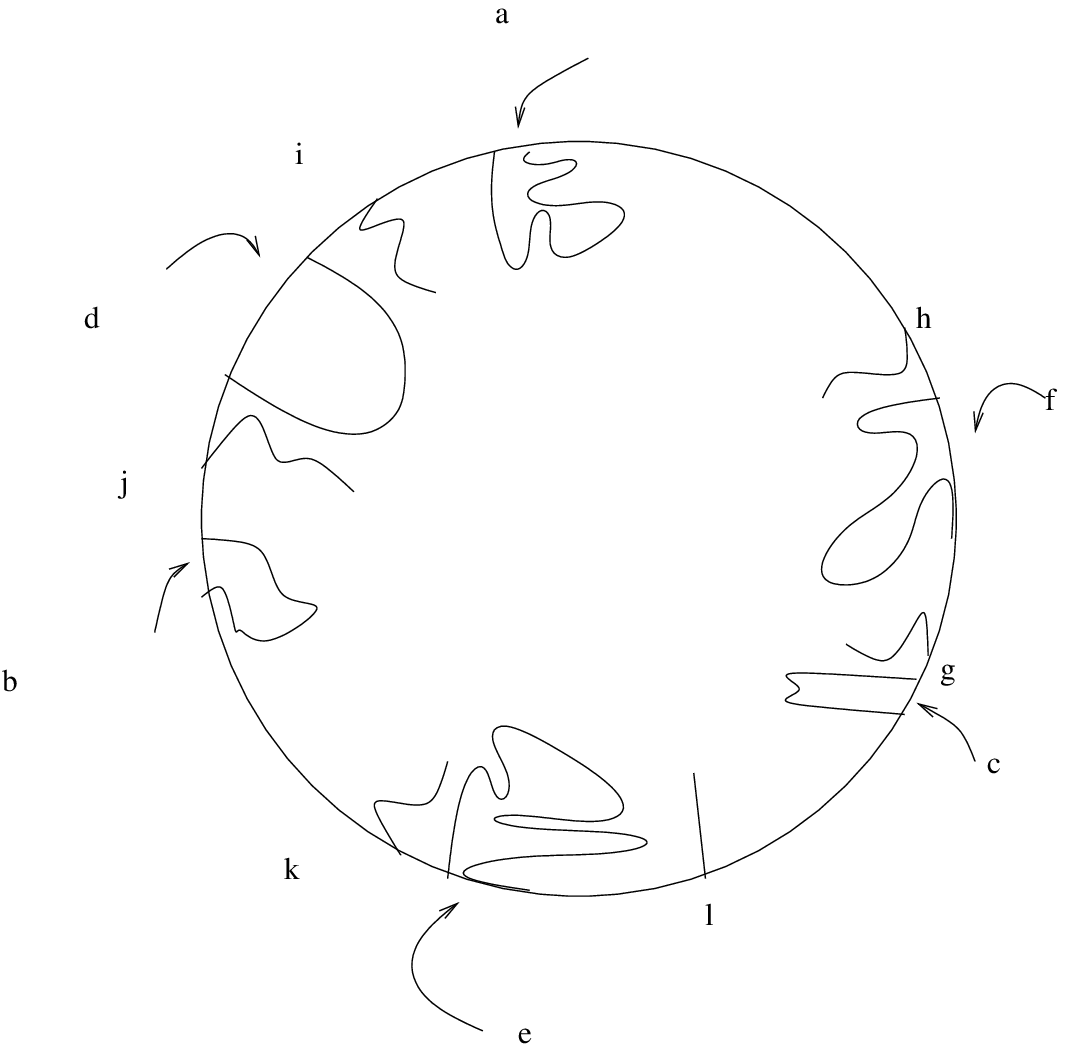}
\caption{Lemma \ref{quidh} with $n=6$}
\end{center}
\end{figure}

\begin{proof} By Proposition \ref{2c} it is enough to show that we can construct
a hyperbolic configuration.

We begin by proving that the sets $\{[b_i^-]_<\}$,
are pairwise disjoint.  Otherwise, there exists $i\neq j$, such
that $$[b_i^-]_<\cap [b_j^-]_<\neq \emptyset.$$ Then, $[b_i^-]_<\cup [b_j^-]_<$ is a connected set and
$\overline{\varphi([b_i^-]_<\cup [b_j^-]_<)}$ intersects both $I_{i}$ and $I_{j}$.  The cyclic order implies that any arc
joining
 $J_{i-1}$ and $J_{i}$ separates $I_i$ from $I_j$, $i\neq j$.

Our hypothesis 2. allows us to take
a crosscut $\gamma $ from $a_{i-1}$ to $a_{i}$ such that $$\gamma \cap U \subset [b_i^-]_>.$$ So,
$\overline {\varphi (\gamma \cap U )}$ is an arc joining $J_{i-1}$ and $J_{i}$, and $$\overline {\varphi (\gamma \cap U )}\cap
\varphi([b_i^-]_<\cup [b_j^-]_<)\neq \emptyset.$$  This gives us
$$([b_i^-]_<\cup [b_j^-]_<)\cap  [b_i^-]_>\neq \emptyset,$$ and as we are supposing that $f$ is not recurrent,
$$[b_j^-]_<\cap  [b_i^-]_>\neq \emptyset.$$  So, $ [b_i^-]_<\subset [b_j^-]_< $, which implies
$$\overline{\varphi([b_i^-]_<)}\cap S^1\subset I_{i}\cap I_{j},$$ a contradiction.

So, we can cyclically order the $2n$ sets $a_i$, $[b_i^-]_<$, $i\in \Z/n\Z$ (see Remark \ref{ii}).
Moreover, for all $i\in \Z/n\Z$,
$$a_{i-1}\to [b_i^-]_<\to  a_i.$$

Define $A_i = [b_i^+]_>$ and $R_i=  [b_i^-]_<$, for $i\in \Z/n\Z$.  To finish the proof
of the lemma, it is enough to show that the sets $R_i, A_i, i\in \Z/n\Z,$ are pairwise disjoint.  Indeed, if this
is true, our previous remark on the cyclic order, and our hypothesis 2. imply that $((R_i)_{i\in \Z/n\Z},(A_i)_{i\in \Z/n\Z})$
 is a
hyperbolic configuration.

We have already proved that the sets $R_i, i\in \Z/n\Z$ are pairwise disjoint.
We will also show that $[b_i^-]_<\cap [b_j^+]_>=\emptyset$ for any
$j\in \Z/n\Z$.  By hypothesis 2., $[b_i^-]_<\cap [b_i^+]_>=\emptyset$, as we are supposing that $f$ is not recurrent. If
$[b_i^-]_<\cap [b_j^+]_>\neq\emptyset$ for some $j\neq i$, then $[b_j^+]_<\subset [b_i^-]_<$, $j\neq i$.   Therefore,
$\overline{\varphi([b_j^+]_<)}
\cap S^1\subset I_i, $  $j\neq i$, which contradicts our hypothesis 4..

We have proved that the sets $R_i$ are disjoint from the sets $A_i, i\in\Z/n\Z$.  So, in order to finish,
we only have to prove that the sets $A_i, i\in \Z/n\Z$ are pairwise disjoint.

If this is not the case, there would exist
$i\neq j$, such that $[b_i^+]_>\cap [b_j^+]_>\neq
\emptyset$.  So,
$[b_i^+]_>\cup [b_j^+]_>$ is a connected set containing $a_{i}\cup a_{j}$, and must therefore
intersect $[b_i^+]_<$, because of the cyclic order and hypothesis 2.  We may of course assume that $ [b_j^+]_>\cap[b_i^+]_<\neq
\emptyset$.  Now, we have that
$[b_j^+]_>$ is a connected set containing $a_{j}\cup a_{i} $ and must therefore intersect $[b_j^+]_<$.  This
contradiction proves our claim.
\end{proof} 

\section{Proof of the main result}\label{proof}

This section is devoted to the proof of Theorem \ref{main*}.

We fix  an
orientation preserving homeomorphism $f : \D \to \D$ which realizes a cycle of links
${\cal L} = ((\alpha_i, \om_i))_{i\in\Z/n\Z}$.  We recall that this means that there exists a family $(z_i)_{i\in \Z/n\Z}$ of points in $\D$ such
that for all $i\in \Z/n\Z$
$$\lim _{k \to -\infty} f^k(z_i) = \alpha _i ,  \ \lim _{k \to
+\infty} f^k(z_i) = \om _i .$$\\

We also recall that $$\ell = \{\alpha_i, \om_i: i\in\Z/n\Z\}\subset S^1,$$ \noindent and that we supppose that $f$ can
 be extended to a  homeomorphism of $\D \cup \ell .$ \\

\subsection{The elliptic case.}

Let us state our first proposition:

\begin{prop}\label{ell} If ${\cal L}$ is elliptic, then $\fix(f)\neq \emptyset$.  Moreover, one of the following holds:

\begin{enumerate}
 \item $f$ is recurrent,
\item  ${\cal L}$ is a degenerate cycle.

\end{enumerate}

\end{prop}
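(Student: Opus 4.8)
The plan is to argue by contradiction: assume $f$ is non-recurrent (in particular, since $f$ realizes ${\cal L}$, the extension hypothesis holds and by the Corollary to Brouwer's translation lemma $\per(f)\setminus\fix(f)=\emptyset$). Suppose also $\fix(f)=\emptyset$, so that we may fix a free maximal brick decomposition ${\cal D}=(V,E,B)$ of $\D\setminus\fix(f)=\D$. The goal is then to produce a configuration contradicting Proposition \ref{hc} or \ref{2c}, or to conclude ${\cal L}$ is degenerate. Concretely, I would use Lemma \ref{ends} (and Remark \ref{puntas}) to build, for each $i\in\Z/n\Z$, the families of little bricks $(b_i'^l)_{l\ge 1}$ accumulating on $\om_i$ and $(b_i'^l)_{l\le -1}$ accumulating on $\alpha_i$, together with the arcs $\Gamma_i^{\pm}$ landing at $\om_i$, $\alpha_i$. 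Setting $A_i$ to be the future of the first positive brick $b_i'^{1}$ and $R_i$ the past of the first negative brick $b_i'^{-1}$, property \ref{pto1} of Lemma \ref{ends} forces $[b_i'^{-1}]_{<}$ to contain the whole negative tail and hence $\alpha_i$ in its closure, while $[b_i'^{1}]_{>}$ contains $\om_i$; moreover the orbit of $z_i$ links $R_i$ to $A_i$, giving the needed condition $[b_i]_{\ge}\cap A_i\ne\emptyset$ with $b_i\in R_i$.

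The heart of the argument is to check that the cyclic order at infinity of the sets $r_i\subset\inte(R_i)$, $a_i\subset\inte(A_i)$ (in the sense of \S\ref{co}) is exactly the elliptic order property $a_0\to r_2\to a_1\to\cdots\to a_i\to r_{i+2}\to a_{i+1}\to\cdots\to a_{n-1}\to r_1\to a_0$. This is where the ellipticity hypothesis $\om_{i-1}\toe\alpha_{i+1}\to\om_i$ on ${\cal L}$ enters: the arcs $\Gamma_i^\pm$ land at the points $\alpha_i,\om_i\in S^1$, so the cyclic order of the $r_i,a_i$ relative to the complement $U=\D\setminus\bigcup(a_i\cup r_i)$ is controlled by the cyclic order of $\{\alpha_i,\om_i\}$ on $S^1$, which the relation $\om_{i-1}\toe\alpha_{i+1}\to\om_i$ translates precisely into the elliptic order. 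The only subtlety is when $U$ fails to be connected, or when some $r_i$ and $a_j$ (equivalently some $\alpha_i$ and $\om_j$) coincide — that is exactly the degenerate case, and I would isolate it: if $(\om_i,\alpha_i)\in{\cal L}$ for some $i$ the sets collapse and ${\cal L}$ is degenerate, which is the second alternative of the proposition; otherwise the configuration is genuinely an elliptic Repeller/Attractor configuration.

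Once the elliptic configuration of order $n\ge 3$ is in place, Proposition \ref{hc} says $f$ is recurrent, contradicting our standing assumption; hence $\fix(f)\ne\emptyset$ unless we are in the degenerate case. To get the sharper dichotomy "either $f$ is recurrent or ${\cal L}$ is degenerate," I would run the same construction without assuming $\fix(f)=\emptyset$ is impossible a priori: if $f$ is non-recurrent and ${\cal L}$ is non-degenerate, the elliptic configuration exists and Proposition \ref{hc} yields recurrence, a contradiction; so either $f$ is recurrent or ${\cal L}$ is degenerate, and in the first case $\per(f)\setminus\fix(f)\ne\emptyset$ is false but Franks' lemma / recurrence still forces $\fix(f)\ne\emptyset$ via the index-1 curve (a recurrent $f$ has a fixed point). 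Either way $\fix(f)\ne\emptyset$.

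The main obstacle I anticipate is the bookkeeping of the cyclic order at infinity: one must verify that the sets $r_i=\Gamma_i^-([0,1])$ and $a_i=\Gamma_i^+([0,1])$ (suitably enlarged inside $R_i$, $A_i$) really are pairwise disjoint closed connected sets with $U$ connected, and that their prime-end order matches the order of $\{\alpha_i,\om_i\}$ on $S^1$ — this requires knowing the $R_i$ are pairwise disjoint repellers and the $A_i$ pairwise disjoint attractors, which uses non-recurrence together with Remark \ref{br}(\ref{br4}) and Proposition \ref{futcon}, plus the fact that two disjoint attractors (resp. repellers) cannot be adjacent. Verifying disjointness of, say, $A_i$ and $A_j$ is precisely the kind of "connected-set-contains-two-$a$'s-hence-meets-its-own-past" argument already used in the proofs of Lemmas \ref{quid} and \ref{quidh}, so I expect to invoke or reprise that technique. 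The degenerate case must be carved out carefully so that the statement "one of the following holds" is clean.
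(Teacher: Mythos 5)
Your plan hinges on the claim that, for a non-degenerate elliptic cycle, the ``naive'' sets $R_i=[b_i^{-1}]_\leq$-type pasts and $A_i=[b_i^{1}]_\geq$-type futures are pairwise disjoint and cyclically ordered according to the circle order of $\{\alpha_i,\om_i\}$, so that an elliptic configuration of order $n$ appears directly. This is where the argument breaks: you identify ``some $\alpha_i$ coincides with some $\om_j$'' with degeneracy, but non-degeneracy only forbids $(\om_i,\alpha_i)\in{\cal L}$, i.e.\ the simultaneous coincidences $\om_i=\alpha_{i+2}$ and $\om_{i+2}=\alpha_i$; the elliptic order property is stated with $\toe$, so coincidences $\om_{i-1}=\alpha_{i+1}$ are allowed in perfectly non-degenerate cycles (Figure 1 (c) is exactly such an example, and covering it is the point of the paper relative to Le Calvez). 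When $\alpha_i=\om_{i-2}$, the repeller accumulating at $\alpha_i$ and the attractor accumulating at $\om_{i-2}$ land at the same boundary point, so neither their disjointness nor their prime-end order can be read off from the cyclic order on $S^1$, and $U$ need not have the structure you assume. The paper's proof exists precisely because this direct construction fails: it introduces $I_{\mbox{sing}}$ and the disks of Lemma \ref{cim} to handle these coincidences, proves the ``domino effect'' Lemmas \ref{dosfut}--\ref{theka} to control futures, and then uses Lemma \ref{jis} together with the combinatorial obstruction Lemma \ref{quid} not to build an order-$n$ elliptic configuration but to force $n=4$ and degeneracy under non-recurrence. Your anticipated ``bookkeeping'' of disjointness of the $A_i$'s via the crosscut argument of Lemmas \ref{quid}/\ref{quidh} is also not available at the point you want it: that argument presupposes the conclusions of the domino-effect lemmas (futures of the $b_i^-$'s containing all the $a_j$'s), which are themselves the bulk of the work.

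The second, independent gap is the degenerate branch. The proposition asserts $\fix(f)\neq\emptyset$ unconditionally for elliptic ${\cal L}$, but your conclusion ``either way $\fix(f)\neq\emptyset$'' is only justified in the recurrent branch (Franks' lemma giving an index-$1$ curve). In the case ``$f$ non-recurrent and ${\cal L}$ degenerate'' you give no fixed-point argument, and none is automatic: the introduction exhibits a non-recurrent $f$ realizing a degenerate elliptic cycle, so recurrence genuinely fails there and an elliptic configuration cannot be the source of the fixed point. The paper closes this case by showing that for $n=4$ one can build a hyperbolic Repeller/Attractor configuration of order $2$ (from $[b_0^{-k}]_<,[b_2^{-k}]_<,[b_1^{k}]_>,[b_3^{k}]_>$) and then invoking Proposition \ref{2c}. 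Without an argument of this kind, the ``Moreover, $\fix(f)\neq\emptyset$'' part of the statement remains unproved in your proposal.
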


As the proof is long, we will first describe our strategy. The first part of the work consists in constructing a brick
decomposition which is suitable for our purposes.  Once this is done,  we show that if $f$ is not
recurrent, the elliptic order property  gives rise to constraints on the order of the cycle of links
${\cal L}$.  We will show (as a consequence of Lemma \ref{quid}) that the only possibility
for the order of ${\cal L}$ is $n=4$. The case $n=4$ is special, as degeneracies may occur (see Figure 2, and the introduction,
where we explain that non-degeneracy is needed for obtaining the index result).  For $n=4$ we prove that $\fix(f)\neq \emptyset$, and that if $f$ is not recurrent, then
${\cal L}$ is degenerate.\\

{\bf I. Construction of the brick decomposition.}

We first note that we may assume that $n>3$: if $n=3$, the definition
of cycle of links implies automatically that the points $\{\alpha_i\}, \{\om_i\} $ are all different, and the proof follows from Le Calvez's improvement to Handel's
theorem.
As  we are dealing with the
elliptic case, the only possible coincidences among the
points  $\{\alpha _i\}$, $\{\om _i\}$, are of the form $\om_{i-2 }= \alpha _{i}$.
In particular, the points $\{\om _i\}$ are all different and for all $i\in \Z/n\Z$ we can take
a neighbourhood $U_i^+$ of $\om _i$ in $\overline \D$  in such a way that $U_i^+\cap U_j^+ = \emptyset$ if $i\neq j$. We define $U_i^-=U_{i-2}^+$ if $\alpha _i = \om_{i-2}$,
and for all $i\in\Z/n\Z$ such that $\alpha _i \neq \om_{i-2}$ we take a neighbourhood $U_i^-$  of $\alpha_i$ in $\overline \D$  in such a way that
$U_i^-\cap U_j^+ = \emptyset$ for all $j\in\Z/n\Z$ and $U_i^-\cap U_j^- = \emptyset$ for all  $i\neq j$.

{\bf We suppose from now on that $f$ is not recurrent.}

We apply Lemma \ref{ends} and obtain families of closed disks $(b_i'^l)_{l\in \Z\backslash \{0\}, i\in \Z/n\Z}$. So, the disks in
the family $(b_i'^l)_{ l\geq 1, i\in \Z/n\Z}$, have pairwise disjoint interiors.

Let $I_{\mbox{reg}}$ be the set of $i\in \Z/n\Z$ such that $\alpha _i \neq \om _{i-2}$, or such that $\alpha _i =\om _{i-2}$ but there
exists $K>0$ such that $$\cup _{k>K} \inte(b_{i-2}'^k) \cap \cup _{k>K} \inte(b _{i}'^{-k}) = \emptyset.$$
\noindent Let $I_{\mbox{sing}}$ be
the complement of $I_{\mbox{reg}}$ in $\Z/n\Z$.

After discarding a finite number
of disks,  we can suppose that  the disks $b_i'^l$ with $l\geq 1$, $ i\in \Z/n\Z$, and $b_i'^{-l}$ with
$ l\geq 1, i\in I_{\mbox{reg}}$, have pairwise
disjoint interiors.

If $i\in I_{\mbox{sing}}$, then $\alpha _i = \om _{i-2}$  and  for all $k >0$ there exists $k'>k,\  j'>k$, such
that $\inte(b _{i-2}'^{k'}) \cap \inte(b _{i}'^{-j'})\neq \emptyset$. \\ In the following lemma we refer to the
family of integers $(l_i)_{i\in\Z/n\Z}$  constructed in Lemma \ref{ends}.

\begin{lema}\label{cim} If $i\in I_{\mbox{sing}}$,  we can find sequences
of free closed disks  $(c_i^m)_{m\geq 0}$,  such that:

\begin{enumerate}

\item $c_i^m\subset U_{i-2}^+= U_i^-$,
\item there exists an increasing sequence $(k_i^m)_{m\geq 0}$ such that $b_{i-2}'^{k_i^m}\cap c_i^m\neq \emptyset$
for all $m\geq 0$,
\item $(b_{i-2}'^{k_i^p}\cup c_i^p) \cap (b_{i-2}'^{k_i^m}\cup c_i^m) = \emptyset$ for all $p\neq m$,
% \item for all $n\geq 0$, $c'_i^n\cap O = f^{-j_i^n}(z_{i})$, where $O = \{f^k(z_i): k\in \Z, i\in \Z/n\Z\}$,
\item there exists an increasing sequence $(j_i^m)_{m\geq 0}$ such that $f^{-l_i-j_i^m+1}(z_{i}) \in c_i^m$
for all $m\geq 0$,
\item the sequence $(c_i^m)_{m\geq 0}$ converges in the Hausdorff topology to $\om _{i-2} = \alpha _{i}$.
\item $b_{i-2}'^{k_i^m}\cap c_i^m$ is an arc for all $m\geq 0$ (so, $c_i^m \cup b_{i-2}'^{k_i^m}$ is a topological closed disk),
% \item if $c_i^m\cap b_{i-2}'^k \neq \emptyset$, then $c_i^m\cap b_{i-2}'^k$ is an arc,
% \item $b_{i-2}'^{k}\cap c_i^m = \emptyset$ if $k\neq k_i^m$,
\item $\partial (\cup_{k\geq 1}b_{i-2}'^k\cup \cup_{m\geq 0} c_i^m)$ is a one dimensional submanifold,
\item if $x\in \D$, then $x$ belongs to at most two different disks in the family $\{b_{i-2}'^k,c_i^m : k\geq 1,m\geq 0 \}$

\end{enumerate}

\end{lema}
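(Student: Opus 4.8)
The plan is to realise each $c_i^m$ as a thin ``bridge disk'' that attaches the orbit point $w_m:=f^{-l_i-j_i^m+1}(z_i)$ --- which by Lemma~\ref{ends}(\ref{pto1}) lies in $\inte(b_i'^{-j_i^m})$ --- to one disk $b_{i-2}'^{k_i^m}$ of the family $(b_{i-2}'^{k})_{k\geq1}$, the existence of a disk to attach to being exactly what the hypothesis $i\in I_{\mathrm{sing}}$ provides. Write $p=\om_{i-2}=\alpha_i$.

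First I would fix the combinatorial data. Since $i\in I_{\mathrm{sing}}$, the set $S=\{\,j\geq1:\ \inte(b_{i-2}'^{k})\cap\inte(b_i'^{-j})\neq\emptyset\ \text{for some }k\,\}$ is infinite, and I would pick $j_i^0<j_i^1<\cdots$ in $S$, growing fast. For each $m$ the set $C_m:=\bigl(\bigcup_{k\geq1}b_{i-2}'^{k}\bigr)\cap\inte(b_i'^{-j_i^m})$ is a non-empty closed subset of the open disk $\inte(b_i'^{-j_i^m})$. I claim $w_m\notin\bigcup_k b_{i-2}'^{k}$: by Lemma~\ref{ends} the only orbit points $f^k(z_j)$ lying in a disk $b_{i-2}'^{k}$ are orbit points of $z_{i-2}$, so if $w_m$ (an orbit point of $z_i$) lay there, then $z_i$ and $z_{i-2}$ would share an orbit, forcing $\alpha_i=\alpha_{i-2}$; together with $\alpha_i=\om_{i-2}$ this contradicts $\alpha_{i-2}\neq\om_{i-2}$. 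Hence $w_m$ lies in some connected component $V_m$ of $\inte(b_i'^{-j_i^m})\setminus C_m$; since $C_m\neq\emptyset$ and $\inte(b_i'^{-j_i^m})$ is connected, $\overline{V_m}$ meets $C_m$, so I may pick $x_m\in\overline{V_m}\cap b_{i-2}'^{k_i^m}$ for some index $k_i^m$, and then $x_m\in\partial b_{i-2}'^{k_i^m}\cap\inte(b_i'^{-j_i^m})$, while the multiplicity bound of Lemma~\ref{ends}(5) forces $x_m\notin b_{i-2}'^{k}$ for $k\neq k_i^m$ (in particular $x_m$ is not a junction point of the chain $(b_{i-2}'^{k})$). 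A compactness argument --- only finitely many $b_{i-2}'^{k}$ lie at distance $\geq\varepsilon$ from $p$, whereas $b_i'^{-j}\to p$ and $b_{i-2}'^{k}\to p$ in the Hausdorff topology by Lemma~\ref{ends}(8) --- shows $k_i^m\to\infty$; so choosing the $j_i^m$ to grow fast enough I may assume $k_i^m$ is strictly increasing, the disks $(b_i'^{-j_i^m})_m$ are pairwise disjoint, and the disks $(b_{i-2}'^{k_i^m})_m$ are pairwise disjoint (a disk of either family of sufficiently large index is squeezed into an arbitrarily small neighbourhood of $p$, hence misses any fixed earlier disk).

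Next I would build the bridges. Inside $\inte(b_i'^{-j_i^m})$ join $w_m$ to $x_m$ by an arc $\eta_m$ with $\eta_m\setminus\{x_m\}\subset V_m$; this is possible because near $x_m$ the set $\inte(b_i'^{-j_i^m})\setminus C_m$ is just one local side of the smooth curve $\partial b_{i-2}'^{k_i^m}$. Since $\eta_m\subset\inte(b_i'^{-j_i^m})$ and this disk is free, $\eta_m$ is free; moreover $\eta_m\cap\bigcup_k b_{i-2}'^{k}\subset\{x_m\}$, so the compact set $\eta_m$ is disjoint from the closed set $\{p\}\cup\overline{\bigcup_{k\neq k_i^m}b_{i-2}'^{k}}\cup\bigcup_{m'\neq m}b_i'^{-j_i^{m'}}$ and lies at positive distance from it. Choosing $\varepsilon_m>0$ less than half that distance, let $N_m$ be the closed $\varepsilon_m$-neighbourhood of $\eta_m$ (a free closed disk contained in $\inte(b_i'^{-j_i^m})$) and set
$$c_i^m:=N_m\setminus\inte(b_{i-2}'^{k_i^m}).$$
Cutting an open half-disk bite out of a disk along part of its boundary yields a closed disk, so $c_i^m$ is a free closed disk; it lies in $b_i'^{-j_i^m}\subset U_i^-=U_{i-2}^+$ (property 1); it contains $w_m$, which is not deleted since $w_m\notin b_{i-2}'^{k_i^m}$ (property 4); and $c_i^m\cap b_{i-2}'^{k_i^m}=N_m\cap\partial b_{i-2}'^{k_i^m}$ is a single subarc of $\partial b_{i-2}'^{k_i^m}$ lying on $\partial c_i^m$, so that $c_i^m\cup b_{i-2}'^{k_i^m}$ is a closed disk (properties 2 and 6). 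Since $c_i^m\subset b_i'^{-j_i^m}$ and $j_i^m\to\infty$, the $c_i^m$ converge to $p$ in the Hausdorff topology (property 5). The disjointness in property 3 follows from the pairwise disjointness of the $b_i'^{-j_i^m}$, of the $b_{i-2}'^{k_i^m}$, and the fact that each $N_m$ was kept away from every $b_{i-2}'^{k}$ with $k\neq k_i^m$; property 8 holds because the $c_i^m$ are pairwise disjoint, each meets only $b_{i-2}'^{k_i^m}$ among the $b_{i-2}'^{k}$, and $N_m$ avoids the junction arcs $b_{i-2}'^{k_i^m}\cap b_{i-2}'^{k_i^m\pm1}$; and property 7 follows from these together with property 6, since locally the union $\bigcup_{k\geq1}b_{i-2}'^{k}\cup\bigcup_m c_i^m$ is either one of the original disks or two disks glued along a boundary arc of each.

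The only genuinely delicate point --- and the main obstacle --- is the bookkeeping of these choices: one must extract the subsequences $(j_i^m),(k_i^m)$ sparsely enough, and then pick the thickening radii $\varepsilon_m$ small enough, that the infinitely many bridges interfere neither with each other, nor with the chain $(b_{i-2}'^{k})$, nor with its junction arcs. This is exactly where the Hausdorff convergence of both families to the \emph{single} point $p=\om_{i-2}=\alpha_i$ is used, since it is what makes every high-index disk tiny and concentrated near $p$, hence disjoint from any fixed earlier one. Once the choices are organised as above, properties 1--8 are read off directly.
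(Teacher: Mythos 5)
Your proposal is correct and takes essentially the same route as the paper: exploit the $I_{\mbox{sing}}$ condition to pick a sparse increasing sequence $(j_i^m)$, join the orbit point $f^{-l_i-j_i^m+1}(z_i)\in \inte(b_i'^{-j_i^m})$ to a first contact point $x_m\in\partial b_{i-2}'^{k_i^m}$ by an arc avoiding the family $(b_{i-2}'^k)_k$, and thicken, with the Hausdorff convergence of both families to $\om_{i-2}=\alpha_i$ handling the disjointness bookkeeping (the paper organises this same bookkeeping inductively via the neighbourhoods $U_m$ and thresholds $K_m$). One small caveat: a metric $\varepsilon$-neighbourhood of an arc need not be a disk, and $N_m\setminus\inte(b_{i-2}'^{k_i^m})$ need not meet $b_{i-2}'^{k_i^m}$ in a single boundary arc as stated; the correct (and standard) fix is to take a tame disk thickening of $\eta_m$ in general position with respect to $\partial b_{i-2}'^{k_i^m}$, which is exactly the informal ``thickening'' step the paper itself invokes.
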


\begin{figure}[h]\label{fcin}
\begin{center}
\psfrag{k}{$\om _{i-2} = \alpha _i$}\psfrag{c}{$b'^{k_m}_{i-2}$}\psfrag{l}{ $c_i^m$}
\includegraphics[scale=0.6]{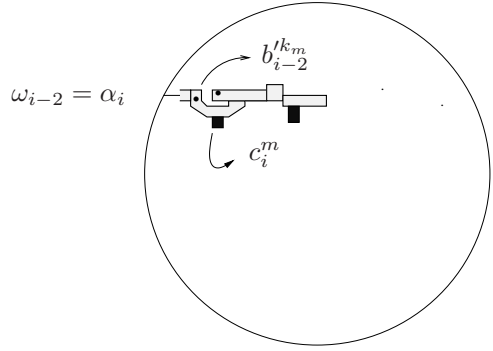}
\caption{The disks $b'^{k_m}_{i-2}$ and $c_i^m$}
\end{center}
\end{figure}

\begin{proof} Take  $i\in I_{\mbox{sing}}$ and consider the family of closed disks $(b_{i-2}'^k)_{k\geq 1} \subset U_{i-2}^+$. As
 $i\in I_{\mbox{sing}}$, there exists $ j_i^0 > 1$, such
that $$\inte(\cup _{k\geq 1}b _{i-2}'^{k}) \cap \inte(b _{i}'^{-j_i^0})\neq \emptyset.$$\noindent By Lemma \ref{ends}, item 7,
$f^{(-l_i-j_i^0+1)} (z_i)\in \inte (b_i'^{-j_i^0})\backslash (\cup_{l\geq 1}b _{i-2}'^{l})$.
We take an arc $$\gamma _i^0 \subset \inte (b_i'^{-j_i^0})\backslash \inte (\cup_{l\geq 1}b _{i-2}'^{l})$$
\noindent joining
$f^{(-l_i-j_i^0+1)} (z_i)$ and a point $x_i^0\in
\partial (\cup _{l\geq 1}b _{i-2}'^{l})$.  We define $k_i^0 \geq 1$ by $$x_i^0\in b _{i-2}'^{k_i^0}.$$

We define inductively for $m\geq 0$:
\begin{enumerate}
 \item $U_m\subset  U_{i-2}^+= U_i^- $ a neighbourhood of $\om_{i-2}=\alpha_i$ in $\overline \D$ such that
$$\overline{U_m}\cap
(\inte(b _{i-2}'^{k_i^m}) \cup \inte(b _{i}'^{-j_i^m})) = \emptyset,$$
\item $K_m >0$ such that for all $k\geq K_m$ $b_{i-2}'^{k}\cup b_{i}'^{-k}\subset U_m$,
\item $j_i^{m+1} > K_m$, such
that $\inte(\cup _{k\geq K_m}b _{i-2}'^{k}) \cap \inte(b _{i}'^{-j_i^{m+1}})\neq \emptyset$,
\item $\gamma_i^{m+1} \subset  \inte (b_i'^{-j_i^{m+1}})\backslash (\cup_{l\geq K_m}b _{i-2}'^{l})$ an arc joining
$f^{(-l_i-j_i^{m+1}+1)} (z_i)$ and a point $x_i^{m+1}\in \partial
(\cup _{k\geq K_m}b _{i-2}'^{k})$,
\item $k_i^{m+1}> K_m$ by $$x_i^{m+1}\in b _{i-2}'^{k_i^{m+1}}.$$

\end{enumerate}

The existence of $K_m$ comes from the fact that both sequences $(b_i'^{-l})_{l\geq 1}$, $(b_{i-2}'^l)_{l\geq 1}$
converge in de Hausdorff topology to $\alpha_i = \om_{i-2}$; that of $ j_i^{m+1}$ from the fact that
$i\in I_{\mbox{sing}}$; that of $\gamma_i^{m+1}$ from the choice of $ j_i^{m+1}$ and the fact that
$f^{(-l_i-j_i^{m+1}+1)} (z_i)\in \inte (b_i'^{-j_i^{m+1}})\backslash (\cup_{l\geq K_m}b _{i-2}'^{l})$, and that
of $x_i^{m+1}$ and $k_i^{m+1}$ follows from the choice of  $ j_i^{m+1}$.

By thickening these arcs $\{\gamma _i^m\}$, we can construct disks $\{c_i^m\}$ verifying all the conditions of the lemma.

\end{proof}

The proposition above allows us to construct a maximal free brick decomposition $(V,E,B)$ such that:

\begin{enumerate}
 \item for all $i\in \Z/n\Z$ and for all $l\geq 1$, there exists $b_i^l\in B$ such that $b_i'^l\subset b_i^l$,
\item for all $i\in I_{\mbox{reg}}$ and for all $l\geq 1$, there exists $b_i^{-l}\in B$ such that $b_i'^{-l}\subset b_i^{-l}$,
\item for all $m\geq 0$ and for all $i\in I_{\mbox{sing}}$ there exists $b_i^{-j_i^m}\in B$ such that $c_i^m\subset b_i^{-j_i^m}$.
\end{enumerate}

{\bf II. The ``domino effect'' of the elliptic order property.}

\begin{lema}\label{dosfut} Take two indices $i,j$ in $\Z/n\Z$, and two integers $k$ and $N$. If $b_j^k$ and
$b_{j+2}^k$ are contained in $[b_i^N]_>$, then there exists $k'\in \Z$ such that  $b_{l}^{k'}$ is contained in $[b_i^N]_>$
for all $l\in\Z/n\Z$.

\end{lema}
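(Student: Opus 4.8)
The plan is to deduce everything from the fact that $X:=[b_i^N]_{>}$ is a connected attractor: it is connected because $\cal D$ is a maximal free decomposition (Proposition~\ref{futcon}), and it is an attractor because $\varphi(X)=\bigcup_{p\geq2}\varphi^{p}(\{b_i^N\})\subset X$, so that $b\in X$ forces $[b]_{\geq}\subset X$ by Remark~\ref{br}\,(\ref{br1}). The first step would be a transfer principle. Write $\mathcal O_m=\{f^{q}(z_m):q\in\Z\}$. Each of the bricks $b_m^{l}$ (and, for $m\in I_{\mbox{sing}}$, each disk $c_m^{r}$) of the constructed decomposition, as well as every brick met by one of the finitely many intermediate orbit points $f^{q}(z_m)$ with $|q|\leq l_m$, contains a point of $\mathcal O_m$ (Lemmas~\ref{ends},~\ref{cim}); moreover the forward orbit of any point of $\mathcal O_m$ eventually lies in $b_m^{l}$ for every large $l$. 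Hence, if a brick $b$ of $X$ meets $\mathcal O_m$, then $b_m^{l}\in[b]_{\geq}\subset X$ for all $l\geq L_m$, for some $L_m\geq1$; since $b_m^{l}\to\omega_m$ in the Hausdorff topology, this gives $\omega_m\in\overline X$ and puts the whole forward tail of $z_m$ into $X$. Applied to the hypothesis, $b_j^{k}$ and $b_{j+2}^{k}$ meet $\mathcal O_j$ and $\mathcal O_{j+2}$ respectively, so already $\omega_j,\omega_{j+2}\in\overline X$, with the forward tails of $z_j$ and $z_{j+2}$ contained in $X$.

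Next, for each $m\in\Z/n\Z$ I would produce an arc $\Gamma_m\colon[0,1]\to\overline{\D}$ with $\Gamma_m(0)=\alpha_m$, $\Gamma_m(1)=\omega_m$, $\Gamma_m((0,1))\subset\D$, every brick of which meets $\mathcal O_m$. Concretely, one concatenates the hair $\Gamma_m^{+}\subset\inte(\cup_{l\geq1}b_m'^{l})\cup\{\omega_m\}$ reaching $\omega_m$ and the hair $\Gamma_m^{-}$ reaching $\alpha_m$ (when $m\in I_{\mbox{sing}}$, so that $\alpha_m=\omega_{m-2}$, built from the disks $c_m^{r}$ of Lemma~\ref{cim}) furnished by Remark~\ref{puntas}, links them across the bulk along the translation arcs $\gamma_m^{q}$, $|q|\leq l_m$, prunes the self-intersections to obtain an arc, and puts it in general position with respect to $\Sigma(\cal D)$. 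Now ellipticity gives the cyclic order $\cdots\to\omega_{m-2}\to\alpha_m\to\omega_{m-1}\to\alpha_{m+1}\to\omega_m\to\cdots$ on $\ell$, so $\omega_{m-1}$ and each $\omega_p$ with $p\neq m-1,m$ and $\omega_p\neq\alpha_m$ lie in different components of $S^{1}\setminus\{\alpha_m,\omega_m\}$. Since a forward tail reaches $S^{1}$ only at its limit point, a connected attractor containing the forward tails of $z_{m-1}$ and of $z_p$ cannot lie in one component of $\D\setminus\Gamma_m$, hence meets $\Gamma_m$ inside $\D$, hence contains a brick meeting $\mathcal O_m$, hence contains the forward tail of $z_m$.

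Now I would run the ``domino''. From $\omega_j,\omega_{j+2}\in\overline X$: these lie on opposite sides of $\{\alpha_{j+1},\omega_{j+1}\}$, so the step above applied to $\Gamma_{j+1}$ puts the forward tail of $z_{j+1}$ into $X$, whence $\omega_{j+1}\in\overline X$. Iterating with $\Gamma_{j+3},\Gamma_{j+4},\dots,\Gamma_{j+n-1}$ — at the step of $\Gamma_{j+t+1}$, for $2\leq t\leq n-2$, one uses that $\omega_{j+t}$, obtained at the previous step, and $\omega_j$ lie on opposite sides of $\{\alpha_{j+t+1},\omega_{j+t+1}\}$, which one reads off the cyclic order (and checks $j\notin\{j+t,j+t+1\}$) — one obtains $\{b_m^{l}:l\geq L_m\}\subset X$ for every $m\in\Z/n\Z$. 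Taking $k':=\max_{m\in\Z/n\Z}L_m$ then gives $b_m^{k'}\in[b_i^N]_{>}$ for all $m$, which is the assertion.

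I expect the second step to be the main obstacle: making rigorous that the arcs $\Gamma_m$ can be chosen with every brick they meet meeting $\mathcal O_m$ — the delicate part being the ``bulk'' segment joining the two hairs, where a priori the translation arcs $\gamma_m^{q}$ need not stay inside bricks that meet $\mathcal O_m$ — and, hand in hand with this, that ``$X$ meets $\Gamma_m$'' really forces a brick \emph{of} $X$, not merely one adjacent to $X$, to meet $\mathcal O_m$ (the latter reduction being handled through Remark~\ref{br}\,(\ref{br3})). Once these points are settled, the Jordan‑type separation argument in $\overline{\D}$ and the cyclic‑order induction are routine.
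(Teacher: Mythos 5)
Your first step (the ``transfer principle'': a brick of the attractor $X=[b_i^N]_>$ meeting the orbit of $z_m$ forces $b_m^l\in X$ for all large $l$) and your separation-plus-cyclic-order induction are sound, and in spirit they follow the same lines as the paper. But the proof hinges on the construction you yourself flag as the main obstacle, and that construction is a genuine gap, not a technicality: you need, for each $m$, an arc $\Gamma_m$ from $\alpha_m$ to $\om_m$ through $\D$ such that \emph{every} brick met by $\Gamma_m$ meets the orbit of $z_m$. Nothing in Lemma \ref{ends}, Lemma \ref{cim} or Remark \ref{puntas} gives this: the two ``hairs'' near $\alpha_m$ and $\om_m$ are fine, but the middle segment joining them must physically traverse bricks of the maximal free decomposition, and the translation arcs $\gamma_m^q$ of Proposition \ref{gamak} give no brick-wise control whatsoever -- consecutive orbit points need not lie in adjacent bricks, so any arc connecting them crosses intermediate bricks that have no reason to contain points of the orbit (nor even to lie in $\bigcup_{k''}[b_m^{k''}]_\leq$, so the weaker variant of your requirement is not available either). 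Without this property, ``$X$ meets $\Gamma_m$'' produces a brick of $X$ on $\Gamma_m$ but no handle to pull the forward tail of $z_m$ into $X$, and the domino never starts. (The adjacency worry you raise, by contrast, is harmless: a point of $X\cap\Gamma_m$ lies in a brick belonging to $X$, which is then a brick met by $\Gamma_m$.)

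The paper avoids exactly this difficulty by reversing the roles of the two objects in the separation argument. The separating arc is drawn \emph{inside the attractor}: since $b_j^l,\,b_{j+2}^l\subset[b_i^N]_>$ for all $l\geq k$ and $[b_i^N]_>$ is connected (Proposition \ref{futcon}), Remark \ref{puntas} yields an arc $\gamma:[0,1]\to[b_i^N]_>\cup\{\om_j,\om_{j+2}\}$ joining $\om_j$ and $\om_{j+2}$, which separates $\alpha_{j+1}$ from $\om_{j+1}$ (and $\alpha_{j+3}$ from $\om_{j+3}$), the relevant points being distinct because $n>3$ and the only coincidences are of the form $\alpha_i=\om_{i-2}$. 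The object forced to cross $\gamma$ is then the past $[b_{j+1}^{k''}]_\leq$, which is by construction a connected union of bricks accumulating on $\alpha_{j+1}$ and containing $b_{j+1}^{k''}$ near $\om_{j+1}$; a single brick in $[b_{j+1}^{k''}]_\leq\cap[b_i^N]_>$ immediately gives $b_{j+1}^{k''}\in[b_i^N]_>$ by the attractor property. Thus no arc from $\alpha_m$ to $\om_m$ with controlled bricks is ever needed; each step of the induction produces the two indices $j+1$ and $j+3$ at once, and one concludes by taking $k'$ large. If you want to salvage your write-up, replace your step 2 by this dual separation argument; as it stands, the proposal is incomplete at its decisive point.
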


\begin{proof} We will show that if $b_j^k$ and $b_{j+2}^k$ are contained in $[b_i^N]_>$, then there exists $k^{''}$
such that both
 $b_{j+1}^{k^{''}}$ and $b_{j+3}^{k^{''}}$ are contained in $[b_i^N]_>$.  If $b_j^k$ and $b_{j+2}^k$ are contained in $[b_i^N]_>$,
$b_j^l$ and $b_{j+2}^l$ are
contained in $[b_i^N]_>$ for all $l\geq k$.  By Remark \ref{puntas}, we can find an arc
$$\gamma : [0,1]\to [b_i^N]_>\cup \{\om_j, \om_{j+2}\}$$\noindent joining $\om_j$ and $\om_{j+2}$.  As $n>3$, and the coincidences
are of the form $\alpha _i = \om _{i-2}$, we know that the points $\alpha _{j+1}, \om_j, \alpha _{j+3}, \om _{j+2}$
are all different.  So, $\gamma $ separates both $\alpha_{j+1}$ from $\om_{j+1}$ and
$\alpha_{j+3}$ from $\om_{j+3}$. So,  there exists $k^{''}>0$ such that
$[b_{j+1}^{k^{''}}]_\leq \cap
[b_i^N]_> \neq \emptyset$
and $[b_{j+3}^{k^{''}}]_\leq\cap [b_i^{N}]_{>} \neq \emptyset$.
We are done by induction, and by taking $k'$ large enough.

\end{proof}

In the following lemma we make reference to the sequences $(k_i^m)_{m\geq 0}$ and $(j_i^m)_{m\geq 0}$ defined in Lemma
\ref{cim}.

\begin{lema}\label{fuckinpuntas} For every $i\in I_{\mbox{sing}}$, there exists $N>0$ such that $[b_i^{-j_i^N}]_\geq$
contains $b_{i-2}^{k_i^N}$.

\end{lema}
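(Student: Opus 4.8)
The plan is to exploit the construction in Lemma~\ref{cim}, in which the disks $c_i^m \subset b_i^{-j_i^m}$ and $b_{i-2}'^{k_i^m} \subset b_{i-2}^{k_i^m}$ meet along an arc, so that $c_i^m \cup b_{i-2}'^{k_i^m}$ is a topological closed disk, and the disk $b_{i-2}^{k_i^m}$ (being one of the disks carrying a forward iterate of $z_{i-2}$) lies in the strict future of $b_{i-2}^+$ when $b_{i-2}^+$ is chosen as in the standing construction. The key point is that $c_i^m$ and $b_{i-2}'^{k_i^m}$ have nonempty intersection as subsets of $\D$, hence the bricks $b_i^{-j_i^m}$ and $b_{i-2}^{k_i^m}$ are adjacent (or equal) in the brick decomposition. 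Since $b_{i-2}^{k_i^m}$ belongs to $[b_{i-2}^{k_i^0}]_\geq$, which is an attractor containing all the $b_{i-2}^l$ for $l \geq k_i^0$, adjacency of $b_i^{-j_i^m}$ to this attractor forces, by Remark~\ref{br}(\ref{br3}), that $[b_i^{-j_i^m}]_> \cap [b_{i-2}^{k_i^m}]_\geq \neq \emptyset$; in particular $b_{i-2}^{k_i^m}$ lies in $[b_i^{-j_i^m}]_\geq$ for $m$ large (or indeed for every $m$, once one tracks the adjacency carefully).

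Concretely, first I would fix $i \in I_{\mathrm{sing}}$, so that $\alpha_i = \om_{i-2}$, and recall from Lemma~\ref{cim}(2) and (6) that for each $m \geq 0$ the intersection $b_{i-2}'^{k_i^m} \cap c_i^m$ is an arc, hence nonempty; combined with Lemma~\ref{cim}(8) and item~1,3 of the brick-decomposition construction we get that the bricks $b_{i-2}^{k_i^m} \supset b_{i-2}'^{k_i^m}$ and $b_i^{-j_i^m} \supset c_i^m$ satisfy $b_{i-2}^{k_i^m} \cap b_i^{-j_i^m} \neq \emptyset$, i.e. they are adjacent or equal. They cannot be equal for large $m$ because $c_i^m$ accumulates on $\alpha_i$ from the $U_i^-$ side while $b_{i-2}'^{k_i^m}$ is a disk in the $b_{i-2}'$-family; if one worries about the degenerate possibility of equality, note that if $b_i^{-j_i^m} = b_{i-2}^{k_i^m}$ then trivially $b_{i-2}^{k_i^m} \in [b_i^{-j_i^m}]_\geq$ and we are done. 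Otherwise $b_i^{-j_i^m}$ is adjacent to the set $X = [b_{i-2}^{k_i^0}]_\geq$, which is an attractor by the remarks after Remark~\ref{br} (future of a brick), and which contains $b_{i-2}^{k_i^m}$ since $k_i^m \geq k_i^0$ implies $b_{i-2}^{k_i^m} \in \varphi^{k_i^m - k_i^0}(\{b_{i-2}^{k_i^0}\}) \subset [b_{i-2}^{k_i^0}]_\geq$ (here using Lemma~\ref{ends}(\ref{pto1}): consecutive disks $b_{i-2}'^l$ carry consecutive iterates of $z_{i-2}$, so $\varphi(\{b_{i-2}^l\}) \ni b_{i-2}^{l+1}$).

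Then, by Remark~\ref{br}(\ref{br3}), adjacency of $b_i^{-j_i^m}$ to the attractor $X$ gives $[b_i^{-j_i^m}]_> \cap X \neq \emptyset$. This alone is not quite the statement $b_{i-2}^{k_i^N} \in [b_i^{-j_i^N}]_\geq$; to pin down that it is exactly the disk with the matching index, I would argue as follows. For a single fixed $m$, $b_i^{-j_i^m}$ meets $b_{i-2}^{k_i^m}$ in $M$, so $b_{i-2}^{k_i^m}$ is adjacent to (or equal to) $b_i^{-j_i^m}$, hence also $b_{i-2}^{k_i^m}$ is adjacent to the attractor $[b_i^{-j_i^m}]_\geq$ (which contains $b_i^{-j_i^m}$). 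Applying Remark~\ref{br}(\ref{br3}) to the attractor $[b_i^{-j_i^m}]_\geq$ and the brick $b_{i-2}^{k_i^m}$ adjacent to it, we get $[b_{i-2}^{k_i^m}]_> \cap [b_i^{-j_i^m}]_\geq \neq \emptyset$. Since $f$ is non-recurrent, no closed chain of bricks exists, so $b_{i-2}^{k_i^m} \notin [b_{i-2}^{k_i^m}]_>$; but $[b_{i-2}^{k_i^m}]_>$ is an attractor intersecting the attractor $[b_i^{-j_i^m}]_\geq$, and the standard dichotomy (Remark~\ref{br}(\ref{br4}) together with the future/attractor machinery) forces one of these to contain $b_{i-2}^{k_i^m}$; since it cannot be $[b_{i-2}^{k_i^m}]_>$, we conclude $b_{i-2}^{k_i^m} \in [b_i^{-j_i^m}]_\geq$. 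Taking $N = m$ for any such $m$ (say $m=0$, or $m$ large enough that all the earlier genericity reductions apply) yields the claim.

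The main obstacle I anticipate is the bookkeeping needed to rule out, or harmlessly absorb, the various degenerate coincidences: whether $b_i^{-j_i^m}$ and $b_{i-2}^{k_i^m}$ are literally the same brick, whether the adjacency is ``across $M$'' versus ``in $B$'' (the excerpt explicitly warns that identifying subsets of $B$ with subsets of $M$ can create spurious intersections), and making sure the non-recurrence hypothesis is invoked in exactly the right place to convert ``the strict future of $b_{i-2}^{k_i^m}$ meets the future of $b_i^{-j_i^m}$'' into the cleaner containment $b_{i-2}^{k_i^m} \in [b_i^{-j_i^m}]_\geq$. Everything else — the fact that $b_{i-2}^{k_i^m}$ sits inside the future-attractor generated by the family $(b_{i-2}^l)_{l\geq 1}$, and that $c_i^m \subset b_i^{-j_i^m}$ — is immediate from Lemma~\ref{ends} and Lemma~\ref{cim}.
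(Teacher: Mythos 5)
There is a genuine gap, and it sits exactly where the real difficulty of this lemma lies. From the adjacency of the bricks $b_i^{-j_i^m}\supset c_i^m$ and $b_{i-2}^{k_i^m}\supset b_{i-2}'^{k_i^m}$ and Remark \ref{br}(\ref{br3}) you can only conclude that $[b_{i-2}^{k_i^m}]_>$ meets the attractor $[b_i^{-j_i^m}]_\geq$ (or that $[b_i^{-j_i^m}]_>$ meets $[b_{i-2}^{k_i^0}]_\geq$) somewhere; there is no ``standard dichotomy'' by which two intersecting attractors must contain a given brick adjacent to both, and Remark \ref{br}(\ref{br4}) says nothing of the sort (it only says disjointness in $B$ is equivalent to disjointness in $M$). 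Concretely, the scenario your argument fails to exclude is the one where, for every $m$, the dynamical crossing along the arc $b_{i-2}'^{k_i^m}\cap c_i^m$ goes the wrong way: $f(b_{i-2}'^{k_i^m})\cap c_i^m\neq\emptyset$ but $f(c_i^m)\cap b_{i-2}'^{k_i^m}=\emptyset$. In that case $b_i^{-j_i^m}\in[b_{i-2}^{k_i^m}]_>$, all your adjacency and intersection statements hold, and yet nothing local forces $b_{i-2}^{k_i^N}\in[b_i^{-j_i^N}]_\geq$. Your proposal would thus ``prove'' the lemma by purely local brick bookkeeping, without ever using the elliptic order of the cycle or non-recurrence in a substantive way — a sign that the argument is too weak, since the statement genuinely depends on both.

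The paper's proof has to work much harder precisely because of this. First it uses the disks $X_m=b_{i-2}'^{k_i^m}\cup c_i^m$, which form closed chains pairwise, together with Proposition \ref{guile}, to show that at most one $X_m$ is free; since each of $b_{i-2}'^{k_i^m}$ and $c_i^m$ is free, this yields the dichotomy that for each $m$ either $f(c_i^m)\cap b_{i-2}'^{k_i^m}\neq\emptyset$ (the good case, which gives the lemma directly) or $f(b_{i-2}'^{k_i^m})\cap c_i^m\neq\emptyset$. Then the bad case, holding for all $m$, is excluded by a global argument: Lemma \ref{dosfut} propagates $[b_{i-2}^{k_i^m}]_>$ to all families $b_j^l$, Remark \ref{puntas} produces arcs $\Gamma_m$ joining $\om_{i-2}$ and $\om_{i-4}$ inside this future, these arcs separate $\alpha_{i-1}$ from $\alpha_{i-3}$, and a final argument using the backward orbit of $z_{i-3}$ (or $z_{i-1}$) forces $[b_{i-2}^{k_i^m}]_\leq$ to meet $[b_{i-2}^{k_i^m}]_>$, contradicting non-recurrence. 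None of this global machinery appears in your proposal, and without it the lemma does not follow.
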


\begin{proof}  We will prove the following stronger statement which implies immediately that $[b_i^{-j_i^N}]_\geq$
contains $b_{i-2}^{k_i^N}$: there exists $N>0$ such that  $f(c_i^{N})\cap b_{i-2}'^{k_i^N}\neq \emptyset$.

{\bf I.} Let us begin by studying the local dynamics of the brick decomposition at $\alpha _i = \om _{i-2}$,
$i\in I_{\mbox{sing}}$. We define for all $m\geq 0$, $$X_m = b_{i-2}'^{k_i^m}\cup c_i^{m},$$\noindent and  we recall that every $X_m$
is a closed disk (see Lemma \ref{cim}). Then, for all $m\geq 0$,
% $m>p\geq 0$, $$f^{l_{i-2}+ k_i^p -1}(z_{i-2})\cup f ^{-l_i-j_i^p+1} (z_i)\in X_p$$\noident and
 $$f^{l_{i-2}+ k_i^m -1}(z_{i-2})\cup f ^{-l_i-j_i^m-j_i^m} (z_i)
\in X_m.$$

\noindent So, given any two positive integers $m>p$, one has: $$\cup _{k\geq 1}
f ^k (X_p) \cap X_m \neq \emptyset$$ and $$\cup _{k\geq 1} f ^k (X_m) \cap X_p \neq \emptyset.$$  Besides,
$X_m\cap X_p = \emptyset$ and $X_m$ and $X_p$ are topological closed disks. Therefore, if we can find $m>p\geq 0$ such that
both $X_p$ and $X_m$ are free sets, $f$ would be recurrent by Proposition \ref{guile}. So, we can suppose that for all $m\geq 0$
the set $X_m$ is not free. So, as for all $m\geq 0$ both $b_i'^{k_m}$ and $c_i^m$ are
free sets, then either $f(b_{i-2}'^{k_i^m})\cap  c_i^{m}\neq
 \emptyset$, or $f( c_i^{m})\cap b_{i-2}'^{k_i^m}\neq \emptyset$. If there exists $m>0$ such that $f( c_i^{m})
\cap b_{i-2}'^{k_i^m}\neq \emptyset$, we are done. So, we may assume that for all $m\geq 0$,  $f(b_{i-2}'^{k_i^m})\cap  c_i^{m}\neq\emptyset$. Then,
$f(b_{i-2}^{k_i^m})\cap  b_i^{-j_i^m}\neq \emptyset$ for all $m\geq 0$.
In particular,   $[b_{i-2}^{k_i^m}]_>$ contains $b_{i}^{l}$ for all $l>0$ and for all $m\geq 0$.\\

{\bf II.}  We will show that this implies that $f$ is recurrent. As $[b_{i-2}^{k_i^m}]_>$ contains $b_{i}^{k}$
 and $b_{i-2}^k$, for $k>k_i^m$,
Lemma \ref{dosfut} implies that for all
$m\geq 0$  there exists $l_m>0$ such that $[b_{i-2}^{k_i^m}]_>$ contains $b_{j}^{l}$  for all $j\in \Z/n\Z$ and for all $l\geq
 l_m$.

In particular, Remark \ref{puntas} tells us that for all $m\geq 0$ there exists  an arc
$$\Gamma_m : [0,1]\to  [b_{i-2}^{k_i^m}]_>\cup \{\om _{i-2}, \om _{i-4}\}$$\noindent joining $\om _{i-2}$ and $ \om _{i-4}$, which implies that $\Gamma _m$
separates  $\alpha_{i-1}$ from  $\alpha_{i-3}$ in $\overline \D$ (see Figure 8 (a) and observe that as $n>3$ the points
$\alpha _{i-3}, \om _{i-4},\alpha _{i-1}, \om _{i-2}$ are
all different).  As we are assuming that $f$ is not recurrent, we obtain that the closure of  $[b_{i-2}^{k_i^m}]_\leq$  cannot contain
both points $\alpha_{i-1}$ and $\alpha_{i-3}$.

We will suppose that for all $m\geq 0$, the closure of  $[b_{i-2}^{k_i^m}]_\leq$ does not contain one
of the points $\alpha_{i-1}$ and $\alpha_{i-3}$, and obtain a contradiction. As  $m>p$ implies
$$[b_{i-2}^{k_i^p}]_\leq\subset  [b_{i-2}^{k_i^m}]_\leq , $$ one of the points $\alpha_{i-1}$ or  $\alpha_{i-3}$ is not
contained in the closure of any of the sets $[b_{i-2}^{k_i^m}]_\leq$, $m\geq 0$.
Let us suppose that $\alpha_{i-3}$ is not contained in $\overline{[b_{i-2}^{k_i^m}]_\leq }$ for any $m\geq 0$ (the proof
is analogous in the other case). In particular, for all $m\geq 0$, $[b_{i-2}^{k_i^m}]_\leq$ does not contain any of the bricks containing
the orbit of $z_{i-3}$.  We take a neighbourhood $U$ of $\alpha _{i-3}$ in $\overline \D$ such that $U\cap [b_{i-2}^{k_i^0}]_\leq
=\emptyset$ and such that $U\cap \cup _{l>k_i^0} b_{i-2}^{l} =\emptyset$.  We take $j>0$ such that  $f^{-j}(z_{i-3})\in U$,
and an arc $\beta: [0,1]\to U$ joining $\alpha _{i-3}$ and $f^{-j}(z_{i-3})$. Take a brick $b\in B$ such that
$f^{-j}(z_{i-3})\in b$.  As $\cup _{l\geq 1} b_{i-3}'^{l}\subset [b]_\geq$, Remark \ref{puntas} allows us to take
an arc $\gamma : [0,1]\to [b]_{\geq}\cup \om_{i-3}$ joining $f^{-j}(z_{i-3})$ and $\om_{i-3}$.

So, $\beta . \gamma$
separates $\alpha _{i-2}$ from $\om _{i-2}$ in $\overline \D$ and
 $$\beta . \gamma\cap (\cup _{l>k_0} b_{i-2}^{l} \cup [b_{i-2}^{k_i^0}]_\leq )\neq \emptyset,$$\noindent which implies $$\gamma\cap (\cup _{l>k_0} b_{i-2}^{l} \cup [b_{i-2}^{k_i^0}]_\leq )\neq
\emptyset ,$$\noindent because of our choice of $U$ (see Figure 8 (b)).
So, $$b_{\geq}\cap \cup _{l>0} [b_{i-2}^l]_<\neq \emptyset, $$ which implies that for some $m\geq 0$,
$$[b]_{\geq}\cap [b_{i-2}^m]_<\neq \emptyset.$$

So, $b\in  [b_{i-2}^{k_i^m}]_\leq$, and   $[b_{i-2}^{k_i^m}]_\leq$ contains a brick containing one point of the orbit of
$z_{i-3}$.

This contradiction finishes the proof of the lemma.
\begin{figure}[h]
\begin{center}
\psfrag{a}{$\om _{i-2} = \alpha _i$}\psfrag{b}{$\alpha _{i-1}$}\psfrag{e}{ $\alpha _{i-3}$}\psfrag{d}{ $\om_{i-4}$}
\psfrag{g}{ $\Gamma _m$}\psfrag{x}{ $U$}
   \subfigure[]{\includegraphics[scale=0.6]{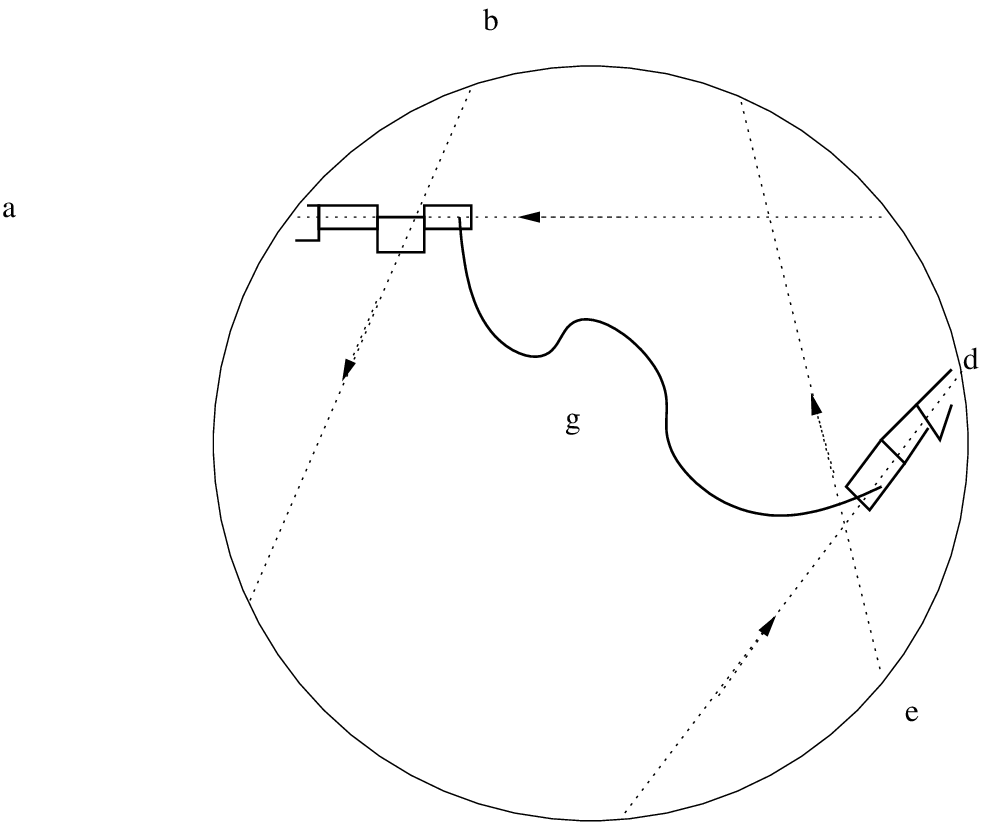}}\hspace{.25in}
    \subfigure[] {\includegraphics[scale=0.6]{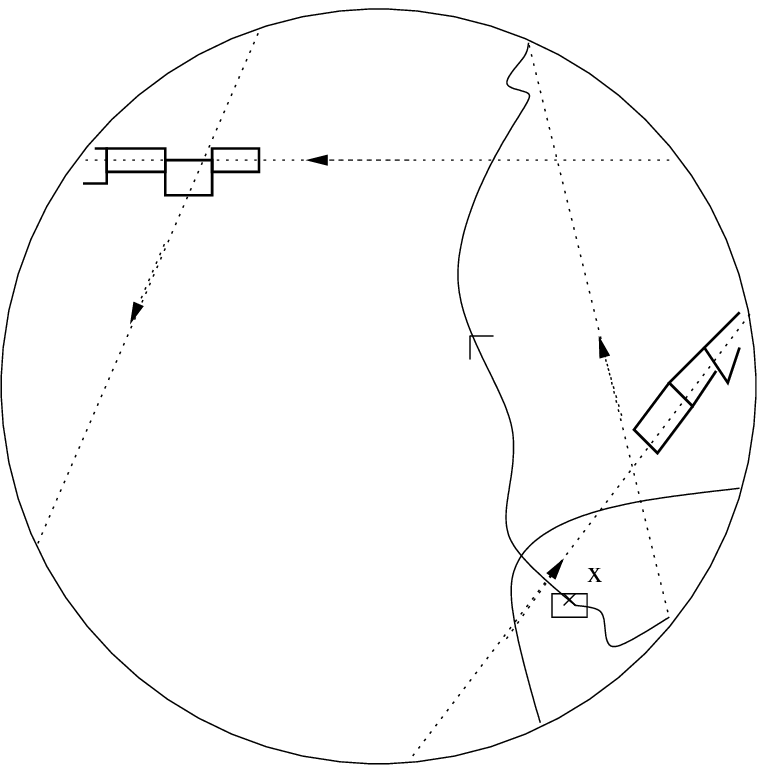}}\hspace{.25in} \\
\caption{The proof of lemma \ref{fuckinpuntas}}
\end{center}
 \end{figure}

\end{proof}

\begin{lema}\label{theka}  There exists $k>0$ such that for any pair of indices $i,j$ in $\Z/n\Z$, the attractor
 $[b_i^{-k}]_>$ contains $b_j^k$.

\end{lema}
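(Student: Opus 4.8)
The plan is to combine the "domino effect" (Lemma \ref{dosfut}), the structure at singular indices (Lemma \ref{fuckinpuntas}), and the convergence properties of the brick families to promote local containment into a uniform global containment statement for a single negative-index attractor.

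First I would dispose of the singular indices. For each $i\in I_{\mbox{sing}}$, Lemma \ref{fuckinpuntas} gives $N_i>0$ with $b_{i-2}^{k_i^{N_i}}\subset [b_i^{-j_i^{N_i}}]_\geq$. Since $[b_i^{-j_i^{N_i}}]_\geq$ is an attractor, it also contains $b_{i-2}^{l}$ for all $l\geq k_i^{N_i}$, and by Lemma \ref{dosfut} applied with $j=i-2$ (using that $b_{i-2}^l$ and $b_{i}^l$ both lie in this attractor for large $l$ — the latter because $b_i^l$ for $l\geq 1$ converges to $\om_i=\alpha_{i}$... more precisely one uses the arc $\Gamma_{i-2}^+$ of Remark \ref{puntas} together with $b_{i-2}^l\subset[b_i^{-j_i^{N_i}}]_\geq$) we get that $[b_i^{-j_i^{N_i}}]_\geq$ contains $b_l^{k}$ for all $l\in\Z/n\Z$ and all $k$ sufficiently large. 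So for each singular $i$ there is a negative-index brick whose future contains an entire "far tail" $b_l^{k}$, $l\in\Z/n\Z$, $k\geq k_0(i)$.

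Next, for the regular indices $i\in I_{\mbox{reg}}$, I would argue directly. When $\alpha_i\neq\om_{i-2}$, or in the separated case where $\cup_{k>K}\inte(b_{i-2}'^k)$ and $\cup_{k>K}\inte(b_i'^{-k})$ are disjoint, the disk $b_i'^{-l}$ sits in a genuine brick $b_i^{-l}$ for all $l\geq 1$, and the sequence $(b_i^{-l})_{l\geq 1}$ converges to $\alpha_i$ in the Hausdorff topology. Using Remark \ref{puntas} one builds an arc from $\alpha_i$ into $\cup_{l\geq 1}b_i'^{-l}$, and using that $f^{l_i+l}(z_i)\in\inte(b_i'^{l+1})$, the past-future dynamics forces $b_i^{-l}$ (for suitable $l$) to have in its future the bricks carrying the whole forward orbit of $z_i$, hence $b_i^{k}\subset[b_i^{-l}]_>$ for all large $k$; then Lemma \ref{dosfut} again spreads this to all indices, giving a negative-index brick whose future contains $b_l^k$ for all $l$ and all $k\geq k_0(i)$.

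Finally I would amalgamate. Having produced, for every $i\in\Z/n\Z$, an integer $m_i$ such that $[b_i^{-m_i}]_>$ contains $b_l^{k}$ for all $l$ and all $k\geq k_0$ (take $k_0$ the max of the finitely many thresholds, and relabel the negative index if needed so the hypothesis reads $[b_i^{-k}]_>$), one picks a single $k>k_0$ large enough that simultaneously $k>m_i$ for all $i$ — since an attractor is monotone in the sense $[b_i^{-m_i}]_>\subset[b_i^{-k}]_>$ once the larger negative-index brick is adjacent into the smaller one's past, which holds because the disks $b_i'^{-l}$ are nested along a connected chain (item 3 of Lemma \ref{ends}) so $b_i^{-k}\in[b_i^{-m_i}]_\leq$ — it follows that $[b_i^{-k}]_>\supset[b_i^{-m_i}]_>\supset b_j^{k}$ for every pair $i,j$. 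The main obstacle I expect is the bookkeeping in this last amalgamation step: making precise that enlarging the negative index keeps us inside the same future (i.e. $[b_i^{-k}]_>\supset[b_i^{-m_i}]_>$ for $k\geq m_i$), which requires knowing the chain $(b_i'^{-l})_l$ realizes the backward orbit of $z_i$ so that each $b_i^{-k}$ is genuinely in the past of $b_i^{-m_i}$ — this is where items \ref{pto1} and 3 of Lemma \ref{ends}, and the corresponding items of Lemma \ref{cim} for singular indices, must be invoked carefully.
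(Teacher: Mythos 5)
Your singular-case and amalgamation steps do follow the paper's argument in essence: for $i\in I_{\mbox{sing}}$ one feeds the pair of indices $i-2$, $i$ produced by Lemma \ref{fuckinpuntas} into Lemma \ref{dosfut}, and the final ``take $k$ sufficiently large'' is justified, as you indicate, by the monotonicity $[b_i^{-m}]_>\subset [b_i^{-k}]_>$ for $k\geq m$, which comes from the fact that the bricks with negative upper index carry the backward orbit of $z_i$ (item \ref{pto1} of Lemma \ref{ends}, item 4 of Lemma \ref{cim}); the paper leaves this last point implicit. A small repair: the clean reason why $b_i^{k}\subset [b_i^{-j_i^{N}}]_\geq$ for singular $i$ is simply that $c_i^{N}\subset b_i^{-j_i^{N}}$ contains the backward-orbit point $f^{-l_i-j_i^{N}+1}(z_i)$, so the whole forward orbit of $z_i$ lies in its future; no convergence or $\Gamma_{i-2}^+$ argument is needed there.

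The genuine gap is in the regular case. For $i\in I_{\mbox{reg}}$ you only establish $b_i^{k}\subset[b_i^{-l}]_>$ for large $k$ --- which is automatic, since $b_i^{-l}\supset b_i'^{-l}$ contains a point of the orbit of $z_i$ --- and then assert that Lemma \ref{dosfut} ``spreads this to all indices''. But Lemma \ref{dosfut} needs two families of bricks whose indices differ by two, $b_j^{k}$ and $b_{j+2}^{k}$, inside the attractor; a single index gives the domino lemma nothing to act on. This is exactly where the elliptic order property must enter, and it is the core of the paper's proof of this lemma: for regular $i$ one has $\cup_{l\geq 1}b_i'^{-l}\subset \cup_{l>0}[b_i^{-l}]_>$ (which may fail for singular $i$), so by Remark \ref{puntas} there is an arc $\Gamma_i$ joining $\alpha_i$ to $\om_i$ inside $\cup_{l>0}[b_i^{-l}]_>\cup\{\alpha_i,\om_i\}$; by the elliptic order this arc separates $\alpha_{i-1}$ from $\om_{i-1}$ and $\alpha_{i+1}$ from $\om_{i+1}$ in $\overline{\D}$, so the orbits of $z_{i-1}$ and $z_{i+1}$ must cross it, giving some $m$ with $b_{i-1}^{m}, b_{i+1}^{m}\subset[b_i^{-m}]_>$; only then does Lemma \ref{dosfut} (applied with $j=i-1$) propagate containment to all indices. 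Without this separation step your argument for regular indices does not go through.
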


\begin{proof} For all $i \in I_{\mbox{reg}}$, we know that $\cup _{l\geq 1} b_i'^{-l}\subset \cup _{l>0} [b_i^{-l}]_>$ (note
that this is not necessarily the case if $i\in I_{\mbox{sing}}$).  So,
by Remark \ref{puntas}, there exists an arc $$\Gamma _i: [0,1] \to \cup _{l>0} [b_i^{-l} ]_>\cup
\{\alpha _i, \om _i\}$$ \noindent joining $\alpha _i $ and  $ \om _i $.  So, $\Gamma _i$ separates both
$\alpha _{i-1}$ from $\om _{i-1}$ and  $\alpha _{i+1}$ from $\om _{i+1}$ in $\overline \D$. Therefore, there
 exists $m >0$ such that
$[b_i^{-m}]_>$ contains both $b_{i+1}^m$ and $b_{i-1}^m$.  By Lemma \ref{dosfut},  $[b_i^{-m}]_>$ contains $b_j^l$ for all
$j\in \Z/n\Z$, and $l$ large enough.

For all $ i \in I_{\mbox{sing}}$, the previous lemma tells us that there exists $N>0$ such that $[b_i^{-j_i^N}]_\geq$ contains
$b_{i-2}^{k_i^N}$.  Clearly, $[b_i^{-j_i^N}]_\geq$ also contains $b_{i}^{k_i^N}$ and so once again, Lemma \ref{dosfut} implies
 that  $[b_i^{-j_i^N}]_\geq$ contains
$b_{j}^{l}$, for all $j\in \Z/n\Z$, and $l$ large enough. We finish by taking $k$ sufficiently large.
\end{proof}

{\bf III. Constraints on the order of the cycle of links ${\cal L}$.}

We fix $k>0$ such that for any pair of indices  $i,j$ in $\Z/n\Z$, $[b_i^{-k}]_>$ contains $b_j^k$.
We define $$a_i = (\cup _{m\geq k} b_i^m) \cap \Gamma _i^+,\  i\in \Z/n\Z$$ \noindent (see Remark \ref{puntas} for the definition of
$\Gamma _i^+$).  We may suppose that $$U = \D \backslash \cup _{i\in\Z/n\Z}
a_i$$\noindent  is simply connected.  As $a_i \subset \cup _{m\geq k} b_i^m$, and we are supposing that $f$ is not recurrent, we know that
$[b_i^{-k}]_<\subset U$ for all $i\in \Z/n\Z$.

Let $\varphi: U \to \D$ be the Riemann map and consider the intervals
$J_i, i\in \Z/n\Z$ defined in 3.1.
We define $I_i$ as to be the connected component of $S^1\backslash \cup _{l\in \Z/n\Z} J_l$ following $J_{i-2}$ in the natural
(positive) cyclic order on $S^1$ .  So, each $I_i$ is a closed interval, and we have:

$$J_{i-2} \to I_{i} \to J_{i-1}$$\noindent  for all $i \in \Z/n\Z.$

\begin{lema}\label{jis}\begin{enumerate} For all $i\in \Z/n\Z$,
             \item  there exists $j_i\in \Z/n\Z$ such that
$\overline {\varphi ([b_i^{-k}]_<)}\cap S^1\subset I_{j_i}$,
\item  $j_i \in \{i-1, i\}$,
\item if $\alpha _i \neq \om _{i-2}$, then $j_i = i$.
            \end{enumerate}

\end{lema}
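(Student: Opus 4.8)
The plan is to derive all three items from a single boundary–separation argument of the type used in the proofs of Lemmas \ref{quid} and \ref{quidh}, combined with the elliptic order property. First I would collect what we know about $[b_i^{-k}]_<$. Since $\cal D$ is free and maximal, Proposition \ref{futcon} gives that $[b_i^{-k}]_<$ is connected; by the paragraph preceding the lemma it is contained in $U$; and by construction (Lemma \ref{ends} if $i\in I_{\mathrm{reg}}$, Lemma \ref{cim} if $i\in I_{\mathrm{sing}}$) the ``past tail'' of the orbit of $z_i$ — the bricks $b_i^{-l}$, $l$ large, respectively the bricks $b_i^{-j_i^m}\supset c_i^m$ — lies in $[b_i^{-k}]_<$ and accumulates onto $\alpha_i$, so that $\alpha_i\in\overline{\varphi([b_i^{-k}]_<)}\cap S^1$. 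On the other side, Lemma \ref{theka} tells us that $[b_i^{-k}]_>$ is a connected attractor containing $b_j^k$ for every $j$, hence containing every $a_j$ (indeed $a_j\subset\cup_{m\geq k}b_j^m\subset[b_j^k]_\geq\subset[b_i^{-k}]_>$, using $f^{l_j+l}(z_j)\in\inte(b_j'^{l+1})$ from Lemma \ref{ends}); and, $f$ being non‑recurrent, Proposition \ref{franksfino} gives $b_i^{-k}\notin[b_i^{-k}]_>$, i.e. $[b_i^{-k}]_<\cap[b_i^{-k}]_>=\emptyset$.

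For item 1, suppose that $\overline{\varphi([b_i^{-k}]_<)}\cap S^1$ met two distinct components of $S^1\backslash\cup_l J_l$, or met the interior of some $J_l$. Then, exactly as in the proofs of Lemmas \ref{quid} and \ref{quidh}, one picks indices $p,q$ so that every arc of $\overline{\D}$ joining $\overline{J_p}$ to $\overline{J_q}$ separates the two boundary points of $\overline{\varphi([b_i^{-k}]_<)}\cap S^1$ singled out above. Since $a_p,a_q\subset[b_i^{-k}]_>$, one routes a crosscut $\gamma$ of $U$ from $a_p$ to $a_q$ with $\gamma\cap U\subset[b_i^{-k}]_>$, so that $\overline{\varphi(\gamma\cap U)}$ is such a separating arc. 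As $\varphi([b_i^{-k}]_<)$ is connected and its closure reaches both sides of this arc, it must meet $\varphi(\gamma\cap U)$, whence $[b_i^{-k}]_<\cap[b_i^{-k}]_>\neq\emptyset$, contradicting non‑recurrence. Therefore $\overline{\varphi([b_i^{-k}]_<)}\cap S^1$ is contained in a single gap $I_{j_i}$.

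For items 2 and 3, recall from the first paragraph that $\alpha_i\in\overline{\varphi([b_i^{-k}]_<)}\cap S^1\subset I_{j_i}$. The cyclic order of the sets $a_l$ is that of the points $\om_l$, so $a_{i-2}$ and $a_{i-1}$ are consecutive, $I_i$ is the unique gap lying (in $S^1$) between $\om_{i-2}$ and $\om_{i-1}$, and the only gaps whose closures reach $\om_{i-2}$ are $I_{i-1}$ and $I_i$. By the elliptic order property $\om_{i-2}\toe\alpha_i\to\om_i$ applied with the index shifted, namely $\om_{i-2}\toe\alpha_i\to\om_{i-1}$, the point $\alpha_i$ lies on the closed arc of $S^1$ from $\om_{i-2}$ to $\om_{i-1}$; combined with $\alpha_i\in I_{j_i}$ this forces $j_i\in\{i-1,i\}$. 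Finally, if $\alpha_i\neq\om_{i-2}$, then — the only coincidences in the elliptic case being of the form $\alpha_j=\om_{j-2}$ — we also have $\alpha_i\neq\om_{i-1}$, so $\alpha_i$ is interior to that arc, hence $\alpha_i\in\inte(I_i)$, and therefore $j_i=i$.

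The step I expect to be the main obstacle is the careful prime‑end bookkeeping hidden in item 1: making rigorous that $\overline{\varphi([b_i^{-k}]_<)}\cap S^1$ cannot slip into the interior of some $J_l$ (equivalently, that the repeller $[b_i^{-k}]_<$ cannot become adjacent in $\D$ to the attractor $a_l$), and, at a coincidence point $\alpha_i=\om_{i-2}$, pinning down that $\alpha_i$ genuinely sits at the common endpoint of the closed intervals $I_{i-1}$ and $I_i$ — so that both alternatives in item 2 really occur and nothing sharper is being claimed.
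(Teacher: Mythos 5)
Your item 1 is essentially the paper's argument (crosscut in $[b_i^{-k}]_>$ between two $a$'s whose images separate the two gaps, contradicting non-recurrence), except for one small slip: when $\overline{\varphi([b_i^{-k}]_<)}$ meets the interior of some $J_l$ there need not be ``two boundary points'' to separate, so the separation argument does not apply to that subcase; the correct (and easy) fix is the paper's direct one, namely that meeting $J_l$ forces $[b_i^{-k}]_<\cap a_l\neq \emptyset$ in $\D$, contradicting $[b_i^{-k}]_<\subset U$.

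The genuine gap is in items 2 and 3. The assertion ``$\alpha_i\in\overline{\varphi([b_i^{-k}]_<)}\cap S^1\subset I_{j_i}$'' conflates two different circles: $\alpha_i$ lives on $\partial\D$, while $\overline{\varphi([b_i^{-k}]_<)}\cap S^1$ lives on the circle of prime ends of $U$. The correct statement extracted from ``$[b_i^{-k}]_<$ accumulates on $\alpha_i$'' is only that some prime end lying in $I_{j_i}$ has impression containing $\alpha_i$. To conclude $j_i\in\{i-1,i\}$ (resp. $j_i=i$ when $\alpha_i\neq\om_{i-2}$) you then need to know that the only gaps carrying a prime end whose impression contains $\alpha_i$ are $I_{i-1}$ and $I_i$ (resp. only $I_i$). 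This is exactly the point at issue: impressions need not be single points, the sets $a_l$ are not a priori arcs landing at $\om_l$, and the cyclic order of the $a_l$ alone does not exclude a far-away gap from having a large impression reaching $\alpha_i$; you yourself flag this as ``the main obstacle'' but do not supply an argument. The paper closes precisely this hole with a second crosscut argument: for item 2 it takes a crosscut $\gamma\subset[b_i^{-k}]_>$ from $a_{i-3}$ to $a_{i-1}$, observes (using the elliptic order and $\alpha_i\notin\{\om_{i-3},\om_{i-1}\}$) that $\alpha_i$ lies in the closure of only one component of $U\setminus\gamma$, deduces that the connected set $[b_i^{-k}]_<$ lies in that component, and then notes that the closure of the corresponding component of $\D\setminus\varphi(\gamma\cap U)$ meets only $I_{i-1}$ and $I_i$ among the gaps; for item 3 the same argument with a crosscut from $a_{i-2}$ to $a_{i-1}$ (legitimate since then $\alpha_i\notin\{\om_{i-2},\om_{i-1}\}$) pins $j_i=i$. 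Without some version of this separation step your items 2 and 3 are asserted rather than proved.
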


\begin{proof}
 \begin{enumerate}
  \item If there exists $x\in \overline {\varphi ([b_i^{-k}]_<)}\cap J_{j}$ for some $j\in \Z/n\Z$, then
$\overline{[b_i^{-k}]_<}\cap a_j\neq \emptyset$.  As  $[b_i^{-k}]_<$ is closed in $\D$,
 and as $a_j\subset \D$, we obtain
$[b_i^{-k}]_<\cap a_j\neq \emptyset$, a contradiction.  So, $\overline {\varphi ([b_i^{-k}]_<)}\subset \cup
_{j\in \Z/n\Z} I_j$. If  $\overline {\varphi ([b_i^{-k}]_<)}$ intersects $I_j$ and $I_k$, $k\neq j$, then there exists two different
indices
$i_0$ and $i_1$ in $\Z/n\Z$ such that any arc joining $J_{i_0}$ and $J_{i_1}$ separates $I_j$ from $I_k$.  We take a crosscut
$\gamma$ from $a_{i_0}$ to $a_{i_1}$ such that $\gamma \subset [b_i^{-k}]_>$.  So,
$$\varphi(\gamma \cap U)\cap
\varphi ([b_i^{-k}]_<)\neq \emptyset, $$ and consequently $$[b_i^{-k}]_>\cap [b_i^{-k}]_<\neq \emptyset, $$ which contradicts
our assumption that $f$ is not recurrent.

\item Take a crosscut $\gamma\subset [b_i^{-k}]_>$ from $a_{i-3}$ to $a_{i-1}$.  Then, the elliptic order property implies that
 $\alpha_i$ belongs to the closure
of only one of the two connected components of $U\backslash \gamma$; the one to the right of $\gamma$. We use here
the fact that $\alpha_i \notin \{\om_{i-3}, \om_{i-1}\}$. So, $[b_i^{-k}]_<$ also belongs to the connected component of
$U\backslash \gamma$ which is to the right of $\gamma$.  Consequently, $\varphi([b_i^{-k}]_<)$ belongs to the connected
component of $\D\backslash \varphi(\gamma\cap U)$ which is to the right of $\varphi(\gamma\cap U)$.  As
$\overline{\varphi(\gamma\cap U)}$
is an arc
from $J_{i-3}$ to $J_{i-1}$, the closure of this connected component only contains $I_i$ and $I_{i-1}$.  So,
we obtain $j_i \in \{i-1, i\}$.

\item If $\alpha _i \neq \om _{i-2}$, we can apply exactly the same argument than in the preceding item, but using a crosscut
$\gamma$ from $a_{i-2}$ to $a_{i-1}$, obtaining $j_i = i$.
 \end{enumerate}

\end{proof}

\begin{obs}\label{hipquid}
 If we set $b_i^- = b_i^{-k}$, and $b_i^+ = b_i^k$ , the bricks $ b_{i}^{-}$, $i\in\{i_0, i_1,i_2\}$ satisfy all
the hypothesis of Lemma
\ref{quid}, where $i_0, i_1,i_2$ are any three different indices $\in \Z/n\Z$.  Indeed, $k$ is chosen so
that 2. and 3. (a), hold, 3.(b) is granted since $\alpha_i\subset \overline{[b_i^-]_<}$ for all $i\in \Z/n\Z$, and 3. (c) is the
content of item 1. in the preceding lemma.

\end{obs}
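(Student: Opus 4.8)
The plan is to run through the three numbered hypotheses of Lemma \ref{quid} one by one, taking $b_i^{+}=b_i^{k}$ for every $i\in\Z/n\Z$ and letting the three bricks $b_{i_0}^{-}=b_{i_0}^{-k}$, $b_{i_1}^{-}=b_{i_1}^{-k}$, $b_{i_2}^{-}=b_{i_2}^{-k}$ play the role of the triple $(b_s^{-})_{s\in\Z/3\Z}$. None of these checks needs a new idea; each is a repackaging of something already established, so the ``proof'' is essentially an exercise in bookkeeping.

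First I would verify hypothesis 1, namely that the cyclic order of the sets $a_i$ is $a_0\to a_1\to\cdots\to a_{n-1}\to a_0$. The key observation is that each $a_i$ is a closed connected subset of $\overline{\D}$ meeting $S^1$ only at the single point $\om_i$: indeed $a_i\subset\cup_{m\geq k}b_i^{m}$, the disks $b_i'^{m}\subset b_i^{m}$ converge to $\om_i$, and a tail of the arc $\Gamma_i^{+}$ lies in $\cup_{m\geq k}b_i^{m}$, so $\om_i=\Gamma_i^{+}(1)\in\overline{a_i}$. Since the $a_i$ are pairwise disjoint and each touches $S^1$ only at $\om_i$, the cyclic order of the prime-end intervals $J_i$, and hence of the $a_i$, coincides with the positive cyclic order of the points $\om_i$ on $S^1$ (the Riemann map is orientation preserving, so no reversal occurs); and the elliptic order property $\om_{i-1}\toe\alpha_{i+1}\to\om_i$, together with the fact that for $n>3$ the $\om_i$ are pairwise distinct, places these points in exactly the order $\om_0\to\om_1\to\cdots\to\om_{n-1}\to\om_0$.

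Next I would dispatch hypotheses 2 and 3. For hypothesis 2, the orbit of $z_i$ forces $f(b_i'^{m-1})\cap b_i'^{m}\neq\emptyset$ for all $m\geq1$ (Lemma \ref{ends}), hence $b_i^{m}\in\varphi(\{b_i^{m-1}\})$ and so $b_i^{m}\in[b_i^{k}]_{>}$ for every $m>k$; choosing the arc $\Gamma_i^{+}$, and therefore $a_i$, inside $\cup_{m>k}b_i^{m}$ (see the last paragraph) then gives $a_i\subset[b_i^{k}]_{>}$. For hypothesis 3(a), the inclusion $b_{i_s}^{-k}\subset[b_i^{k}]_{<}$, for every $s$ and every $i$, is precisely the statement $b_i^{k}\in[b_{i_s}^{-k}]_{>}$, which is Lemma \ref{theka}; and $[b_i^{-k}]_{<}\subset U$ was already recorded in the construction of the brick decomposition (a brick common to $[b_i^{-k}]_{<}$ and to some $a_j\subset[b_i^{-k}]_{>}$ would give a closed chain of bricks, contradicting non-recurrence). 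For hypothesis 3(b), applying the argument just used for $f$ to $f^{-1}$ gives $b_{i_s}^{-l}\in[b_{i_s}^{-k}]_{<}$ for all $l>k$, and these disks converge to $\alpha_{i_s}$, so $\alpha_{i_s}\in\overline{[b_{i_s}^{-k}]_{<}}\cap\partial U$. Finally hypothesis 3(c), the existence for each $s$ of an index $j_{i_s}$ with $\overline{\varphi([b_{i_s}^{-k}]_{<})}\cap S^1\subset I_{j_{i_s}}$, is exactly item 1 of Lemma \ref{jis}.

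The only point that calls for any care is hypothesis 2: one must place $a_i$ in the \emph{strict} future $[b_i^{k}]_{>}$, not merely in $[b_i^{k}]_{\geq}$, which means $a_i$ must avoid the brick $b_i^{k}$ itself --- for otherwise $a_i\subset[b_i^{k}]_{>}$ would yield $b_i^{k}\in[b_i^{k}]_{>}$ and hence recurrence of $f$, against the standing hypothesis. This is harmless: it suffices to take the arc $\Gamma_i^{+}$ of Remark \ref{puntas} inside $\inte(\cup_{m>k}b_i'^{m})\cup\{\om_i\}$, which is still path-connected and still accumulates at $\om_i$, so that $a_i=\Gamma_i^{+}\subset\cup_{m>k}b_i^{m}$. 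With this adjustment all three hypotheses of Lemma \ref{quid} hold, which is the assertion of the remark.
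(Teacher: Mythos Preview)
Your argument is correct and follows the paper's own justification essentially verbatim: hypotheses~2 and~3(a) come from the choice of $k$ in Lemma~\ref{theka}, 3(b) from $\alpha_i\in\overline{[b_i^{-k}]_<}$, and 3(c) is item~1 of Lemma~\ref{jis}. You are in fact more careful than the paper on two points it leaves implicit: you explicitly derive the cyclic order $a_0\to a_1\to\cdots\to a_{n-1}$ (hypothesis~1) from the elliptic order of the $\omega_i$, and you flag the need to place $a_i$ in the \emph{strict} future $[b_i^k]_>$ rather than $[b_i^k]_\geq$, offering the harmless fix of taking $\Gamma_i^+$ inside $\cup_{m>k}b_i'^m$.
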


The second item in the preceding lemma gives us:

\begin{cor}\label{koro}  If $|i-l|\geq 2$, then $j_i\neq j_l$.

\end{cor}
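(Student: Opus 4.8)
The plan is to deduce Corollary \ref{koro} directly from item (2) of Lemma \ref{jis}, which asserts that $j_i\in\{i-1,i\}$ for every $i\in\Z/n\Z$. Recall also that the intervals $I_m$, $m\in\Z/n\Z$, are by construction the (pairwise disjoint) connected components of $S^1\backslash\cup_m J_m$, and that $\overline{\varphi([b_i^{-k}]_<)}$ is a nonempty connected subset of $S^1$ contained in a single $I_{j_i}$; so the equality $j_i=j_l$ is exactly the statement that $\overline{\varphi([b_i^{-k}]_<)}$ and $\overline{\varphi([b_l^{-k}]_<)}$ lie in the same component. In particular $j_i$ is well defined, and the corollary is a purely combinatorial consequence of the inclusions $j_i\in\{i-1,i\}$.

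I would argue by contraposition. Suppose $j_i=j_l=:m$. Applying Lemma \ref{jis}(2) to $i$ gives $m\in\{i-1,i\}$, hence $i\in\{m,m+1\}$; applying it to $l$ gives $l\in\{m,m+1\}$. Running through the four possibilities for $(i,l)\in\{m,m+1\}^2$, one finds in each case that $i\equiv l$ or $i\equiv l\pm 1\pmod n$, i.e.\ the cyclic distance $|i-l|$ in $\Z/n\Z$ is at most $1$. Equivalently, $|i-l|\ge 2$ forces $j_i\ne j_l$, which is the assertion. I do not expect any real obstacle here; the only things to keep in mind are that $|i-l|$ denotes the cyclic distance in $\Z/n\Z$, and that we are in the regime $n>3$ fixed at the start of the proof of Proposition \ref{ell}, so that $m$ and $m+1$ are genuinely distinct indices and the case check is non-degenerate (for $n=3$ the hypothesis $|i-l|\ge2$ is vacuous anyway).
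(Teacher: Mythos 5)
Your argument is correct and is exactly the paper's: the paper states the corollary as an immediate consequence of item 2 of Lemma \ref{jis} (the inclusion $j_i\in\{i-1,i\}$ forces $\{i-1,i\}\cap\{l-1,l\}\neq\emptyset$ whenever $j_i=j_l$, hence $|i-l|\leq 1$), with no further argument given. Your added remarks on well-definedness of $j_i$ and the cyclic reading of $|i-l|$ are consistent with how the paper uses the corollary afterwards.
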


The constraints on the order ${\cal L}$ follows.

\begin{lema} The order of ${\cal L}$ is either $4$ or $5$.

\end{lema}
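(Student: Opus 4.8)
The plan is to apply Lemma~\ref{quid} to every triple of indices and combine the resulting non-injectivity with Corollary~\ref{koro}; this will yield a purely combinatorial constraint on $n$ that forces $n\leq 5$.

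First I would fix three pairwise distinct indices $i_0,i_1,i_2\in\Z/n\Z$ and observe, using Remark~\ref{hipquid}, that the bricks $b_{i_0}^{-k},b_{i_1}^{-k},b_{i_2}^{-k}$ satisfy all the hypotheses of Lemma~\ref{quid}, the index ``$i_s$'' occurring there being $j_{i_s}$ in the present notation (Lemma~\ref{jis}(1)). Lemma~\ref{quid} then gives that $s\mapsto j_{i_s}$ is not injective, so two of $i_0,i_1,i_2$, say $i_a\neq i_b$, satisfy $j_{i_a}=j_{i_b}$; by the contrapositive of Corollary~\ref{koro} (together with $i_a\neq i_b$, and recalling $j_i\in\{i-1,i\}$ from Lemma~\ref{jis}(2)) the indices $i_a$ and $i_b$ are cyclically consecutive in $\Z/n\Z$.

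This shows that every three-element subset of $\Z/n\Z$ contains a pair of cyclically consecutive elements, which is impossible as soon as $n\geq 6$: for such $n$ the set $\{0,2,4\}\subset\Z/n\Z$ has its three elements pairwise at cyclic distance $2$. Hence $n\leq 5$, and since we have already reduced to the case $n>3$, we conclude $n\in\{4,5\}$. I expect no genuine obstacle here: all the substance sits in Lemmas~\ref{quid} and~\ref{jis} (and, through Lemma~\ref{quid}, in Proposition~\ref{hc}), and the only point deserving care is that Remark~\ref{hipquid} is precisely what allows Lemma~\ref{quid} to be invoked for \emph{every} triple of indices rather than for a single one.
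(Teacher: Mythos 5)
Your proposal is correct and follows essentially the same route as the paper: Remark~\ref{hipquid} lets you invoke Lemma~\ref{quid} for the triple of indices, and Lemma~\ref{jis}(2) (equivalently Corollary~\ref{koro}) forces $j_0,j_2,j_4$ to be distinct when $n\geq 6$, giving the contradiction. The paper simply applies this directly to the triple $\{0,2,4\}$ rather than phrasing it as a statement about all triples, so the two arguments coincide in substance.
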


\begin{proof}  If $n\geq 6$, the sets $\{i, i-1\}$, $i\in \{0,2,4\}$ are pairwise disjoint, and so the three indices
$j_0, j_2,j_4$ given by Lemma \ref{jis} are different. This contradicts Lemma \ref{quid}.
\end{proof}

\begin{lema} We have $n=4$.
\end{lema}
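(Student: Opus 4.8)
The preceding lemma has reduced matters to $n\in\{4,5\}$, so the plan is simply to rule out $n=5$, and the only ingredients needed are Lemma \ref{jis} (which forces $j_i\in\{i-1,i\}$ for every $i$) together with Lemma \ref{quid}, applied through Remark \ref{hipquid} (which says that for \emph{any} three distinct indices the associated bricks $b_i^-=b_i^{-k}$ satisfy the hypotheses of Lemma \ref{quid}, whence among the three corresponding indices $j_i$ two must coincide). Concretely, I would assume $n=5$ and first write down the possibilities explicitly: $j_0\in\{4,0\}$, $j_1\in\{0,1\}$, $j_2\in\{1,2\}$, $j_3\in\{2,3\}$, $j_4\in\{3,4\}$.

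Then I would feed two carefully chosen triples into Lemma \ref{quid}. For the triple $\{0,1,3\}$ the value sets $\{4,0\}$, $\{0,1\}$, $\{2,3\}$ are pairwise disjoint except that $\{4,0\}\cap\{0,1\}=\{0\}$; so the only possible coincidence is $j_0=j_1$, and it forces $j_0=0$. For the triple $\{0,2,4\}$ the value sets $\{4,0\}$, $\{1,2\}$, $\{3,4\}$ are pairwise disjoint except that $\{4,0\}\cap\{3,4\}=\{4\}$; so the only possible coincidence is $j_0=j_4$, and it forces $j_0=4$. Since $0\neq 4$ in $\Z/5\Z$ this is a contradiction, so $n\neq 5$ and hence $n=4$.

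I do not expect a genuine obstacle here: the argument is purely combinatorial once Lemma \ref{jis} and Remark \ref{hipquid} are in hand. The one point that deserves care is why the easy argument used for $n\geq 6$ (take the triple $\{0,2,4\}$, whose neighbourhood pairs $\{i-1,i\}$ are then pairwise disjoint, forcing $j_0,j_2,j_4$ pairwise distinct and contradicting Lemma \ref{quid}) breaks down at $n=5$: in $\Z/5\Z$ there is no triple of indices pairwise at cyclic distance $\geq 2$, so a single triple cannot do the job and one is forced to combine two of them as above. It is also worth double-checking that Remark \ref{hipquid} really permits arbitrary triples of indices — it does — and that in the triples $\{0,1,3\}$ and $\{0,2,4\}$ all three indices are distinct, which is immediate.
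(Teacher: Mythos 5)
Your argument is correct and is essentially the paper's own: both rule out $n=5$ by combining the constraint $j_i\in\{i-1,i\}$ from Lemma \ref{jis} with two applications of Lemma \ref{quid} (via Remark \ref{hipquid}) to well-chosen triples of indices. The paper uses the triples $\{0,2,3\}$ and $\{1,3,4\}$ (forcing $j_2=j_3=2$ and then a contradiction), while you use $\{0,1,3\}$ and $\{0,2,4\}$ to force $j_0=0$ and $j_0=4$ simultaneously; the difference is purely cosmetic.
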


\begin{proof}  We show that $n=5$ also contradicts Lemma \ref{quid}. If $j_0, j_2, j_3$ are all different, we are done
because of Lemma \ref{quid}.  Otherwise, the only
possibility is that $j_2 = j_3 = 2$ (see Lemma \ref{jis}). But then, $j_1, j_3$ and $j_4$ are different.

\end{proof}

\begin{lema} ${\cal L}$ is degenerate.

\end{lema}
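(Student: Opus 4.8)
The plan is to extract a short, self-contained combinatorial argument from the two facts already available at this stage: the indices $j_i$ of Lemma \ref{jis}, which satisfy $j_i\in\{i-1,i\}$ with $j_i=i$ whenever $\alpha_i\neq\om_{i-2}$, and the non-injectivity statement of Lemma \ref{quid}, which by Remark \ref{hipquid} applies to \emph{every} triple of distinct indices of $\Z/4\Z$. First I would record that, since $n=4$, Corollary \ref{koro} gives $j_0\neq j_2$ and $j_1\neq j_3$. Writing the conclusion of Lemma \ref{quid} for the four triples $\{0,1,2\}$, $\{1,2,3\}$, $\{2,3,0\}$, $\{3,0,1\}$ and discarding in each case the forbidden equality between the two opposite indices, one is left with the alternatives: $j_0=j_1$ or $j_1=j_2$; $\ j_1=j_2$ or $j_2=j_3$; $\ j_2=j_3$ or $j_3=j_0$; $\ j_3=j_0$ or $j_0=j_1$.

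Next I would turn this into a tiny boolean computation. Let $P,Q,R,S$ denote the assertions $j_0=j_1$, $j_1=j_2$, $j_2=j_3$, $j_3=j_0$; the four alternatives read $P\vee Q$, $Q\vee R$, $R\vee S$, $S\vee P$. Moreover no two \emph{consecutive} ones among $P,Q,R,S$ can hold at once: $P\wedge Q$ would force $j_0=j_2$, contradicting Corollary \ref{koro}, and similarly for the other pairs. Hence exactly one of each consecutive pair holds, and inspecting the $4$-cycle $P-Q-R-S$ shows that the only two possibilities are ``$P$ and $R$'' and ``$Q$ and $S$''. Combining with $j_i\in\{i-1,i\}$ pins the values down completely: either $(j_0,j_1,j_2,j_3)=(0,0,2,2)$, or $(j_0,j_1,j_2,j_3)=(3,1,1,3)$.

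Finally I would read degeneracy off each case using the third item of Lemma \ref{jis} in its contrapositive form, $j_i\neq i\Rightarrow\alpha_i=\om_{i-2}$. If $(j_0,j_1,j_2,j_3)=(0,0,2,2)$, then $j_1\neq1$ and $j_3\neq3$, so $\alpha_1=\om_{1-2}=\om_3$ and $\alpha_3=\om_{3-2}=\om_1$; hence the link at index $3$ is $(\alpha_3,\om_3)=(\om_1,\alpha_1)$, so both $(\alpha_1,\om_1)$ and $(\om_1,\alpha_1)$ belong to ${\cal L}$ and ${\cal L}$ is degenerate. If $(j_0,j_1,j_2,j_3)=(3,1,1,3)$, then $j_0\neq0$ and $j_2\neq2$, so $\alpha_0=\om_{0-2}=\om_2$ and $\alpha_2=\om_{2-2}=\om_0$; hence $(\alpha_2,\om_2)=(\om_0,\alpha_0)$, and again both $(\alpha_0,\om_0)$ and $(\om_0,\alpha_0)$ lie in ${\cal L}$, so ${\cal L}$ is degenerate.

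There is no analytic obstacle left: the heavy lifting --- the brick decomposition of part I, the ``domino effect'' of Lemmas \ref{dosfut}--\ref{theka}, and the reduction to $n=4$ --- is all behind us. The only point demanding care is that the case analysis be genuinely exhaustive: one must be sure that Lemma \ref{quid}/Remark \ref{hipquid} really applies to all four triples simultaneously (which rests on the $a_i$ being in the cyclic order $a_0\to a_1\to a_2\to a_3\to a_0$ arranged in part III) and that the boolean step above leaves no further admissible pattern of the $j_i$ unexamined.
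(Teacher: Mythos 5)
Your argument is correct and rests on exactly the same machinery as the paper's proof: items 2--3 of Lemma \ref{jis}, Corollary \ref{koro}, and the non-injectivity conclusion of Lemma \ref{quid} supplied by Remark \ref{hipquid}. The paper merely runs it in the contrapositive --- if ${\cal L}$ were non-degenerate one could choose $l$ with $\alpha_l\neq\om_{l-2}$ and $\alpha_{l+1}\neq\om_{l-1}$, so that $j_l=l$, $j_{l+1}=l+1$ and $j_{l-1}\in\{l-2,l-1\}$ are three distinct indices, contradicting Lemma \ref{quid} --- whereas your direct solution of the constraint system, pinning the patterns down to $(j_0,j_1,j_2,j_3)=(0,0,2,2)$ or $(3,1,1,3)$, is a valid and slightly more informative variant of the same argument.
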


\begin{proof}
 We will show that if $n=4$ and ${\cal L}$ is non-degenerate, we can also find a triplet $i_0, i_1, i_2$ in $\Z/n\Z$
 such that the corresponding $j_{i_s}$,
$s\in \{0,1,2\}$ are different.

For a non-degenerate cycle of links, there can be at most two coincidences of the type $\alpha _i = \om _{i-2}$.  Furthermore,
if $\alpha _i = \om _{i-2}$ and $\alpha _j = \om _{j-2}$ for some $i\neq j$, then $|i-j|=1$.  Indeed, the points in $\ell$
are ordered as follows:
$$\om_0\toe \alpha_2 \to \om_1 \toe \alpha_3 \to \om_2 \toe \alpha_0\to \om_3\toe \alpha_1\to \om_0,$$
and non-degeneracy means that we cannot have both $\om _{i} =\alpha _{i+2}$ and $\om _{i+2}=\alpha_i$, for some $i\in\Z/4\Z$.
So, there exists $l\in \Z/4\Z$ such that $\alpha _l\neq \om _{l-2}$ and $\alpha _{l+1}\neq \om _{l-1}$.
We can suppose without loss of generality that $\alpha _0 \neq \om _2$, and $\alpha _1 \neq \om _3$ (see Figure 9).
Items 2. and 3. in Lemma \ref{jis} imply that   $j_0, j_1$, and $j_3$ are different, and we are done.

\end{proof}

\begin{figure}[h]
\begin{center}

\psfrag{0}{ $\alpha _0$}\psfrag{1}{ $\alpha _1$}\psfrag{3}{ $\alpha _3$}
\includegraphics[scale=0.6]{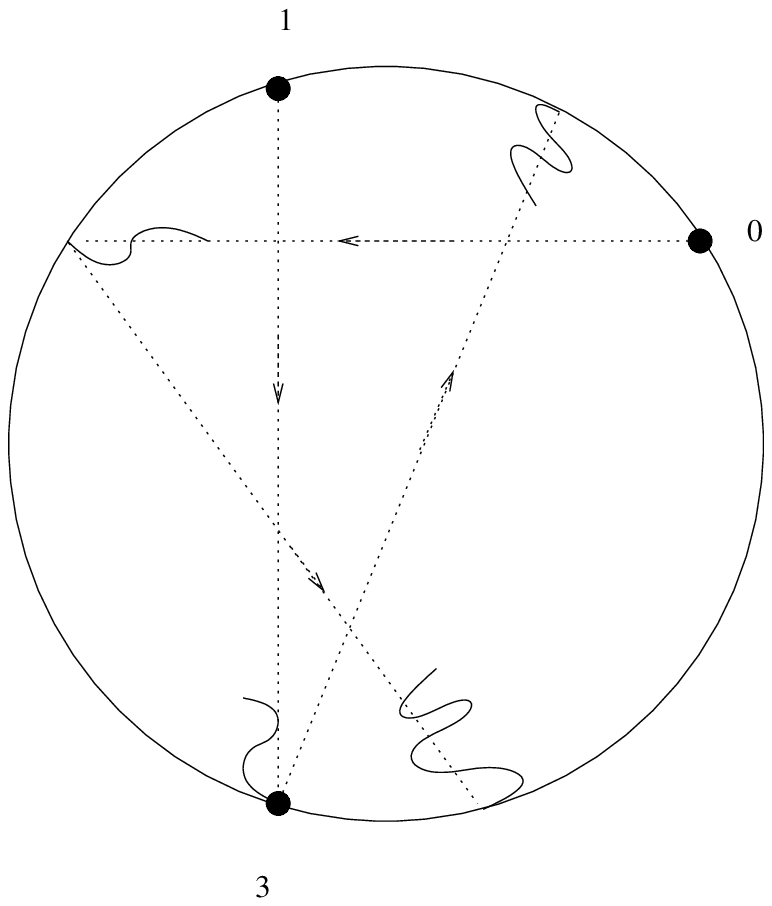}
\caption{The case $n=4$}
\end{center}
\end{figure}

The following lemma finishes the proof of Proposition \ref{ell}.

\begin{lema}  If $n=4$, then $\fix(f)\neq \emptyset$.

\end{lema}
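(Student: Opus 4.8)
```latex
\begin{proof}[Proof plan]
The plan is to reduce the case $n=4$ to an application of the hyperbolic machinery (Proposition \ref{2c}), just as the higher-order cases were ruled out via the elliptic machinery (Lemma \ref{quid}). We still work under the standing assumption that $f$ is not recurrent, with the brick decomposition $(V,E,B)$ constructed in Part I and the bricks $b_i^{\pm}=b_i^{\pm k}$, $i\in\Z/4\Z$, satisfying the properties collected in Remark \ref{hipquid}. By the previous lemmas we know that ${\cal L}$ must be degenerate: concretely, since non-degeneracy was excluded, there are (up to relabelling) two coincidences, and the points of $\ell$ collapse to the configuration $\om_0=\alpha_2$, $\om_1=\alpha_3$, $\om_2=\alpha_0$, $\om_3=\alpha_1$, i.e. only four distinct points on $S^1$. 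First I would record this explicitly and fix the cyclic order of the four distinct points, together with the corresponding cyclic order of the sets $a_0,a_1,a_2,a_3$ (note $a_i$ and $a_{i+2}$ now abut the same end of $S^1$, so we genuinely have only two ``ends'' in play and the argument must be run carefully on $U$ via the Riemann map).

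Next I would pin down the indices $j_i$ of Lemma \ref{jis}. In the fully degenerate $n=4$ situation all four indices $i$ lie in $I_{\mbox{sing}}$, so item 3 of Lemma \ref{jis} gives no information, but item 2 still forces $j_i\in\{i-1,i\}$ and Corollary \ref{koro} forces $j_0\neq j_2$ and $j_1\neq j_3$. Hence $\{j_0,j_2\}$ and $\{j_1,j_3\}$ are each a two-element set, and a short parity/counting argument shows the only consistent assignment (up to the symmetry $i\mapsto i+1$) is $j_i=i$ for all $i$, or $j_i=i-1$ for all $i$; after relabelling we may assume $j_i=i$. Then I would verify that with $R_i:=[b_i^{-k}]_<$ and $A_i:=[b_i^{k}]_>=[b_i^+]_>$ the pair $((R_i)_{i\in\Z/4\Z},(A_i)_{i\in\Z/4\Z})$ forms a \emph{hyperbolic configuration} of order $4$: the order property $r_0\to a_0\to r_1\to a_1\to r_2\to a_2\to r_3\to a_3\to r_0$ follows from the placement $\overline{\varphi([b_i^{-k}]_<)}\cap S^1\subset I_i$ together with $J_{i-2}\to I_i\to J_{i-1}$ and the cyclic order of the $a_i$'s; and the required bricks with $[b_i^i]_>\cap A_i\neq\emptyset$, $[b_i^{i-1}]_>\cap A_{i-1}\neq\emptyset$ are furnished by the fact that $[b_i^{-k}]_>$ contains $b_j^{k}$ for \emph{every} $j$ (the defining property of $k$), in particular for $j=i$ and $j=i-1$. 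The disjointness of the six relevant sets $R_i,A_i$ is proved by exactly the same ``if two attractors meet, one contains a forbidden $[b^+]_<$'' and ``if a repeller meets an attractor $[b_j^+]_>$ with $b_j^+\in[b_i^-]_>$ then $f$ is recurrent'' arguments used in the proof of Lemma \ref{quidh}. Invoking Proposition \ref{2c} then yields $\fix(f)\neq\emptyset$.

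The main obstacle I anticipate is not the configuration-building but the bookkeeping forced by the degeneracy: because $a_i$ and $a_{i+2}$ accumulate on the \emph{same} point of $S^1$, one has to argue carefully that the Riemann map $\varphi$ of $U$ still separates them into distinct prime-end intervals $J_i$ (this is where simple connectedness of $U$ and the hypothesis that the $b_i'^l$ have disjoint interiors after discarding finitely many disks are essential), and that the four intervals $J_0,J_1,J_2,J_3$ appear in the cyclic order making $j_i=i$ compatible with the hyperbolic order property. A subsidiary point is handling the alternative $j_i=i-1$: rather than a separate argument I would observe it is obtained from the first by the index shift $i\mapsto i+1$, which is a symmetry of the degenerate cycle, so no new work is needed. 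Once the cyclic-order lemma is in hand, the verification that we have a hyperbolic configuration is routine and parallels Lemma \ref{quidh} line by line.
\end{proof}
```
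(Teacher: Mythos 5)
There is a genuine gap, and it sits exactly at the step you call a ``short parity/counting argument.'' You claim that the only assignments compatible with the constraints are $j_i=i$ for all $i$ or $j_i=i-1$ for all $i$. Both of these are in fact \emph{impossible}: by Remark \ref{hipquid}, any three of the bricks $b_{i}^{-k}$ satisfy the hypotheses of Lemma \ref{quid}, so no three of the values $j_0,j_1,j_2,j_3$ can be pairwise distinct; but if $j_i=i$ for all $i$ (or $j_i=i-1$ for all $i$), the triple $j_0,j_1,j_2$ is already made of three distinct indices. Working out which assignments survive $j_i\in\{i-1,i\}$ together with ``no injective triple,'' one finds only the patterns $j_0=j_1=0,\ j_2=j_3=2$ or $j_1=j_2=1,\ j_3=j_0=3$, i.e.\ two of the repellers $[b_i^{-k}]_<$ necessarily have their prime-end traces in the \emph{same} interval $I$. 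Consequently the alternating cyclic order $r_0\to a_0\to r_1\to a_1\to r_2\to a_2\to r_3\to a_3$ needed for your order-$4$ hyperbolic configuration with $R_i=[b_i^{-k}]_<$, $A_i=[b_i^{k}]_>$ cannot hold, so the plan as stated cannot be carried out. A secondary unjustified step is your reduction to the fully degenerate picture $\om_0=\alpha_2,\ \om_1=\alpha_3,\ \om_2=\alpha_0,\ \om_3=\alpha_1$: the previous lemma only gives that ${\cal L}$ is degenerate, i.e.\ that \emph{at least one} pair $(\alpha_i,\om_i),(\om_i,\alpha_i)$ occurs, not that all coincidences occur, and the fixed-point statement must be proved for every $n=4$ case in any event.

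The paper avoids all of this by dropping to a hyperbolic configuration of order $2$: it sets $R_0=[b_0^{-k}]_<$, $R_1=[b_2^{-k}]_<$, $A_0=[b_3^{k}]_>$, $A_1=[b_1^{k}]_>$. Since $j_0\in\{3,0\}$ and $j_2\in\{1,2\}$, the trace of $R_0$ lies in $I_3\cup J_2\cup I_0$ and that of $R_1$ in $I_1\cup J_0\cup I_2$, so the alternating order $R_0\to A_0\to R_1\to A_1$ holds \emph{whatever} the actual values of $j_0,j_2$ are; the required bricks flowing into both neighbouring attractors come from the defining property of $k$ ($[b_i^{-k}]_>$ contains every $b_j^{k}$), and pairwise disjointness of $R_0,R_1$ and of $A_0,A_1$ is checked by the same non-recurrence/separation arguments you quote from Lemma \ref{quidh}. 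If you want to salvage your approach, you would have to either build the order-$2$ configuration as the paper does, or show directly that the surviving patterns $j_0=j_1,\ j_2=j_3$ (resp.\ $j_1=j_2,\ j_3=j_0$) still yield some hyperbolic configuration, which essentially reduces to the paper's choice of two repellers and two attractors; no appeal to degeneracy is needed anywhere.
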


\begin{proof} We will be done by constructing a hyperbolic Repeller/Attractor configuration of order $2$.
We define $$R_0 = [b_0^{-k}]_<, \ R_1 = [b_2^{-k}]_<, \ A_0 = [b_3^{k}]_>, \ A_1 = [b_1^{k}]_> .$$

By the choice of $k$, there exists two bricks $c_i^i, c_i^{i-1}$, contained in $R_i$, $i\in\Z/2\Z$ such that
$[c_i^j]_>\cap A_j\neq \emptyset$, if $j\in \{i, i-1\}$.

Besides, the cyclic order of these sets is the following:

$$R_0\to A_0\to R_1 \to A_1 \to R_0.$$

Indeed, we know that $j_0\in \{0,3\}$, $j_2\in \{2,1\}$, and the cyclic order of the intervals $J_i, I_i, i\in\Z/4\Z$ is:

$$I_0\to J_3\to I_1 \to J_0 \to I_2 \to J_1 \to I_3 \to J_2 \to I_0.$$

So, we just have to show that the sets $R_i, A_i, i\in\Z/2\Z$ are pairwise disjoint.  The choice of $k$ implies that
$[b_i^{-k}]_<\cap[b_j^k]_> = \emptyset$ for all $i,j$ in $\Z/4\Z$.   As a consequence, we just have to check
$R_0\cap R_1 = \emptyset$, and $A_0\cap A_1 = \emptyset$.

If this is not the case, $[b_0^{-k}]_<\cup [b_2^{-k}]_<$ is a connected set separating $[b_1^{k}]_>$ and $ [b_3^{k}]_>$.
Again by the choice of $k$ we have:

$$([b_0^{-k}]_<\cup [b_2^{-k}]_<) \cap[b_0^{-k}]_>\neq \emptyset, $$ \noindent and as we are supposing that $f$ is not
recurrent,
 $$[b_2^{-k}]_<\cap[b_0^{-k}]_>\neq \emptyset.$$ \noindent But then,

 $$[b_2^{-k}]_<\cap [b_2^{-k}]_>\neq \emptyset, $$

\noindent because $[b_2^{-k}]_<$ contains $[b_{0}^{-k}]_< $ and therefore separates $[b_1^{k}]_>$ and $ [b_3^{k}]_>$, both of
which are contained in $[b_2^{-k}]_>$.

Analogously, if $A_0\cap A_1 \neq \emptyset$, then $[b_3^{k}]_>\cup [b_1^{k}]_>$ is a connected set separating $[b_2^{-k}]_<$ and $[b_{0}^{-k}]_< $ .
Again by the choice of $k$ we have:

$$([b_3^{k}]_>\cup [b_1^{k}]_>) \cap[b_3^{k}]_<\neq \emptyset, $$ \noindent and as we are supposing that $f$ is not
recurrent,
 $$[b_1^{k}]_>\cap[b_3^{k}]_<\neq \emptyset.$$ \noindent But then,

 $$[b_1^{k}]_>\cap [b_1^{k}]_<\neq \emptyset, $$

\noindent because $[b_1^{k}]_>$ contains $[b_{3}^{k}]_> $ and therefore separates $[b_0^{-k}]_<$ and $ [b_2^{-k}]_<$, both of
which are contained in $[b_1^{k}]_<$.
\end{proof}

\subsection{The hyperbolic case.}

Our next proposition finishes the proof of Theorem \ref{main*}:

\begin{prop}\label{negi} If ${\cal L}$ is hyperbolic, then $\fix (f) \neq
 \emptyset$.

\end{prop}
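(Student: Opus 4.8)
The plan is to mirror the structure of the elliptic case, using Lemma~\ref{quidh} in place of Lemma~\ref{quid} as the engine that converts combinatorial data into a hyperbolic Repeller/Attractor configuration. First I would dispose of the recurrent case: if $f$ is recurrent there is nothing to prove, so I assume $f$ is non-recurrent and aim for a contradiction-free construction of a hyperbolic configuration, whose existence forces $\fix(f)\neq\emptyset$ by Proposition~\ref{2c}. Write $n=2m$ with $m\geq 2$, and recall that hyperbolicity says that for $i\equiv 0 \bmod 2$ the points on the circle are ordered $\alpha_i\to\alpha_{i-1}\toe\om_{i+1}\to\om_i\toe\alpha_{i+2}$; so the only possible coincidences are of the form $\om_{i+1}=\alpha_{i-1}$ (equivalently, pairing up two consecutive links into a ``box''), and in particular the points $\{\om_i\}$ are all distinct and the points $\{\alpha_i\}$ are all distinct. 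This mild coincidence structure is exactly what makes the argument run in parallel with the elliptic case.

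Next I would set up the brick decomposition. Choosing pairwise disjoint neighbourhoods $U_i^+$ of the $\om_i$ and neighbourhoods $U_i^-$ of the $\alpha_i$ (identifying $U_i^-$ with the appropriate $U_j^+$ when $\alpha_i=\om_j$), I apply Lemma~\ref{ends} to get the families $(b_i'^l)_{l\in\Z\setminus\{0\}}$, and, exactly as in the singular analysis of the elliptic proof, build auxiliary disks $c_i^m$ (playing the role of Lemma~\ref{cim}) whenever a coincidence $\om_{i+1}=\alpha_{i-1}$ produces infinitely many interlacings between the families converging to that common endpoint. I then take a maximal free brick decomposition adapted to all these disks, producing bricks $b_i^l$ with $b_i'^l\subset b_i^l$. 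The point of all this bookkeeping is to be able to invoke, after possibly discarding finitely many disks, the analogue of Lemma~\ref{theka}: there exists $k>0$ such that for every $i$ the attractor $[b_i^{-k}]_>$ contains $b_i^k$, and moreover $[b_i^{-k}]_>$ contains $b_{i-1}^l$ (for even $i$) or $b_{i+1}^l$ (for odd $i$) for $l$ large — i.e.\ each repeller-end's strict future reaches both attractor-ends of its ``box''. This is where the ``domino effect'' lemma (Lemma~\ref{dosfut}, or its hyperbolic variant) is used: in the hyperbolic case one needs each $[b_i^{-k}]_>$ to swallow the two attractor-ends $A_{i-1},A_i$ adjacent to $R_i$, rather than to propagate around the whole cycle.

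Then I would define $a_i=(\cup_{m\geq k}b_i^m)\cap\Gamma_i^+$ for $i\in\Z/n\Z$, arrange that $U=\D\setminus\cup a_i$ is simply connected, let $\varphi:U\to\D$ be the Riemann map, and introduce the intervals $J_i$ and the gaps $I_i$ between consecutive $J_j$'s chosen exactly as in the hypothesis of Lemma~\ref{quidh} ($J_{i-1}\to I_i\to J_i$). A Lemma~\ref{jis}-type argument — crosscuts inside $[b_i^{-k}]_>$ and the hyperbolic order property — then shows $\overline{\varphi([b_i^{-k}]_<)}\cap S^1\subset I_{j_i}$ with $j_i\in\{i-1,i\}$, and $j_i=i$ when $\alpha_i\neq\om_{i-1}$ (the non-coincidence case), which after relabelling gives exactly hypotheses 1–4 of Lemma~\ref{quidh} with $b_i^+=b_i^k$, $b_i^-=b_i^{-k}$. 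Applying Lemma~\ref{quidh} yields $\fix(f)\neq\emptyset$.

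The main obstacle I expect is the coincidence analysis $\om_{i+1}=\alpha_{i-1}$ and the corresponding construction of the disks $c_i^m$ and the proof that one may reduce to the ``nice'' local picture — precisely the role played by Lemmas~\ref{cim} and~\ref{fuckinpuntas} in the elliptic case. In the hyperbolic setting one has to check that the interlacing disks near a coincidence point can again be absorbed into a single repeller-end whose strict future reaches the two neighbouring attractor-ends, and that this does not accidentally create a recurrence (which would be fine) or, worse, obstruct the disjointness of the $R_i$'s and $A_i$'s needed for the configuration. A secondary subtlety is verifying that the cyclic order of the sets $a_i$ together with the gap intervals $I_i$ really matches the combinatorial order property required by Lemma~\ref{quidh} for \emph{every} $m\geq 2$, not just small $m$; but since Lemma~\ref{quidh} is stated for general $n$, once the order is pinned down there is no case analysis on $n$ as there was in the elliptic proof, and the argument closes uniformly.
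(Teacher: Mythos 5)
Your overall scaffolding (Lemma \ref{ends}, the singular disks, a maximal free decomposition, then a prime-end/crosscut analysis feeding Lemma \ref{quidh}) does follow the paper, but the endgame has a genuine gap, and it is exactly at the decisive point. First, a structural problem: in the hyperbolic case the attractor ends are \emph{not} in the standard cyclic order $a_0\to a_1\to\cdots\to a_{n-1}$; the circle order of the $\om_i$'s is pairwise swapped, $a_{i-2}\to a_{i+1}\to a_i$ for even $i$ (between consecutive $\om$'s there sit either two $\alpha$'s or none). So Lemma \ref{quidh} cannot be applied to the full family ``with $b_i^+=b_i^k$, $b_i^-=b_i^{-k}$'' and gaps $J_{i-1}\to I_i\to J_i$, as you propose: its standing hypothesis on the cyclic order of the $a_i$'s fails, and the repellers do not alternate with the attractors (two $\alpha$'s share one gap). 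The paper applies Lemma \ref{quidh} only to the even-indexed half, $a'_i=a_{2i}$, $b_i^\pm=b_{2i}^{\pm k}$, for which the standard order and the pattern $J'_{i-1}\to I_{2i}\to J'_i$ do hold. Relatedly, hypothesis 2 of Lemma \ref{quidh} for that half requires $[b_{2i}^{-k}]_>$ to contain $b_{2i-2}^k$ as well as $b_{2i}^k$; your propagation statement only reaches the two ends of the ``box'' ($b_{2i}^k$ and $b_{2i-1}^k$), which is weaker than the paper's Lemma \ref{ptasfut} (all four ends $l\in\{2i-2,2i-1,2i,2i+1\}$, plus Lemma \ref{thel} to keep the pasts inside $U$). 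Also your coincidence bookkeeping is off: the possible coincidences are $\alpha_i=\om_{i-2}$ (even $i$) and $\alpha_i=\om_{i+2}$ (odd $i$), not $\alpha_i=\om_{i-1}$.

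The deeper missing idea is your claim that ``$j_i=i$ in the non-coincidence case'' follows from a Lemma \ref{jis}(3)-type crosscut. It does not: in the hyperbolic order, $\alpha_{2i}$ and $\alpha_{2i-1}$ both lie in the arc from $\om_{2i-2}$ to $\om_{2i+1}$, and the crosscut arguments (the paper's Lemma \ref{hypqh}) only localize $\overline{\varphi([b_{2i}^{-k}]_<)}\cap S^1$ into one of the two gaps $I_{2i}$, $I_{2i-1}$ -- coincidence or not, there is no crosscut inside $[b_{2i}^{-k}]_>$ that excludes the gap $I_{2i-1}$. Excluding $j_{2i}=2i-1$ is the crux of the proof, and the paper does it by a different mechanism: if $j_{2i}=2i-1$, then $j_{2i-2}$, $j_{2i-1}$, $j_{2i}$ are pairwise distinct, and a further crosscut shows that $[b_{2i-2}^{-k}]_>$ also contains $b_{2i}^k$, so the three repellers $[b_{2i-2}^{-k}]_<$, $[b_{2i-1}^{-k}]_<$, $[b_{2i}^{-k}]_<$ together with the three attractors violate Lemma \ref{quid} (the elliptic-configuration lemma), which is thus used a second time in the hyperbolic case. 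Your plan never invokes Lemma \ref{quid} at this stage and offers no substitute for this exclusion, so the reduction to Lemma \ref{quidh} is unsupported precisely where the work lies; the rest of your outline (local analysis at coincidences via the analogues of Lemmas \ref{cim} and \ref{fuckinpuntas}, i.e.\ Lemma \ref{l49}, and the appeal to Proposition \ref{2c} through Lemma \ref{quidh}) is in line with the paper.
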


We recall that the order of a hyperbolic cycle of links is an even number.  That is, from now on $n=2m$,
$m\geq 2$.
 The
hyperbolic order property implies that the only possible coincidences among the points $\alpha_i,\om_i$, $i\in \Z/n\Z$ are of the form
$\om _{i-2} = \alpha _i$, for even values of $i$,
or $\om_{i+2}= \alpha _i$, for odd values of $i$.

As the points $\{\om _i\}$ are all
different, we can  take a neighbourhood $U_i^+$ of $\om _i$ in $\overline \D$ in such a way that that
$U_i^+\cap U_j^+ = \emptyset$ if $i\neq j$. For even values of $i$, we define $U_i^- = U_{i-2}^+$ if $\alpha_i = \om _{i-2}$,
and if  $\alpha_i \neq \om _{i-2}$ we take a neighbourhood $U_i^-$ of $\alpha _i$ in $\overline \D$ in such a way that
$U_i^-\cap U_j^+ = \emptyset$ for any $j$, and $U_i^- \cap U_j^- = \emptyset$ if $j\neq i$.  Similarly, for odd
values of $i$, we define $U_i^- = U_{i+2}^+$ if $\alpha_i = \om _{i+2}$,
and if  $\alpha_i \neq \om _{i+2}$ we take a neighbourhood $U_i^-$ of $\alpha _i$ in $\overline \D$ in such a way that
$U_i^-\cap U_j^+ = \emptyset$ for any $j$, and $U_i^- \cap U_j^- = \emptyset$ if $j\neq i$.

{\bf We keep the assumption that $f$ is not recurrent.}

We apply Lemma \ref{ends} and obtain families of closed disks $(b_i'^l)_{l\in \Z\backslash \{0\}, i\in \Z/2m\Z}$. So, the disks
in the family
$(b_i'^l)_{l\geq 1, i\in \Z/2m\Z}$ have pairwise disjoint interiors.

Let $I_{\mbox{reg}}$ be the set of even $i\in \Z/2m\Z$ such that $\alpha _i \neq \om _{i-2}$, or such that
$\alpha _i =\om _{i-2}$ but there exists $K>0$ such that $\cup _{k>K}b _{i-2}'^k \cap \cup _{k>K}b _{i}'^{-k} = \emptyset$,
together with the set of odd $i\in \Z/2m\Z$ such that  $\alpha _i \neq \om _{i+2}$, or such that
$\alpha _i =\om _{i+2}$ but there exists $K>0$ such that $\cup _{k>K}b_{i+2}'^k \cap \cup _{k>K}b _{i}'^{-k} = \emptyset$.
Let $I_{\mbox{sing}}$ be the complementary set of $I_{\mbox{reg}}$ in $\Z/2m\Z$.

We can suppose that all the disks in the families $(b_i'^l)_{l\geq 1, i\in \Z/2m\Z}$, $(b_i'^{-l})_{l\geq 1, i\in I_{\mbox{reg}}}$ have
disjoint interiors.

We define  $i^* = i-2$ if $i$ is even, and $i^* = i+2$ if $i$ is odd.

\begin{lema} If $i\in I_{\mbox{sing}}$, we can find sequences of free closed disks  $(c_i^n)_{n\geq 0}$, satisfying :

\begin{enumerate}

\item $c_i^n\subset U_{i^*}^+ = U_i^-$,

\item there exists an increasing sequence $(k_i^n)_{n\geq 0}$
 such that $b_{i^*}'^{k_i^n}\cap c_i^n \neq \emptyset$ for all $n\geq 0$,

\item $(b_{i^*}'^{k_i^n}\cup c_i^n) \cap (b_{i^*}'^{k_i^p}\cup c_i^p) = \emptyset$ for all $n\neq p$,

 \item there exists an increasing sequence $(j_i^n)_{n\geq 0}$ such that $f^{-j_i^n}(z_{i}) \in c_i^n$,

\item the sequence $(c_i^n)_{n\geq 0}$ converge in the Hausdorff topology to $\om _{i^*} = \alpha _{i}$,

\item $b_{i^*}'^{k_i^n}\cap c_i^n$ is an arc for all $n\geq 0$,

\item $\partial (\cup_{k\geq 1}b_{i^*}'^k\cup \cup_{n\geq 0} c_i^n)$ is a one dimensional submanifold,

\item if $x\in \D$, then $x$ belongs to at most two different disks in the family $\{ b_{i^*}'^k, c_i^n: k\geq 1, n\geq 0\}$.
\end{enumerate}

\end{lema}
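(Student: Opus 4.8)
The plan is to mimic, almost line for line, the construction in the proof of Lemma~\ref{cim}, replacing the index $i-2$ by $i^*$ everywhere, so that the argument becomes uniform in the parity of $i$. Fix $i\in I_{\mbox{sing}}$. By definition of $I_{\mbox{sing}}$ we have $\alpha_i=\om_{i^*}$ and, for every $K>0$, the tails $\cup_{k>K}b_{i^*}'^{k}$ and $\cup_{k>K}b_i'^{-k}$ still meet. We also recall from Lemma~\ref{ends} that the $f$-orbit of $z_i$ never meets the disks $b_{i^*}'^{l}$ ($l\geq 1$), that $f^{-l_i-(l-1)}(z_i)\in\inte(b_i'^{-l})$ for $l\geq 1$, and that both families $(b_{i^*}'^{k})_{k\geq1}$ and $(b_i'^{-k})_{k\geq1}$ converge in the Hausdorff topology to $\alpha_i=\om_{i^*}$ and lie in $U_{i^*}^+=U_i^-$.

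First I would build inductively a sequence of arcs $(\gamma_i^n)_{n\geq0}$. For $n=0$: the singularity hypothesis gives $j_i^0>1$ with $\inte(\cup_{k\geq1}b_{i^*}'^{k})\cap\inte(b_i'^{-j_i^0})\neq\emptyset$; since the orbit point $f^{-l_i-(j_i^0-1)}(z_i)$ lies in $\inte(b_i'^{-j_i^0})\backslash(\cup_{l\geq1}b_{i^*}'^{l})$, one chooses an arc $\gamma_i^0\subset\inte(b_i'^{-j_i^0})\backslash\inte(\cup_{l\geq1}b_{i^*}'^{l})$ from that point to some $x_i^0\in\partial(\cup_{l\geq1}b_{i^*}'^{l})$, and lets $k_i^0\geq1$ be determined by $x_i^0\in b_{i^*}'^{k_i^0}$. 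For the inductive step, with $\gamma_i^0,\dots,\gamma_i^m$ and $j_i^m,k_i^m$ already chosen, take a neighbourhood $U_m$ of $\alpha_i$ in $\overline{\D}$ disjoint from all previously used disks (in particular from $\inte(b_{i^*}'^{k_i^m})\cup\inte(b_i'^{-j_i^m})$); pick $K_m$ with $b_{i^*}'^{k}\cup b_i'^{-k}\subset U_m$ for $k\geq K_m$ (Hausdorff convergence); pick $j_i^{m+1}>K_m$ with $\inte(\cup_{k\geq K_m}b_{i^*}'^{k})\cap\inte(b_i'^{-j_i^{m+1}})\neq\emptyset$ (since $i\in I_{\mbox{sing}}$); and choose an arc $\gamma_i^{m+1}\subset\inte(b_i'^{-j_i^{m+1}})\backslash(\cup_{l\geq K_m}b_{i^*}'^{l})$ from the orbit point of $z_i$ inside $b_i'^{-j_i^{m+1}}$ to a point $x_i^{m+1}\in\partial(\cup_{k\geq K_m}b_{i^*}'^{k})$, determining $k_i^{m+1}>K_m$ by $x_i^{m+1}\in b_{i^*}'^{k_i^{m+1}}$. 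The sequences $(j_i^n)$ and $(k_i^n)$ are increasing, and the interposed neighbourhoods $U_m$ make the arcs $\gamma_i^n$ --- and the sets $b_{i^*}'^{k_i^n}\cup\gamma_i^n$ --- pairwise disjoint.

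Then the disks $c_i^n$ are obtained by thickening: replace each $\gamma_i^n$ by a sufficiently thin, generically chosen, tubular neighbourhood $c_i^n$. Thinness forces $c_i^n\subset\inte(b_i'^{-j_i^n})\subset U_{i^*}^+=U_i^-$ (item~1) and $c_i^n$ free (since $b_i'^{-j_i^n}$ is a free brick and a small enough neighbourhood of an arc inside a free set is free), makes $c_i^n\cap b_{i^*}'^{k_i^n}$ a single arc near $x_i^n$ so that $c_i^n\cup b_{i^*}'^{k_i^n}$ is a closed disk (items~2 and 6), and --- by general position --- keeps every point in at most two of the disks and the boundary of $\cup_{k\geq1}b_{i^*}'^k\cup\cup_{n\geq0}c_i^n$ a one-dimensional submanifold (items~7 and 8); disjointness of the $U_m$ gives item~3; the orbit point $f^{-l_i-(j_i^n-1)}(z_i)\in\gamma_i^n\subset c_i^n$ gives item~4 after relabelling the sequence; and item~5 holds because $c_i^n\subset b_i'^{-j_i^n}$, which converges to $\alpha_i=\om_{i^*}$.

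The only delicate point --- exactly as in Lemma~\ref{cim} --- is this last thickening step: the neighbourhoods must be chosen simultaneously thin enough to stay free and inside their containing bricks, and generic enough that all the transversality requirements (items 6--8) hold at once. Since each arc sits inside a single free brick and the bricks produced by Lemma~\ref{ends} already carry the needed general-position properties, this causes no trouble, and there is no new difficulty beyond the elliptic case treated in Lemma~\ref{cim}.
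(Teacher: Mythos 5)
Your proposal is correct and coincides with the paper's proof, which simply observes that the local dynamics near $\alpha_i=\om_{i^*}$ for $i\in I_{\mbox{sing}}$ is the same as in the elliptic case and invokes the proof of Lemma \ref{cim} verbatim; you have merely written out that construction explicitly with $i-2$ replaced by $i^*$.
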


\begin{proof}  Note that the local dynamics in a neighbourhood of a point $\alpha_i, i\in I_{\mbox{sing}}$ is exactly the same as that
in the elliptic case.  So, the same proof we did for Lemma \ref{cim} works here as well.

\end{proof}

We construct a maximal free brick decomposition $(V,E,B)$ such that:

\begin{enumerate}
 \item for all $i\in \Z/2m\Z$ and for all $l\geq 1$, there exists $b_i^l\in B$ such that $b_i'^l\subset b_i^l$,
\item for all $i\in I_{\mbox{reg}}$ and for all $l\geq 1$, there exists $b_i^{-l}\in B$ such that $b_i'^{-l}\subset b_i^{-l}$,
\item for all $n\geq 0$ and for all $i\in I_{\mbox{sing}}$ there exists $b_i^{-j_i^n}\in B$ such that $c_i^n\subset b_i^{-j_i^n}$.
\end{enumerate}

\begin{lema}\label{l49} If $i\in I_{\mbox{sing}}$, then there exists $N>0$ such that  $[b_i^{-j_i^N}]_\geq$
contains $b_{i^*}^{k_i^N}$.
\end{lema}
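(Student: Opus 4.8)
The plan is to repeat the proof of Lemma~\ref{fuckinpuntas} almost word for word, changing only the index bookkeeping so that it matches the hyperbolic order property instead of the elliptic one. Fix $i\in I_{\mbox{sing}}$ and assume first that $i$ is even, so that $i^*=i-2$ and $\alpha_i=\om_{i-2}$; the case $i$ odd (where $i^*=i+2$) is completely symmetric and I would just indicate it at the end. As observed just before the statement, the local dynamics of the brick decomposition near $\alpha_i=\om_{i^*}$ is exactly the one already analysed in the elliptic case, so Part~I of the proof of Lemma~\ref{fuckinpuntas} applies verbatim: setting $X_m=b_{i^*}'^{k_i^m}\cup c_i^m$ (a topological closed disk containing a forward iterate of $z_{i^*}$ and a backward iterate of $z_i$), the sets $X_m$ are pairwise disjoint and joined in both directions by chains, so by Proposition~\ref{guile} and non-recurrence no two of them can be free; hence each $X_m$ is non-free, and since $b_{i^*}'^{k_i^m}$ and $c_i^m$ are free we get that either $f(c_i^m)\cap b_{i^*}'^{k_i^m}\neq\emptyset$ for some $m$ --- in which case $[b_i^{-j_i^m}]_\geq$ contains $b_{i^*}^{k_i^m}$ and we are done with $N=m$ --- or $f(b_{i^*}'^{k_i^m})\cap c_i^m\neq\emptyset$ for every $m$, so that $[b_{i^*}^{k_i^m}]_>$ contains $b_i^l$ for all $l>0$ and all $m\geq 0$.

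It remains to rule out the second alternative by deriving a contradiction with non-recurrence. Now $[b_{i^*}^{k_i^m}]_>$ contains both $b_{i^*}^l$ and $b_i^l$ for all large $l$, so Remark~\ref{puntas} produces an arc $\Gamma_m:[0,1]\to[b_{i^*}^{k_i^m}]_>\cup\{\om_{i^*},\om_i\}$ joining $\om_{i-2}$ to $\om_i$. Reading off the hyperbolic order property at the indices $i$, $i-2$ and $i-4$, the relevant ends occur on $S^1$ in the cyclic order $\dots,\om_{i-3},\om_{i-4},\alpha_{i-2},\alpha_{i-3},\om_{i-1},\om_{i-2}=\alpha_i,\alpha_{i-1},\om_{i+1},\om_i,\dots$, and in particular $\Gamma_m$ separates $\alpha_{i-1}$ from $\alpha_{i-3}$ in $\overline{\D}$ (these points, and the other ends used below, are genuinely distinct since $n=2m\geq 4$ and the only coincidences are $\om_{j^*}=\alpha_j$). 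As in Lemma~\ref{fuckinpuntas}, since $[b_{i^*}^{k_i^m}]_\leq$ is connected (Proposition~\ref{futcon}) and, by non-recurrence, disjoint from $\Gamma_m\subset[b_{i^*}^{k_i^m}]_>$, its closure cannot contain both $\alpha_{i-1}$ and $\alpha_{i-3}$; and since $[b_{i^*}^{k_i^p}]_\leq\subset[b_{i^*}^{k_i^m}]_\leq$ for $p<m$, one of these two ends --- call it $\alpha_\varepsilon$, $\varepsilon\in\{i-1,i-3\}$ --- lies outside $\overline{[b_{i^*}^{k_i^m}]_\leq}$ for every $m$, so that none of these pasts meets the orbit of $z_\varepsilon$.

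From here I would finish exactly as in Part~II of Lemma~\ref{fuckinpuntas}: choose a neighbourhood $U$ of $\alpha_\varepsilon$ in $\overline{\D}$ disjoint from $[b_{i^*}^{k_i^0}]_\leq$ and from $\bigcup_{l>k_i^0}b_{i^*}^l$, an integer $j>0$ with $f^{-j}(z_\varepsilon)\in U$, an arc $\beta\subset U$ from $\alpha_\varepsilon$ to $f^{-j}(z_\varepsilon)$, a brick $b\ni f^{-j}(z_\varepsilon)$, and, using $\bigcup_{l\geq 1}b_\varepsilon'^l\subset[b]_\geq$ together with Remark~\ref{puntas}, an arc $\gamma\subset[b]_\geq\cup\{\om_\varepsilon\}$ from $f^{-j}(z_\varepsilon)$ to $\om_\varepsilon$. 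The crucial point, which one checks from the cyclic order above, is that for both $\varepsilon=i-1$ and $\varepsilon=i-3$ the concatenation $\beta\cdot\gamma$ separates $\om_{i^*}=\om_{i-2}$ from $\alpha_{i^*}=\alpha_{i-2}$ in $\overline{\D}$. Since the connected set $\bigcup_{l>k_i^0}b_{i^*}^l\cup[b_{i^*}^{k_i^0}]_\leq$ accumulates at $\om_{i-2}$ on one side of $\beta\cdot\gamma$ and at $\alpha_{i-2}$ on the other, it must meet $\beta\cdot\gamma$, hence --- $\beta$ being confined to $U$ --- it meets $\gamma\subset[b]_\geq$; using that the pasts $[b_{i^*}^l]_\leq$ increase with $l$ and exhaust $\bigcup_m[b_{i^*}^{k_i^m}]_\leq$, together with Remark~\ref{br}, this forces $b\in[b_{i^*}^{k_i^m}]_\leq$ for some $m$, so that past contains a brick of the orbit of $z_\varepsilon$, a contradiction. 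The main obstacle --- and essentially the only genuinely new content --- is the combinatorial bookkeeping: verifying, for each parity of $i$, that the arc joining $\om_{i^*}$ and $\om_i$ separates exactly the announced pair of $\alpha$-ends and that $\beta\cdot\gamma$ separates $\om_{i^*}$ from $\alpha_{i^*}$. Unlike in the elliptic case this direct reading of the hyperbolic order property is enough, and the ``domino'' Lemma~\ref{dosfut} is not needed.
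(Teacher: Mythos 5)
Your proposal is correct and follows essentially the same route as the paper: reuse Part I of Lemma \ref{fuckinpuntas} to reduce to the alternative $f(b_{i^*}'^{k_i^m})\cap c_i^m\neq\emptyset$ for all $m$, build the arcs $\Gamma_m$ joining $\om_{i^*}$ and $\om_i$ to conclude that $\overline{[b_{i^*}^{k_i^m}]_\leq}$ misses one of $\alpha_{i-1},\alpha_{i-3}$, and then contradict this with an arc from that $\alpha_\varepsilon$ to $\om_\varepsilon$ crossing the connected set $\cup_{l}b_{i^*}^l\cup[b_{i^*}^{k_i^m}]_\leq$, which accumulates at both $\alpha_{i-2}$ and $\om_{i-2}$. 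Your explicit check that both choices of $\varepsilon$ yield the separation of $\alpha_{i^*}$ from $\om_{i^*}$ is exactly the case the paper dismisses as ``analogous,'' and your observation that Lemma \ref{dosfut} is not needed here matches the paper's proof.
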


\begin{proof}  Fix an even index $i\in I_{\mbox{sing}}$ (the proof for odd indices is analogous).  The first part of the proof
is identical to part I. in the proof of Lemma \ref{fuckinpuntas}. Indeed, this proof is local, that is, it does not depend on how
the rest of the point in $\ell$ are ordered.  So, there are two possibilities:
either $f(c_i^N)\cap b_{i-2}'^{k_i^N}\neq \emptyset$ or $f(b_{i-2}'^{k_i^N})\cap c_i^N\neq \emptyset$.  In the first case
we are done, as it implies immediately the statement of the lemma.  As a consequence, we may assume that for all
$n\geq 0$,  $[b_{i-2}^{k_i^n}]_>$ contains $b_{i}^{l}$ for all $l>0$.  We will show that this contradicts the fact that
$f$ is not recurrent.

With this assumption, for all $n\geq 0$
there exists an arc $$\Gamma _n : [0,1 ] \to [b_{i-2}^{k_i^n}]_>\cup \{\om_{i-2},\om _i\}$$\noindent joining $
\om _{i-2}$ and $\om _{i}$ (see Remark \ref{puntas}).  So, the arc $\Gamma _n$ separates $\alpha _{i-1}$ from
$\alpha _{i-3}$ in $\overline {\D}$ for all $n>0$ (see Figure 10, and note that the points $\alpha _{i-1},
\alpha _{i-3}, \om _{i-2}, \om _{i}$ are all different ).

We deduce
(as we are supposing that $f$ is not recurrent) that for any $n>0$ $\overline{[b_{i-2}^{k_i^n}]_\leq}$ cannot contain
both $\alpha_{i-1}$ and $\alpha _{i-3}$. So, one of the points $\alpha_{i-1}$ or $\alpha _{i-3}$ is not contained
in any of the sets $\overline{[b_{i-2}^{k_i^n}]_\leq}$, $n>0$.  We will suppose that for all $n>0$,
$\alpha _{i-1}\notin\overline{[b_{i-2}^{k_i^n}]_\leq}$ (the proof is analogous in the other case).
We fix $n>0$
and consider the connected set $$K = \cup _{l\geq k_i^n} b_{i-2}^l \cup  [b_{i-2}^{k_i^n}]_\leq .$$  We choose a neighbourhood
$U$ of $\alpha _{i-1}$ in $\overline {\D}$ such that $U\cap K = \emptyset$.  Then, we take $j>0$, such that $f^{-j}(z_{i-1})\in
U$ and $b\in B$ such that $f^{-j}(z_{i-1})\in b$.  We take an arc $\gamma \subset
 U$ joining $\alpha _{i-1}$ and $f^{-j}(z_{i-1})$, and an arc $\beta \subset [b]_\geq \cup \om _{i-1}$ joining $f^{-j}(z_{i-1})$
and $\om _{i-1}$. We deduce that $\gamma .\beta \cap K \neq \emptyset$, and as $\gamma \subset U$, we
have $\beta \cap K\neq \emptyset$. So, there exists $l\geq k_i^n$ such that $b\in [b_{i-2}^l]_{\leq}$, and
consequently $\alpha _{i-1}\in \overline{[b_{i-2}^l]_{\leq}}$.  This contradiction finishes the proof of the lemma.

\end{proof}

\begin{figure}[h]\label{fnueve}
\begin{center}
\psfrag{a}{$\om _{i-2} = \alpha _i$}\psfrag{b}{$\alpha _{i-1}$}\psfrag{c}{$\alpha _{i-2}$}\psfrag{d}{ $\om_{i}$}
\psfrag{g}{ $\Gamma _n$}\psfrag{y}{ $\om_{i-1}$}\psfrag{x}{ $\om _{i+1}$}\psfrag{z}{ $\alpha _{i+1}$}
\includegraphics[scale=0.6]{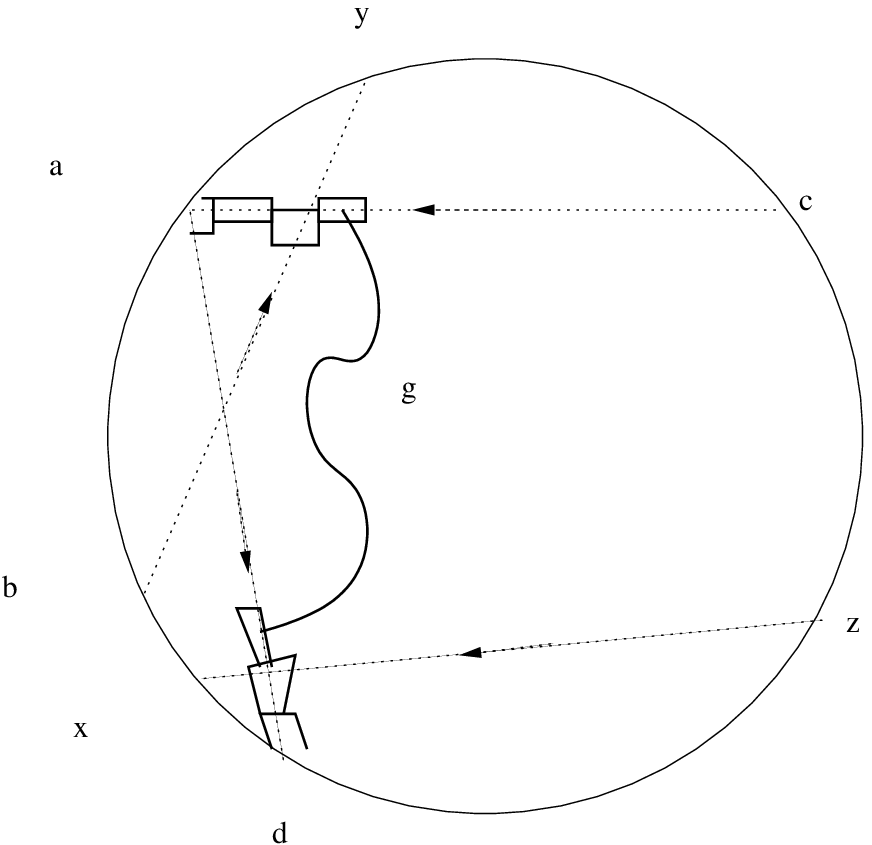}
\caption{The proof of lemma \ref{l49}}
\end{center}
\end{figure}

\begin{lema}\label{ptasfut}  There exists $k>0$ such that for all even values of $i\in \Z/2m\Z$,
both attractors $[b_i^{-k}]_>$ and
$[b_{i-1}^{-k}]_>$
contain $b^k_l$ for all $l\in \{i-2, i-1, i, i+1\}$.

\end{lema}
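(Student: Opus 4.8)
The plan is to repeat, with suitable modifications, the argument of Lemma \ref{theka}; the new feature is that the hyperbolic order property does \emph{not} allow a ``domino'' propagation to all the $\om_l$ as in Lemma \ref{dosfut}, so each attractor we build will reach precisely the four indices named in the statement. Fix $j\in\Z/2m\Z$ and put $A_j=\bigcup_{l>0}[b_j^{-l}]_>$ if $j\in I_{\mbox{reg}}$, and $A_j=[b_j^{-j_j^N}]_\geq$ if $j\in I_{\mbox{sing}}$, where $N$ is provided by Lemma \ref{l49} (for $j\in I_{\mbox{sing}}$ we also write $b_j^{-k}$ for $b_j^{-j_j^n}$ with $j_j^n$ the least term $\geq k$). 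In both cases $A_j$ is an attractor, it is connected by Proposition \ref{futcon}, it contains $b_j^l$ for every $l\geq1$, and by Remark \ref{puntas} it is accessible by an arc landing at $\om_j$; when $j\in I_{\mbox{reg}}$ it is moreover accessible by an arc landing at $\alpha_j$, and when $j\in I_{\mbox{sing}}$ Lemma \ref{l49} gives $[b_{j^*}^{k_j^N}]_\geq\subset A_j$, so then $A_j$ is also accessible by an arc landing at $\om_{j^*}$.

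The combinatorial input is that, for even $i$, the hyperbolic order property says exactly that the four points $\alpha_i,\alpha_{i-1},\om_{i+1},\om_i$ occur on $S^1$ in this cyclic order, and that these quadruples, read off for $i=0,2,\dots,2m-2$, exhaust $S^1$. Consider first $j\in I_{\mbox{reg}}$, say $j$ even. An arc $\Gamma_j$ from $\alpha_j$ to $\om_j$ interior to $A_j$ away from its endpoints exists since $A_j$ is connected and contains both accessing families; the two arcs of $S^1$ cut by $\{\alpha_j,\om_j\}$ carry $\{\alpha_{j-1},\om_{j+1}\}$ on one side and the rest on the other, so $\Gamma_j$ separates $\alpha_{j-1}$ from $\om_{j-1}$ and $\om_{j+1}$ from $\alpha_{j+1}$. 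By the familiar argument --- if $b_l^s\notin A_j$ then $b_l^s$ lies in the repeller $B\setminus A_j$, hence so does the connected set $[b_l^s]_\leq$, which contains on one hand $b_l^s$ (on the $\om_l$-side of $\Gamma_j$ once $s$ is large) and on the other the bricks of the backward orbit of $z_l$ accumulating on $\alpha_l$, so it must cross $\Gamma_j$ and meet $A_j$, impossible under non-recurrence --- we get $b_{j-1}^s,b_{j+1}^s\in A_j$ for all large $s$. Now $A_j$ contains families accumulating on the two ends $\om_{j-1}$ and $\om_j$, and every arc of $\overline\D$ from $\om_{j-1}$ to $\om_j$ keeps $\{\om_{j-2},\alpha_j,\alpha_{j-1},\om_{j+1}\}$ on one side and $\alpha_{j-2}$ on the other; choosing such an arc inside $A_j$ and repeating the argument gives $b_{j-2}^s\in A_j$. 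Hence $A_j$ contains $b_l^s$ for all large $s$ and all $l\in\{j-2,j-1,j,j+1\}$. The odd case is symmetric, with window $\{j-1,j,j+1,j+2\}$. If $j\in I_{\mbox{sing}}$ one starts directly from the two families accumulating on $\om_{j^*}$ and on $\om_j$ that $A_j$ already contains, and uses a single arc of $\overline\D$ from $\om_{j^*}$ to $\om_j$: since $\om_{j^*}$ and $\om_j$ are the last entries of two adjacent quadruples, this arc separates both $\om_{j+1}$ from $\alpha_{j+1}$ and $\om_{j-1}$ from $\alpha_{j-1}$, yielding again $b_l^s\in A_j$ for all large $s$ and all $l$ in the window.

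Applying this to every $j$, and noting that for even $i$ the windows of $j=i$ and $j=i-1$ both equal $\{i-2,i-1,i,i+1\}$, that an attractor containing $b_l^s$ contains $b_l^k$ for every $k\geq s$, and that $[b_j^{-k}]_>\supset[b_j^{-L}]_>$ for $k\geq L$, we conclude by choosing $k$ larger than the finitely many thresholds that appear. The step I expect to be the main obstacle is the arc-in-$A_j$ construction when ${\cal L}$ is degenerate: if both $\alpha_j=\om_{j^*}$ and $\alpha_{j^*}=\om_j$ hold and $j\in I_{\mbox{reg}}$, two of the four circle points used in the separation collapse onto an endpoint of the separating arc, so the argument as stated breaks. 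One then has to treat that configuration apart --- which in fact forces $n=4$ --- exactly as the degenerate $n=4$ situation was handled separately in the elliptic proof, replacing the brick ``near $\om_{j^*}$'' by the $b_j^{-l}$ already lying in $A_j$; carrying this out cleanly is the delicate part.
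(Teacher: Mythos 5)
Your overall strategy is the paper's: the same attractors $\bigcup_{l>0}[b_j^{-l}]_>$ (regular case) and $[b_j^{-j_j^N}]_\geq$ via Lemma \ref{l49} (singular case), arcs landing at the points of $\ell$ via Remark \ref{puntas}, separation forced by the hyperbolic cyclic order, the attractor property to absorb the bricks $b_l^s$, and a final enlargement of $k$ using monotonicity. The singular case and the first step of the regular case are carried out exactly as in the paper and are fine.

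The gap is the one you half-notice at the end, and it is real: in the regular case your second separating arc joins $\om_{j-1}$ to $\om_j$ (resp.\ $\om_{j+1}$ to $\om_j$ for odd $j$) and is supposed to separate $\alpha_{j-2}$ from $\om_{j-2}$ (resp.\ $\alpha_{j+2}$ from $\om_{j+2}$). The hyperbolic order allows $\alpha_{j-2}=\om_{j-4}$, which equals $\om_j$ exactly when $n=4$; in that case the point you want to separate sits at an endpoint of your arc, the connected set $\bigcup_k[b_{j-2}^k]_\leq$ need not cross it, and the step collapses. Note that your diagnosis of when this happens is also off: it does not require the full degeneracy ``$\alpha_j=\om_{j^*}$ and $\alpha_{j^*}=\om_j$'' --- the single coincidence $\alpha_{j-2}=\om_j$ (with $j-2$ possibly singular but $j$ regular) already breaks your arc, e.g.\ $n=4$ with $\alpha_2=\om_0$ but $\alpha_0\neq\om_2$, taking $j=0$. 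Since degenerate hyperbolic cycles of order $4$ are squarely within the scope of the lemma (Figure 2 is one), leaving this case to an unexecuted sketch (``replacing the brick near $\om_{j^*}$ by the $b_j^{-l}$ already lying in $A_j$'') means the lemma is not proved as written. The repair is exactly the paper's choice of endpoints and needs no case analysis: after the first step $A_j$ contains $b_{j-1}^s$ and $b_{j+1}^s$ for large $s$, hence families accumulating at both $\om_{j-1}$ and $\om_{j+1}$; an arc in $A_j\cup\{\om_{j-1},\om_{j+1}\}$ joining these two points separates $\alpha_{j-2}$ from $\om_{j-2}$ (resp.\ $\alpha_{j+2}$ from $\om_{j+2}$) in $\overline\D$, because the only coincidences permitted by the hyperbolic order are of the form $\alpha_l=\om_{l\mp2}$ and these can never identify $\alpha_{j\mp2}$ or $\om_{j\mp2}$ with $\om_{j-1}$ or $\om_{j+1}$.
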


\begin{proof} If $i\in I_{\mbox{sing}}$, the previous lemma tells us that there exists $N>0$ such that $[b_i^{-j_i^N}]_\geq$ contains
$b_{i-2}^{k_i^N}$.  So, we can find an arc $$\Gamma: [0,1]\to
[b_i^{-j_i^N}]_>\cup \{\om _{i-2}, \om _{i}\}$$\noindent  joining $\om _{i-2}$ and $\om _i$.  This arc
separates both $\alpha _{i-1}$ from $\om _{i-1}$, and $\alpha _{i+1}$ from $\om _{i+1}$ in $\overline {\D}$ (see Figure 10).  As a consequence,
both $\cup _{k\geq 1} [b_{i-1}^k]_\leq$ and $\cup _{k\geq 1} [b_{i+1}^k]_\leq$ intersect $\Gamma$, and so there exists $k>0$
such that $b_{i-1}^k$ and $b_{i+1}^k$ belong to  $[b_i^{-j_i^N}]_>$.  If $i-1\in I_{\mbox{sing}}$, we can show analogously
that
 $ [b_{i-1}^{-j_{i-1}^N}]_>$
contains $b^k_l$ for all $l\in \{i-2, i-1, i, i+1\}$ and some $k>0$.

If $i\in I_{\mbox{reg}}$, we can find an arc $$\Gamma : [0,1] \to \cup _{l>0}[b_i^{-l}]_>\cup \{\alpha _i, \om_i\}$$\noindent joining $\alpha_i$ and
$\om_i$.  So, $\Gamma$ separates (in $\overline{\D}$) both $\alpha _{i+1}$ from $\om _{i+1}$ and
 $\alpha _{i-1}$ from $\om _{i-1}$. So,  both $\cup _{k\geq 1} [b_{i-1}^k]_\leq$ and $\cup _{k\geq 1} [b_{i+1}^k]_\leq$
intersect
$\Gamma$, and there exist $k,N>0$ such that $[b_i^{-N}]_> \cap [b_{i-1}^k]_\leq\neq \emptyset$ and $[b_i^{-N}]_> \cap
[b_{i+1}^k]_\leq\neq \emptyset$. Once $b_{i-1}^l$ and $b_{i+1}^l$ belong to $[b_i^{-N}]_>$,  we can find an arc $$\Gamma ': [0,1]
\to [b_i^{-N}]_> \cup \{\om _{i-1}, \om_{i+1}\}$$\noindent joining $\om_{i-1}$ and $\om_{i+1}$. So, $\Gamma '$ separates
$\alpha _{i-2}$ from $\om_{i-2}$ in $\overline \D$, and one obtains $b_{i-2}^k\in [b_i^{-N}]_> $, for some $k>0$. We obtain
the result by sufficiently enlarging $k$.
\end{proof}

We fix $k>0$ as in Lemma \ref{ptasfut}.

\begin{lema}\label{thel}  There exists $p>k$ such that $[b_i^{-k}]_<\cap b_{j}'^l = \emptyset$ for all $i,j$ in $\Z/2m\Z$
 and $l\geq p$.

\end{lema}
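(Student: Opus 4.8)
The plan is to argue by contradiction, turning the existence of arbitrarily ``deep'' intersections of $[b_i^{-k}]_<$ with the disks $b_j'^l$ into an impossible meeting of a repeller with an attractor of $f$.

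First I would fix the contradiction data. If no such $p$ existed, then --- there being only finitely many pairs of indices --- there would be fixed $i,j\in\Z/2m\Z$ and a strictly increasing sequence $(l_n)_{n\geq 1}$, which we may take with $l_n\geq k$, such that $[b_i^{-k}]_<\cap b_j'^{l_n}\neq\emptyset$ for all $n$. The key preliminary point is that every disk $b_j'^l$ with $l\geq 1$ is swallowed by the attractor $[b_j^{-k}]_>$: the forward orbit of $z_j$ meets $b_j^{-k}$ and, strictly later, each brick $b_j^l$ ($l\geq 1$) by Lemma \ref{ends}, so $b_j^l\in[b_j^{-k}]_>$, and hence $b_j'^l\subset b_j^l\subset[b_j^{-k}]_>$. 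Thus $[b_i^{-k}]_<$ meets the brick $b_j^{l_n}\subset[b_j^{-k}]_>$ for every $n$; in particular $\om_j\in\overline{[b_i^{-k}]_<}$, and, chasing the implications of Remark \ref{br} together with the non-recurrence of $f$, one gets $[b_i^{-k}]_<\cap[b_j^{-k}]_>\neq\emptyset$ as subsets of $B$. When $i=j$ this already contradicts non-recurrence (a brick in both the strict past and the strict future of $b_i^{-k}$ yields a closed chain, hence recurrence by Proposition \ref{franksfino}), so in that case $[b_i^{-k}]_<\cap b_i'^l=\emptyset$ for all $l\geq 1$; from now on assume $i\neq j$.

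Next I would unwind the intersection. A brick $b\in[b_i^{-k}]_<\cap[b_j^{-k}]_>$ satisfies $b_i^{-k}\in[b]_>$ and $b_j^{-k}\in[b]_<$, i.e. there is a forward chain $b_j^{-k}\to b\to b_i^{-k}$; hence $b_i^{-k}\in[b_j^{-k}]_>$ and therefore $[b_i^{-k}]_{\geq}\subset[b_j^{-k}]_>$. Combining this with Lemma \ref{ptasfut}, the connected attractor $[b_j^{-k}]_>$ now contains the bricks $b_l^k$ for the whole block of four consecutive indices around $i$ in addition to those around $j$; using Remark \ref{puntas} to run arcs toward the corresponding ends, $\overline{[b_j^{-k}]_>}\cap S^1$ contains $\om_i$, $\om_j$ and their neighbours, while the repeller $[b_j^{-k}]_<$ still reaches $\alpha_j$ through the bricks $b_j^{-l}$ ($l>k$), and $b_j^{-k}$ is adjacent to $[b_j^{-k}]_>$ in a small neighbourhood of $\alpha_j$ (by freeness of the decomposition and the orbit of $z_j$). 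Drawing a crosscut $\gamma\subset[b_j^{-k}]_>$ joining two of the ends it reaches, and invoking the hyperbolic order property of ${\cal L}$ to see that $\alpha_j$ (or $\alpha_{j^*}$, in the singular case) is pinched strictly between two such ends, one finds that $\gamma$ separates $\alpha_j$ from $\om_j$; since $[b_j^{-k}]_<$ reaches $\alpha_j$ and $b_j^k\in[b_j^{-k}]_>$ reaches $\om_j$, this forces $[b_j^{-k}]_<\cap[b_j^{-k}]_>\neq\emptyset$, hence a closed chain of bricks and the recurrence of $f$ by Proposition \ref{franksfino} --- the desired contradiction.

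The delicate step is this last one: extracting a genuine contradiction from the purely combinatorial inclusion $[b_i^{-k}]_{\geq}\subset[b_j^{-k}]_>$ requires a careful case analysis of which ends of $\ell$ a recurrence-free attractor of the form $[b_\bullet^{-k}]_>$ can touch simultaneously, and it is precisely here that the hyperbolic combinatorics of the points $\alpha_i,\om_i$ and the uniform index $k$ supplied by Lemma \ref{ptasfut} enter in an essential way; the remaining steps are bookkeeping with Remark \ref{br} and the equivalences of Section \ref{bricks}.
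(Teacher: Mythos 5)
Your reduction is sound up to the inclusion $[b_i^{-k}]_{\geq}\subset[b_j^{-k}]_>$ (equivalently $b_j^{-k}\in[b_i^{-k}]_<$), but the step that is supposed to yield the contradiction --- the only place the lemma could actually come from --- fails as you describe it, and you yourself leave it as a ``delicate case analysis''. A crosscut $\gamma\subset[b_j^{-k}]_>$ separating $\alpha_j$ from $\om_j$, together with the facts that $[b_j^{-k}]_<$ accumulates on $\alpha_j$ and that $b_j^k\in[b_j^{-k}]_>$ accumulates on $\om_j$, forces nothing: two connected sets lying on opposite sides of a crosscut need not meet, and nothing obliges the repeller to reach the $\om_j$ side, so the conclusion $[b_j^{-k}]_<\cap[b_j^{-k}]_>\neq\emptyset$ does not follow. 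To extract an intersection from a separation argument you need one connected set that is forced to meet both complementary components of an arc contained in the attractor, and exhibiting such a set (for every relative position of $i$ and $j$ in the hyperbolic order, including the coincidences $\alpha_i=\om_{i^*}$) is exactly the missing content. There is also a smaller gap earlier: from the plane intersection $[b_i^{-k}]_<\cap b_j'^{l_n}\neq\emptyset$ you pass to a brick-level intersection with $[b_j^{-k}]_>$ ``by Remark \ref{br}'', but item \ref{br4} of that remark concerns two attractors or two repellers; a repeller and an attractor that are disjoint in $B$ may still share boundary points in $\D$, so the adjacency case needs an argument.

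For comparison, the paper's proof is direct and does not pass through $[b_j^{-k}]_>$ at all. For each $i$ it uses Lemma \ref{ptasfut} and Remark \ref{puntas} to draw an arc $\gamma_i\subset[b_i^{-k}]_>\cup\{\om_{i+1},\om_{i-1}\}$ joining $\om_{i+1}$ and $\om_{i-1}$ (with the corresponding pair of ends for odd $i$); by the hyperbolic order this arc separates $\alpha_i$ from every $\om_j$ with $j\notin\{i-2,i-1,i+1\}$, and since the connected repeller $[b_i^{-k}]_<$ is disjoint from $[b_i^{-k}]_>$ (non-recurrence) and accumulates on $\alpha_i$, it lies entirely on the $\alpha_i$ side; as the disks $(b_j'^l)_l$ Hausdorff-converge to $\om_j$, those with $l$ large lie on the other side and so miss $[b_i^{-k}]_<$. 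The remaining indices $j\in\{i-2,i-1,i+1\}$ are handled by the choice of $k$: $b_j^l\in[b_i^{-k}]_>$ for $l\geq k$, so these bricks cannot meet $[b_i^{-k}]_<$. If you wish to keep your contradiction scheme, the set to run across a crosscut is $[b_i^{-k}]_<$ itself, whose closure would contain both $\alpha_i$ and $\alpha_j$ once $b_j^{-k}\in[b_i^{-k}]_<$, against an arc in $[b_i^{-k}]_>$ separating those two points when $j$ is far from $i$ --- which is the paper's argument in disguise, plus the separate treatment of nearby $j$.
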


\begin{proof} Fix  $i\in \Z/2m\Z$ even. There exists an arc
$$\gamma _{i}:[0,1]\to [b_i^{-k}]_>\cup \{\om_{i+1}, \om_{i-1}\}$$\noindent joining $\om_{i+1}$ and $\om_{i-1}$.  As the
three points $\alpha_{i}, \om_{i+1},$ and $\om_{i-1}$ are different, $\gamma_{i}$ separates $\alpha_{i}$ from any $\om_j$
$j\notin \{i-2, i-1,i+1\}$ (in  $\overline \D$) .

So, there exists $l_i>k$ such that  $\gamma_{i}$ separates $[b_i^{-k}]_<$ from any $b_{j}'^l$ with $l>l_i$ and
$j\notin \{i-2,i-1,i+1\}$.    Besides,
we already know that  $[b_{i}^{-l_i}]_{<}\cap [b_{j}^{l_i}]_> = \emptyset$ if $j\in \{i-2,i-1,i+1\}$, because $[b_{i}^{-l_i}]_>$
contains $b_j^{l_i}$. In particular, $[b_{i}^{-l_i}]_<\cap b_{j}'^{l} = \emptyset$ for $l\geq l_i$ and $j\in \{i-2,i-1,i+1\}$.

If $i$ is odd, we can do the same argument  with an arc
$$\gamma _{i-1}:[0,1]\to [b_i^{-k}]_>\cup \{\om_{i}, \om_{i-2}\}$$\noindent joining $\om_{i}$ and $\om_{i-2}$.

We finish by taking $p=\max\{l_i, i\in\Z/2m\Z \}$.

\end{proof}

Thanks to the two preceeding lemmas we may fix $k>0$ such that:
\begin{enumerate}
 \item both attractors $[b_i^{-k}]_>$ and $[b_{i-1}^{-k}]_>$
contains $b^k_l$ for all even values of $i$, and for all $l\in \{i-2, i-1, i, i+1\}$,
\item $[b_i^{-k}]_<\cap b_{j}'^l= \emptyset$ for all $i,j $ in $\Z/2m\Z$, and $l\geq k$.
\end{enumerate}

We define $$a_i = \Gamma _i^+\cap \cup_{l\geq k} b_{i}'^{l}$$\noindent for all $i\in \Z/2m\Z$.
The cyclic order of the sets $\{a_i\}$ satisfies:  $$a_{i-2}\to a _{i+1}\to a_{i},$$\noindent  for all even values
of $i$.  We
may suppose that each $a_i$ is an arc, and so $U= \D\backslash \cup_{i\in\Z/2m\Z}a_i$ is simply connected.  Let $\varphi: U \to \D$ be the Riemann map
and consider the intervals $\{J_i\}$ defined in 3.1.

For all even $i$, we define $I_i$ as to be the connected component  of
$S^1\backslash \cup _{l\in \Z/2m\Z}J_l$ following $J_{i-2}$ in the natural
(positive) cyclic order on $S^1$. We define $I_{i+1}$,  as to be the connected component of
$S^1\backslash \cup _{l\in \Z/2m\Z}J_l$ following $I_i$.  So, for all even $i$ we have:

$$J_{i-2}\to I_i\to J_{i+1}\to I_{i+1}\to J_i.$$

\begin{lema}\label{hypqh}  For all $i\in \Z/2m\Z$,

\begin{enumerate}
 \item $[b_i^{-k}]_<\subset U$,
\item if $i$ is even, then $\overline{\varphi([b_i^{-k}]_<)}\cap S^1\subset I_{i}\cup I_{i-1}$
, and
$\overline{\varphi(b_{i-1_<}^{-k})}\cap S^1\subset I_{i}\cup I_{i+1}$,
\item there exists $j_i$ such that  $\overline{\varphi([b_i^{-k}]_<)}\cap S^1\subset I_{j_i}$
(so, if $i$ is even, $j_i\in \{i, i-1\}$, $j_{i-1}\in \{i, i+1\}$).
\end{enumerate}

\end{lema}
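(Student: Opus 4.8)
The plan is to adapt the proof of Lemma~\ref{jis}; the one structural difference is that in the hyperbolic case $[b_i^{-k}]_>$ need not contain every $a_l$. By Lemma~\ref{ptasfut} we only know, for even $i$, that both $[b_i^{-k}]_>$ and $[b_{i-1}^{-k}]_>$ contain $b^k_l$, hence $a_l$, for $l\in\{i-2,i-1,i,i+1\}$; thus only four of the crosscuts used in the elliptic argument are available, which is exactly why two intervals must be allowed per index. Item~(1) is the same reduction as in Lemma~\ref{jis}: each $a_j$ is an arc contained in $\bigl(\bigcup_{l\geq k}b_j'^l\bigr)\cup\{\om_j\}$, while $[b_i^{-k}]_<$ is a union of bricks and hence a subset of $\D\setminus\fix(f)$; by the second property defining $k$ (Lemma~\ref{thel}), $[b_i^{-k}]_<\cap b_j'^l=\emptyset$ for all $l\geq k$, so $[b_i^{-k}]_<\cap a_j=\emptyset$, i.e. $[b_i^{-k}]_<\subset U$. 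Consequently $\varphi$ is defined on $[b_i^{-k}]_<$, which is connected by Proposition~\ref{futcon}, and as in Lemma~\ref{jis} one also obtains $\overline{\varphi([b_i^{-k}]_<)}\cap J_j=\emptyset$ for every $j$, whence $\overline{\varphi([b_i^{-k}]_<)}\cap S^1\subset\bigcup_l I_l$.

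For item~(2), fix an even $i$ (the odd case is the mirror relabelling). First, $\alpha_i\in\overline{[b_i^{-k}]_<}$: when $i\in I_{\mbox{reg}}$ this is because $[b_i^{-k}]_<$ contains all the bricks $b_i^{-l}$ with $l>k$, which converge to $\alpha_i$; when $i\in I_{\mbox{sing}}$, where $b_i^{-k}$ abbreviates $b_i^{-j_i^N}$, the past $[b_i^{-j_i^N}]_<$ contains the disks $c_i^n$ with $n>N$, which converge to $\alpha_i=\om_{i^*}$. Using the connectedness of $[b_i^{-k}]_>$ and Lemma~\ref{ptasfut}, choose a crosscut $\gamma$ of $U$ from $a_{i-1}$ to $a_{i+1}$ with $\gamma\cap U\subset[b_i^{-k}]_>$. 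In the cyclic order of the $a_l$'s only $a_{i-2}$ lies strictly between $a_{i-1}$ and $a_{i+1}$ on one side, so $\overline{\varphi(\gamma\cap U)}$ joins $J_{i-1}$ to $J_{i+1}$ and splits $\D$ into two pieces whose $S^1$-traces are $\overline{I_{i-1}\cup J_{i-2}\cup I_i}$ and the complementary arc. The cyclic order of $\ell$ forced by the hyperbolic condition, in which $\om_{i-1}\to\om_{i-2}\toe\alpha_i\to\alpha_{i-1}\toe\om_{i+1}$, puts $\alpha_i$ in the first of these arcs; since $[b_i^{-k}]_<$ is connected, accumulates on $\alpha_i$, and is disjoint from $\gamma\cap U$ (because $[b_i^{-k}]_<\cap[b_i^{-k}]_>=\emptyset$, $f$ being non-recurrent), it lies entirely in the corresponding component of $U\setminus\gamma$, whence $\overline{\varphi([b_i^{-k}]_<)}\cap S^1\subset I_{i-1}\cup I_i$. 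The same argument, with a crosscut from $a_{i-2}$ to $a_i$ inside $[b_{i-1}^{-k}]_>$, gives $\overline{\varphi([b_{i-1}^{-k}]_<)}\cap S^1\subset I_i\cup I_{i+1}$.

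For item~(3), suppose $\overline{\varphi([b_i^{-k}]_<)}$ met both $I_{i-1}$ and $I_i$. As $a_{i-1},a_{i-2}\subset[b_i^{-k}]_>$, take a crosscut $\delta$ of $U$ from $a_{i-1}$ to $a_{i-2}$ with $\delta\cap U\subset[b_i^{-k}]_>$; since $I_{i-1}$ is the arc of $S^1$ lying directly between $J_{i-1}$ and $J_{i-2}$, the arc $\overline{\varphi(\delta\cap U)}$ separates $I_{i-1}$ from $I_i$ in $\D$, so the connected set $\varphi([b_i^{-k}]_<)$ must cross it, giving $[b_i^{-k}]_<\cap[b_i^{-k}]_>\neq\emptyset$, contrary to non-recurrence. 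Hence $\overline{\varphi([b_i^{-k}]_<)}\cap S^1$ lies in a single $I_{j_i}$ with $j_i\in\{i-1,i\}$; the odd case, using a crosscut from $a_i$ to $a_{i+1}$, yields $j_{i-1}\in\{i,i+1\}$.

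The step I expect to be most delicate is the prime-end bookkeeping underlying item~(2): one must be certain that the crosscuts, confined as they are to the ``small'' attractors supplied by Lemma~\ref{ptasfut}, cut $S^1$ exactly as the cyclic order of the $J_l$'s and $I_l$'s predicts, and that the hyperbolic order genuinely forces $\alpha_i$ onto the prescribed side. The subtle case is $i\in I_{\mbox{sing}}$: there $\alpha_i=\om_{i^*}$ sits at an endpoint of some $J_l$, and $[b_i^{-k}]_<$ may reach it from the side of $I_{i-1}$, which is precisely why the second interval cannot be removed from the statement.
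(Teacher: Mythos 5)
Your proposal is correct and follows essentially the same route as the paper: item (1) from the choice of $k$ via Lemma \ref{thel}, the preliminary step that $\overline{\varphi([b_i^{-k}]_<)}$ misses every $J_j$, item (2) via a crosscut of $U$ inside $[b_i^{-k}]_>$ (resp. $[b_{i-1}^{-k}]_>$) joining $a_{i-1}$ to $a_{i+1}$ (resp. $a_{i-2}$ to $a_i$) together with the hyperbolic cyclic order and non-recurrence, and item (3) via a second crosscut separating the two candidate intervals. The only difference is that you make explicit the fact that $\alpha_i\in\overline{[b_i^{-k}]_<}$ (treating $I_{\mbox{reg}}$ and $I_{\mbox{sing}}$ separately), which the paper leaves implicit, so this is a faithful and slightly more detailed version of the same argument.
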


\begin{proof}
\begin{enumerate}
 \item This is trivial because of the choice of $k>0$.

\item First, we show that $\overline {\varphi ([b_i^{-k}]_<)}\subset \cup
_{j\in \Z/2m\Z} I_j$.  Otherwise, there exists $x\in \overline {\varphi ([b_i^{-k}]_<)}\cap J_{j}$ for
some $j\in \Z/2m\Z$.  So, $\overline{[b_i^{-k}]_<}$
contains a point  in $a_j$.  As  $[b_i^{-k}]_<$ is a closed subset of $\D$, and $a_j\subset \D$
we obtain $[b_i^{-k}]_<\cap a_j\neq \emptyset$, contradicting the previous item.

Fix if $i\in\Z/2m\Z$ even.  Take a crosscut $\gamma \subset [b_i^{-k}]_>$ from $\om_{i-1}$ to $\om_{i+1}$.  So, $\alpha _i$ belongs
to the closure of only one of the connected components of $\overline \D\backslash \gamma$; the one to the right of $\gamma$.
So, $\varphi ([b_i^{-k}]_<)$ belongs to the connected component of $\D\backslash \varphi(\gamma\cap U)$ which is to the
right of
$\varphi(\gamma\cap U)$.  As $\overline{\varphi(\gamma\cap U)}$ is an arc joining $J_{i-1}$ and $J_{i+1}$, the cyclic order implies that
$\overline {\varphi ([b_i^{-k}]_<)}\cap S^1\subset I_{i}\cup I_{i-1}$.

The statement for $i-1$ is proved analogously.

\item  Suppose $i$ is even (as before, the other case is analogous). The previous item implies that if
$\overline{\varphi([b_i^{-k}]_<)}$ intersects $I_j$ and $I_l$, $j\neq l$, then $\{j,l\}=\{i, i-1\}$.

Take a crosscut $\gamma \subset [b_i^{-k}]_>$ from $\om_{i-1}$ to $\om_{i-2}$.  Then, $\overline{\varphi(\gamma\cap U)}$
 separates
in $\overline {\D}$ $I_{i-1}$ from $I_i$.  This gives us $$[b_i^{-k}]_<\cap [b_i^{-k}]_>\neq \emptyset,$$\noindent a
contradiction.
\end{enumerate}

\end{proof}

\begin{obs}  If we set $a'_i = a_{2i},$ $b_i^- = b_{2i}^{-k}$, and $b_i^+= b_{2i}^k$ for all $i\in \Z/m\Z$, then $a'_i$,
$b_i^-$, $b_i^+$, $i\in \Z/m\Z$,
satisfy hypothesis 1. to 3. of Lemma \ref{quidh}.  So, if we prove that $j_{2i}=2i$ for all $i\in\Z/m\Z$, then
$\fix(f)\neq \emptyset$.  Indeed, the sets  $a'_i, i\in\Z/m\Z$ are cyclically ordered as follows:

$$a'_0\to a'_1 \to a'_2\to \ldots \to a'_{m-2}\to a '_{m-1} \to a'_0.$$ \noindent Besides, if we set $J'_i = J_{2i},$
for all $i\in \Z/m\Z$, we have:

$$J'_{i-1}\to I_{2i} \to J'_i,$$ \noindent for all $i\in\Z/2m\Z$, and so  $j_{2i}=2i$ is exactly hypothesis 4. of
Lemma \ref{quidh}.

\end{obs}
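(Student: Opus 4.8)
The plan is to verify, item by item, that the data $a'_i=a_{2i}$, $b_i^-=b_{2i}^{-k}$, $b_i^+=b_{2i}^k$ (for $i\in\Z/m\Z$) satisfy hypotheses 1 to 3 of Lemma \ref{quidh}, applied with $m$ in the role of $n$ and with the simply connected domain $U'=\D\backslash\bigcup_{j\in\Z/m\Z}a'_j$ (Riemann map $\varphi'$), which contains $U$; then to read off the cyclic order $a'_0\to a'_1\to\cdots\to a'_{m-1}\to a'_0$ and the identification of the prime‑end intervals of $U'$ with the $J_{2i}$, $I_{2i}$ already constructed; and finally to observe that, under this identification, hypothesis 4 of Lemma \ref{quidh} for the primed data is precisely the statement $j_{2i}=2i$. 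Once that statement is available, Lemma \ref{quidh} yields $\fix(f)\neq\emptyset$, which is the content of the remark.

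For hypothesis 1 I would show $a'_i\subset[b_{2i}^k]_>$: by the brick construction $b_{2i}'^l\subset b_{2i}^l$ for $l\geq1$, and the orbit relation $f^{l_{2i}+l}(z_{2i})\in\inte(b_{2i}'^{l+1})$ of Lemma \ref{ends} forces $f(b_{2i}^l)\cap b_{2i}^{l+1}\neq\emptyset$, so the $b_{2i}^l$ form an $f$‑chain and $b_{2i}^l\in[b_{2i}^k]_>$ for every $l>k$; since $a_{2i}\subset\bigcup_{l>k}b_{2i}^l$ (shrinking $a_{2i}$ towards $\om_{2i}$ if needed), this gives $a'_i\subset[b_{2i}^k]_>=[b_i^+]_>$. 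Hypothesis 2 is Lemma \ref{ptasfut} applied to the even index $2i$: it gives $b_{2i-2}^k,b_{2i}^k\in[b_{2i}^{-k}]_>$, equivalently $b_{2i}^{-k}\in[b_{2i-2}^k]_<\cap[b_{2i}^k]_<$, i.e. $b_i^-\subset[b_{i-1}^+]_<\cap[b_i^+]_<$. For hypothesis 3, Lemma \ref{hypqh}(1) gives $[b_{2i}^{-k}]_<\subset U\subset U'$, while $\alpha_{2i}\in\overline{[b_{2i}^{-k}]_<}\cap S^1\subset\partial U'$ (the strict past of $b_{2i}^{-k}$ accumulates on $\alpha_{2i}$ along the backward orbit of $z_{2i}$), so $\overline{[b_i^-]_<}\cap\partial U'\neq\emptyset$.

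The cyclic order of the $a'_i$ follows by deleting the odd‑indexed $a_i$ from the chains $a_{2j-2}\to a_{2j+1}\to a_{2j}$ recorded before Lemma \ref{hypqh}, which leave $a'_{j-1}\to a'_j$ for all $j$. The only step requiring genuine care — and where I expect the difficulty to lie — is the comparison of the prime ends of $U$ with those of $U'$. Here $U'$ is obtained from $U$ by filling in the $m$ pairwise disjoint slits $a_{2j+1}$, whose closures meet $S^1$ only at the distinct points $\om_{2j+1}$, hence are disjoint from every $\overline{a_{2i}}$ and — by the choice of $k$, which makes $[b_{2i}^{-k}]_<$ avoid every $b_j'^l$ with $l\geq k$ — from every repeller $[b_{2i}^{-k}]_<$. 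Since filling a slit does not disturb the prime ends whose impression lies in a set that the slit's closure avoids, the interval of $U'$ with impression in $a'_j=a_{2j}$ is again $J_{2j}$, so $J'_j=J_{2j}$, and the $S^1$‑impression of each $[b_{2i}^{-k}]_<$ is the same in $U'$ as in $U$. Moreover $a_{2j+1}$, whose prime‑end interval $J_{2j+1}$ obeys $J_{2j-2}\to I_{2j}\to J_{2j+1}\to I_{2j+1}\to J_{2j}$, lay between $I_{2j}$ and $I_{2j+1}$, so after filling, the component $I'_j$ of $S^1\backslash\bigcup_l J'_l$ following $J'_{j-1}=J_{2j-2}$ equals $I_{2j}\cup\{\om_{2j+1}\}\cup I_{2j+1}$; thus $J'_{j-1}\to I_{2j}\to J'_j$, with $I_{2j}\subset I'_j$ and $I_{2j-1}\subset I'_{j-1}$ disjoint from $I'_j$.

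To conclude, hypothesis 4 of Lemma \ref{quidh} for the primed data reads $\overline{\varphi'([b_{2i}^{-k}]_<)}\cap S^1\subset I'_i$; by Lemma \ref{hypqh}(2)--(3) this impression lies in $I_{j_{2i}}$ with $j_{2i}\in\{2i,2i-1\}$, and since $I_{2i}\subset I'_i$ whereas $I_{2i-1}\subset I'_{i-1}$ is disjoint from $I'_i$, hypothesis 4 is satisfied exactly when $j_{2i}=2i$. Hence, if $j_{2i}=2i$ for all $i\in\Z/m\Z$, all four hypotheses of Lemma \ref{quidh} (with $n=m$) are met and $\fix(f)\neq\emptyset$.
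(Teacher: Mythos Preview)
Your argument is correct and follows the same route as the paper: you verify hypotheses 1--3 of Lemma~\ref{quidh} for the primed data using Lemma~\ref{ptasfut} and Lemma~\ref{hypqh}, read off the cyclic order of the $a'_i$ from the hyperbolic order, and then identify hypothesis~4 with the condition $j_{2i}=2i$. The paper's remark does this more tersely---it simply writes $J'_i=J_{2i}$ and $J'_{i-1}\to I_{2i}\to J'_i$ without discussing the passage from the Riemann map of $U$ to that of $U'$---whereas you spell out the prime-end comparison (filling the odd slits $a_{2j+1}$, which are disjoint from the even $a_{2i}$ and from the repellers, so that $I'_j\supset I_{2j}$ and $I'_{j-1}\supset I_{2j-1}$); this extra care is appropriate but does not change the strategy.
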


We are now ready to prove Proposition \ref{negi}:

\begin{proof}  Because of the previous remark, it is enough to show that $j_{2i}=2i$ for all $i\in\Z/m\Z$. We will show
that if this is not the case, we contradict Lemma \ref{quid}.
Lemma \ref{hypqh}, tells us that $j_{2i}\in \{2i, 2i-1\}$.  Let us assume that $j_{2i} = 2i-1$. This implies that
$j_{2i-2}, j_{2i-1},$ and $j_{2i}$ are different.
Indeed, by Lemma \ref{hypqh} $j_{2i-2}\in \{2i-3, 2i-2\}$, $j_{2i-1}\in \{2i, 2i+1\}$, and by assumption $j_{2i} = 2i-1$.  Besides,
we have:

\begin{itemize}

\item $[b_{2i}^{-k}]_>$ contains $b_{2i}^k$, $b_{2i-1}^k$, and  $b_{2i-2}^k$,
\item $[b_{2i-1}^{-k}]_>$ contains $b_{2i}^k$, $b_{2i-1}^k$, and  $b_{2i-2}^k$,
\item  $[b_{2i-2}^{-k}]_>$ contains both $b_{2i-2}^k$ and $b_{2i-1}^k$.
\end{itemize}

So, as $j_{2i-2}, j_{2i-1},$ and $j_{2i}$ are different, if we show that $[b_{2i-2}^{-k}]_>$ also contains $b_{2i}^k$, we
contradict Lemma \ref{quid}.
Take a crosscut $\gamma \subset  [b_{2i-2}^{-k}]_> $ from $a_{2i-2}$ to $a_{2i-4}$.  Then,
$\overline{\varphi (\gamma \cap U)}$ separates $I_{2i-1}$ from $J_{2i}$.  On the other hand, $\overline{\varphi([b_{2i}^{k}]_<)}$
joins this both sets, as we are assuming $j_{2i} = 2i-1$, and by definition of $J_{2i}$. So,
$$\varphi([b_{2i}^{k}]_<)\cap \varphi (\gamma \cap U)\neq
\emptyset, $$\noindent and we are done.

\end{proof}

\section{Proof of Lemma \ref{opt}}\label{last}

We finish by proving Lemma \ref{opt}, showing that our theorem is optimal.  \\

We begin with a perturbation lemma.\\

Let $(\phi _t)_{t\in \R}$ be the flow in $\D$ whose orbits are drawn in the figure below:

\begin{figure}[h]
\begin{center}
\psfrag{a}{$0$}
{\includegraphics[scale=0.6]{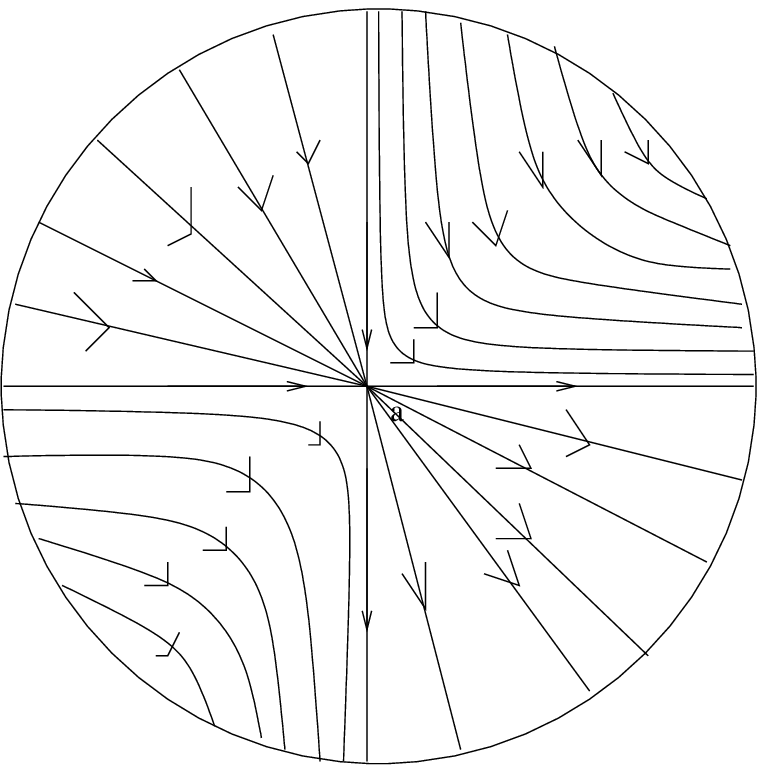}}

\end{center}
\end{figure}

We say that a flow $(\varphi _t)_{t\in \R}$ in $\D$ is {\it locally conjugate to $(\phi _t)_{t\in \R}$ at $z_0$} if there exist an open neighbourhood $U$ of $z_0$ and a homeomorphism
$h:\D\to U$ such that $h(0)=z_0$ and $h^{-1}\varphi_t h = \phi_t$ for all $t\in \R$. 

If $\varphi:\D \to \D$ is a homeomorphism, we write $\alpha(x, \varphi)$ for the set of accumulation points of the backward $\varphi$- orbit of $x$, and $\om (x,\varphi)$
for the set of accumulation points of the forward $\varphi$- orbit of $x$.

\begin{lema}\label{local} Let $\varphi:\D \to \D$ be the time one map of flow which is locally conjugate to $(\phi_t)_{t\in \R}$ at $z_0$, and $U$ an open neighbourhood 
of $z_0$ where
$h^{-1}\varphi h = \phi _1$. Then, for any $x,y \in U$ such that $\om (x, \varphi) = z_0 =\alpha(y, \varphi) $,
there exists an orientation preserving homeomorphism $g: \D\to \D$ supported in the union of two free disjoint open disks such that
$$ \alpha (x, \varphi\circ g) = \alpha (x, \varphi), \ \om (x, \varphi\circ g) = \om (y, \varphi) .$$
 
\end{lema}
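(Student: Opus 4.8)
The statement is a local surgery lemma: near $z_0$ the flow looks like the standard ``index $0$'' picture (figure above), which has a single stagnation-type behaviour where one incoming orbit branch merges with one outgoing branch. The point $x$ flows forward into $z_0$ and the point $y$ flows backward out of $z_0$; I want to reroute $x$'s forward orbit so that after finitely many steps it lands on (a point of) the backward orbit of $y$, and then let $\varphi$ carry it forward — thereby stealing $y$'s forward $\omega$-limit. The construction must keep everything orientation preserving and confined to two free disjoint open disks so that no new fixed points or recurrence are introduced.

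\textbf{Step 1: normalize via the conjugacy.} Work in the coordinate chart $h$, so that on $U$ we may assume $\varphi = \phi_1$, the time-one map of the model flow $(\phi_t)$, and $z_0 = 0$. The hypothesis $\omega(x,\varphi) = 0 = \alpha(y,\varphi)$ means that in these coordinates $\phi_t(x) \to 0$ as $t \to +\infty$ and $\phi_t(y) \to 0$ as $t \to -\infty$; in particular the forward $\phi_1$-orbit $(\phi_n(x))_{n \ge 0}$ and the backward $\phi_1$-orbit $(\phi_{-n}(y))_{n \ge 0}$ are both sequences converging to $0$, lying on the two relevant orbit branches of the model picture.

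\textbf{Step 2: pick the surgery points and a free translation arc.} Choose $N$ large so that $a := \phi_N(x)$ and $b := \phi_{-N}(y)$ are both close to $0$ and inside $U$. Since $\phi_1$ has no fixed point near $0$ except possibly $0$ itself and we are away from $0$, both $a$ and $b$ lie in the complement of $\mathrm{Fix}(\varphi)$; using that the complement of the fixed set is open and, more precisely, that one can find translation arcs (Brouwer theory, as recalled in the excerpt), pick a simple arc $\delta$ joining $a$ to $b$ inside $U \setminus \mathrm{Fix}(\varphi)$ which is \emph{free} ($\varphi(\delta) \cap \delta = \emptyset$); this is possible because $a$ and $b$ can be taken on nearby branches of the same local model and the model is a flow, so a short transverse arc joining them can be chosen transverse to the flow and hence free. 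Thicken $\delta$ slightly to a free open disk $D_1$; also let $D_2$ be a small free open disk around $b$ disjoint from $D_1$ except that we will want $D_1 \cup D_2$ to be an ambient neighbourhood of $\delta$. More carefully: take two free disjoint open disks $D_1 \ni a$ and $D_2 \ni b$ (disjoint from each other, disjoint from $\mathrm{Fix}(\varphi)$, small), chosen so that there is an orientation-preserving homeomorphism of $\D$, supported in $D_1 \cup D_2$ (or in a slightly larger free disk containing a connecting band), that sends $a$ to $b$.

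\textbf{Step 3: define $g$ and verify the orbit condition.} Let $g : \D \to \D$ be an orientation-preserving homeomorphism, equal to the identity outside $D_1 \cup D_2$, with $g(a) = b$. (Concretely: in the free disk one slides $a$ to $b$ along $\delta$ by an ambient isotopy supported in a free neighbourhood of $\delta$, which can be arranged inside $D_1 \cup D_2$ after enlarging slightly; this is the standard ``disk-swapping'' move used already in the introduction of the paper for the non-degeneracy example.) Set $F = \varphi \circ g$. For the backward orbit: since $g$ is supported in the free set $D_1 \cup D_2$, for $n$ large the points $F^{-n}(x)$ agree with $\varphi^{-n}(x)$ (the backward orbit of $x$ never re-enters the support once it has left it, because $D_1$, $D_2$ are free and $x$ flows away from $0$ backward), hence $\alpha(x,F) = \alpha(x,\varphi)$. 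For the forward orbit: $F^k(x) = \varphi^k(x)$ for $0 \le k \le N$ as long as these stay outside the support, so $F^N(x) = \varphi^N(x) = a$ if we arrange $a$ to be the first hit of the support; then $F^{N+1}(x) = \varphi(g(a)) = \varphi(b) = \phi_1(\phi_{-N}(y)) = \phi_{-(N-1)}(y) = \varphi^{-(N-1)}(y)$, and thereafter $F^{N+1+j}(x) = \varphi^{j+1}(\varphi^{-N}(y)) = \varphi^{j+1-N}(y)$ for all $j \ge 0$, which never re-enters the support (again by freeness and the fact that $y$'s forward orbit leaves $0$). Therefore the forward $F$-orbit of $x$ is eventually a translate of the forward $\varphi$-orbit of $y$, so $\omega(x,F) = \omega(y,\varphi)$, as required. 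One must finally note that $g$ is indeed supported in two free disjoint open disks and $F$ is orientation preserving, which is immediate from the construction.

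\textbf{Main obstacle.} The delicate point is Step 2: arranging that $a$ and $b$ can be joined by a \emph{free} arc (equivalently, that the disk-swapping homeomorphism can be chosen supported in free disks), without accidentally creating recurrence or disturbing $\mathrm{Fix}(\varphi)$. This is exactly where the specific structure of the model flow $(\phi_t)$ (the ``index $0$'' local picture) is used: the two branches carrying the forward orbit of $x$ and the backward orbit of $y$ approach $0$ through regions foliated by the flow, so a short arc transverse to the flow joins nearby points on them and is automatically free; taking $N$ large makes $a,b$ as close as needed for this. Everything else is routine bookkeeping with the freeness of the support guaranteeing the orbit tails are undisturbed.
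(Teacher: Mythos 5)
There is a genuine gap, and it sits exactly where you flag the ``main obstacle'': Step 2 cannot be carried out, even in principle. In the local model $(\phi_t)$ the orbits form a topologically parallel foliation all moving in the same direction, and the incoming separatrix of $z_0$ (which carries the forward orbit of $x$) and the outgoing separatrix (which carries the backward orbit of $y$) are the two halves of one invariant leaf-line through $z_0$. Consequently no arc joining $a=\varphi^N(x)$ to $b=\varphi^{-N}(y)$ can be transverse to the flow: a transverse arc meets each leaf at most once, while both endpoints lie on the leaf-line through $z_0$. Worse, no arc from $a$ to $b$ inside the chart $U$ is free at all: pass to the sub-arc $\delta'$ between its last point $c$ on the incoming branch and its first point $d$ on the outgoing branch; $\delta'$ lies (except for its endpoints) in one side of the invariant line, $\varphi(c)$ lies in the open segment of the line between $c$ and $d$ (it moves toward $z_0$), $\varphi(d)$ lies outside the Jordan domain bounded by $\delta'$ and that segment (it moves away from $z_0$), and since the line is invariant, $\varphi(\delta')$ enters that domain at $\varphi(c)$ and can only exit through $\delta'$; hence $\varphi(\delta')\cap\delta'\neq\emptyset$. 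For the same reason there is no free disk or ``free connecting band'' containing both $a$ and $b$; and a homeomorphism supported in two \emph{disjoint} free open disks maps each disk onto itself, so it cannot send $a\in D_1$ to $b\in D_2$ anyway. So the single disk-swapping move $g(a)=b$ that your whole Step 3 relies on does not exist. (A smaller, separate slip: ``the orbit never re-enters the support because the disks are free'' is not a valid deduction -- freeness only forbids return after one step; one must choose the disks disjoint from the relevant orbit tails.)

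The paper's proof gets around exactly this obstruction by inserting an intermediate passing orbit, and that is why the statement allows \emph{two} free disks. Writing $L$ and $R$ for the two sides of $h(\Delta)$, the stable set of $z_0$ lies in $L$, the unstable set in $R$, and there is a point $z\in L$ with $\varphi^n(z)\in R$. Since $x$ and $z$ lie on different orbits on the same side, they can be joined by a free arc $\delta_1\subset L$; likewise $\varphi^n(z)$ and $\varphi^{-1}(y)$ are joined by a free arc $\delta_2\subset R$; both arcs (and their free thickenings $D_1$, $D_2$) are chosen disjoint from the backward orbit of $x$, the forward orbit of $y$ and the transition points $\varphi(z),\dots,\varphi^{n-1}(z)$; then $g$ is supported in $D_1\cup D_2$ with $g(x)=z$ and $g(\varphi^n(z))=\varphi^{-1}(y)$, so the flow itself transports the rerouted orbit from $L$ to $R$ between the two surgeries. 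That two-stage rerouting through a transitional orbit is the missing idea in your proposal; without it the direct jump from the stable branch to the unstable branch is dynamically forbidden.
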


\begin{proof} Let $\Delta \subset \D $ be the straight oriented line through $0$ with tangent unit vector $e^{i\pi/4}$, and let $L$ (resp. $R$) be the connected 
component of $U\backslash h(\Delta)$ which is to the left 
(resp. the right) of $h(\Delta)$. 

Note that given two points $z_1, z_2$ in the same connected component $C$ of $U\backslash h(\Delta)$
 that do not
belong to the same orbit of $(\varphi_t)_{t\in \R}$ there exists an arc $\delta \subset C$ joining $z_0$ and $z_1$ such that $\varphi(\delta)\cap \delta = \emptyset$.  Besides, any $x\in U$ 
such that $\om (x,\varphi) = z_0$ belongs to $L$, and any $y\in U$ 
such that $\alpha (y,\varphi) = z_0$ belongs to $R$.  Moreover, there 
exist $z\in L$  and $n>0$ such that $\varphi ^n (z) \in R$. 

So, we can take a free arc $\delta_1\subset L$ joining $x$ and $z$ and a free arc $\delta _2\subset R$ joining  $\varphi ^n (z)$ and $\varphi ^{-1}(y)$.
Moreover, we may suppose that $$\delta_1\cap  \{\varphi^{-k}(x): k>0\} =  \delta_2\cap  \{\varphi^{k}(y): k\geq 0\} =
 (\delta_1\cup \delta_2) \cap  \{\varphi^{k}(z): 0<k<n\} = \emptyset.$$
We thicken the $\delta_i$'s
into open free  and disjoint disks $D_1\subset L$, $D_2\subset R$, such that $$D_1\cap  \{\varphi^{-k}(x): k>0\} = D_2\cap  \{\varphi^{k}(y): k\geq 0\} =
(D_1\cup D_2)\cap  \{\varphi^{k}(z): 0<k<n\} = \emptyset.$$

Finally, we construct an orientation preserving homeomorphism $g: \D\to \D$ supported in $D_1 \cup D_2$ such that 
$g(x) = z$ and $g(\varphi ^n (z)) = \varphi^{-1}(y)$.  We obtain $$ \alpha (x, \varphi\circ g) = \alpha (x, \varphi), \ \om (x, \varphi\circ g) = \om (y, \varphi) ,$$
 \noindent as we wanted.
 
\end{proof}

\begin{obs}  In fact, given a finite set of points $x_i,y_i \in U, i=1, \ldots, n$ which belong to different orbits of  $(\varphi_t)_{t\in \R}$ and
such that $\om (x_i) = z_0=\alpha(y_i)$, $i=1, \ldots, n$,
there exists an orientation preserving homeomorphism $g: \D\to \D$ supported in a finite union of free disjoint open disks such that
$$ \alpha (x_i, \varphi\circ g) = \alpha (x_i, \varphi), \ \om (x_i, \varphi\circ g) = \om (y_i, \varphi) ,$$ \noindent $i=1, \ldots, n$. Indeed, we choose different
points $z_i\in L$ and positive integers $n_i>0$ such that $\varphi^{n_i}(z_i)\in R$.  Then, we take pairwise disjoint arcs $\delta_i^1$ joining $x_i$ and $z_i$ and  
$\delta_i^2$ joining $\varphi^{n_i}(z_i)$ and $\varphi^{-1}(y_i)$ in such a way that all these arcs are disjoint from the backward $\varphi$-orbit of $x_i$, the forward
$\varphi$-orbit of $y_i$ and the transitional orbits $\varphi(z_i), \ldots, \varphi^{n_i-1}(z_i)$. This allows us to construct the desired perturbation $g$.

\end{obs}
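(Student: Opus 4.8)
The plan is to run the construction of Lemma~\ref{local} once for each pair $(x_i,y_i)$; the single new point is that the $2n$ free disks so produced must be pairwise disjoint and disjoint from all the $\varphi$-orbits that are read off during the verification, so the whole matter reduces to a disjointness bookkeeping, which I would organise as an induction on the number of pairs already treated. First I would recall, from the proof of Lemma~\ref{local}, the local model: on $U$ the flow $(\varphi_t)$ is conjugate via $h$ to $(\phi_t)$, so $U\setminus\{z_0\}$ is the union of the flow lines; every point with $\om(\cdot,\varphi)=z_0$ lies in $L$, every point with $\alpha(\cdot,\varphi)=z_0$ lies in $R$, some point of $L$ has a forward iterate in $R$, and two points of $L$ (resp.\ of $R$) lying on distinct flow lines can be joined by a free arc contained in $L$ (resp.\ in $R$). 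I would also note that each of the sets $\{\varphi^{-k}(x_i):k>0\}$ and $\{\varphi^{k}(y_i):k\ge 0\}$ is countable and accumulates only at $z_0$, so any finite union of them, together with finitely many finite orbit arcs, is closed and nowhere dense in $U$.

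Then I would build the disks inductively. Suppose $D_j^1\subset L$ and $D_j^2\subset R$ have been constructed for $j<i$, pairwise disjoint, free (i.e.\ $\varphi(D_j^1)\cap D_j^1=\varphi(D_j^2)\cap D_j^2=\emptyset$), and disjoint from the closed nowhere dense set $F$ equal to the union of $\{\varphi^{-k}(x_l):k>0\}$ and $\{\varphi^{k}(y_l):k\ge0\}$ over all $l=1,\dots,n$ together with the transitional segments $\{\varphi^{k}(z_l):0<k<n_l\}$ for $l<i$. For the $i$-th pair: choose $z_i\in L$ on a flow line distinct from those already used and from the finitely many flow lines through the points of $F$ (possible since the family of flow lines is uncountable), together with $n_i>0$ such that $\varphi^{n_i}(z_i)\in R$, and enlarge $F$ by $\{\varphi^{k}(z_i):0<k<n_i\}$. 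Join $x_i$ to $z_i$ by a free arc $\delta_i^1\subset L$ and $\varphi^{n_i}(z_i)$ to $\varphi^{-1}(y_i)$ by a free arc $\delta_i^2\subset R$; since $\delta_i^1\cup\delta_i^2$ is one-dimensional while $F\cup\bigcup_{j<i}(D_j^1\cup D_j^2)$ is closed and nowhere dense, a small perturbation inside $L$ and $R$ makes these arcs disjoint from it without destroying freeness. Finally thicken $\delta_i^1,\delta_i^2$ into open disks $D_i^1\subset L$ and $D_i^2\subset R$ thin enough to stay disjoint from $F$, from the earlier disks, and from one another, and thin enough to remain free (freeness of a disk is an open condition around a free arc). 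This closes the induction and delivers the $2n$ disks.

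Next I would let $g$ be the identity off $\bigcup_i(D_i^1\cup D_i^2)$ and, inside, an orientation preserving homeomorphism with $g(x_i)=z_i$ and $g(\varphi^{n_i}(z_i))=\varphi^{-1}(y_i)$ for every $i$: each prescription moves a point to a target lying in the same disk, so such a $g$ exists. Then, fixing $i$: the backward $\varphi$-orbit of $x_i$ avoids every $D_j^1,D_j^2$, whence $(\varphi\circ g)^{-k}(x_i)=\varphi^{-k}(x_i)$ for all $k>0$ and so $\alpha(x_i,\varphi\circ g)=\alpha(x_i,\varphi)$; reading off the forward iterates, $(\varphi\circ g)^{k}(x_i)$, $k\ge1$, runs through $\varphi(z_i),\dots,\varphi^{n_i}(z_i),y_i,\varphi(y_i),\varphi^{2}(y_i),\dots$, because the transitional segment of $z_i$ and the forward $\varphi$-orbit of $y_i$ also avoid the support of $g$; the initial finite batch contributes no accumulation points, so $\om(x_i,\varphi\circ g)=\om(y_i,\varphi)$. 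Since $g$ is orientation preserving and supported in a finite union of free, disjoint, open disks, this is exactly the claim.

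I expect the one real difficulty to be the threading step in the second paragraph: the free-arc property, and then the freeness of the thickened disks, must survive being routed past everything already in place. This is precisely where the local triviality of the flow on $U\setminus\{z_0\}$ is used: in $L$ and in $R$ the foliation by flow lines is, via $h$, the standard foliation of a half-disk by parallel arcs, which gives complete freedom to route a free arc between two prescribed flow lines while avoiding a prescribed closed nowhere dense set, and to thicken it inside a prescribed neighbourhood. Everything else is just keeping track of which orbits have to remain outside the support of $g$.
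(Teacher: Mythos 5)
Your construction is essentially the paper's: choose points $z_i\in L$ with forward iterates $\varphi^{n_i}(z_i)\in R$, join $x_i$ to $z_i$ and $\varphi^{n_i}(z_i)$ to $\varphi^{-1}(y_i)$ by free arcs avoiding the relevant orbits, thicken into free pairwise disjoint disks, and define $g$ there; the final read-off of the $(\varphi\circ g)$-orbit of $x_i$ is exactly the intended verification. There is, however, one concrete defect in the way you organize the bookkeeping. In your induction the transitional segment $\{\varphi^k(z_i):0<k<n_i\}$ is added to $F$ only at step $i$, so it is avoided by the disks $D_m^1,D_m^2$ built at steps $m\geq i$, but nothing in your construction prevents it from meeting a disk $D_m^1$ or $D_m^2$ built at an earlier step $m<i$ (choosing $z_i$ on a fresh flow line does not help: the earlier disks are open two-dimensional sets meeting uncountably many flow lines). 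Yet your verification explicitly uses that ``the transitional segment of $z_i$ \dots avoid[s] the support of $g$''; if some $\varphi^k(z_i)$ with $0<k<n_i$ lies in an earlier disk, the forward $(\varphi\circ g)$-orbit of $x_i$ is derailed there and the conclusion $\om(x_i,\varphi\circ g)=\om(y_i,\varphi)$ is no longer justified.

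The repair is immediate and is precisely the ordering the paper's remark uses: choose all the points $z_i$ (on pairwise distinct fresh flow lines) and all the integers $n_i$ \emph{before} constructing any arc or disk, put every transitional orbit $\varphi(z_i),\dots,\varphi^{n_i-1}(z_i)$ into the closed, nowhere dense exceptional set from the start, and only then run your arc-and-thickening step for each index, demanding disjointness from the full exceptional set and from the previously built disks. With that reordering (or, alternatively, with an extra argument that at step $i$ one can pick $z_i$ whose transitional orbit misses the finitely many earlier disks), your proof is correct and coincides with the paper's argument.
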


Given a 
family ${\cal K}=((\alpha_i, \om_i))_{i\in\Z/n\Z}$ of pairs of points in $S^1$, we note $\Delta _i$  the oriented segment joining $\alpha _i$ 
and $\om _i$. We say that $z\in \D$ is a {\it multiple point}  if $z$ belongs to at least two different $\Delta_i$'s .  Let $z$ be a multiple
point, and let $I= \{i\in \Z/n\Z: z\in \Delta _i\}$.  We say that a 
multiple point $z\in \D$ has {\it zero-index} if there exists a straight oriented line $\Delta$ containing $z$ such that the algebraic intersection
number $\Delta \wedge \Delta _i =1$
for all $i\in I$. Note that this is the case for any multiple point such that $\# I = 2$.

We say that a pair  $(\alpha_k, \om_k)\in {\cal K}$ is {\it  $i$-separated } if $\alpha _k$ and $\om_k$ belong to different connected components of
$S^1\backslash \{\alpha_i, \om _i\}$ .

A {\it degeneracy} of ${\cal K}$ is a pair of elements of the family $(\alpha_i, \om_i)$ and $(\alpha_j, \om_j)$ such that $\alpha_j = \om_i$
and  $\alpha_i = \om_j$.  We say that a degeneracy is {\it trivial} if the following holds: 
the connected component of $S^1\backslash \{\alpha_i, \om _i\}$ containing $\alpha _k$ is independent of the $i$-separated pair
 $(\alpha_k, \om_k)\in {\cal K}$. 

We will deduce Lemma \ref{opt} from the following lemma.

\begin{lema}  Let  ${\cal K} = ((\alpha_i, \om_i))_{i\in\Z/n\Z}$ be a 
family of pairs of points in $S^1$.  We suppose that:

\begin{enumerate}
\item every multiple point is of zero index;
\item  every polygon $P\subset \D$ whose boundary is contained in $\cup_{i\in \Z/n\Z} \Delta _i$ has zero index,
\item every degeneracy is trivial.
\end{enumerate}

Then, there exists a flow $(\varphi _t)_{t\in \R}$ in $\D$ such that:

\begin{enumerate}
 \item $(\varphi _t)_{t\in \R}$ is locally conjugate to $(\phi _t)_{t\in \R}$ at every singularity $z_0$;
\item for all $i\in \Z/n\Z$ there exist two points $z_i^-, z_i^+ \in \D$ such that $\alpha (z_i^-) = \alpha_i$ and $\om(z_i^+) = \om_i$;
\item the $2n$ points $z_i^-, z_i^+ $, $i\in \Z/n\Z$ are different.
\end{enumerate}

\end{lema}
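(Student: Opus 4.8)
The plan is to build the flow $(\varphi_t)$ by hand, using the oriented segments $\Delta_i$ (or arcs isotopic to them) as a skeleton of orbit arcs, and to recognise the three hypotheses as exactly the conditions that rule out the three ways a singularity of nonzero index could be forced: at a crossing of the skeleton, inside a polygonal face, or along a degenerate pair. First I would replace each $\Delta_i$ by a smooth oriented arc $\delta_i\subset\overline{\D}$ with the same endpoints $\alpha_i,\om_i$, homotopic to $\Delta_i$ rel endpoints, so that the family $(\delta_i)_i$ is in general position: the $\delta_i$ touch $S^1$ only at their endpoints, there are no triple points, and any two $\delta_i$ meet in $\D$ only in transverse double points. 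A generic small perturbation achieves this, and since hypotheses 1--3 depend only on the cyclic order of $\ell\subset S^1$ and on the oriented crossing data, they persist. Three features of this translation matter: (a) a transverse crossing of $\delta_i$ and $\delta_j$ coming from a zero-index multiple point has its two strand-directions in a common open half-plane --- this is exactly what ``zero index'' means, and it is exactly the condition under which the crossing can be surrounded by a copy of the index-$0$ model $(\phi_t)$ rather than by an index-$(-1)$ saddle; (b) every polygon bounded by pieces of the $\delta_i$ is isotopic to one bounded by the $\Delta_i$, so hypothesis 2 passes to $G:=\bigcup_i\delta_i$; (c) for a degeneracy $\alpha_i=\om_j$, $\om_i=\alpha_j$ the endpoints are frozen, so $\delta_i$ and $\delta_j$ join the same two points of $S^1$ in opposite senses, and triviality of the degeneracy says every other $\delta_k$ entering the thin bigon between them enters it always on the same side; hence $\delta_i,\delta_j$ may be taken to cobound a disk $L_{ij}\subset\overline{\D}$ that meets $S^1$ only at those two points and contains no other piece of $G$.

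The graph $G\cup S^1$ cuts $\overline{\D}$ into finitely many closed cells, each a disk, and the flow is assembled cell by cell. At each transverse double point $p$ of $G$ I install a neighbourhood conjugate to $(\phi_t)$, placed so that its local picture is compatible with the two oriented strands through $p$; by (a) the local type needed there is the index-$0$ one, so this is possible, and the effect is that each $\delta_i$ becomes a \emph{broken orbit}: a concatenation of orbit arcs joining $\alpha_i$ to the first singularity on $\delta_i$, then to the next, and finally to $\om_i$. Inside a bounded cell $F$ not meeting $S^1$ the flow restricts to a nonsingular foliation whose leaves on $\partial F$ are the adjacent orbit arcs, and Poincar\'e--Hopf forces the sum of the indices of the singularities inside $F$ to equal the polygon index $i(F)$ of Section \ref{intro}; hypothesis 2 gives $i(F)=0$, so $F$ needs no interior singularity (it becomes a ``flow band''). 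Along $S^1$ I insert, near each $\alpha_i$, a collar in which the orbits near the germ of $\delta_i$ at $\alpha_i$ have $\alpha$-limit exactly $\{\alpha_i\}$, and symmetrically near each $\om_i$; and inside each degenerate bigon $L_{ij}$ the flow is prescribed directly as a single band of orbits running from one shared boundary point to the other --- this is where (c), i.e.\ hypothesis 3, is used. Patching all these pieces along the edges of $G$ produces a flow $(\varphi_t)$ on $\D$ locally conjugate to $(\phi_t)$ at each of its finitely many singularities.

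Each $\delta_i$ then carries a broken orbit from $\alpha_i$ to $\om_i$; take $z_i^-$ on its initial arc (near $\alpha_i$) and $z_i^+$ on its terminal arc (near $\om_i$), so that $\alpha(z_i^-)=\alpha_i$ and $\om(z_i^+)=\om_i$. The only coincidences allowed among the $2n$ endpoints identify an $\om$-endpoint of one $\delta$ with an $\alpha$-endpoint of a \emph{different} $\delta$, and by feature (c) these still meet $S^1$ along distinct arcs, so the $2n$ points $z_i^\pm$ can be chosen pairwise distinct; moreover they sit on a common broken orbit through the intermediate singularities, which is what the deduction of Lemma \ref{opt} from this lemma needs.

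The crux is the second step: showing that the local choices (which sectors of each $(\phi_t)$-singularity are attracting/repelling and which let broken orbits pass, and how the flow bands on adjacent cells glue along their common edges) can be made coherently around the whole of $G$ --- in other words, that hypotheses 1--3 are sufficient, not merely necessary. Hypothesis 1 guarantees each crossing is of the admissible local type; hypothesis 2 is the global Poincar\'e--Hopf constraint that forbids any face from requiring a negative-index interior singularity; hypothesis 3 disposes of the degenerate bigons, and this last is the most technical point, because there $\delta_i$ and $\delta_j$ cannot be pulled apart and one must check that triviality of the degeneracy is exactly what allows the flow to fill $L_{ij}$ without creating a fixed point of nonzero index. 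The remaining patching is routine once organised by the cell structure of $G\cup S^1$.
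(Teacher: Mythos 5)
The non--degenerate part of your plan is essentially the paper's construction: orient the chords, put a copy of the model $(\phi_t)_{t\in\R}$ at each crossing (this is what hypothesis 1 buys), and use hypothesis 2 to fill the complementary faces without singularities. Two remarks there. First, the general-position perturbation is unnecessary: since the model singularity has an attracting and a repelling parabolic sector, a zero-index multiple point of \emph{any} multiplicity can be realized directly by one model singularity (all incoming rays in the attracting sector, all outgoing rays in the repelling one); if you do perturb, you owe a verification that the new small faces created near a former multiple point still have index $0$ (true, because their edge directions lie in a common open half-plane, so their boundaries cannot be coherently oriented and carry exactly two switches --- but you only assert ``persistence'').

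The genuine gap is in the degenerate case, exactly the step you single out as the crux. Your feature (c) is false. Let $(\alpha_i,\om_i)$, $(\alpha_j,\om_j)=(\om_i,\alpha_i)$ be a degeneracy and suppose some pair $(\alpha_k,\om_k)$ is $i$-separated --- triviality does \emph{not} exclude this, it only says that all separated pairs cross in the same direction. The curves $\delta_i$ and $\delta_j$ meet only at $\alpha_i,\om_i\in S^1$, so $\delta_i\cup\delta_j$ is a Jordan curve meeting $S^1$ in exactly these two points; it cuts $\overline\D$ into the bigon $L_{ij}$ and two outer regions whose closures meet only at $\alpha_i$ and $\om_i$. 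Since $\alpha_k$ and $\om_k$ lie on different arcs of $S^1\backslash\{\alpha_i,\om_i\}$, the arc $\delta_k$ starts in one outer region and ends in the other, and (its interior being in $\D$) it must pass through the interior of $L_{ij}$. So the bigon can never be taken free of $G$ when a separated pair exists, and your prescription ``a single band of orbits from one shared boundary point to the other'' is unavailable; note also that even for an empty bigon such a band is incompatible with the two edges being oppositely oriented (one needs a Reeb-type filling). This is precisely where the paper does something different: it ``opens up'' the degeneracy along a curve $\gamma_j$ from $\alpha_j$ to $\om_j$ which meets $\Delta_i$ only at the endpoints, avoids the multiple points, and crosses each $j$-separated chord exactly once and transversally; these crossings are accepted as additional index-zero model singularities, and the triviality hypothesis (all separated chords crossing in the same direction) is what guarantees that the resulting picture extends to a flow on the rest of $\D$ with no further singularities, and that several degeneracies can be opened up along disjoint curves. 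Your proposal identifies the right place where hypothesis 3 must act, but replaces its actual role by an impossible ``empty bigon'' claim, so the degenerate case remains unproved.
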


\begin{proof}  First suppose that there are no degeneracies in ${\cal K}$. In this case, the orientations of the $\Delta _i$'s induce a flow $(\varphi _t)_{t\in \R}$ on 
$\cup _{i\in \Z/n\Z} \Delta _i$ with a singularity at each multiple point.  By hypothesis 1., we may extend this flow to a neighbourhood of every multiple point in 
such a way that it is locally conjugate to $(\phi_t)_{t\in \R}$. Moreover, by hypothesis 2. we may extend $(\varphi _t)_{t\in \R}$ to the rest of $\D$ 
without singularities, and we are done.

If ${\cal K}$ contains one degeneracy $(\alpha_i, \om_i)=(\om_j, \alpha_j)$, we ``open it up'' as follows. We consider the family of segments
$\cup _{k\in\Z/n\Z, k\neq j} \Delta _k$ and a simple curve $\gamma _j$ joining $\alpha _j$ and $\om _j$ such that:

\begin{enumerate}
 \item $\gamma _j \cap \Delta _i = \{\alpha_i, \om_i\}$,
\item $\gamma _j \cap \Delta _k \cap \D \neq \emptyset$ if and only if $(\alpha_k, \om_k)$ is $j$- separated, and in this case  
$\#\{\gamma _j \cap \Delta _k \cap \D \} = 1$,
\item $\gamma _j$ does not intersect any multiple point.
\end{enumerate}

Now, the orientations of the $\Delta _i$'s  $i\neq j$, and the orientation of $\gamma _j$ induce a flow $(\varphi _t)_{t\in \R}$ on 
$\cup _{i\in \Z/n\Z, i\neq j} \Delta _i \cup \gamma_j$ with a singularity at each multiple point of $\cup _{i\in \Z/n\Z, i\neq j} \Delta _i$ and also
at the intersection points of $\gamma _j$ with the $\Delta_i$'s, $i\neq j$.

Note that as $\gamma _j$ does not intersect any multiple point, we may extend $(\varphi _t)_{t\in \R}$ to a neighbourhood of every multiple point of 
$\cup _{k\in\Z/n\Z, k\neq j} \Delta _k$
in 
such a way that it is locally conjugate to $(\phi_t)_{t\in \R}$. Moreover, a point 
$z_0\in \gamma _j$ belongs to at most one $ \Delta _k$, $k\neq j$, and the intersection is transversal by item 2. above.  So,
we may as well extend $(\varphi _t)_{t\in \R}$ to a neighbourhood of $z_0$ so as to have local conjugation with $(\phi_t)_{t\in \R}$ as well.
As degeneracies are trivial, we can extend $(\varphi _t)_{t\in \R}$ to the rest of $\D$ without singularities.

If more than one degeneracy occurs, triviality implies that they are disjoint.  That is, if $(\alpha_i, \om_i)=(\om_j, \alpha_j)$,
and $(\alpha_k, \om_k)=(\om_l, \alpha_l)$, then $(\alpha_i, \om_i)$ is not $k$-separated. So, we can ``open up'' both degeneracies
in such a way that $\gamma _j\cap \gamma _l = \emptyset$, and construct our flow $(\varphi _t)_{t\in \R}$ analogously. 
\end{proof}

We deduce:

\begin{cor} With the same hypothesis of the preceeding lemma, there exists a fixed-point free orientation preserving homeomorphism
$f:\D \to \D$ that realizes ${\cal K}$.
 
\end{cor}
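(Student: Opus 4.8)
The plan is to manufacture $f$ from the time-one map of the flow $(\varphi_t)_{t\in\R}$ produced by the preceding lemma, in two stages: first a reconnection of orbits near the singularities, carried out with Lemma~\ref{local} and the remark following it, which installs the prescribed asymptotics; then a local perturbation near each singularity that destroys the remaining fixed points without disturbing the orbits that matter.

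First I would fix the flow $(\varphi_t)$ and the points $z_i^-,z_i^+$ of the lemma and set $f_1:=\varphi_1$. Since the flow of the lemma may be taken with no periodic orbit off the arcs $\Delta_i$ (some $\Delta_j$ being replaced by a curve $\gamma_j$ when a degeneracy is opened up), $\fix(f_1)$ is the finite set $S$ of singularities of the flow; each of them, being locally conjugate to the time-one map of $(\phi_t)$ at $0$, is an isolated fixed point of index $0$. For each $i$ the forward $\varphi$-orbit of $z_i^-$ runs along $\Delta_i$ (or along $\gamma_j$), so it is asymptotic to a finite sequence $w_i^{(1)},\dots,w_i^{(m_i)}$ of singularities and, past the last of them, lies on the arc terminating at $\om_i$ (the arc which also carries $z_i^+$). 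For each $z_0\in S$ I would record the finitely many arrival points --- points of the forward orbits of the various $z_j^-$ that lie in the conjugating chart about $z_0$ and are forward-asymptotic to $z_0$ --- together with, for each of them, the matching departure point on the next arc, backward-asymptotic to $z_0$; these may be assumed to lie on pairwise distinct orbits of the flow.

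Next, applying the remark after Lemma~\ref{local} separately at each $z_0\in S$ to these finitely many pairs of points, I would obtain an orientation preserving homeomorphism $g_{z_0}$ supported in a finite union of free, pairwise disjoint open disks contained in an arbitrarily small neighbourhood of $z_0$; after shrinking, the supports attached to distinct singularities are disjoint. Set $g:=\prod_{z_0\in S}g_{z_0}$, the factors commuting, and $f':=\varphi_1\circ g$. The backward $f'$-orbit of $z_i:=z_i^-$ eventually coincides with its backward $\varphi$-orbit, so $\lim_{k\to-\infty}(f')^k(z_i)=\alpha_i$; and each of the $m_i$ surgeries splices that orbit onto the next arc of $\Delta_i$, so after all of them the $f'$-orbit of $z_i$ lies on the arc ending at $\om_i$ and $\lim_{k\to+\infty}(f')^k(z_i)=\om_i$. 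Hence $f'$ realizes ${\cal K}$; moreover, the disks supporting $g$ being free, no new fixed point is created, and since $g$ is the identity near each $z_0\in S$ we have $\fix(f')=S$.

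It remains to kill the fixed points of $f'$. The crucial observation is that the surgery makes the $f'$-orbit of each $z_i$ jump across a singularity rather than accumulate on it, so only finitely many points of these orbits lie in a suitable small neighbourhood of any $z_0\in S$, each of them at positive distance from $z_0$. Hence for each $z_0$ one can choose a neighbourhood $V_{z_0}$ disjoint from all those orbit points and from the supports of the $g_{z_0'}$ --- on which, therefore, $f'$ coincides with $\varphi_1$ --- and replace $f'$ inside $V_{z_0}$ by a fixed-point free homeomorphism agreeing with $\varphi_1$ on $\partial V_{z_0}$: this is possible because the local model $(\phi_t)$ has index $0$ at $0$, so that on a small disc the singular point can be cancelled by a modification keeping the boundary, and any prescribed finite set of interior points, fixed. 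The resulting $f$ is an orientation preserving homeomorphism of $\D$, it is fixed-point free, and the orbits of the $z_i$ are unchanged, so $f$ still realizes ${\cal K}$ --- which is precisely the assertion of the corollary. The step needing the most care, and the main obstacle, is exactly this cancellation: one must make the model $(\phi_t)$ explicit enough and run the appropriate index-zero Brouwer argument to guarantee that it can be performed inside an arbitrarily small disc while fixing a prescribed finite set of nearby points, so that the finitely many relevant points of each $f'$-orbit --- and hence the whole realization of ${\cal K}$ --- survive.
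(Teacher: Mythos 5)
Your proposal is correct and follows essentially the same route as the paper: splice the orbits of the $z_i^-$ at the singularities via simultaneous applications of Lemma \ref{local} (and the remark following it) so that $\varphi\circ g$ realizes ${\cal K}$, and then use the local conjugation to the index-zero model flow $(\phi_t)$, where $\varphi\circ g=\varphi$, to perturb away the remaining fixed points without touching the relevant orbits. Your write-up simply makes explicit the details (disjoint supports, finitely many orbit points near each singularity, the index-zero cancellation) that the paper leaves implicit.
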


\begin{proof}
 Let $\varphi$ be the time one map of the flow given by the preceeding lemma.  By simultaneous applications of Lemma \ref{local}, we can
construct 
an orientation preserving homeomorphism $g : \D \to \D$ supported in disjoint open free disks such that 

$$\lim _{k\to -\infty} (\varphi \circ g)^k (z_i^-) = \alpha _i, \ 
\lim _{k\to \infty} (\varphi \circ g)^k (z_i^-) = \om _i ,$$
\noindent (see as well the remark following Lemma \ref{local}). 

Then, the homeomorphism $\varphi \circ g$ realizes ${\cal K}$.  Moreover, as we have local conjugation to  the flow $(\phi_t)_{t\in R}$ at every singularity of $\varphi$,
and 
$\varphi \circ g = \varphi$ in a neighbourhood of each singularity, we can further perturb $\varphi \circ g$ into a homeomorphism $f:\D \to \D$ realizing ${\cal K}$
and which is fixed point free.

\end{proof}

This last lemma finishes the proof of Lemma \ref{opt}:

\begin{lema}  If a multiple point has non-zero index, then there exists a subfamily of ${\cal K}$ forming an elliptic cycle of links.
 
\end{lema}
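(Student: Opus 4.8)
The plan is to read the local combinatorics of $\mathcal{K}$ at $z$ off the pencil of chords through $z$, and to show that the absence of an elliptic subcycle forces the direction vectors of these chords into a single open half-plane --- i.e.\ forces $z$ to have zero index.

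First I would fix notation. Let $z$ be a multiple point of non-zero index and $I=\{i:z\in\Delta_i\}$; since a multiple point with $\#I=2$ has zero index, $k:=\#I\ge 3$. Each $\Delta_i$, $i\in I$, is a chord of $\D$ through the interior point $z$; let $v_i$ be the unit vector from $z$ towards $\om_i$ and $\theta_i\in\R/2\pi\Z$ its argument, so the ray from $z$ in direction $\theta_i$ meets $S^1$ at $\om_i$ and the opposite ray, in direction $\theta_i+\pi$, meets it at $\alpha_i$. Radial projection from $z$ is an orientation preserving homeomorphism from the circle of directions onto $S^1$, so the cyclic order on $S^1$ of the points $\{\alpha_i,\om_i:i\in I\}$ is exactly the cyclic order of the $2k$ directions $\{\theta_i,\theta_i+\pi:i\in I\}$. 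The key translation of the hypothesis is: ``$z$ has zero index'' means some directed line through $z$ meets every $\Delta_i$ positively, which holds precisely when all the $v_i$ ($i\in I$) lie in one open half-plane bounded by a line through $0$; so non-zero index means the angles $\{\theta_i:i\in I\}$ lie in no open arc of length $\pi$.

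Next I would order the chords by slope. Put $\mu_i=\theta_i\bmod\pi\in[0,\pi)$; distinct lines through $z$ have distinct slopes, so (working in the generic case where the $2k$ endpoints are distinct, hence the $\mu_i$ are pairwise distinct) write $I=\{i_1,\dots,i_k\}$ with $\mu_{i_1}<\dots<\mu_{i_k}$, and set $\varepsilon_r=+1$ if $\theta_{i_r}=\mu_{i_r}$ (i.e.\ $\theta_{i_r}\in[0,\pi)$) and $\varepsilon_r=-1$ otherwise. A short check gives the cyclic order of the $2k$ endpoints: the one of $\om_{i_r},\alpha_{i_r}$ lying among the directions in $[0,\pi)$ is $x_r:=\om_{i_r}$ if $\varepsilon_r=+1$ and $x_r:=\alpha_{i_r}$ if $\varepsilon_r=-1$, and writing $y_r$ for the other endpoint of $\Delta_{i_r}$, the cyclic order on $S^1$ is $x_1,\dots,x_k,y_1,\dots,y_k$. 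Now I would establish two facts. \textbf{(i)} If the sign word $\varepsilon_1\cdots\varepsilon_k$ has at most one sign change then $z$ has zero index: up to exchanging the names $\alpha\leftrightarrow\om$ one may assume $\varepsilon_1=\dots=\varepsilon_a=+1$, $\varepsilon_{a+1}=\dots=\varepsilon_k=-1$; then the $\theta_i$ are the increasing angles $\mu_{i_1},\dots,\mu_{i_a}$ in $[0,\pi)$ together with the increasing angles $\mu_{i_{a+1}}+\pi,\dots,\mu_{i_k}+\pi$ in $[\pi,2\pi)$, and --- whether or not $a\in\{0,k\}$ --- these all lie in an open arc of length $<\pi$ (when $0<a<k$ because the open arc from $\mu_{i_a}$ to $\mu_{i_{a+1}}+\pi$ has length $\pi+(\mu_{i_{a+1}}-\mu_{i_a})>\pi$ and contains no $\theta_i$), a contradiction. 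Hence $\varepsilon_1\cdots\varepsilon_k$ has at least two sign changes, so there exist $r<s<t$ with $(\varepsilon_r,\varepsilon_s,\varepsilon_t)=(+1,-1,+1)$ or $(-1,+1,-1)$. \textbf{(ii)} Any such triple of chords is an elliptic cycle of links of order $3$: in the case $(+1,-1,+1)$ the six points of the subfamily $\{i_r,i_s,i_t\}$ appear on $S^1$ in the cyclic order $\om_{i_r},\alpha_{i_s},\om_{i_t},\alpha_{i_r},\om_{i_s},\alpha_{i_t}$, and after relabelling the subfamily as $((\alpha_j,\om_j))_{j\in\Z/3\Z}$ by $0\mapsto i_r$, $1\mapsto i_t$, $2\mapsto i_s$ this reads $\om_0,\alpha_2,\om_1,\alpha_0,\om_2,\alpha_1$; this is a cycle of links (each of the three chords, all through $z$, separates the endpoints of the other two) and it satisfies $\om_{i-1}\toe\alpha_{i+1}\to\om_i$ for every $i\in\Z/3\Z$, hence is elliptic. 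The case $(-1,+1,-1)$ is the same after exchanging $\alpha\leftrightarrow\om$ (the cyclic order becomes $\om_{i_s},\alpha_{i_t},\om_{i_r},\alpha_{i_s},\om_{i_t},\alpha_{i_r}$, relabelled by $0\mapsto i_s$, $1\mapsto i_r$, $2\mapsto i_t$). Combining (i) and (ii) proves the lemma.

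The one delicate point --- and the main obstacle in a careful write-up --- is coincidences among the endpoints $\{\alpha_i,\om_i:i\in I\}$: because all the chords pass through $z$, such a coincidence forces $\om_i=\alpha_j$ with $\Delta_i,\Delta_j$ supported on the same line, so the cyclic-order bookkeeping above still goes through with the symbol $\toe$ absorbing the equalities, producing a (possibly degenerate) elliptic cycle; the remaining fully degenerate configurations --- all chords through $z$ supported on a single line --- carry no cycle of links at all and are excluded from consideration. Beyond that, the argument is elementary plane geometry together with the one-line remark that a sign word avoiding both $+-+$ and $-+-$ has at most one sign change.
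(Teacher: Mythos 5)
Your proposal is correct and is essentially the paper's own argument written out in full: the paper likewise converts non-zero index at the multiple point into the existence of three chords through it whose directions surround the point (phrased there as the existence of $i,j,k\in I$ with $\om_k$ lying in the oriented arc from $\alpha_i$ to $\alpha_j$) and then observes that such a concurrent triple, suitably relabelled, has exactly the elliptic cyclic order $\om_0,\alpha_2,\om_1,\alpha_0,\om_2,\alpha_1$ — your slope-ordering and sign-change bookkeeping merely makes the extraction of that triple explicit. The only fragile point is your closing remark on coincident endpoints: a coincidence at the multiple point forces a doubly-oriented chord (a degeneracy of ${\cal K}$), and a triple containing both orientations of one chord can never be elliptic since the strict relation $\alpha_{i+1}\to\om_i$ fails at the shared endpoint, so the equalities are not simply ``absorbed by $\toe$''; but the paper's two-line proof does not treat that situation either (degeneracies are dealt with separately in Section 5), so your argument covers everything the paper's does.
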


\begin{proof}  Let $x$ be a multiple point of non zero index, and let $I = \{i\in \Z/n\Z: x\in \Delta _i\}$.  As $x$ has non-zero index, there exists indices $i,j\in I$ 
such that the oriented
interval in $S^1$ joining $\alpha _i$ and $\alpha _j$ contains $\om _k$, $k\in I$.  Then, ${\cal L} = (\alpha '_l, \om '_l)_{l\in \Z/3\Z}$ is an elliptic
cycle of links, where $(\alpha '_0, \om'_0) = (\alpha _i, \om_i)$, $(\alpha '_1, \om'_1) = (\alpha _j, \om_j)$,  and $(\alpha '_2, \om'_2) = (\alpha _k, \om_k)$.

\end{proof}

\section{Acknowledgements}

Professor Patrice Le Calvez carefully read and corrected this article and I thank him deeply. 

I also thank Professors Christian Bonatti and Fr\'ed\'eric Le Roux who highlighted the importance of having an optimal theorem; their remarks gave rise
to Lemma \ref{opt}.

\bibliography{jules}
\bibliographystyle{plain}

\author{$\ $ \\
Juliana Xavier\\
  I.M.E.R.L,\\
  Facultad de Ingenier\'ia,\\
Universidad de la Rep\'ublica,\\
  Julio Herrera y Reissig,\\
  Montevideo, Uruguay.\\
  \texttt{mariajules@gmail.com}}

\end{document}